\documentclass{article}%
\usepackage{amssymb}
\usepackage{graphicx}
\usepackage{amsmath}%
\setcounter{MaxMatrixCols}{30}%
\usepackage{amsfonts}
\providecommand{\U}[1]{\protect\rule{.1in}{.1in}}
\providecommand{\U}[1]{\protect\rule{.1in}{.1in}}
\newtheorem{theorem}{Theorem}

\newtheorem{corollary}[theorem]{Corollary}

\newtheorem{definition}[theorem]{Definition}

\newtheorem{lemma}[theorem]{Lemma}
\newtheorem{notation}[theorem]{Notation}

\newtheorem{proposition}[theorem]{Proposition}
\newtheorem{remark}[theorem]{Remark}

\newenvironment{proof}[1][Proof]{\textbf{#1.} }{\ \rule{0.5em}{0.5em}}
\begin{document}

\title{Axiomatic Differential Geometry II-4\\-Its Developments-\\Chapter 4: The Fr\"{o}licher-Nijenhuis Algebra}
\author{Hirokazu Nishimura\\Institute of Mathematics, University of Tsukuba\\Tsukuba, Ibaraki, 305-8571\\Japan}
\maketitle

\begin{abstract}
In our previous paper (Axiomatic Differential Geometry II-3) we have discussed
the general Jacobi identity, from which the Jacobi identity of vector fields
follows readily. In this paper we derive Jacobi-like identities of
tangent-vector-valued forms from the general Jacobi identity.

\end{abstract}

\section{Introduction}

In our previous paper \cite{nishi-k} we presented the general Jacobi identity,
from which the Jacobi identity of vector fields followed readily. The
principal objective in this paper is to show that the Jacobi-like identity of
tangent-vector-valued forms follows no less readily from the general Jacobi identity.

In orthodox differential geometry, establishing the Jacobi identity of vector
fields on a smooth manifold is a trifling exercise, because we are able to
identify vector fields with derivations. Similarly, since we are capable of
identifying tangent-vector-valued forms with derivations of a certain kind
over the algebra of differential form, the Jacobi-like identity of
tangent-vector-valued forms is essentially no less difficult, though somewhat
cumbersome, which Fr\"{o}licher and Nijenhuis did in the 1950's.

Within our general framework of axiomatic differential geometry, such luxury
is no longer permitted, so that the significance of the general Jacobi
identity could not be exaggerated. Generally speaking, tangent-vector-valued
forms are defined to be mappings subject to three conditions (the details will
be seen in Section \ref{s4}), namely, the first condition being what might be
called the Dirac condition after Dirac distributions, the second condition
being multi-linearity, and the third condition being anti-symmetry. Our
general approach enables us to discern three levels in which the Jacobi-like
identity holds, namely, tangent-vector-valued forms without multi-linearity or
anti-symmetry, those without multi-linearity, and those without anti-symmetry.
The case of tangent-vector-valued forms without multi-linearity or
anti-symmetry is most fundamental. By taking Jacobi-like identities of
tangent-vector-valued forms of the above three kinds at once, we get the very
Jacobi-like identity that Fr\"{o}licher and Nijenhuis provided more than half
a century ago.

This paper is organized as follows. After some preliminaries in Section
\ref{s2}, we review the general Jacobi identity in Section \ref{s3}. Section
\ref{s4} is devoted to two distinct viewpoints towards tangent-vector-valued
forms, just as we gave two viewpoints towards vector fields in . Section
\ref{s5} is concerned with Jacobi-like identities of tangent-vector-valued
forms. We are working within the axiomatics of differential geometry in
\cite{nishi-j}, namely, a DG-category
\[
\left(  \mathcal{K},\mathbb{R},\mathbf{T},\alpha\right)
\]

\section{\label{s2}Preliminaries}

\subsection{Weil Algebras and Infinitesimal Objects}

The notion of a \textit{Weil algebra} was introduced by Weil himself in
\cite{wei}. We denote by $\mathbf{W}$ the category of Weil algebras. Roughly
speaking, each Weil algebra corresponds to an infinitesimal object in the
shade. By way of example, the Weil algebra $\mathbb{R}[X]/(X^{2})$ (=the
quotient ring of the polynomial ring $\mathbb{R}[X]$\ of an indeterminate
$X$\ over $\mathbb{R}$ modulo the ideal $(X^{2})$\ generated by $X^{2}$)
corresponds to the infinitesimal object of first-order nilpotent
infinitesimals, while the Weil algebra $\mathbb{R}[X]/(X^{3})$ corresponds to
the infinitesimal object of second-order nilpotent infinitesimals. Although an
infinitesimal object is undoubtedly imaginary in the real world, as has
harassed both mathematicians and philosophers of the 17th and the 18th
centuries (because mathematicians at that time preferred to talk infinitesimal
objects as if they were real entities), each Weil algebra yields its
corresponding \textit{Weil functor} on the category of smooth manifolds of
some kind to itself, which is no doubt a real entity. By way of example, the
Weil algebra $\mathbb{R}[X]/(X^{2})$ yields the tangent bundle functor as its
corresponding Weil functor. Intuitively speaking, the Weil functor
corresponding to a Weil algebra stands for the exponentiation by the
infinitesimal object corresponding to the Weil algebra at issue. For Weil
functors on the category of finite-dimensional smooth manifolds, the reader is
referred to \S 35 of \cite{kolar}, while the reader can find a readable
treatment of Weil functors on the category of smooth manifolds modelled on
convenient vector spaces in \S 31 of \cite{kri}.

\textit{Synthetic differential geometry }(usually abbreviated to SDG), which
is a kind of differential geometry with a cornucopia of nilpotent
infinitesimals, was forced to invent its models, in which nilpotent
infinitesimals were visible. For a standard textbook on SDG, the reader is
referred to \cite{lav}, while he or she is referred to \cite{kock} for the
model theory of SDG constructed vigorously by Dubuc \cite{dub} and others.
Although we do not get involved in SDG herein, we will exploit locutions in
terms of infinitesimal objects so as to make the paper highly readable. Thus
we prefer to write $\mathcal{W}_{D}$\ and $\mathcal{W}_{D_{2}}$\ in place of
$\mathbb{R}[X]/(X^{2})$ and $\mathbb{R}[X]/(X^{3})$ respectively, where $D$
stands for the infinitesimal object of first-order nilpotent infinitesimals,
and $D_{2}$\ stands for the infinitesimal object of second-order nilpotent
infinitesimals. To Newton and Leibniz, $D$ stood for
\[
\{d\in\mathbb{R}\mid d^{2}=0\}
\]
while $D_{2}$\ stood for
\[
\{d\in\mathbb{R}\mid d^{3}=0\}
\]
We will write $\mathcal{W}_{d\in D_{2}\mapsto d^{2}\in D}$ for the homomorphim
of Weil algebras $\mathbb{R}[X]/(X^{2})\rightarrow\mathbb{R}[X]/(X^{3})$
induced by the homomorphism $X\rightarrow X^{2}$ of the polynomial ring
\ $\mathbb{R}[X]$ to itself. Such locutions are justifiable, because the
category $\mathbf{W}$ of Weil algebras in the real world and the category of
infinitesimal objects in the shade are dual to each other in a sense. Thus we
have a contravariant functor $\mathcal{W}$\ from the category of infinitesimal
objects in the shade to the category of Weil algebras in the real world. Its
inverse contravariant functor from the category of Weil algebras in the real
world to the category of Weil algebras in the real world is denoted by
$\mathcal{D}$. By way of example, $\mathcal{D}_{\mathbb{R}[X]/(X^{2})}$ and
$\mathcal{D}_{\mathbb{R}[X]/(X^{3})}$\ stand for $D$ and $D_{2}$%
\ respectively. To familiarize himself or herself with such locutions, the
reader is strongly encouraged to read the first two chapters of \cite{lav},
even if he or she is not interested in SDG at all.

We need to fix notation and terminology for simplicial objects, which form an
important subclass of infinitesimal objects. \textit{Simplicial objects} are
infinitesimal objects of the form
\begin{align*}
&  D^{n}\{\mathfrak{p}\}\\
&  =\{(d_{1},...,d_{n})\in D^{n}\mid d_{i_{1}}...d_{i_{k}}=0\text{ }%
(\forall(i_{1},...,i_{k})\in\mathfrak{p)\}}%
\end{align*}
where $\mathfrak{p}$\ is a finite set of finite sequences $(i_{1},...,i_{k}%
)$\ of natural numbers between $1$ and $n$, including the endpoints, with
$i_{1}<...<i_{k}$. If $\mathfrak{p}$\ is empty, $D^{n}\{\mathfrak{p}\}$ is
$D^{n}$ itself. If $\mathfrak{p}$ consists of all the binary sequences, then
$D^{n}\{\mathfrak{p}\}$ represents $D(n)$ in the standard terminology of SDG.
Given two simplicial objects $D^{m}\{\mathfrak{p}\}$\ and $D^{n}%
\{\mathfrak{q}\}$, we define a simplicial object $D^{m}\{\mathfrak{p}\}\oplus
D^{n}\{\mathfrak{q}\}$ to be
\[
D^{m+n}\{\mathfrak{p}\oplus\mathfrak{q}\}
\]
where
\begin{align*}
&  \mathfrak{p}\oplus\mathfrak{q}\\
&  =\mathfrak{p}\cup\{(j_{1}+m,...,j_{k}+m)\mid(j_{1},...,j_{k})\in
\mathfrak{q}\}\\
\cup\{(i,j+m) &  \mid1\leq i\leq m,\ 1\leq j\leq n\}
\end{align*}
Since the operation $\oplus$\ is associative, we can combine any finite number
of simplicial objects by $\oplus$ without bothering about how to insert
parentheses. Given morphisms of simplicial objects $\Phi_{i}:D^{m_{i}%
}\{\mathfrak{p}_{i}\}\rightarrow D^{m}\{\mathfrak{p}\}\ (1\leq i\leq n)$,
there exists a unique morphism of simplicial objects $\Phi:D^{m_{1}%
}\{\mathfrak{p}_{1}\}\oplus...\oplus D^{m_{n}}\{\mathfrak{p}_{n}\}\rightarrow
D^{m}\{\mathfrak{p}\}$ whose restriction to $D^{m_{i}}\{\mathfrak{p}_{i}\}$
coincides with $\Phi_{i}$ for each $i$. We denote this $\Phi$\ by $\Phi
_{1}\oplus...\oplus\Phi_{n}$. We write $D(n)$ for $\{(d,...,d)\in D^{n}\mid
d_{i}d_{j}=0$ for any $i\neq j\}$.

\subsection{Some Conventions}

\begin{notation}
We use the following notations:

\begin{enumerate}
\item We write
\[
\left[  A\rightarrow B\right]
\]
for the exponential $B^{A}$.

\item We denote a canonical injection $A\rightarrow B$ by $i_{A}^{B}$.

\item We denote a canonical projection $A\rightarrow B$ by $\pi_{B}^{A}$.

\item An object $M$\ is always assumed to be microlinear.

\item The evaluation morphism $\left[  A\rightarrow B\right]  \times
A\rightarrow B$ is denoted by
\[
\mathrm{ev}_{\left[  A\rightarrow B\right]  \times A}%
\]

\item We denote by $\mathbb{S}_{p}$\ the set of permutations of $\left\{
1,...,p\right\}  $. Given $\sigma\in\mathbb{S}_{p}$, its signature is denoted
by $\varepsilon_{\sigma}$.
\end{enumerate}
\end{notation}

\section{\label{s3}The General Jacobi Identity}

\begin{proposition}
\label{t3.1}The diagram
\[%
\begin{array}
[c]{ccccc}
&  & \alpha_{\mathcal{W}_{\varphi}}\left(  M\right)  &  & \\
& \mathbf{T}^{\mathcal{W}_{D^{3}\{(1,3),(2,3)\}}}M & \rightarrow &
\underset{2}{\mathbf{T}^{\mathcal{W}_{D^{2}}}M} & \\
\alpha_{\mathcal{W}_{\psi}}\left(  M\right)  & \downarrow &  & \downarrow &
\alpha_{\mathcal{W}_{i_{D(2)}^{D^{2}}}}\left(  M\right) \\
& \underset{1}{\mathbf{T}^{\mathcal{W}_{D^{2}}}M} & \rightarrow &
\mathbf{T}^{\mathcal{W}_{D(2)}}M & \\
&  & \alpha_{\mathcal{W}_{i_{D(2)}^{D^{2}}}}\left(  M\right)  &  &
\end{array}
\]
is a pullback diagram, where the assumptive mapping
\[
\varphi:D^{2}\rightarrow D^{3}\{(1,3),(2,3)\}
\]
is
\[
(d_{1},d_{2})\in D^{2}\mapsto(d_{1},d_{2},0)\in D^{3}\{(1,3),(2,3)\}
\]
while the assumptive mapping
\[
\psi:D^{2}\rightarrow D^{3}\{(1,3),(2,3)\}
\]
is
\[
(d_{1},d_{2})\in D^{2}\mapsto(d_{1},d_{2},d_{1}d_{2})\in D^{3}\{(1,3),(2,3)\}
\]

\end{proposition}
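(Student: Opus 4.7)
The plan is to reduce the proposition to a Weil-algebra computation and then invoke the microlinearity of $M$. Since the functor $\mathbf{T}^{\mathcal{W}_{(-)}}M$ sends colimits in the category of infinitesimal objects (equivalently, limits in the category of Weil algebras) to limits in $\mathcal{K}$, it will suffice to exhibit an isomorphism
\[
\mathcal{W}_{D^{3}\{(1,3),(2,3)\}}\cong\mathcal{W}_{D^{2}}\times_{\mathcal{W}_{D(2)}}\mathcal{W}_{D^{2}}
\]
whose structure maps are those induced by $\varphi$, $\psi$, and $i_{D(2)}^{D^{2}}$.

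Concretely, I would first verify commutativity of the square by noting that $\varphi\circ i_{D(2)}^{D^{2}}=\psi\circ i_{D(2)}^{D^{2}}$ as morphisms $D(2)\to D^{3}\{(1,3),(2,3)\}$: both send $(d_{1},d_{2})$ to $(d_{1},d_{2},0)$, because the third coordinate $d_{1}d_{2}$ of $\psi$ vanishes on $D(2)$. Next I would write down the three Weil algebras in monomial form, obtaining respective bases $\{1,X_{1},X_{2},X_{3},X_{1}X_{2}\}$, $\{1,X_{1},X_{2},X_{1}X_{2}\}$, and $\{1,X_{1},X_{2}\}$ of dimensions $5$, $4$, $3$, and check on these bases that an arbitrary element $a+bX_{1}+cX_{2}+dX_{3}+eX_{1}X_{2}$ of the upper-left algebra is determined uniquely by its images under $\mathcal{W}_{\varphi}$ (which kills the $X_{3}$-component) and $\mathcal{W}_{\psi}$ (which sends $X_{3}$ to $X_{1}X_{2}$), subject to their agreement modulo $X_{1}X_{2}$ in $\mathcal{W}_{D(2)}$. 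This realises the upper-left algebra as the claimed pullback of Weil algebras, and microlinearity then transports the statement to the desired pullback in $\mathcal{K}$.

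The main obstacle is less in the routine algebra than in pinning down the role of $\psi$: the \emph{twist} $(d_{1},d_{2})\mapsto(d_{1},d_{2},d_{1}d_{2})$ is precisely what keeps the $X_{3}$-direction alive in the pullback, for if $\psi$ were replaced by $\varphi$ one would recover only a copy of $\mathcal{W}_{D(2)}$-worth of information on top of a single $\mathcal{W}_{D^{2}}$. Keeping track of this asymmetry, together with the contravariance of $\mathcal{W}$ when passing between infinitesimal objects and Weil algebras in the shade, is the delicate point; once this is in place, microlinearity delivers the result without further effort.
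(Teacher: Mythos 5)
Your argument is correct and is precisely the intended one: the paper states Proposition \ref{t3.1} without proof, reviewing it from \cite{nishi-k}, where the verification is exactly this reduction to the pullback of Weil algebras followed by an appeal to the microlinearity of $M$ (i.e.\ the preservation of limits of Weil algebras by $\mathbf{T}^{\left(-\right)}M$). Your basis computation checks out --- the three algebras have dimensions $5=4+4-3$, $\mathcal{W}_{\varphi}$ kills $X_{3}$ while $\mathcal{W}_{\psi}$ sends $X_{3}$ to $X_{1}X_{2}$, so the induced map into the fibred product is bijective --- and your observation that the twist in $\psi$ is what makes the $X_{3}$-direction recoverable is exactly the point of the construction.
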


\begin{remark}
The numbers $1,2$\ under $\mathbf{T}^{\mathcal{W}_{D^{2}}}M$\ are given simply
so as for the reader to easily relate each occurrence of $\mathbf{T}%
^{\mathcal{W}_{D^{2}}}M$\ in the above Proposition to its corresponding
occurrence in the following Corollary.
\end{remark}

\begin{corollary}
\label{t3.1'}We have
\[
\mathbf{T}^{\mathcal{W}_{D^{3}\{(1,3),(2,3)\}}}M=\underset{1}{\mathbf{T}%
^{\mathcal{W}_{D^{2}}}M}\times_{\mathbf{T}^{\mathcal{W}_{D(2)}}M}\underset
{2}{\mathbf{T}^{\mathcal{W}_{D^{2}}}M}%
\]

\end{corollary}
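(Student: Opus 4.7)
The plan is to read the corollary off directly from Proposition \ref{t3.1}. By definition, a commutative square with vertex $P$, legs to objects $A$ and $B$, and common target $C$ being a pullback diagram is exactly a presentation of $P$ as the fibre product $A\times_{C}B$. Proposition \ref{t3.1} establishes precisely such a pullback, with $P=\mathbf{T}^{\mathcal{W}_{D^{3}\{(1,3),(2,3)\}}}M$, the two legs going to the two labelled copies of $\mathbf{T}^{\mathcal{W}_{D^{2}}}M$ via $\alpha_{\mathcal{W}_{\varphi}}(M)$ and $\alpha_{\mathcal{W}_{\psi}}(M)$, and the common target being $C=\mathbf{T}^{\mathcal{W}_{D(2)}}M$ reached through $\alpha_{\mathcal{W}_{i_{D(2)}^{D^{2}}}}(M)$ on each side. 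Hence the claimed identification is verbatim the translation of the pullback assertion into fibre-product notation.

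If pressed to say anything more, I would point out that the commutativity of the square, which must hold before one even speaks of a pullback, reduces to the simplicial-object equality $i_{D(2)}^{D^{2}}\circ\varphi = i_{D(2)}^{D^{2}}\circ\psi$, which in turn holds because the triples $(d_{1},d_{2},0)$ and $(d_{1},d_{2},d_{1}d_{2})$ agree on $D(2)$, where $d_{1}d_{2}=0$ by definition. Everything else — universality and uniqueness of the mediating arrow — has already been absorbed into the assertion of Proposition \ref{t3.1}.

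Thus there is no genuine obstacle: the corollary carries no new mathematical content beyond the preceding proposition, and the numerical labels $1,2$ under the two copies of $\mathbf{T}^{\mathcal{W}_{D^{2}}}M$, explained in the intervening remark, exist solely to fix the order of the two factors in the fibre product on the right-hand side.
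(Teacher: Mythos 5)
Your proposal is correct and matches the paper's (implicit) treatment: the corollary is stated without proof precisely because it is nothing but the fibre-product restatement of the pullback assertion in Proposition \ref{t3.1}. Your added observation that commutativity of the square comes down to $(d_{1},d_{2},0)$ and $(d_{1},d_{2},d_{1}d_{2})$ agreeing over $D(2)$, where $d_{1}d_{2}=0$, is accurate and harmless extra detail.
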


\begin{notation}
We will write
\[
\zeta^{\overset{\cdot}{-}}\left(  M\right)  :\mathbf{T}^{\mathcal{W}_{D^{2}}%
}M\times_{\mathbf{T}^{\mathcal{W}_{D(2)}}M}\mathbf{T}^{\mathcal{W}_{D^{2}}%
}M\rightarrow\mathbf{T}^{\mathcal{W}_{D}}M
\]
for the morphism
\begin{align*}
& \mathbf{T}^{\mathcal{W}_{D^{2}}}M\times_{\mathbf{T}^{\mathcal{W}_{D(2)}}%
M}\mathbf{T}^{\mathcal{W}_{D^{2}}}M\\
& =\mathbf{T}^{\mathcal{W}_{D^{3}\{(1,3),(2,3)\}}}M\\
& \underrightarrow{\alpha_{\mathcal{W}_{d\in D\mapsto(0,0,d)\in D^{3}%
\{(1,3),(2,3)\}}}}\\
& \mathbf{T}^{\mathcal{W}_{D}}M
\end{align*}

\end{notation}

\begin{proposition}
\label{t3.2}The morphism
\begin{align*}
& \underset{1}{\mathbf{T}^{\mathcal{W}_{D^{2}}}M}\times_{\mathbf{T}%
^{\mathcal{W}_{D(2)}}M}\underset{2}{\mathbf{T}^{\mathcal{W}_{D^{2}}}M}\\
& \underrightarrow{\zeta^{\overset{\cdot}{-}}\left(  M\right)  }\\
& \mathbf{T}^{\mathcal{W}_{D}}M
\end{align*}
and the morphism
\begin{align*}
& \underset{1}{\mathbf{T}^{\mathcal{W}_{D^{2}}}M}\times_{\mathbf{T}%
^{\mathcal{W}_{D(2)}}M}\underset{2}{\mathbf{T}^{\mathcal{W}_{D^{2}}}M}\\
& =\underset{2}{\mathbf{T}^{\mathcal{W}_{D^{2}}}M}\times_{\mathbf{T}%
^{\mathcal{W}_{D(2)}}M}\underset{1}{\mathbf{T}^{\mathcal{W}_{D^{2}}}M}\\
& \underrightarrow{\zeta^{\overset{\cdot}{-}}\left(  M\right)  }\\
& \mathbf{T}^{\mathcal{W}_{D}}M
\end{align*}
sum up only to vanish, where the numbers $1,2$\ under $\mathbf{T}%
^{\mathcal{W}_{D^{2}}}M$\ are given simply so as for the reader to easily
relate each occurrence of $\mathbf{T}^{\mathcal{W}_{D^{2}}}M$\ to another.
\end{proposition}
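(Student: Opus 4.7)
The plan is to realize the ``swap of factors'' as a morphism at the level of the infinitesimal object $D^{3}\{(1,3),(2,3)\}$, and then to reduce the second summand to $-1$ times the first via the negation involution of $D$.

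First I would use the pullback identification of Corollary~\ref{t3.1'} to translate the swap $\underset{1}{\mathbf{T}^{\mathcal{W}_{D^{2}}}M}\times_{\mathbf{T}^{\mathcal{W}_{D(2)}}M}\underset{2}{\mathbf{T}^{\mathcal{W}_{D^{2}}}M}\to\underset{2}{\mathbf{T}^{\mathcal{W}_{D^{2}}}M}\times_{\mathbf{T}^{\mathcal{W}_{D(2)}}M}\underset{1}{\mathbf{T}^{\mathcal{W}_{D^{2}}}M}$ into an endomorphism of $\mathbf{T}^{\mathcal{W}_{D^{3}\{(1,3),(2,3)\}}}M$. By the universal property of the pullback, this swap is $\alpha_{\mathcal{W}_{\sigma}}(M)$ for the unique involution $\sigma:D^{3}\{(1,3),(2,3)\}\rightarrow D^{3}\{(1,3),(2,3)\}$ satisfying $\sigma\circ\varphi=\psi$ and $\sigma\circ\psi=\varphi$. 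Solving these constraints gives the explicit formula
\[
\sigma(d_{1},d_{2},d_{3})=(d_{1},d_{2},d_{1}d_{2}-d_{3}).
\]
I would then check that $\sigma$ really is a morphism of simplicial objects by confirming that $d_{1}(d_{1}d_{2}-d_{3})=0$ and $d_{2}(d_{1}d_{2}-d_{3})=0$ on $D^{3}\{(1,3),(2,3)\}$, which is immediate from $d_{1}^{2}=d_{2}^{2}=d_{1}d_{3}=d_{2}d_{3}=0$, and that $\sigma\circ\sigma=\mathrm{id}$.

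Next I would compose $\sigma$ with $\chi:d\in D\mapsto(0,0,d)\in D^{3}\{(1,3),(2,3)\}$, whose associated $\alpha_{\mathcal{W}_{\chi}}(M)$ is by definition $\zeta^{\overset{\cdot}{-}}(M)$. A direct calculation gives $\sigma\circ\chi:d\mapsto(0,0,-d)$, which factors as $\chi\circ\nu$ where $\nu:d\in D\mapsto-d\in D$. Contravariance of $\mathcal{W}$ and covariance of $\alpha_{(-)}(M)$ in the Weil algebra then produce
\[
\zeta^{\overset{\cdot}{-}}(M)\circ\alpha_{\mathcal{W}_{\sigma}}(M)=\alpha_{\mathcal{W}_{\sigma\circ\chi}}(M)=\alpha_{\mathcal{W}_{\nu}}(M)\circ\zeta^{\overset{\cdot}{-}}(M).
\]

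The last step is to invoke the standard fact that $\alpha_{\mathcal{W}_{\nu}}(M):\mathbf{T}^{\mathcal{W}_{D}}M\rightarrow\mathbf{T}^{\mathcal{W}_{D}}M$, being induced by the scalar multiplication by $-1$ on $D$, is nothing but the additive inverse in the $\mathbb{R}$-module structure of the tangent bundle. Therefore the swapped morphism is $-\zeta^{\overset{\cdot}{-}}(M)$, and its sum with $\zeta^{\overset{\cdot}{-}}(M)$ vanishes.

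The main obstacle will be the first step: correctly identifying, from the pullback diagram of Proposition~\ref{t3.1}, which involution $\sigma$ on the infinitesimal side realizes the abstract swap of pullback factors, and being careful with the covariance/contravariance conventions so that the minus sign ultimately lands in the right place. Once $\sigma(d_{1},d_{2},d_{3})=(d_{1},d_{2},d_{1}d_{2}-d_{3})$ is in hand, the remainder is essentially a computation with the functor $\alpha$ and the standard vector bundle structure on $\mathbf{T}^{\mathcal{W}_{D}}M$.
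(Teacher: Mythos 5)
Your proposal is correct and follows what is essentially the standard argument for the antisymmetry of the strong difference (the paper states Proposition \ref{t3.2} without proof, deferring to its prequel and to Lavendhomme \S 3.4, where exactly this route is taken): the swap of pullback factors is realized by the involution $\sigma(d_{1},d_{2},d_{3})=(d_{1},d_{2},d_{1}d_{2}-d_{3})$ of $D^{3}\{(1,3),(2,3)\}$, and restricting along $d\mapsto(0,0,d)$ turns it into negation on $\mathbf{T}^{\mathcal{W}_{D}}M$. Your verifications that $\sigma$ respects the relations $d_{1}d_{3}=d_{2}d_{3}=0$, interchanges $\varphi$ and $\psi$, and is the unique such morphism are all accurate, so nothing is missing.
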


\begin{proposition}
\label{t3.3}The diagram
\[%
\begin{array}
[c]{ccccc}
&  & \alpha_{\mathcal{W}_{\varphi_{1}^{3}}}\left(  M\right)  &  & \\
& \mathbf{T}^{\mathcal{W}_{D^{4}\{(2,4),(3,4)\}}}M & \rightarrow &
\underset{2}{\mathbf{T}^{\mathcal{W}_{D^{3}}}M} & \\
\alpha_{\mathcal{W}_{\psi_{1}^{3}}}\left(  M\right)  & \downarrow &  &
\downarrow & \alpha_{\mathcal{W}_{i_{D^{3}\{(2,3)\}}^{D^{3}}}}\left(  M\right)
\\
& \underset{1}{\mathbf{T}^{\mathcal{W}_{D^{3}}}M} & \rightarrow &
\mathbf{T}^{\mathcal{W}_{D^{3}\{(2,3)\}}}M & \\
&  & \alpha_{\mathcal{W}_{i_{D^{3}\{(2,3)\}}^{D^{3}}}}\left(  M\right)  &  &
\end{array}
\]
is a pullback diagram, where the assumptive mapping
\[
\varphi_{1}^{3}:D^{3}\rightarrow D^{4}\{(2,4),(3,4)\}
\]
is
\[
(d_{1},d_{2},d_{3})\in D^{3}\mapsto(d_{1},d_{2},d_{3},0)\in D^{4}%
\{(2,4),(3,4)\}\text{,}%
\]
while the assumptive mapping
\[
\psi_{1}^{3}:D^{3}\rightarrow D^{4}\{(2,4),(3,4)\}
\]
is
\[
(d_{1},d_{2},d_{3})\in D^{3}\mapsto(d_{1},d_{2},d_{3},d_{2}d_{3})\in
D^{4}\{(2,4),(3,4)\}
\]

\end{proposition}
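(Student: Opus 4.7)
My plan is to deduce Proposition \ref{t3.3} from Proposition \ref{t3.1} by adjoining an inert ``spectator'' $d_{1}$-coordinate throughout. At the level of Weil algebras I would first record the three canonical isomorphisms
\[
\mathcal{W}_{D^{4}\{(2,4),(3,4)\}}\cong\mathcal{W}_{D}\otimes\mathcal{W}_{D^{3}\{(1,3),(2,3)\}},
\]
\[
\mathcal{W}_{D^{3}}\cong\mathcal{W}_{D}\otimes\mathcal{W}_{D^{2}},\qquad\mathcal{W}_{D^{3}\{(2,3)\}}\cong\mathcal{W}_{D}\otimes\mathcal{W}_{D(2)},
\]
each obtained by splitting off the generator $X_{1}$ (whose index appears in none of the relations on the left) as an independent tensor factor; the triple $(X_{2},X_{3},X_{4})$ is then relabeled $(Y_{1},Y_{2},Y_{3})$, so the relations $(2,4),(3,4)$ become $(1,3),(2,3)$ and match $\mathcal{W}_{D^{3}\{(1,3),(2,3)\}}$.

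Under these identifications, the three infinitesimal maps $\varphi_{1}^{3}$, $\psi_{1}^{3}$ and $i_{D^{3}\{(2,3)\}}^{D^{3}}$ appearing in Proposition \ref{t3.3} all leave $d_{1}$ untouched and act on the remaining coordinates exactly as $\varphi$, $\psi$ and $i_{D(2)}^{D^{2}}$ do in Proposition \ref{t3.1}. On Weil algebras they therefore become $\mathrm{id}_{\mathcal{W}_{D}}\otimes\mathcal{W}_{\varphi}$, $\mathrm{id}_{\mathcal{W}_{D}}\otimes\mathcal{W}_{\psi}$ and $\mathrm{id}_{\mathcal{W}_{D}}\otimes\mathcal{W}_{i_{D(2)}^{D^{2}}}$ respectively. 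Combining this with the exponential-law identity $\mathbf{T}^{\mathcal{W}_{A}\otimes\mathcal{W}_{B}}M\cong\mathbf{T}^{\mathcal{W}_{A}}(\mathbf{T}^{\mathcal{W}_{B}}M)$ built into the DG-category, the square of Proposition \ref{t3.3} is isomorphic to the image under the functor $\mathbf{T}^{\mathcal{W}_{D}}(-)$ of the square of Proposition \ref{t3.1} (taken at base $M$).

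To conclude, $\mathbf{T}^{\mathcal{W}_{D}}$ is the exponentiation $\left[D\to(-)\right]$ in the shade-level locutions of Section \ref{s2}, hence a right adjoint to $(-)\times D$, hence preserves all limits, in particular pullbacks. Applying it to the pullback square of Proposition \ref{t3.1} therefore produces a pullback square, which is exactly the diagram of Proposition \ref{t3.3}.

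The only real obstacle is careful bookkeeping: fixing the three tensor-product identifications so that the index relabeling matches the Weil algebras named in the statement, and checking that under these identifications the three specified morphisms do factor as $\mathrm{id}_{\mathcal{W}_{D}}\otimes(\text{original map})$. Once that is in place, preservation of pullbacks by $\mathbf{T}^{\mathcal{W}_{D}}$ closes the argument without further computation.
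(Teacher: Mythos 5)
The paper itself states Proposition \ref{t3.3} without proof (Section \ref{s3} merely reviews it from \cite{nishi-k}), so your argument has to be judged on its own terms. Your reduction is the right one and the bookkeeping checks out: the index $1$ occurs in none of the relations, so indeed $\mathcal{W}_{D^{4}\{(2,4),(3,4)\}}\cong\mathcal{W}_{D}\otimes\mathcal{W}_{D^{3}\{(1,3),(2,3)\}}$, $\mathcal{W}_{D^{3}}\cong\mathcal{W}_{D}\otimes\mathcal{W}_{D^{2}}$, $\mathcal{W}_{D^{3}\{(2,3)\}}\cong\mathcal{W}_{D}\otimes\mathcal{W}_{D(2)}$, and under the relabeling $(d_{2},d_{3},d_{4})\mapsto(e_{1},e_{2},e_{3})$ the morphisms $\varphi_{1}^{3}$, $\psi_{1}^{3}$, $i_{D^{3}\{(2,3)\}}^{D^{3}}$ become $\mathrm{id}\otimes\varphi$, $\mathrm{id}\otimes\psi$, $\mathrm{id}\otimes i_{D(2)}^{D^{2}}$. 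So the square of Proposition \ref{t3.3} really is the square of Proposition \ref{t3.1} with an inert tensor factor $\mathcal{W}_{D}$ adjoined.

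The step that does not survive in this framework is your last one. In the axiomatics of a DG-category, $D$ is an infinitesimal object ``in the shade'': it is not an object of $\mathcal{K}$, $\mathbf{T}^{\mathcal{W}_{D}}$ is not defined as an exponential $\left[D\rightarrow(-)\right]$, and no axiom exhibits it as a right adjoint to a functor $(-)\times D$. You therefore cannot conclude by adjunction that $\mathbf{T}^{\mathcal{W}_{D}}$ preserves pullbacks; that is precisely the kind of ``luxury'' the introduction warns is no longer permitted. Two repairs are available. Either appeal directly to microlinearity: the cone of Weil algebras underlying Proposition \ref{t3.1} is a finite limit cone of $\mathbb{R}$-modules, tensoring with $\mathcal{W}_{D}$ preserves it because $\mathcal{W}_{D}$ is free of finite rank over $\mathbb{R}$, and microlinearity of $M$ with respect to the tensored cone is exactly the assertion of Proposition \ref{t3.3}. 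Or exploit the symmetry of $\otimes$ to apply the spectator factor first, writing $\mathbf{T}^{\mathcal{W}_{D}\otimes\mathcal{W}_{B}}M=\mathbf{T}^{\mathcal{W}_{B}}\left(\mathbf{T}^{\mathcal{W}_{D}}M\right)$, so that the square of Proposition \ref{t3.3} at $M$ is literally the square of Proposition \ref{t3.1} at the object $\mathbf{T}^{\mathcal{W}_{D}}M$, which is again microlinear; then no limit-preservation claim about $\mathbf{T}^{\mathcal{W}_{D}}$ is needed at all. With either repair your argument closes.
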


\begin{remark}
The numbers $1,2$\ under $\mathbf{T}^{\mathcal{W}_{D^{3}}}M$\ are given simply
so as for the reader to easily relate each occurrence of $\mathbf{T}%
^{\mathcal{W}_{D^{3}}}M$\ in the above Proposition to its corresponding
occurrence in the following Corollary.
\end{remark}

\begin{corollary}
\label{t3.3'}We have
\begin{align*}
& \underset{1}{\mathbf{T}^{\mathcal{W}_{D^{3}}}M}\times_{\mathbf{T}%
^{\mathcal{W}_{D^{3}\{(2,3)\}}}M}\underset{2}{\mathbf{T}^{\mathcal{W}_{D^{3}}%
}M}\\
& =\mathbf{T}^{\mathcal{W}_{D^{4}\{(2,4),(3,4)\}}}M
\end{align*}

\end{corollary}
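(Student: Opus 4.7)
The approach I plan to take is to read off the corollary directly from Proposition~\ref{t3.3}. By the very definition of a pullback, once one knows that the square displayed there is cartesian, its upper-left vertex $\mathbf{T}^{\mathcal{W}_{D^{4}\{(2,4),(3,4)\}}}M$ coincides with the fiber product of the two arrows from the copies of $\mathbf{T}^{\mathcal{W}_{D^{3}}}M$ into $\mathbf{T}^{\mathcal{W}_{D^{3}\{(2,3)\}}}M$; this is precisely the identification claimed. So the main step is not a calculation at all but a change of notation.

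The only bookkeeping is to make sure that the two copies of $\mathbf{T}^{\mathcal{W}_{D^{3}}}M$ in the fiber product are correctly tagged. I would match the label $1$ in the corollary with the copy receiving the morphism $\alpha_{\mathcal{W}_{\psi_{1}^{3}}}(M)$, and the label $2$ with the one receiving $\alpha_{\mathcal{W}_{\varphi_{1}^{3}}}(M)$, so that the base $\mathbf{T}^{\mathcal{W}_{D^{3}\{(2,3)\}}}M$ over which the fiber product is taken is reached by $\alpha_{\mathcal{W}_{i_{D^{3}\{(2,3)\}}^{D^{3}}}}(M)$ from either side. This compatibility is already built into the commutative square of Proposition~\ref{t3.3} and requires no further argument; the numerical labels $1,2$ that decorate $\mathbf{T}^{\mathcal{W}_{D^{3}}}M$ on both sides of the claimed equality are there precisely to preserve this matching across the change of notation.

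The hard part, if one can call it that, is purely organizational: keeping the two arrows into $\mathbf{T}^{\mathcal{W}_{D^{3}\{(2,3)\}}}M$ distinct in the reader's mind despite their being given by the same formula $\alpha_{\mathcal{W}_{i_{D^{3}\{(2,3)\}}^{D^{3}}}}(M)$. There is no actual obstacle, since the entire geometric content has already been discharged by Proposition~\ref{t3.3}. The corollary stands to Proposition~\ref{t3.3} in exactly the same relation as Corollary~\ref{t3.1'} stands to Proposition~\ref{t3.1}, being merely the restatement of a pullback square in the notation $\times_{(-)}$ for fiber products. Accordingly, the proof in practice amounts to no more than invoking the proposition and citing the definition of the pullback.
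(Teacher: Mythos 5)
Your proposal is correct and coincides with the paper's (implicit) treatment: the corollary is stated without proof precisely because it is nothing but the pullback square of Proposition~\ref{t3.3} rewritten in fiber-product notation, with the labels $1,2$ matched exactly as you describe ($1$ to the $\psi_{1}^{3}$-leg, $2$ to the $\varphi_{1}^{3}$-leg).
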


\begin{notation}
We will write
\[
\zeta^{\underset{1}{\overset{\cdot}{-}}}\left(  M\right)  :\mathbf{T}%
^{\mathcal{W}_{D^{3}}}M\times_{\mathbf{T}^{\mathcal{W}_{D^{3}\{(2,3)\}}}%
M}\mathbf{T}^{\mathcal{W}_{D^{3}}}M\rightarrow\mathbf{T}^{\mathcal{W}_{D^{2}}%
}M
\]
for the morphism
\begin{align*}
& \mathbf{T}^{\mathcal{W}_{D^{3}}}M\times_{\mathbf{T}^{\mathcal{W}%
_{D^{3}\{(2,3)\}}}M}\mathbf{T}^{\mathcal{W}_{D^{3}}}M\\
& =\mathbf{T}^{\mathcal{W}_{D^{4}\{(2,4),(3,4)\}}}M\\
& \underrightarrow{\alpha_{\mathcal{W}_{(d_{1},d_{2})\in D^{2}\mapsto
(d_{1},0,0,d_{2})\in D^{4}\{(2,4),(3,4)\}}}\left(  M\right)  }\\
& \mathbf{T}^{\mathcal{W}_{D^{2}}}M
\end{align*}

\end{notation}

\begin{proposition}
\label{t3.4}The diagram
\[%
\begin{array}
[c]{ccccc}
&  & \alpha_{\mathcal{W}_{\varphi_{2}^{3}}}\left(  M\right)  &  & \\
& \mathbf{T}^{\mathcal{W}_{D^{4}\{(1,4),(3,4)\}}}M & \rightarrow &
\underset{2}{\mathbf{T}^{\mathcal{W}_{D^{3}}}M} & \\
\alpha_{\mathcal{W}_{\psi_{2}^{3}}}\left(  M\right)  & \downarrow &  &
\downarrow & \alpha_{\mathcal{W}_{i_{D^{3}\{(1,3)\}}^{D^{3}}}}\left(  M\right)
\\
& \underset{1}{\mathbf{T}^{\mathcal{W}_{D^{3}}}M} & \rightarrow &
\mathbf{T}^{\mathcal{W}_{D^{3}\{(1,3)\}}}M & \\
&  & \alpha_{\mathcal{W}_{i_{D^{3}\{(1,3)\}}^{D^{3}}}}\left(  M\right)  &  &
\end{array}
\]
is a pullback diagram, where the assumptive mapping
\[
\varphi_{2}^{3}:D^{3}\rightarrow D^{4}\{(1,4),(3,4)\}
\]
is
\[
(d_{1},d_{2},d_{3})\in D^{3}\mapsto(d_{1},d_{2},d_{3},0)\in D^{4}%
\{(1,4),(3,4)\}
\]
while the assumptive mapping
\[
\psi_{2}^{3}:D^{3}\rightarrow D^{4}\{(1,4),(3,4)\}
\]
is
\[
(d_{1},d_{2},d_{3})\in D^{3}\mapsto(d_{1},d_{2},d_{3},d_{1}d_{3})\in
D^{4}\{(1,4),(3,4)\}
\]

\end{proposition}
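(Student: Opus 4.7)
The statement of Proposition~\ref{t3.4} is the exact mirror image of Proposition~\ref{t3.3}, obtained by interchanging the roles of the first and second coordinates of $D^{3}$. My plan is to \emph{derive Proposition~\ref{t3.4} from Proposition~\ref{t3.3}} by transporting the latter along this permutation symmetry, rather than replaying the underlying microlinearity argument from scratch.

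Let $\tau:D^{3}\rightarrow D^{3}$ denote the swap $(d_{1},d_{2},d_{3})\mapsto(d_{2},d_{1},d_{3})$, and let $\widetilde{\tau}:D^{4}\rightarrow D^{4}$ denote the swap $(d_{1},d_{2},d_{3},d_{4})\mapsto(d_{2},d_{1},d_{3},d_{4})$. A direct coordinate check shows that $\tau$ restricts to an isomorphism $D^{3}\{(1,3)\}\cong D^{3}\{(2,3)\}$, and that $\widetilde{\tau}$ restricts to an isomorphism $D^{4}\{(1,4),(3,4)\}\cong D^{4}\{(2,4),(3,4)\}$. Moreover, these swaps intertwine the four structure maps of the square: one verifies $\widetilde{\tau}\circ\varphi_{2}^{3}=\varphi_{1}^{3}\circ\tau$ (both send $(d_{1},d_{2},d_{3})$ to $(d_{2},d_{1},d_{3},0)$); $\widetilde{\tau}\circ\psi_{2}^{3}=\psi_{1}^{3}\circ\tau$, because after swapping the first two input coordinates, the appended entry $d_{1}d_{3}$ in $\psi_{2}^{3}$ becomes exactly the entry $d_{2}d_{3}$ demanded by $\psi_{1}^{3}$; and $\tau$ similarly identifies $i_{D^{3}\{(1,3)\}}^{D^{3}}$ with $i_{D^{3}\{(2,3)\}}^{D^{3}}$.

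Applying the contravariant functor $\mathcal{W}$ turns these isomorphisms of infinitesimal objects into isomorphisms of Weil algebras, and then applying $\mathbf{T}^{(-)}M$ through $\alpha$ produces isomorphisms in $\mathcal{K}$ that identify each vertex and each edge of the candidate square in Proposition~\ref{t3.4} with the corresponding vertex and edge of the square in Proposition~\ref{t3.3}. Since being a pullback is preserved under replacing a square by an isomorphic one, the desired pullback property transfers directly.

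The only real obstacle is bookkeeping: one must check carefully that the nilpotency relations cut out by $\{(1,4),(3,4)\}$ and $\{(1,3)\}$ are carried onto those cut out by $\{(2,4),(3,4)\}$ and $\{(2,3)\}$ by $\widetilde{\tau}$ and $\tau$ respectively, and that the four morphisms $\varphi_{2}^{3},\psi_{2}^{3},i_{D^{3}\{(1,3)\}}^{D^{3}}$ are correctly matched with $\varphi_{1}^{3},\psi_{1}^{3},i_{D^{3}\{(2,3)\}}^{D^{3}}$; each of these amounts to a one-line coordinate computation. Should a self-contained argument be preferred, the same pullback can alternatively be extracted directly from microlinearity of $M$ applied to the evident pushout of Weil algebras, exactly in parallel with the proof of Proposition~\ref{t3.3}.
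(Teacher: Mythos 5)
Your reduction is correct. The swap $\tau:(d_{1},d_{2},d_{3})\mapsto(d_{2},d_{1},d_{3})$ and its four-variable extension $\widetilde{\tau}$ do carry the defining relations $\{(1,4),(3,4)\}$ and $\{(1,3)\}$ onto $\{(2,4),(3,4)\}$ and $\{(2,3)\}$ (the induced ring automorphism permuting $X_{1}$ and $X_{2}$ maps the ideal $(X_{i}^{2},X_{1}X_{4},X_{3}X_{4})$ onto $(X_{i}^{2},X_{2}X_{4},X_{3}X_{4})$, and likewise for the three-variable case), and the intertwining identities $\widetilde{\tau}\circ\varphi_{2}^{3}=\varphi_{1}^{3}\circ\tau$ and $\widetilde{\tau}\circ\psi_{2}^{3}=\psi_{1}^{3}\circ\tau$ check out exactly as you compute them. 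Since $\alpha_{\mathcal{W}_{(-)}}(M)$ is functorial, these isomorphisms of infinitesimal objects induce an isomorphism of commutative squares in $\mathcal{K}$, and the pullback property transfers.

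For comparison: the paper itself offers no proof of Proposition \ref{t3.4} (nor of Propositions \ref{t3.1}, \ref{t3.3}, \ref{t3.5}); Section \ref{s3} merely reviews these statements from the preceding installment \cite{nishi-k}, where they are established directly from microlinearity of $M$ applied to a suitable quasi-colimit (limit) diagram of Weil algebras, in the style of Proposition 2.6 of \cite{nishi-a}. Your symmetry argument is a genuinely different and more economical route once Proposition \ref{t3.3} is in hand: it avoids re-running the microlinearity computation and makes transparent why the three Propositions \ref{t3.3}--\ref{t3.5} are a single statement up to the action of $\mathbb{S}_{3}$ on coordinates. What it does not buy you is independence: it presupposes Proposition \ref{t3.3}, so at least one member of the family still needs the direct microlinearity argument you sketch in your closing sentence. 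As a derivation of \ref{t3.4} from \ref{t3.3}, it is complete and sound.
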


\begin{remark}
The numbers $1,2$\ under $\mathbf{T}^{\mathcal{W}_{D^{3}}}M$\ are given simply
so as for the reader to easily relate each occurrence of $\mathbf{T}%
^{\mathcal{W}_{D^{3}}}M$\ in the above Proposition to its corresponding
occurrence in the following Corollary.
\end{remark}

\begin{corollary}
\label{t3.4'}We have
\begin{align*}
& \underset{1}{\mathbf{T}^{\mathcal{W}_{D^{3}}}M}\times_{\mathbf{T}%
^{\mathcal{W}_{D^{3}\{(1,3)\}}}M}\underset{2}{\mathbf{T}^{\mathcal{W}_{D^{3}}%
}M}\\
& =\mathbf{T}^{\mathcal{W}_{D^{4}\{(1,4),(3,4)\}}}M
\end{align*}

\end{corollary}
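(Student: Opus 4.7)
The plan is to read Corollary \ref{t3.4'} as a direct reformulation of Proposition \ref{t3.4} in the language of fibered products. A square is a pullback precisely when its upper-left vertex, equipped with its two projections to the upper-right and lower-left vertices, is canonically isomorphic to the fibered product of the cospan formed by the lower-right vertex together with its two incoming morphisms. Given Proposition \ref{t3.4}, one therefore obtains
\[
\mathbf{T}^{\mathcal{W}_{D^{4}\{(1,4),(3,4)\}}}M = \underset{1}{\mathbf{T}^{\mathcal{W}_{D^{3}}}M}\times_{\mathbf{T}^{\mathcal{W}_{D^{3}\{(1,3)\}}}M}\underset{2}{\mathbf{T}^{\mathcal{W}_{D^{3}}}M},
\]
with the two projection morphisms being $\alpha_{\mathcal{W}_{\psi_{2}^{3}}}(M)$ and $\alpha_{\mathcal{W}_{\varphi_{2}^{3}}}(M)$, and the numerical subscripts $1$ and $2$ tracking which copy of $\mathbf{T}^{\mathcal{W}_{D^{3}}}M$ occurs on which side of the square.

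Concretely, I would spell out the universal property once: for any object $X$ with morphisms $f_{i}:X\rightarrow\mathbf{T}^{\mathcal{W}_{D^{3}}}M$ (for $i=1,2$) satisfying the evident compatibility condition under $\alpha_{\mathcal{W}_{i_{D^{3}\{(1,3)\}}^{D^{3}}}}(M)$, Proposition \ref{t3.4} yields a unique mediating morphism $X\rightarrow\mathbf{T}^{\mathcal{W}_{D^{4}\{(1,4),(3,4)\}}}M$ whose composites with $\alpha_{\mathcal{W}_{\psi_{2}^{3}}}(M)$ and $\alpha_{\mathcal{W}_{\varphi_{2}^{3}}}(M)$ recover $f_{1}$ and $f_{2}$ respectively. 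This is exactly the defining property of the fibered product, so the asserted equality holds up to canonical isomorphism.

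No real obstacle is encountered here: all the substantive work resides in Proposition \ref{t3.4} itself, where one must verify that the associated diagram of infinitesimal objects is a pushout in the shade (equivalently, the corresponding square of Weil algebras is a pushout, whence the functor $\mathbf{T}^{\mathcal{W}_{-}}(M)$, together with the microlinearity of $M$, converts it into a pullback in $\mathcal{K}$). The present corollary is then a notational convenience that will let us pass freely between the tangent functor over $D^{4}\{(1,4),(3,4)\}$ and the fibered product of two copies of the tangent functor over $D^{3}$ in subsequent sections.
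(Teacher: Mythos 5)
Your proposal is correct and matches the paper's (implicit) reasoning exactly: Corollary \ref{t3.4'} is stated without proof precisely because it is nothing more than the restatement of the pullback assertion of Proposition \ref{t3.4} in fibered-product notation, with the projections $\alpha_{\mathcal{W}_{\psi_{2}^{3}}}\left(  M\right)$ and $\alpha_{\mathcal{W}_{\varphi_{2}^{3}}}\left(  M\right)$ identifying $\mathbf{T}^{\mathcal{W}_{D^{4}\{(1,4),(3,4)\}}}M$ with the fibered product via the universal property. Your additional remark that all substantive content lives in Proposition \ref{t3.4} is also consistent with how the paper treats the parallel Corollaries \ref{t3.1'}, \ref{t3.3'}, and \ref{t3.5'}.
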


\begin{notation}
We will write
\[
\zeta^{\underset{2}{\overset{\cdot}{-}}}\left(  M\right)  :\mathbf{T}%
^{\mathcal{W}_{D^{3}}}M\times_{\mathbf{T}^{\mathcal{W}_{D^{3}\{(1,3)\}}}%
M}\mathbf{T}^{\mathcal{W}_{D^{3}}}M\rightarrow\mathbf{T}^{\mathcal{W}_{D^{2}}%
}M
\]
for the morphism
\begin{align*}
& \mathbf{T}^{\mathcal{W}_{D^{3}}}M\times_{\mathbf{T}^{\mathcal{W}%
_{D^{3}\{(1,3)\}}}M}\mathbf{T}^{\mathcal{W}_{D^{3}}}M\\
& =\mathbf{T}^{\mathcal{W}_{D^{4}\{(1,4),(3,4)\}}}M\\
& \underrightarrow{\alpha_{\mathcal{W}_{(d_{1},d_{2})\in D^{2}\mapsto
(0,d_{1},0,d_{2})\in D^{4}\{(1,4),(3,4)\}}}\left(  M\right)  }\\
& \mathbf{T}^{\mathcal{W}_{D^{2}}}M
\end{align*}

\end{notation}

\begin{proposition}
\label{t3.5}The diagram
\[%
\begin{array}
[c]{ccccc}
&  & \alpha_{\mathcal{W}_{\varphi_{3}^{3}}}\left(  M\right)  &  & \\
& \mathbf{T}^{\mathcal{W}_{D^{4}\{(1,4),(2,4)\}}}M & \rightarrow &
\underset{2}{\mathbf{T}^{\mathcal{W}_{D^{3}}}M} & \\
\alpha_{\mathcal{W}_{\psi_{3}^{3}}}\left(  M\right)  & \downarrow &  &
\downarrow & \alpha_{\mathcal{W}_{i_{D^{3}\{(1,2)\}}^{D^{3}}}}\left(  M\right)
\\
& \underset{1}{\mathbf{T}^{\mathcal{W}_{D^{3}}}M} & \rightarrow &
\mathbf{T}^{\mathcal{W}_{D^{3}\{(1,2)\}}}M & \\
&  & \alpha_{\mathcal{W}_{i_{D^{3}\{(1,2)\}}^{D^{3}}}}\left(  M\right)  &  &
\end{array}
\]
is a pullback diagram, where the assumptive mapping
\[
\varphi_{3}^{3}:D^{3}\rightarrow D^{4}\{(1,4),(2,4)\}
\]
is
\[
(d_{1},d_{2},d_{3})\in D^{3}\mapsto(d_{1},d_{2},d_{3},0)\in D^{4}%
\{(1,4),(2,4)\}
\]
while the assumptive mapping
\[
\psi_{3}^{3}:D^{3}\rightarrow D^{4}\{(1,4),(2,4)\}
\]
is
\[
(d_{1},d_{2},d_{3})\in D^{3}\mapsto(d_{1},d_{2},d_{3},d_{1}d_{2})\in
D^{4}\{(1,4),(2,4)\}
\]

\end{proposition}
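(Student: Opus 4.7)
The plan is to proceed in complete parallel with Propositions \ref{t3.1}, \ref{t3.3}, and \ref{t3.4}: exhibit the displayed square as the image under $\mathbf{T}^{\mathcal{W}_{(-)}}M$ of a quasi-colimit diagram of infinitesimal objects, and then invoke microlinearity of $M$ (our standing assumption) to conclude that the image square is a pullback.

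First I verify commutativity of the outer square at the level of infinitesimal objects. Both composites $\varphi_3^3 \circ i_{D^3\{(1,2)\}}^{D^3}$ and $\psi_3^3 \circ i_{D^3\{(1,2)\}}^{D^3}$ send $(d_1,d_2,d_3) \in D^3\{(1,2)\}$ to $(d_1,d_2,d_3,0) \in D^4\{(1,4),(2,4)\}$, because the relation $d_1 d_2 = 0$ imposed by $D^3\{(1,2)\}$ forces the fourth coordinate $d_1 d_2$ produced by $\psi_3^3$ to vanish. Dualising via $\mathcal{W}$ and applying $\mathbf{T}^{\mathcal{W}_{(-)}}M$ yields commutativity of the proposition's square.

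The substantive step is the verification, at the Weil-algebra level, that the corresponding square
\[
\mathcal{W}_{D^4\{(1,4),(2,4)\}} \to \mathcal{W}_{D^3} \rightrightarrows \mathcal{W}_{D^3\{(1,2)\}}
\]
is a pullback of commutative $\mathbb{R}$-algebras. Here $\mathcal{W}_{D^4\{(1,4),(2,4)\}}$ is $\mathbb{R}[X_1,X_2,X_3,X_4]$ modulo the $X_i^2$ together with $X_1 X_4$ and $X_2 X_4$; the two maps to $\mathcal{W}_{D^3}$ send $X_4$ to $0$ and to $X_1 X_2$ respectively, and the common map $\mathcal{W}_{D^3} \to \mathcal{W}_{D^3\{(1,2)\}}$ kills $X_1 X_2$. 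A direct monomial count shows that both sides are $10$-dimensional, and comparing the two images of a generic element reveals that the two coefficients $c_4$ of $X_4$ and $c_{34}$ of $X_3 X_4$ on the left are in bijective correspondence with the two degrees of freedom (the unmatched coefficients of $X_1 X_2$ and $X_1 X_2 X_3$) in the pullback on the right. This combinatorial bookkeeping is the main --- entirely routine --- obstacle, exactly analogous to what underlies Propositions \ref{t3.3} and \ref{t3.4}; the only change is that the "missing" pair of indices is $\{1,2\}$ rather than $\{2,3\}$ or $\{1,3\}$.

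Once the square of Weil algebras is recognised as a quasi-colimit of simplicial objects in the sense preserved by microlinear objects, microlinearity of $M$ immediately gives that $\mathbf{T}^{\mathcal{W}_{(-)}}M$ sends it to a pullback in $\mathcal{K}$, which is exactly the content of the proposition.
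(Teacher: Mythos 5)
The paper itself states Proposition \ref{t3.5} without proof (Section \ref{s3} merely reviews these pullback diagrams from the companion paper \cite{nishi-k}), but your argument is exactly the standard one for such statements: exhibit the cocone $D^{3}\{(1,2)\}\rightrightarrows D^{3}\rightarrow D^{4}\{(1,4),(2,4)\}$ as a quasi-colimit by checking that the dual square of Weil algebras is a pullback, and then appeal to microlinearity of $M$. Your commutativity check ($d_{1}d_{2}=0$ forces the fourth coordinate of $\psi_{3}^{3}$ to vanish on $D^{3}\{(1,2)\}$) and your dimension bookkeeping are both correct --- the two maps send $X_{4}$ to $0$ and to $X_{1}X_{2}$ respectively, so the extra coefficients $c_{4},c_{34}$ on the left match bijectively the unmatched $X_{1}X_{2}$- and $X_{1}X_{2}X_{3}$-coefficients in the $10$-dimensional fibered product --- so the proof is sound and coincides with what the cited source does.
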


\begin{remark}
The numbers $1,2$\ under $\mathbf{T}^{\mathcal{W}_{D^{3}}}M$\ are given simply
so as for the reader to easily relate each occurrence of $\mathbf{T}%
^{\mathcal{W}_{D^{3}}}M$\ in the above Proposition to its corresponding
occurrence in the following Corollary.
\end{remark}

\begin{corollary}
\label{t3.5'}We have
\begin{align*}
& \underset{1}{\mathbf{T}^{\mathcal{W}_{D^{3}}}M}\times_{\mathbf{T}%
^{\mathcal{W}_{D^{3}\{(1,2)\}}}M}\underset{2}{\mathbf{T}^{\mathcal{W}_{D^{3}}%
}M}\\
& =\mathbf{T}^{\mathcal{W}_{D^{4}\{(1,4),(2,4)\}}}M
\end{align*}

\end{corollary}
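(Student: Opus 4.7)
The plan is to extract this corollary as an immediate consequence of Proposition \ref{t3.5}. A pullback square is by definition an identification of its vertex with the fibered product of its two outgoing legs along their common target, so essentially no independent work is required here beyond an inspection of the square and a matching of labels, in complete parallel with the passages from Proposition \ref{t3.1} to Corollary \ref{t3.1'}, from Proposition \ref{t3.3} to Corollary \ref{t3.3'}, and from Proposition \ref{t3.4} to Corollary \ref{t3.4'}.

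Specifically, I would read off Proposition \ref{t3.5} as the assertion that $\mathbf{T}^{\mathcal{W}_{D^{4}\{(1,4),(2,4)\}}}M$ is the pullback of the cospan
\[
\underset{1}{\mathbf{T}^{\mathcal{W}_{D^{3}}}M}\longrightarrow\mathbf{T}^{\mathcal{W}_{D^{3}\{(1,2)\}}}M\longleftarrow\underset{2}{\mathbf{T}^{\mathcal{W}_{D^{3}}}M},
\]
in which both arrows are $\alpha_{\mathcal{W}_{i_{D^{3}\{(1,2)\}}^{D^{3}}}}(M)$, while the two projections out of the vertex are $\alpha_{\mathcal{W}_{\psi_{3}^{3}}}(M)$ to copy $1$ and $\alpha_{\mathcal{W}_{\varphi_{3}^{3}}}(M)$ to copy $2$. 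The universal property of the pullback then furnishes a canonical isomorphism between the vertex and the fibered product
\[
\underset{1}{\mathbf{T}^{\mathcal{W}_{D^{3}}}M}\times_{\mathbf{T}^{\mathcal{W}_{D^{3}\{(1,2)\}}}M}\underset{2}{\mathbf{T}^{\mathcal{W}_{D^{3}}}M},
\]
which is precisely the content of the corollary.

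Since the corollary is a direct restatement, I anticipate no substantive obstacle. The only care required is to keep the two occurrences of $\mathbf{T}^{\mathcal{W}_{D^{3}}}M$ straight, and this is exactly the purpose of the subscript labels $1$ and $2$ that the author has attached in both the statement of Proposition \ref{t3.5} and in the statement of the corollary; the preceding remark makes this correspondence explicit. All of the substantive mathematics (the verification that the square is indeed a pullback, arising from the corresponding colimit behaviour on the Weil-algebra side under the duality $\mathcal{W}\dashv\mathcal{D}$, combined with microlinearity of $M$) lives in Proposition \ref{t3.5}; the corollary is purely a notational repackaging.
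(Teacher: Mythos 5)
Your proposal is correct and matches the paper's treatment: the paper gives no separate proof of Corollary \ref{t3.5'}, presenting it as an immediate restatement of the pullback property established in Proposition \ref{t3.5}, exactly as you describe. Your identification of the cospan and of which projection lands in which labelled copy of $\mathbf{T}^{\mathcal{W}_{D^{3}}}M$ is accurate.
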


\begin{notation}
We will write
\[
\zeta^{\underset{3}{\overset{\cdot}{-}}}\left(  M\right)  :\mathbf{T}%
^{\mathcal{W}_{D^{3}}}M\times_{\mathbf{T}^{\mathcal{W}_{D^{3}\{(1,2)\}}}%
M}\mathbf{T}^{\mathcal{W}_{D^{3}}}M\rightarrow\mathbf{T}^{\mathcal{W}_{D^{2}}%
}M
\]
for the morphism
\begin{align*}
& \mathbf{T}^{\mathcal{W}_{D^{3}}}M\times_{\mathbf{T}^{\mathcal{W}%
_{D^{3}\{(1,2)\}}}M}\mathbf{T}^{\mathcal{W}_{D^{3}}}M\\
& =\mathbf{T}^{\mathcal{W}_{D^{4}\{(1,4),(2,4)\}}}M\\
& \alpha_{\mathcal{W}_{(d_{1},d_{2})\in D^{2}\mapsto(0,0,d_{1},d_{2})\in
D^{4}\{(1,4),(3,4)\}}}\left(  M\right) \\
& \mathbf{T}^{\mathcal{W}_{D^{2}}}M
\end{align*}

\end{notation}

\begin{notation}
We will introduce three notations.

\begin{enumerate}
\item We will write
\[
\zeta^{(\ast_{123}\overset{\cdot}{\underset{1}{-}}\ast_{132})\overset{\cdot
}{-}(\ast_{231}\overset{\cdot}{\underset{1}{-}}\ast_{321})}\left(  M\right)
:\triangle\left(  M\right)  \rightarrow\mathbf{T}^{\mathcal{W}_{D}}M
\]
for the composition of morphisms
\begin{align*}
& \pi_{\left(  \left(  \underset{321}{\mathbf{T}^{\mathcal{W}_{D^{3}}}M}%
\times_{\mathbf{T}^{\mathcal{W}_{D^{3}\{(2,3)\}}}M}\underset{231}%
{\mathbf{T}^{\mathcal{W}_{D^{3}}}M}\right)  \times_{\mathbf{T}^{\mathcal{W}%
_{D(2)}}M}\left(  \underset{132}{\mathbf{T}^{\mathcal{W}_{D^{3}}}M}%
\times_{\mathbf{T}^{\mathcal{W}_{D^{3}\{(2,3)\}}}M}\underset{123}%
{\mathbf{T}^{\mathcal{W}_{D^{3}}}M}\right)  \right)  }^{\triangle\left(
M\right)  }\\
& :\triangle\left(  M\right)  \rightarrow\left(
\begin{array}
[c]{c}%
\underset{321}{\mathbf{T}^{\mathcal{W}_{D^{3}}}M}\\
\times_{\mathbf{T}^{\mathcal{W}_{D^{3}\{(2,3)\}}}M}\\
\underset{231}{\mathbf{T}^{\mathcal{W}_{D^{3}}}M}%
\end{array}
\right)  \times_{\mathbf{T}^{\mathcal{W}_{D(2)}}M}\left(
\begin{array}
[c]{c}%
\underset{132}{\mathbf{T}^{\mathcal{W}_{D^{3}}}M}\\
\times_{\mathbf{T}^{\mathcal{W}_{D^{3}\{(2,3)\}}}M}\\
\underset{123}{\mathbf{T}^{\mathcal{W}_{D^{3}}}M}%
\end{array}
\right)
\end{align*}
\begin{align*}
& \zeta^{\overset{\cdot}{\underset{1}{-}}}\left(  M\right)  \times
_{\mathbf{T}^{\mathcal{W}_{D(2)}}M}\zeta^{\overset{\cdot}{\underset{1}{-}}%
}\left(  M\right)  :\\
& :\left(
\begin{array}
[c]{c}%
\underset{321}{\mathbf{T}^{\mathcal{W}_{D^{3}}}M}\\
\times_{\mathbf{T}^{\mathcal{W}_{D^{3}\{(2,3)\}}}M}\\
\underset{231}{\mathbf{T}^{\mathcal{W}_{D^{3}}}M}%
\end{array}
\right)  \times_{\mathbf{T}^{\mathcal{W}_{D(2)}}M}\left(
\begin{array}
[c]{c}%
\underset{132}{\mathbf{T}^{\mathcal{W}_{D^{3}}}M}\\
\times_{\mathbf{T}^{\mathcal{W}_{D^{3}\{(2,3)\}}}M}\\
\underset{123}{\mathbf{T}^{\mathcal{W}_{D^{3}}}M}%
\end{array}
\right) \\
& \rightarrow\mathbf{T}^{\mathcal{W}_{D^{2}}}M\times_{\mathbf{T}%
^{\mathcal{W}_{D(2)}}M}\mathbf{T}^{\mathcal{W}_{D^{2}}}M
\end{align*}
\[
\zeta^{\overset{\cdot}{-}}\left(  M\right)  :\mathbf{T}^{\mathcal{W}_{D^{2}}%
}M\times_{\mathbf{T}^{\mathcal{W}_{D(2)}}M}\mathbf{T}^{\mathcal{W}_{D^{2}}%
}M\rightarrow\mathbf{T}^{\mathcal{W}_{D}}M
\]
in succession, where $\triangle\left(  M\right)  $\ denotes
\[
\left[
\begin{array}
[c]{ccc}
&
\begin{array}
[c]{c}%
\left(
\begin{array}
[c]{c}%
\underset{321}{\mathbf{T}^{\mathcal{W}_{D^{3}}}M}\\
\times_{\mathbf{T}^{\mathcal{W}_{D^{3}\{(2,3)\}}}M}\\
\underset{231}{\mathbf{T}^{\mathcal{W}_{D^{3}}}M}%
\end{array}
\right) \\
\times_{\mathbf{T}^{\mathcal{W}_{D(2)}}M}\\
\left(
\begin{array}
[c]{c}%
\underset{132}{\mathbf{T}^{\mathcal{W}_{D^{3}}}M}\\
\times_{\mathbf{T}^{\mathcal{W}_{D^{3}\{(2,3)\}}}M}\\
\underset{123}{\mathbf{T}^{\mathcal{W}_{D^{3}}}M}%
\end{array}
\right)
\end{array}
& \\%
\begin{array}
[c]{c}%
\times_{\mathbf{T}^{\mathcal{W}_{D^{3}\oplus D^{3}}}M}\\
\,\\
\,
\end{array}
&  &
\begin{array}
[c]{c}%
\times_{\mathbf{T}^{\mathcal{W}_{D^{3}\oplus D^{3}}}M}\\
\,\\
\,
\end{array}
\\%
\begin{array}
[c]{c}%
\left(
\begin{array}
[c]{c}%
\underset{132}{\mathbf{T}^{\mathcal{W}_{D^{3}}}M}\\
\times_{\mathbf{T}^{\mathcal{W}_{D^{3}\{(1,3)\}}}M}\\
\underset{312}{\mathbf{T}^{\mathcal{W}_{D^{3}}}M}%
\end{array}
\right) \\
\times_{\mathbf{T}^{\mathcal{W}_{D(2)}}M}\\
\left(
\begin{array}
[c]{c}%
\underset{213}{\mathbf{T}^{\mathcal{W}_{D^{3}}}M}\\
\times_{\mathbf{T}^{\mathcal{W}_{D^{3}\{(1,3)\}}}M}\\
\underset{231}{\mathbf{T}^{\mathcal{W}_{D^{3}}}M}%
\end{array}
\right)
\end{array}
& \times_{\mathbf{T}^{\mathcal{W}_{D^{3}\oplus D^{3}}}M} &
\begin{array}
[c]{c}%
\left(
\begin{array}
[c]{c}%
\underset{213}{\mathbf{T}^{\mathcal{W}_{D^{3}}}M}\\
\times_{\mathbf{T}^{\mathcal{W}_{D^{3}\{(1,2)\}}}M}\\
\underset{123}{\mathbf{T}^{\mathcal{W}_{D^{3}}}M}%
\end{array}
\right) \\
\times_{\mathbf{T}^{\mathcal{W}_{D(2)}}M}\\
\left(
\begin{array}
[c]{c}%
\underset{321}{\mathbf{T}^{\mathcal{W}_{D^{3}}}M}\\
\times_{\mathbf{T}^{\mathcal{W}_{D^{3}\{(1,2)\}}}M}\\
\underset{312}{\mathbf{T}^{\mathcal{W}_{D^{3}}}M}%
\end{array}
\right)
\end{array}
\end{array}
\right]
\]

\item We will write the morphism
\[
\zeta^{(\ast_{231}\overset{\cdot}{\underset{2}{-}}\ast_{213})\overset{\cdot
}{-}(\ast_{312}\overset{\cdot}{\underset{2}{-}}\ast_{132})}\left(  M\right)
:\triangle\left(  M\right)  \rightarrow\mathbf{T}^{\mathcal{W}_{D}}M
\]
for the composition of morphisms
\begin{align*}
& \pi_{\left(  \left(  \underset{132}{\mathbf{T}^{\mathcal{W}_{D^{3}}}M}%
\times_{\mathbf{T}^{\mathcal{W}_{D^{3}\{(1,3)\}}}M}\underset{312}%
{\mathbf{T}^{\mathcal{W}_{D^{3}}}M}\right)  \times_{\mathbf{T}^{\mathcal{W}%
_{D(2)}}M}\left(  \underset{213}{\mathbf{T}^{\mathcal{W}_{D^{3}}}M}%
\times_{\mathbf{T}^{\mathcal{W}_{D^{3}\{(1,3)\}}}M}\underset{231}%
{\mathbf{T}^{\mathcal{W}_{D^{3}}}M}\right)  \right)  }^{\triangle\left(
M\right)  }\\
& :\triangle\left(  M\right)  \rightarrow\left(
\begin{array}
[c]{c}%
\underset{132}{\mathbf{T}^{\mathcal{W}_{D^{3}}}M}\\
\times_{\mathbf{T}^{\mathcal{W}_{D^{3}\{(1,3)\}}}M}\\
\underset{312}{\mathbf{T}^{\mathcal{W}_{D^{3}}}M}%
\end{array}
\right)  \times_{\mathbf{T}^{\mathcal{W}_{D(2)}}M}\left(
\begin{array}
[c]{c}%
\underset{213}{\mathbf{T}^{\mathcal{W}_{D^{3}}}M}\\
\times_{\mathbf{T}^{\mathcal{W}_{D^{3}\{(1,3)\}}}M}\\
\underset{231}{\mathbf{T}^{\mathcal{W}_{D^{3}}}M}%
\end{array}
\right)
\end{align*}
\begin{align*}
& \zeta^{\overset{\cdot}{\underset{2}{-}}}\left(  M\right)  \times
_{\mathbf{T}^{\mathcal{W}_{D(2)}}M}\zeta^{\overset{\cdot}{\underset{2}{-}}%
}\left(  M\right)  :\\
& :\left(
\begin{array}
[c]{c}%
\underset{132}{\mathbf{T}^{\mathcal{W}_{D^{3}}}M}\\
\times_{\mathbf{T}^{\mathcal{W}_{D^{3}\{(1,3)\}}}M}\\
\underset{312}{\mathbf{T}^{\mathcal{W}_{D^{3}}}M}%
\end{array}
\right)  \times_{\mathbf{T}^{\mathcal{W}_{D(2)}}M}\left(
\begin{array}
[c]{c}%
\underset{213}{\mathbf{T}^{\mathcal{W}_{D^{3}}}M}\\
\times_{\mathbf{T}^{\mathcal{W}_{D^{3}\{(1,3)\}}}M}\\
\underset{231}{\mathbf{T}^{\mathcal{W}_{D^{3}}}M}%
\end{array}
\right) \\
& \rightarrow\mathbf{T}^{\mathcal{W}_{D^{2}}}M\times_{\mathbf{T}%
^{\mathcal{W}_{D(2)}}M}\mathbf{T}^{\mathcal{W}_{D^{2}}}M
\end{align*}
\[
\zeta^{\overset{\cdot}{-}}\left(  M\right)  :\mathbf{T}^{\mathcal{W}_{D^{2}}%
}M\times_{\mathbf{T}^{\mathcal{W}_{D(2)}}M}\mathbf{T}^{\mathcal{W}_{D^{2}}%
}M\rightarrow\mathbf{T}^{\mathcal{W}_{D}}M
\]
in succession.

\item We will write the morphism
\[
\zeta^{(\ast_{312}\overset{\cdot}{\underset{3}{-}}\ast_{321})\overset{\cdot
}{-}(\ast_{123}\overset{\cdot}{\underset{3}{-}}\ast_{213})}\left(  M\right)
:\triangle\left(  M\right)  \rightarrow\mathbf{T}^{\mathcal{W}_{D}}M
\]
for the composition of morphisms
\begin{align*}
& \pi_{\left(  \left(  \underset{213}{\mathbf{T}^{\mathcal{W}_{D^{3}}}M}%
\times_{\mathbf{T}^{\mathcal{W}_{D^{3}\{(1,2)\}}}M}\underset{123}%
{\mathbf{T}^{\mathcal{W}_{D^{3}}}M}\right)  \times_{\mathbf{T}^{\mathcal{W}%
_{D(2)}}M}\left(  \underset{321}{\mathbf{T}^{\mathcal{W}_{D^{3}}}M}%
\times_{\mathbf{T}^{\mathcal{W}_{D^{3}\{(1,2)\}}}M}\underset{312}%
{\mathbf{T}^{\mathcal{W}_{D^{3}}}M}\right)  \right)  }^{\triangle\left(
M\right)  }\\
& :\triangle\left(  M\right)  \rightarrow\left(
\begin{array}
[c]{c}%
\underset{213}{\mathbf{T}^{\mathcal{W}_{D^{3}}}M}\\
\times_{\mathbf{T}^{\mathcal{W}_{D^{3}\{(1,2)\}}}M}\\
\underset{123}{\mathbf{T}^{\mathcal{W}_{D^{3}}}M}%
\end{array}
\right)  \times_{\mathbf{T}^{\mathcal{W}_{D(2)}}M}\left(
\begin{array}
[c]{c}%
\underset{321}{\mathbf{T}^{\mathcal{W}_{D^{3}}}M}\\
\times_{\mathbf{T}^{\mathcal{W}_{D^{3}\{(1,2)\}}}M}\\
\underset{312}{\mathbf{T}^{\mathcal{W}_{D^{3}}}M}%
\end{array}
\right)
\end{align*}
\begin{align*}
& \zeta^{\overset{\cdot}{\underset{3}{-}}}\left(  M\right)  \times
_{\mathbf{T}^{\mathcal{W}_{D(2)}}M}\zeta^{\overset{\cdot}{\underset{3}{-}}%
}\left(  M\right) \\
& :\left(
\begin{array}
[c]{c}%
\underset{213}{\mathbf{T}^{\mathcal{W}_{D^{3}}}M}\\
\times_{\mathbf{T}^{\mathcal{W}_{D^{3}\{(1,2)\}}}M}\\
\underset{123}{\mathbf{T}^{\mathcal{W}_{D^{3}}}M}%
\end{array}
\right)  \times_{\mathbf{T}^{\mathcal{W}_{D(2)}}M}\left(
\begin{array}
[c]{c}%
\underset{321}{\mathbf{T}^{\mathcal{W}_{D^{3}}}M}\\
\times_{\mathbf{T}^{\mathcal{W}_{D^{3}\{(1,2)\}}}M}\\
\underset{312}{\mathbf{T}^{\mathcal{W}_{D^{3}}}M}%
\end{array}
\right) \\
& \rightarrow\mathbf{T}^{\mathcal{W}_{D^{2}}}M\times_{\mathbf{T}%
^{\mathcal{W}_{D(2)}}M}\mathbf{T}^{\mathcal{W}_{D^{2}}}M
\end{align*}
\[
\zeta^{\overset{\cdot}{-}}\left(  M\right)  :\mathbf{T}^{\mathcal{W}_{D^{2}}%
}M\times_{\mathbf{T}^{\mathcal{W}_{D(2)}}M}\mathbf{T}^{\mathcal{W}_{D^{2}}%
}M\rightarrow\mathbf{T}^{\mathcal{W}_{D}}M
\]
in succession.
\end{enumerate}
\end{notation}

\begin{theorem}
\label{t3.6}(\underline{The general Jacobi Identity}) The three morphisms
\[
\zeta^{(\ast_{123}\overset{\cdot}{\underset{1}{-}}\ast_{132})\overset{\cdot
}{-}(\ast_{231}\overset{\cdot}{\underset{1}{-}}\ast_{321})}\left(  M\right)
:\triangle\left(  M\right)  \rightarrow\mathbf{T}^{\mathcal{W}_{D}}M
\]
\[
\zeta^{(\ast_{231}\overset{\cdot}{\underset{2}{-}}\ast_{213})\overset{\cdot
}{-}(\ast_{312}\overset{\cdot}{\underset{2}{-}}\ast_{132})}\left(  M\right)
:\triangle\left(  M\right)  \rightarrow\mathbf{T}^{\mathcal{W}_{D}}M
\]
\[
\zeta^{(\ast_{312}\overset{\cdot}{\underset{3}{-}}\ast_{321})\overset{\cdot
}{-}(\ast_{123}\overset{\cdot}{\underset{3}{-}}\ast_{213})}\left(  M\right)
:\triangle\left(  M\right)  \rightarrow\mathbf{T}^{\mathcal{W}_{D}}M
\]
sum up only to vanish.
\end{theorem}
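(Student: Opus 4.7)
The plan is to reduce the general Jacobi identity to a purely combinatorial cancellation at the level of infinitesimal objects, exploiting the contravariant functoriality of $\mathbf{T}^{\mathcal{W}_{-}}M$ together with the pullback identifications established in the preceding corollaries. First I would use Corollaries \ref{t3.3'}, \ref{t3.4'}, \ref{t3.5'}, and \ref{t3.1'} iteratively to rewrite $\triangle(M)$ as $\mathbf{T}^{\mathcal{W}_{E}}M$ for a single infinitesimal object $E$, obtained by amalgamating six copies of $D^{3}$ (one per permutation in $\mathbb{S}_{3}$, indexed by the subscripts $123,132,213,231,312,321$ appearing in the definition) along their simplicial subobjects $D^{3}\{(1,2)\}$, $D^{3}\{(1,3)\}$, $D^{3}\{(2,3)\}$ and $D(2)$ exactly in the pattern encoded by the threefold fibered-product expression for $\triangle(M)$.

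Once this identification is in hand, each of the three morphisms $\zeta^{(\ast_{\sigma}\overset{\cdot}{\underset{i}{-}}\ast_{\tau})\overset{\cdot}{-}(\ast_{\rho}\overset{\cdot}{\underset{i}{-}}\ast_{\eta})}(M)$ becomes $\alpha_{\mathcal{W}_{\phi_{i}}}(M)$ for an explicit morphism $\phi_{i}:D\rightarrow E$ obtained by composing the innermost map $d\mapsto(0,0,d)\in D^{3}\{(1,3),(2,3)\}$ of $\zeta^{\overset{\cdot}{-}}$ with the four embeddings $d\mapsto(d_{1},0,0,d_{2})$, $d\mapsto(0,d_{1},0,d_{2})$, $d\mapsto(0,0,d_{1},d_{2})$ used in defining $\zeta^{\overset{\cdot}{\underset{1}{-}}}$, $\zeta^{\overset{\cdot}{\underset{2}{-}}}$, $\zeta^{\overset{\cdot}{\underset{3}{-}}}$, and then transporting the result into each of the six $D^{3}$-summands along the appropriate permutation of coordinates.

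The assertion of the theorem will then follow once I exhibit a single morphism from an infinitesimal object containing $D(3)$ into $E$ whose three restrictions to the axes of $D(3)$ recover $\phi_{1},\phi_{2},\phi_{3}$, and verify that this morphism factors through the point --- equivalently, that the three $\phi_{i}$'s sum to the zero map under the additive structure of $E$. The prototype of this argument is Proposition \ref{t3.2}, where the vanishing of $\zeta^{\overset{\cdot}{-}}(M)$ summed with its swap is exactly this kind of additive cancellation along the diagonal $d\mapsto(0,0,d)$; the general case is the same principle carried out along each of the three axes of $D^{3}$ in turn, so that the microlinearity of $M$ finally converts the identity among the $\phi_{i}$'s into the identity among the three composite morphisms.

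The main obstacle will be the combinatorial bookkeeping. Each of the six permutations in $\mathbb{S}_{3}$ occurs twice in each of the three morphisms (once on each side of the outer $\overset{\cdot}{-}$), always paired with a neighbour differing by the transposition $(i,i{+}1)$; tracking, for a fixed $\sigma\in\mathbb{S}_{3}$ and a fixed axis $i$, precisely which coordinate of $D^{3}$ is singled out by the innermost projection of $\zeta^{\overset{\cdot}{-}}$ after the appropriate pullback identifications is delicate. Organising the data so that the threefold sum telescopes across permutation pairs $(\sigma,\sigma\cdot(i,i{+}1))$ and produces the zero map is where the technical weight lies; once the combinatorial picture is sorted out, the passage from infinitesimal objects to morphisms is routine naturality.
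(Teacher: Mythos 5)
The paper itself offers no proof of Theorem \ref{t3.6}: Section \ref{s3} merely restates the general Jacobi identity from \cite{nishi-k} (and ultimately from the microcube papers \cite{nishi-a}, \cite{nishi-b}, \cite{nishi-oso}), so your proposal has to be judged against the argument given there. In outline you have reconstructed that argument correctly: identify $\triangle\left(  M\right)$ with $\mathbf{T}^{\mathcal{W}_{E}}M$ for a single amalgam $E$ of six copies of $D^{3}$, observe that each of the three composites is then $\alpha_{\mathcal{W}_{\phi_{i}}}\left(  M\right)$ for an explicit $\phi_{i}:D\rightarrow E$, and conclude by an additive cancellation mediated by an extension over $D(3)$.

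As written, however, the proposal has a genuine gap, and it sits exactly where you park the ``combinatorial bookkeeping.'' First, the identification $\triangle\left(  M\right)  =\mathbf{T}^{\mathcal{W}_{E}}M$ does not follow from Corollaries \ref{t3.1'} and \ref{t3.3'}--\ref{t3.5'} alone: those supply only the binary pullbacks, whereas $\triangle\left(  M\right)$ is a limit over a substantially larger diagram (including the gluings over $\mathbf{T}^{\mathcal{W}_{D^{3}\oplus D^{3}}}M$), and one must verify that $\mathbf{T}^{\left(  -\right)  }M$ carries the corresponding amalgamation of simplicial objects to a limit; this quasi-colimit verification is precisely the bulk of \cite{nishi-oso}. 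Second, and more seriously, the assertion that the three $\phi_{i}$ ``sum to the zero map under the additive structure of $E$'' is not a condition that can simply be read off: the sum of the three tangent vectors is only defined, uniformly in the argument from $\triangle\left(  M\right)$, once one has actually produced a morphism $\Phi$ from a simplicial object containing $D(3)$ into $E$ whose restrictions to the three axes are $\phi_{1},\phi_{2},\phi_{3}$, after which the vanishing of the sum amounts to the composite of $\Phi$ with $d\mapsto\left(  d,d,d\right)$ being constant. Producing $\Phi$ --- that is, checking that the map $\left(  d_{1},d_{2},d_{3}\right)  \mapsto\phi_{1}\left(  d_{1}\right)  +\phi_{2}\left(  d_{2}\right)  +\phi_{3}\left(  d_{3}\right)$ respects all the defining relations of $E$, and that the diagonal composite collapses --- is the entire mathematical content of the general Jacobi identity. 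It is not routine naturality, and until it is exhibited the proposal remains a plan rather than a proof.
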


\section{\label{s4}Tangent-Vector-Valued Forms}

\begin{notation}
We write
\[
\underline{\Omega}_{\left(  0\right)  }^{\left(  p,1\right)  }\left(
M\right)
\]
for
\[
\left[  \mathbf{T}^{\mathcal{W}_{D^{p}}}M\rightarrow\mathbf{T}^{\mathcal{W}%
_{D}}M\right]  \text{,}%
\]
while we write
\[
\overline{\Omega}_{\left(  0\right)  }^{\left(  p,1\right)  }\left(  M\right)
\]
for
\[
\mathbf{T}^{\mathcal{W}_{D}}\left[  \mathbf{T}^{\mathcal{W}_{D^{p}}%
}M\rightarrow M\right]
\]

\end{notation}

It is easy to see that

\begin{proposition}
\label{t4.1}We have
\[
\underline{\Omega}_{\left(  0\right)  }^{\left(  p,1\right)  }\left(
M\right)  =\overline{\Omega}_{\left(  0\right)  }^{\left(  p,1\right)
}\left(  M\right)
\]

\end{proposition}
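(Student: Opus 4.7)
The plan is to establish the identity as an instance of the exponential law (currying) combined with the characteristic property of Weil prolongation: namely that $\mathbf{T}^{\mathcal{W}_D}$ is exponentiation by the infinitesimal object $D=\mathcal{D}_{\mathcal{W}_D}$ dual to $\mathcal{W}_D$. Reading $\mathbf{T}^{\mathcal{W}_D}(-)$ informally as $[D\rightarrow -]$, the identity
\[
\mathbf{T}^{\mathcal{W}_{D}}\left[\mathbf{T}^{\mathcal{W}_{D^{p}}}M\rightarrow M\right]\cong\left[\mathbf{T}^{\mathcal{W}_{D^{p}}}M\rightarrow \mathbf{T}^{\mathcal{W}_{D}}M\right]
\]
becomes a repackaging of the associativity of the exponential law $B^{A\times C}\cong(B^{A})^{C}$, applied with $A=\mathbf{T}^{\mathcal{W}_{D^{p}}}M$, $B=M$, $C=D$.

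First I would write down the canonical comparison morphism explicitly. Starting from the evaluation $\mathrm{ev}_{[\mathbf{T}^{\mathcal{W}_{D^{p}}}M\rightarrow M]\times\mathbf{T}^{\mathcal{W}_{D^{p}}}M}\colon[\mathbf{T}^{\mathcal{W}_{D^{p}}}M\rightarrow M]\times\mathbf{T}^{\mathcal{W}_{D^{p}}}M\rightarrow M$, I would apply $\mathbf{T}^{\mathcal{W}_{D}}$ and, using that $\mathbf{T}^{\mathcal{W}_{D}}$ preserves finite products, manufacture a morphism
\[
\mathbf{T}^{\mathcal{W}_{D}}\left[\mathbf{T}^{\mathcal{W}_{D^{p}}}M\rightarrow M\right]\times\mathbf{T}^{\mathcal{W}_{D^{p}}}M\rightarrow\mathbf{T}^{\mathcal{W}_{D}}M,
\]
whose exponential transpose gives the desired map $\overline{\Omega}_{(0)}^{(p,1)}(M)\rightarrow\underline{\Omega}_{(0)}^{(p,1)}(M)$. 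The inverse is produced by the symmetric recipe, and that the two are mutually inverse follows from the triangle identities for currying.

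The main obstacle is justifying the commutation $\mathbf{T}^{\mathcal{W}_{D}}[A\rightarrow B]\cong[A\rightarrow\mathbf{T}^{\mathcal{W}_{D}}B]$ intrinsically within the axiomatics of the DG-category $(\mathcal{K},\mathbb{R},\mathbf{T},\alpha)$, rather than invoking it as a truism of SDG. This is the sole substantive ingredient: it is an instance of the general principle that the Weil prolongation $\mathbf{T}^{\mathcal{W}}$ is exponentiation by $\mathcal{D}_{\mathcal{W}}$, made formal through the structure transformation $\alpha$, and it uses the microlinearity of $M$ (assumed in our standing convention) to keep the relevant exponentials in the tractable part of $\mathcal{K}$. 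Once this commutation is in hand, the proposition falls out in a single line of currying, which is consistent with the preamble's remark that the statement is easy to see.
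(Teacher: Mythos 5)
Your proposal is correct and is essentially the argument the paper intends: the paper supplies no proof beyond ``It is easy to see that,'' and the identity is precisely the instance $A=\mathbf{T}^{\mathcal{W}_{D^{p}}}M$, $B=M$, $W=\mathcal{W}_{D}$ of the commutation $\mathbf{T}^{W}\left[  A\rightarrow B\right]  =\left[  A\rightarrow\mathbf{T}^{W}B\right]$, realized by transposing $\mathbf{T}^{\mathcal{W}_{D}}\mathrm{ev}$ exactly as you describe. The one point you flag as the ``main obstacle''---justifying that commutation inside the axiomatics rather than as an SDG truism---is in fact handed to you outright by the DG-category axioms of \cite{nishi-j}, where Weil prolongation is required to commute with exponentiation, so no further work is needed there.
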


\begin{notation}
We write
\[
\underline{\Omega}_{\left(  1\right)  }^{\left(  p,1\right)  }\left(
M\right)
\]
for the equalizer of the exponential transpose of
\begin{align*}
& \left[  \mathbf{T}^{\mathcal{W}_{D^{p}}}M\rightarrow\mathbf{T}%
^{\mathcal{W}_{D}}M\right]  \times\mathbf{T}^{\mathcal{W}_{D^{p}}}M\\
& \underrightarrow{\mathrm{ev}_{\left[  \mathbf{T}^{\mathcal{W}_{D^{p}}%
}M\rightarrow\mathbf{T}^{\mathcal{W}_{D}}M\right]  \times\mathbf{T}%
^{\mathcal{W}_{D^{p}}}M}}\\
& \mathbf{T}^{\mathcal{W}_{D}}M\\
& \underrightarrow{\alpha_{\mathcal{W}_{1\rightarrow D}}}\\
& M
\end{align*}
and that of
\begin{align*}
& \left[  \mathbf{T}^{\mathcal{W}_{D^{p}}}M\rightarrow\mathbf{T}%
^{\mathcal{W}_{D}}M\right]  \times\mathbf{T}^{\mathcal{W}_{D^{p}}}M\\
& \underrightarrow{\pi_{\mathbf{T}^{\mathcal{W}_{D^{p}}}M}^{\left[
\mathbf{T}^{\mathcal{W}_{D^{p}}}M\rightarrow\mathbf{T}^{\mathcal{W}_{D}%
}M\right]  \times\mathbf{T}^{\mathcal{W}_{D^{p}}}M}}\\
& \mathbf{T}^{\mathcal{W}_{D^{p}}}M\\
& \underrightarrow{\alpha_{\mathcal{W}_{1\rightarrow D}}}\\
& M
\end{align*}
while we write
\[
\overline{\Omega}_{\left(  1\right)  }^{\left(  p,1\right)  }\left(  M\right)
\]
for the equalizer of the exponential transpose of
\begin{align*}
& \mathbf{T}^{\mathcal{W}_{D}}\left[  \mathbf{T}^{\mathcal{W}_{D^{p}}%
}M\rightarrow M\right]  \times\mathbf{T}^{\mathcal{W}_{D^{p}}}M\\
& \underrightarrow{\alpha_{\mathcal{W}_{1\rightarrow D}}\left(  \left[
\mathbf{T}^{\mathcal{W}_{D^{p}}}M\rightarrow M\right]  \right)  }\\
& \left[  \mathbf{T}^{\mathcal{W}_{D^{p}}}M\rightarrow M\right]
\times\mathbf{T}^{\mathcal{W}_{D^{p}}}M\\
& \underrightarrow{\mathrm{ev}_{\left[  \mathbf{T}^{\mathcal{W}_{D^{p}}%
}M\rightarrow M\right]  \times\mathbf{T}^{\mathcal{W}_{D^{p}}}M}}\\
& M
\end{align*}
and that of
\begin{align*}
& \mathbf{T}^{\mathcal{W}_{D}}\left[  \mathbf{T}^{\mathcal{W}_{D^{p}}%
}M\rightarrow M\right]  \times\mathbf{T}^{\mathcal{W}_{D^{p}}}M\\
& \underrightarrow{\pi_{\mathbf{T}^{\mathcal{W}_{D^{p}}}M}^{\mathbf{T}%
^{\mathcal{W}_{D}}\left[  \mathbf{T}^{\mathcal{W}_{D^{p}}}M\rightarrow
M\right]  \times\mathbf{T}^{\mathcal{W}_{D^{p}}}M}}\\
& \mathbf{T}^{\mathcal{W}_{D^{p}}}M\\
& \underrightarrow{\alpha_{\mathcal{W}_{1\rightarrow D^{p}}}\left(  M\right)
}\\
& M
\end{align*}

\end{notation}

It is easy to see that

\begin{proposition}
\label{t4.2}We have
\[
\underline{\Omega}_{\left(  1\right)  }^{\left(  p,1\right)  }\left(
M\right)  =\overline{\Omega}_{\left(  1\right)  }^{\left(  p,1\right)
}\left(  M\right)
\]

\end{proposition}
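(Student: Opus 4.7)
The plan is to reduce the statement to Proposition \ref{t4.1} by showing that, under the canonical exponential-adjunction isomorphism
\[
\left[  \mathbf{T}^{\mathcal{W}_{D^{p}}}M\rightarrow\mathbf{T}^{\mathcal{W}_{D}}M\right]  \;\cong\;\mathbf{T}^{\mathcal{W}_{D}}\left[  \mathbf{T}^{\mathcal{W}_{D^{p}}}M\rightarrow M\right]
\]
furnished by Proposition \ref{t4.1}, the two defining pairs of parallel morphisms correspond term by term. Since an equalizer is preserved under any isomorphism of its parallel pair, this forces the two equalizers $\underline{\Omega}_{(1)}^{(p,1)}(M)$ and $\overline{\Omega}_{(1)}^{(p,1)}(M)$ to coincide.

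First I would dispose of the \emph{second} legs of the two equalizers, which are visibly identical: each factors as the projection onto the $\mathbf{T}^{\mathcal{W}_{D^{p}}}M$ coordinate followed by the canonical base-point morphism $\alpha_{\mathcal{W}_{1\rightarrow D^{p}}}(M)\colon\mathbf{T}^{\mathcal{W}_{D^{p}}}M\rightarrow M$. This second projection is independent of the first coordinate, so the isomorphism of Proposition \ref{t4.1} applied only to the first factor carries one second leg to the other on the nose.

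The substantive step is to match the \emph{first} legs. Writing $\omega$ for a generalized element of $\underline{\Omega}_{(0)}^{(p,1)}(M)$ and $\widetilde{\omega}$ for its transpose in $\overline{\Omega}_{(0)}^{(p,1)}(M)$, and taking $X$ a generalized element of $\mathbf{T}^{\mathcal{W}_{D^{p}}}M$, I must establish
\[
\alpha_{\mathcal{W}_{1\rightarrow D}}(M)\bigl(\mathrm{ev}(\omega,X)\bigr)\;=\;\mathrm{ev}\bigl(\alpha_{\mathcal{W}_{1\rightarrow D}}([\mathbf{T}^{\mathcal{W}_{D^{p}}}M\rightarrow M])(\widetilde{\omega}),\,X\bigr).
\]
This is a naturality/adjointness identity: the exponential transpose interchanges post-composition by $\alpha_{\mathcal{W}_{1\rightarrow D}}(M)$ with application of $\alpha_{\mathcal{W}_{1\rightarrow D}}$ at the exponential object $[\mathbf{T}^{\mathcal{W}_{D^{p}}}M\rightarrow M]$, because $\alpha_{\mathcal{W}_{1\rightarrow D}}$ is natural in its target object and $\mathbf{T}^{\mathcal{W}_{D}}(-)=(-)^{\mathcal{D}_{\mathcal{W}_{D}}}$ is a right adjoint. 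Concretely, both sides compute the value at $X$ of the underlying function in $[\mathbf{T}^{\mathcal{W}_{D^{p}}}M\rightarrow M]$ obtained by collapsing the infinitesimal $D$-curve $\widetilde{\omega}$ to its base point.

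The sole obstacle is the bookkeeping through two layers of nested exponentials; no genuinely new categorical input is needed beyond what already underwrites Proposition \ref{t4.1}. Once the first legs are matched, the equalizer of the first parallel pair transports along the isomorphism of Proposition \ref{t4.1} to the equalizer of the second, yielding the claimed equality.
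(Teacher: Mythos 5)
Your argument is correct and is precisely the one the paper intends: the paper offers no written proof (it prefaces the proposition with ``It is easy to see that''), and the expected justification is exactly your transport of the two parallel pairs along the identification of Proposition \ref{t4.1}, using naturality of the exponential adjunction to match the first legs and the observation that the second legs (projection followed by the base-point morphism $\alpha_{\mathcal{W}_{1\rightarrow D^{p}}}(M)$) are literally the same. Nothing further is needed.
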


\begin{notation}
We write
\[
\underline{\Omega}_{\left(  12\right)  }^{\left(  p,1\right)  }\left(
M\right)
\]
for the intersection of $\underline{\Omega}_{\left(  1\right)  }^{\left(
p,1\right)  }\left(  M\right)  $\ and the equalizer of the exponential
transpose of
\begin{align*}
& \left[  \mathbf{T}^{\mathcal{W}_{D^{p}}}M\rightarrow\mathbf{T}%
^{\mathcal{W}_{D}}M\right]  \times\mathbf{T}^{\mathcal{W}_{D^{p}}}%
M\times\mathbb{R}\\
& \underrightarrow{\mathrm{id}_{\left[  \mathbf{T}^{\mathcal{W}_{D^{p}}%
}M\rightarrow\mathbf{T}^{\mathcal{W}_{D}}M\right]  }\times\left(  \underset
{i}{\cdot}\right)  _{\mathbf{T}^{\mathcal{W}_{D^{p}}}M\times\mathbb{R}}}\\
& \left[  \mathbf{T}^{\mathcal{W}_{D^{p}}}M\rightarrow\mathbf{T}%
^{\mathcal{W}_{D}}M\right]  \times\mathbf{T}^{\mathcal{W}_{D^{p}}}M\\
& \underrightarrow{\mathrm{ev}_{\left[  \mathbf{T}^{\mathcal{W}_{D^{p}}%
}M\rightarrow\mathbf{T}^{\mathcal{W}_{D}}M\right]  \times\mathbf{T}%
^{\mathcal{W}_{D^{p}}}M}}\\
& \mathbf{T}^{\mathcal{W}_{D}}M
\end{align*}
and that of
\begin{align*}
& \left[  \mathbf{T}^{\mathcal{W}_{D^{p}}}M\rightarrow\mathbf{T}%
^{\mathcal{W}_{D}}M\right]  \times\mathbf{T}^{\mathcal{W}_{D^{p}}}%
M\times\mathbb{R}\\
& \underrightarrow{\mathrm{ev}_{\left[  \mathbf{T}^{\mathcal{W}_{D^{p}}%
}M\rightarrow\mathbf{T}^{\mathcal{W}_{D}}M\right]  \times\mathbf{T}%
^{\mathcal{W}_{D^{p}}}M}\times\mathrm{id}_{\mathbb{R}}}\\
& \mathbf{T}^{\mathcal{W}_{D}}M\times\mathbb{R}\\
& \underrightarrow{\left(  \cdot\right)  _{\mathbf{T}^{\mathcal{W}_{D}}%
M\times\mathbb{R}}}\\
& \mathbf{T}^{\mathcal{W}_{D}}M
\end{align*}
where

\begin{enumerate}
\item The morphism
\[
\left(  \underset{i}{\cdot}\right)  _{\mathbf{T}^{\mathcal{W}_{D^{p}}}%
M\times\mathbb{R}}:\mathbf{T}^{\mathcal{W}_{D^{p}}}M\times\mathbb{R}%
\rightarrow\mathbf{T}^{\mathcal{W}_{D^{p}}}M
\]
\ stands for the morphism whose exponential transpose is
\begin{align*}
& \mathbf{T}^{\mathcal{W}_{D^{p}}}M\\
& \underrightarrow{\alpha_{\mathcal{W}_{\left(  d_{1},...,d_{p},a\right)  \in
D^{p}\times\mathbb{R}\rightarrow\left(  d_{1},...,ad_{i},...,d_{p}\right)  \in
D^{p}}}\left(  M\right)  }\\
& \mathbf{T}^{\mathcal{W}_{D^{p}\times\mathbb{R}}}M\\
& =\left[  \mathbb{R}\rightarrow\mathbf{T}^{\mathcal{W}_{D^{p}}}M\right]
\end{align*}

\item The morphism
\[
\left(  \cdot\right)  _{\mathbf{T}^{\mathcal{W}_{D}}M\times\mathbb{R}%
}:\mathbf{T}^{\mathcal{W}_{D}}M\times\mathbb{R}\rightarrow\mathbf{T}%
^{\mathcal{W}_{D}}M
\]
\ stands for the morphism whose exponential transpose is
\begin{align*}
& \mathbf{T}^{\mathcal{W}_{D}}M\\
& \underrightarrow{\alpha_{\mathcal{W}_{\left(  d,a\right)  \in D\times
\mathbb{R}\rightarrow ad\in D}}\left(  M\right)  }\\
& \mathbf{T}^{\mathcal{W}_{D\times\mathbb{R}}}M\\
& =\left[  \mathbb{R}\rightarrow\mathbf{T}^{\mathcal{W}_{D}}M\right]
\end{align*}

\end{enumerate}
\end{notation}

\begin{notation}
We write
\[
\overline{\Omega}_{\left(  12\right)  }^{\left(  p,1\right)  }\left(
M\right)
\]
for the intersection of $\overline{\Omega}_{\left(  1\right)  }^{\left(
p,1\right)  }\left(  M\right)  $\ and the equalizer of the exponential
transpose of
\begin{align*}
& \mathbf{T}^{\mathcal{W}_{D}}\left[  \mathbf{T}^{\mathcal{W}_{D^{p}}%
}M\rightarrow M\right]  \times\mathbf{T}^{\mathcal{W}_{D^{p}}}M\times
\mathbb{R}\\
& \underrightarrow{\mathrm{id}_{\mathbf{T}^{\mathcal{W}_{D}}\left[
\mathbf{T}^{\mathcal{W}_{D^{p}}}M\rightarrow M\right]  }\times\left(
\underset{i}{\cdot}\right)  _{\mathbf{T}^{\mathcal{W}_{D^{p}}}M\times
\mathbb{R}}}\\
& \mathbf{T}^{\mathcal{W}_{D}}\left[  \mathbf{T}^{\mathcal{W}_{D^{p}}%
}M\rightarrow M\right]  \times\mathbf{T}^{\mathcal{W}_{D^{p}}}M\\
& \underrightarrow{\mathrm{id}_{\mathbf{T}^{\mathcal{W}_{D}}\left[
\mathbf{T}^{\mathcal{W}_{D^{p}}}M\rightarrow M\right]  }\times\alpha
_{\mathcal{W}_{1\rightarrow D}}\left(  \mathbf{T}^{\mathcal{W}_{D^{p}}%
}M\right)  }\\
& \mathbf{T}^{\mathcal{W}_{D}}\left[  \mathbf{T}^{\mathcal{W}_{D^{p}}%
}M\rightarrow M\right]  \times\mathbf{T}^{\mathcal{W}_{D}}\left(
\mathbf{T}^{\mathcal{W}_{D^{p}}}M\right) \\
& =\mathbf{T}^{\mathcal{W}_{D}}\left(  \left[  \mathbf{T}^{\mathcal{W}_{D^{p}%
}}M\rightarrow M\right]  \times\mathbf{T}^{\mathcal{W}_{D^{p}}}M\right) \\
& \underrightarrow{\mathbf{T}^{\mathcal{W}_{D}}\mathrm{ev}_{\left[
\mathbf{T}^{\mathcal{W}_{D^{p}}}M\rightarrow M\right]  \times\mathbf{T}%
^{\mathcal{W}_{D^{p}}}M}}\\
& \mathbf{T}^{\mathcal{W}_{D}}M
\end{align*}
and that of
\begin{align*}
& \mathbf{T}^{\mathcal{W}_{D}}\left[  \mathbf{T}^{\mathcal{W}_{D^{p}}%
}M\rightarrow M\right]  \times\mathbf{T}^{\mathcal{W}_{D^{p}}}M\times
\mathbb{R}\\
& \underrightarrow{\mathrm{id}_{\mathbf{T}^{\mathcal{W}_{D}}\left[
\mathbf{T}^{\mathcal{W}_{D^{p}}}M\rightarrow M\right]  }\times\alpha
_{\mathcal{W}_{1\rightarrow D}}\left(  \mathbf{T}^{\mathcal{W}_{D^{p}}%
}M\right)  \times\mathrm{id}_{\mathbb{R}}}\\
& \mathbf{T}^{\mathcal{W}_{D}}\left[  \mathbf{T}^{\mathcal{W}_{D^{p}}%
}M\rightarrow M\right]  \times\mathbf{T}^{\mathcal{W}_{D}}\left(
\mathbf{T}^{\mathcal{W}_{D^{p}}}M\right)  \times\mathbb{R}\\
& \underrightarrow{\mathbf{T}^{\mathcal{W}_{D}}\mathrm{ev}_{M}^{\left[
\mathbf{T}^{\mathcal{W}_{D^{p}}}M\rightarrow M\right]  \times\mathbf{T}%
^{\mathcal{W}_{D^{p}}}M}\times\mathrm{id}_{\mathbb{R}}}\\
& \mathbf{T}^{\mathcal{W}_{D}}M\times\mathrm{id}_{\mathbb{R}}\\
& \underrightarrow{\left(  \cdot\right)  _{\mathbf{T}^{\mathcal{W}_{D}}%
M\times\mathbb{R}}}\\
& \mathbf{T}^{\mathcal{W}_{D}}M
\end{align*}

\end{notation}

It is easy to see that

\begin{proposition}
\label{t4.3}We have
\[
\underline{\Omega}_{\left(  12\right)  }^{\left(  p,1\right)  }\left(
M\right)  =\overline{\Omega}_{\left(  12\right)  }^{\left(  p,1\right)
}\left(  M\right)
\]

\end{proposition}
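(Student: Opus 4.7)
The plan is to bootstrap from Proposition~\ref{t4.2}, which already identifies the two ambient objects cut out by condition~1. What remains is to show that the further equalizer encoding condition~2 (rescaling of the $i$-th argument) slices out the same subobject on each side. Since the scalar action $(\cdot)_{\mathbf{T}^{\mathcal{W}_{D}}M\times\mathbb{R}}$ on the target and the twist $(\underset{i}{\cdot})_{\mathbf{T}^{\mathcal{W}_{D^{p}}}M\times\mathbb{R}}$ on the source are identical ingredients on both sides, the real content is to show that the two evaluation morphisms---the direct $\mathrm{ev}$ on the underlined side and the composite $\mathbf{T}^{\mathcal{W}_{D}}(\mathrm{ev})\circ(\mathrm{id}\times\alpha_{\mathcal{W}_{1\rightarrow D}})$ on the overlined side---are intertwined by the canonical isomorphism of Proposition~\ref{t4.1}.

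I would carry this out in two steps. First, I make the intertwining explicit: under the exponential-law isomorphism $[\mathbf{T}^{\mathcal{W}_{D^{p}}}M\rightarrow\mathbf{T}^{\mathcal{W}_{D}}M]\cong\mathbf{T}^{\mathcal{W}_{D}}[\mathbf{T}^{\mathcal{W}_{D^{p}}}M\rightarrow M]$ of Proposition~\ref{t4.1}, the underlined evaluation corresponds precisely to the overlined one after pairing with $\mathrm{id}_{\mathbf{T}^{\mathcal{W}_{D^{p}}}M}$. This is the very compatibility that made Propositions~\ref{t4.1} and~\ref{t4.2} go through, and is a routine consequence of the naturality of $\alpha$ combined with the $[-,-]$-$\times$ adjunction. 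Second, I plug this intertwining into the two parallel pairs of arrows defining the condition-2 equalizer: the source twist $\mathrm{id}\times(\underset{i}{\cdot})_{\mathbf{T}^{\mathcal{W}_{D^{p}}}M\times\mathbb{R}}$ and the target rescaling $(\cdot)_{\mathbf{T}^{\mathcal{W}_{D}}M\times\mathbb{R}}$ are the same ingredients on both sides, so the two pairs of parallel arrows are carried to each other, up to the iso. Equalizers of isomorphic pairs of arrows are canonically isomorphic, and intersecting with the already-identified $\underline{\Omega}_{(1)}^{(p,1)}(M)=\overline{\Omega}_{(1)}^{(p,1)}(M)$ yields the desired equality.

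The main obstacle is a bookkeeping one: verifying that $\alpha_{\mathcal{W}_{(d,a)\mapsto ad}}$ on the target side and $\alpha_{\mathcal{W}_{1\rightarrow D}}$ inserted on the source side fit together so that the two rescaling operations really do correspond under the iso of Proposition~\ref{t4.1}. This boils down to the naturality of $\alpha$ applied to the obvious commutative square of Weil-algebra homomorphisms relating $D$, $D\times\mathbb{R}$, $D^{p}$ and $D^{p}\times\mathbb{R}$; once that square is identified, the remainder is formal diagram chasing, making the proposition essentially a corollary of Propositions~\ref{t4.1} and~\ref{t4.2}.
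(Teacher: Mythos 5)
Your proposal is correct and is essentially the argument the paper has in mind: the paper offers no written proof, prefacing the proposition only with ``It is easy to see that,'' and the intended justification is exactly your reduction to Propositions \ref{t4.1} and \ref{t4.2} together with the observation that the two parallel pairs defining the extra homogeneity equalizer correspond under the canonical identification, so the equalizers coincide.
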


\begin{notation}
We write
\[
\underline{\Omega}_{\left(  13\right)  }^{\left(  p,1\right)  }\left(
M\right)
\]
for the intersection of $\underline{\Omega}_{\left(  1\right)  }^{\left(
p,1\right)  }\left(  M\right)  $\ and the equalizers, with all $\sigma
\in\mathbb{S}_{p}$, of the exponential transpose of
\begin{align*}
& \left[  \mathbf{T}^{\mathcal{W}_{D^{p}}}M\rightarrow\mathbf{T}%
^{\mathcal{W}_{D}}M\right]  \times\mathbf{T}^{\mathcal{W}_{D^{p}}}M\\
& \underrightarrow{\mathrm{id}_{\left[  \mathbf{T}^{\mathcal{W}_{D^{p}}%
}M\rightarrow\mathbf{T}^{\mathcal{W}_{D}}M\right]  }\times\left(
\cdot^{\sigma}\right)  _{\mathbf{T}^{\mathcal{W}_{D^{p}}}M}}\\
& \left[  \mathbf{T}^{\mathcal{W}_{D^{p}}}M\rightarrow\mathbf{T}%
^{\mathcal{W}_{D}}M\right]  \times\mathbf{T}^{\mathcal{W}_{D^{p}}}M\\
& \underrightarrow{\mathrm{ev}_{\left[  \mathbf{T}^{\mathcal{W}_{D^{p}}%
}M\rightarrow\mathbf{T}^{\mathcal{W}_{D}}M\right]  \times\mathbf{T}%
^{\mathcal{W}_{D^{p}}}M}}\\
& \mathbf{T}^{\mathcal{W}_{D}}M
\end{align*}
and that of
\begin{align*}
& \left[  \mathbf{T}^{\mathcal{W}_{D^{p}}}M\rightarrow\mathbf{T}%
^{\mathcal{W}_{D}}M\right]  \times\mathbf{T}^{\mathcal{W}_{D^{p}}}M\\
& \underrightarrow{\mathrm{ev}_{\left[  \mathbf{T}^{\mathcal{W}_{D^{p}}%
}M\rightarrow\mathbf{T}^{\mathcal{W}_{D}}M\right]  \times\mathbf{T}%
^{\mathcal{W}_{D^{p}}}M}}\\
& \mathbf{T}^{\mathcal{W}_{D}}M\\
& \underrightarrow{\varepsilon_{\sigma}}\\
& \mathbf{T}^{\mathcal{W}_{D}}M
\end{align*}
where the morphism
\[
\left(  \cdot^{\sigma}\right)  _{\mathbf{T}^{\mathcal{W}_{D^{p}}}M}%
:\mathbf{T}^{\mathcal{W}_{D^{p}}}M\rightarrow\mathbf{T}^{\mathcal{W}_{D^{p}}}M
\]
stands for the morphism
\begin{align*}
& \mathbf{T}^{\mathcal{W}_{D^{p}}}M\\
& \underrightarrow{\alpha_{\mathcal{W}_{\left(  d_{1},...,d_{p}\right)  \in
D^{p}\mapsto\left(  d_{\sigma\left(  1\right)  },...,d_{\sigma\left(
p\right)  }\right)  \in D^{p}}}\left(  M\right)  }\\
& \mathbf{T}^{\mathcal{W}_{D^{p}}}M
\end{align*}
We write
\[
\overline{\Omega}_{\left(  13\right)  }^{\left(  p,1\right)  }\left(
M\right)
\]
for the intersection of $\underline{\Omega}_{\left(  1\right)  }^{\left(
p,1\right)  }\left(  M\right)  $\ and the equalizers, with all $\sigma
\in\mathbb{S}_{p}$, of the exponential transpose of
\begin{align*}
& \mathbf{T}^{\mathcal{W}_{D}}\left[  \mathbf{T}^{\mathcal{W}_{D^{p}}%
}M\rightarrow M\right]  \times\mathbf{T}^{\mathcal{W}_{D^{p}}}M\\
& \underrightarrow{\mathrm{id}_{\mathbf{T}^{\mathcal{W}_{D}}\left[
\mathbf{T}^{\mathcal{W}_{D^{p}}}M\rightarrow M\right]  }\times\left(
\cdot^{\sigma}\right)  _{\mathbf{T}^{\mathcal{W}_{D^{p}}}M}}\\
& \mathbf{T}^{\mathcal{W}_{D}}\left[  \mathbf{T}^{\mathcal{W}_{D^{p}}%
}M\rightarrow M\right]  \times\mathbf{T}^{\mathcal{W}_{D^{p}}}M\\
& \underrightarrow{\mathrm{id}_{\mathbf{T}^{\mathcal{W}_{D}}\left[
\mathbf{T}^{\mathcal{W}_{D^{p}}}M\rightarrow M\right]  }\times\alpha
_{\mathcal{W}_{1\rightarrow D}}\left(  \mathbf{T}^{\mathcal{W}_{D^{p}}%
}M\right)  }\\
& \mathbf{T}^{\mathcal{W}_{D}}\left[  \mathbf{T}^{\mathcal{W}_{D^{p}}%
}M\rightarrow M\right]  \times\mathbf{T}^{\mathcal{W}_{D}}\left(
\mathbf{T}^{\mathcal{W}_{D^{p}}}M\right) \\
& =\mathbf{T}^{\mathcal{W}_{D}}\left(  \left[  \mathbf{T}^{\mathcal{W}_{D^{p}%
}}M\rightarrow M\right]  \times\mathbf{T}^{\mathcal{W}_{D^{p}}}M\right) \\
& \underrightarrow{\mathbf{T}^{\mathcal{W}_{D}}\mathrm{ev}_{\left[
\mathbf{T}^{\mathcal{W}_{D^{p}}}M\rightarrow M\right]  \times\mathbf{T}%
^{\mathcal{W}_{D^{p}}}M}}\\
& \mathbf{T}^{\mathcal{W}_{D}}M
\end{align*}
and that of
\begin{align*}
& \mathbf{T}^{\mathcal{W}_{D}}\left[  \mathbf{T}^{\mathcal{W}_{D^{p}}%
}M\rightarrow M\right]  \times\mathbf{T}^{\mathcal{W}_{D^{p}}}M\\
& \underrightarrow{\mathrm{id}_{\mathbf{T}^{\mathcal{W}_{D}}\left[
\mathbf{T}^{\mathcal{W}_{D^{p}}}M\rightarrow M\right]  }\times\alpha
_{\mathcal{W}_{1\rightarrow D}}\left(  \mathbf{T}^{\mathcal{W}_{D^{p}}%
}M\right)  }\\
& \mathbf{T}^{\mathcal{W}_{D}}\left[  \mathbf{T}^{\mathcal{W}_{D^{p}}%
}M\rightarrow M\right]  \times\mathbf{T}^{\mathcal{W}_{D}}\left(
\mathbf{T}^{\mathcal{W}_{D^{p}}}M\right) \\
& \underrightarrow{\mathbf{T}^{\mathcal{W}_{D}}\mathrm{ev}_{\left[
\mathbf{T}^{\mathcal{W}_{D^{p}}}M\rightarrow M\right]  \times\mathbf{T}%
^{\mathcal{W}_{D^{p}}}M}}\\
& \mathbf{T}^{\mathcal{W}_{D}}M\\
& \underrightarrow{\varepsilon_{\sigma}}\\
& \mathbf{T}^{\mathcal{W}_{D}}M
\end{align*}

\end{notation}

It is easy to see that

\begin{proposition}
\label{t4.4}We have
\[
\underline{\Omega}_{\left(  13\right)  }^{\left(  p,1\right)  }\left(
M\right)  =\overline{\Omega}_{\left(  13\right)  }^{\left(  p,1\right)
}\left(  M\right)
\]

\end{proposition}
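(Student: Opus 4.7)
The plan is to reduce Proposition \ref{t4.4} to the identifications already established in Propositions \ref{t4.1} and \ref{t4.2}, so that the only remaining task is to check that the anti-symmetry equalizer takes the ``same'' form on both sides of the underlined/overlined dichotomy. Since both $\underline{\Omega}_{(13)}^{(p,1)}(M)$ and $\overline{\Omega}_{(13)}^{(p,1)}(M)$ are defined as intersections of $\underline{\Omega}_{(1)}^{(p,1)}(M)$ (resp.\ $\overline{\Omega}_{(1)}^{(p,1)}(M)$) with equalizers indexed by $\sigma\in\mathbb{S}_{p}$, and since Proposition \ref{t4.2} already hands us an equality of the ambient subobjects, it suffices to show that, for each fixed $\sigma$, the defining pair of morphisms on the underlined side corresponds to the defining pair on the overlined side under the canonical identification used in Proposition \ref{t4.1}.

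The central observation is that the canonical isomorphism
\[
\left[\mathbf{T}^{\mathcal{W}_{D^{p}}}M\rightarrow\mathbf{T}^{\mathcal{W}_{D}}M\right]\;\cong\;\mathbf{T}^{\mathcal{W}_{D}}\left[\mathbf{T}^{\mathcal{W}_{D^{p}}}M\rightarrow M\right]
\]
underlying Proposition \ref{t4.1} is the composite of two exponential transpositions together with the naturality of $\alpha_{\mathcal{W}_{1\rightarrow D}}$. Under this isomorphism, the evaluation morphism $\mathrm{ev}_{[\mathbf{T}^{\mathcal{W}_{D^{p}}}M\rightarrow\mathbf{T}^{\mathcal{W}_{D}}M]\times\mathbf{T}^{\mathcal{W}_{D^{p}}}M}$ translates precisely into the composite
\[
\mathbf{T}^{\mathcal{W}_{D}}\mathrm{ev}_{[\mathbf{T}^{\mathcal{W}_{D^{p}}}M\rightarrow M]\times\mathbf{T}^{\mathcal{W}_{D^{p}}}M}\circ\bigl(\mathrm{id}\times\alpha_{\mathcal{W}_{1\rightarrow D}}(\mathbf{T}^{\mathcal{W}_{D^{p}}}M)\bigr),
\]
which is exactly the ``evaluation block'' appearing in the overlined definition of $\overline{\Omega}_{(13)}^{(p,1)}(M)$.

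With this correspondence in hand, I would write down the two equalizer diagrams on the underlined and overlined sides side by side, and verify component by component that the first morphism in each pair matches and likewise the second. The permutation action $(\cdot^{\sigma})_{\mathbf{T}^{\mathcal{W}_{D^{p}}}M}$ appears identically in both, acting only on the second factor and being by definition induced by $\alpha_{\mathcal{W}_{(d_{1},\ldots,d_{p})\mapsto(d_{\sigma(1)},\ldots,d_{\sigma(p)})}}(M)$; it therefore commutes with $\alpha_{\mathcal{W}_{1\rightarrow D}}$ by naturality of $\alpha$. The scalar multiplication by $\varepsilon_{\sigma}$ is a morphism into $\mathbf{T}^{\mathcal{W}_{D}}M$ and is identical on both sides. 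Hence, for each $\sigma$, the two equalizer subobjects agree, and intersecting over all $\sigma\in\mathbb{S}_{p}$ inside the common $\underline{\Omega}_{(1)}^{(p,1)}(M)=\overline{\Omega}_{(1)}^{(p,1)}(M)$ yields the desired identity.

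The main obstacle, such as it is, is purely bureaucratic: carefully tracking the exponential transposes so that the two pairs of parallel arrows defining the equalizers really are carried to each other, with no hidden twist, by the Proposition \ref{t4.1} isomorphism. Once naturality of $\alpha_{\mathcal{W}_{1\rightarrow D}}$ with respect to $(\cdot^{\sigma})_{\mathbf{T}^{\mathcal{W}_{D^{p}}}M}$ is invoked, no genuine new computation is required beyond what is already implicit in Propositions \ref{t4.1}--\ref{t4.3}.
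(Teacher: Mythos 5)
Your proposal is correct and follows exactly the route the paper implicitly intends: the paper offers no proof, prefacing the proposition with ``It is easy to see that,'' and your argument --- reducing to Proposition \ref{t4.2} for the ambient subobjects and then matching the $\sigma$-indexed equalizer pairs under the transposition isomorphism of Proposition \ref{t4.1}, using naturality of $\alpha$ against $(\cdot^{\sigma})_{\mathbf{T}^{\mathcal{W}_{D^{p}}}M}$ --- is the standard unwinding of that claim.
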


\begin{notation}
We write
\[
\underline{\Omega}_{\left(  123\right)  }^{\left(  p,1\right)  }\left(
M\right)
\]
for the intersection of $\underline{\Omega}_{\left(  12\right)  }^{\left(
p,1\right)  }\left(  M\right)  $\ and $\underline{\Omega}_{\left(  13\right)
}^{\left(  p,1\right)  }\left(  M\right)  $, while we write
\[
\overline{\Omega}_{\left(  123\right)  }^{\left(  p,1\right)  }\left(
M\right)
\]
for the intersection of $\overline{\Omega}_{\left(  12\right)  }^{\left(
p,1\right)  }\left(  M\right)  $\ and $\overline{\Omega}_{\left(  13\right)
}^{\left(  p,1\right)  }\left(  M\right)  $.
\end{notation}

It is easy to see that

\begin{proposition}
\label{t4.5}We have
\[
\underline{\Omega}_{\left(  123\right)  }^{\left(  p,1\right)  }\left(
M\right)  =\overline{\Omega}_{\left(  123\right)  }^{\left(  p,1\right)
}\left(  M\right)
\]

\end{proposition}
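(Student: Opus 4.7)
The plan is to reduce the statement directly to the already-established Propositions \ref{t4.3} and \ref{t4.4}, treating the equality as a purely formal consequence of the fact that the intersection operation preserves equality of subobjects once a common ambient object is fixed.

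First I would make explicit the ambient in which both intersections live. By Proposition \ref{t4.1} we have an identification $\underline{\Omega}_{(0)}^{(p,1)}(M)=\overline{\Omega}_{(0)}^{(p,1)}(M)$, realised by the exponential transpose that converts a morphism $\mathbf{T}^{\mathcal{W}_{D^{p}}}M\rightarrow\mathbf{T}^{\mathcal{W}_{D}}M$ into a morphism $\mathbf{T}^{\mathcal{W}_{D^{p}}}M\rightarrow M$ parametrised by $\mathcal{W}_{D}$. Under this identification, Proposition \ref{t4.2} says that the ``Dirac'' subobject $\underline{\Omega}_{(1)}^{(p,1)}(M)$ equals $\overline{\Omega}_{(1)}^{(p,1)}(M)$, which gives us a common ambient object inside which all four of $\underline{\Omega}_{(12)}^{(p,1)}(M)$, $\overline{\Omega}_{(12)}^{(p,1)}(M)$, $\underline{\Omega}_{(13)}^{(p,1)}(M)$, $\overline{\Omega}_{(13)}^{(p,1)}(M)$ sit as subobjects.

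Next, I invoke Proposition \ref{t4.3} to obtain the equality $\underline{\Omega}_{(12)}^{(p,1)}(M)=\overline{\Omega}_{(12)}^{(p,1)}(M)$ as subobjects of this common ambient, and Proposition \ref{t4.4} to obtain $\underline{\Omega}_{(13)}^{(p,1)}(M)=\overline{\Omega}_{(13)}^{(p,1)}(M)$ likewise. Since intersection of subobjects depends only on the subobjects themselves and not on their presentation, it follows immediately that
\[
\underline{\Omega}_{(12)}^{(p,1)}(M)\cap\underline{\Omega}_{(13)}^{(p,1)}(M)=\overline{\Omega}_{(12)}^{(p,1)}(M)\cap\overline{\Omega}_{(13)}^{(p,1)}(M),
\]
which is precisely the desired equality $\underline{\Omega}_{(123)}^{(p,1)}(M)=\overline{\Omega}_{(123)}^{(p,1)}(M)$ by the defining notation.

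There is really no substantive obstacle here: the only bookkeeping is to verify that the exponential transpose used in Proposition \ref{t4.1} is compatible with the two further equalizer conditions (multi-linearity and anti-symmetry), so that the pairwise equalities of Propositions \ref{t4.3} and \ref{t4.4} are equalities of subobjects of literally the same ambient. This compatibility is automatic because both multi-linearity and anti-symmetry conditions are formulated by pre-composing evaluation with the same operations on $\mathbf{T}^{\mathcal{W}_{D^{p}}}M$, so the transpose relating the underline and overline descriptions carries one pair of equalizers onto the other. With this observed, the proposition follows in a single line.
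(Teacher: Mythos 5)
Your proposal is correct and is exactly the argument the paper intends: since $\underline{\Omega}_{\left(123\right)}^{\left(p,1\right)}\left(M\right)$ and $\overline{\Omega}_{\left(123\right)}^{\left(p,1\right)}\left(M\right)$ are by definition the intersections $\underline{\Omega}_{\left(12\right)}^{\left(p,1\right)}\left(M\right)\cap\underline{\Omega}_{\left(13\right)}^{\left(p,1\right)}\left(M\right)$ and $\overline{\Omega}_{\left(12\right)}^{\left(p,1\right)}\left(M\right)\cap\overline{\Omega}_{\left(13\right)}^{\left(p,1\right)}\left(M\right)$, the claim follows at once from Propositions \ref{t4.3} and \ref{t4.4} together with Proposition \ref{t4.2} supplying the common ambient. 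The paper gives no written proof (it is stated as ``easy to see''), and your one-line reduction, including the remark that the identification of Proposition \ref{t4.1} is compatible with the further equalizer conditions, is precisely the intended justification.
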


\section{\label{s5}The Jacobi Identity in the Fr\"{o}licher-Nijenhuis Algebra}

\subsection{\label{s5.1}Preparatory Considerations}

Let us begin this subsection by adding the following definition to our lexicon.

\begin{definition}
Given an object $M$\ in the category $\mathcal{K}$ and natural numbers $p,q$,
we define a morphism
\[
\underline{\mathrm{Conv}}_{p,q}^{M}:\left[  \mathbf{T}^{\mathcal{W}_{D^{p}}%
}M\rightarrow M\right]  \times\left[  \mathbf{T}^{\mathcal{W}_{D^{q}}%
}M\rightarrow M\right]  \rightarrow\left[  \mathbf{T}^{\mathcal{W}_{D^{p+q}}%
}M\rightarrow M\right]
\]
in the category $\mathcal{K}$\ to be the exponential transpose of
\begin{align*}
& \left[  \mathbf{T}^{\mathcal{W}_{D^{p}}}M\rightarrow M\right]  \times\left[
\mathbf{T}^{\mathcal{W}_{D^{q}}}M\rightarrow M\right]  \times\mathbf{T}%
^{\mathcal{W}_{D^{p+q}}}M\\
& =\left[  \mathbf{T}^{\mathcal{W}_{D^{p}}}M\rightarrow M\right]
\times\left[  \mathbf{T}^{\mathcal{W}_{D^{q}}}M\rightarrow M\right]
\times\mathbf{T}^{\mathcal{W}_{D^{p}}\otimes_{k}\mathcal{W}_{D^{q}}}M\\
& =\left[  \mathbf{T}^{\mathcal{W}_{D^{p}}}M\rightarrow M\right]
\times\left[  \mathbf{T}^{\mathcal{W}_{D^{q}}}M\rightarrow M\right]
\times\mathbf{T}^{\mathcal{W}_{D^{q}}\otimes_{k}\mathcal{W}_{D^{p}}}M\\
& =\left[  \mathbf{T}^{\mathcal{W}_{D^{p}}}M\rightarrow M\right]
\times\left[  \mathbf{T}^{\mathcal{W}_{D^{q}}}M\rightarrow M\right]
\times\mathbf{T}^{\mathcal{W}_{D^{p}}}\left(  \mathbf{T}^{\mathcal{W}_{D^{q}}%
}M\right) \\
& \underrightarrow{\mathrm{id}_{\left[  \mathbf{T}^{\mathcal{W}_{D^{p}}%
}M\rightarrow M\right]  }\times\alpha_{\mathcal{W}_{D^{p}\rightarrow1}}\left(
\left[  \mathbf{T}^{\mathcal{W}_{D^{q}}}M\rightarrow M\right]  \right)
\times\mathrm{id}_{\mathbf{T}^{\mathcal{W}_{D^{p}}}\left(  \mathbf{T}%
^{\mathcal{W}_{D^{q}}}M\right)  }}\\
& \left[  \mathbf{T}^{\mathcal{W}_{D^{p}}}M\rightarrow M\right]
\times\mathbf{T}^{\mathcal{W}_{D^{p}}}\left[  \mathbf{T}^{\mathcal{W}_{D^{q}}%
}M\rightarrow M\right]  \times\mathbf{T}^{\mathcal{W}_{D^{p}}}\left(
\mathbf{T}^{\mathcal{W}_{D^{q}}}M\right) \\
& =\left[  \mathbf{T}^{\mathcal{W}_{D^{p}}}M\rightarrow M\right]
\times\mathbf{T}^{\mathcal{W}_{D^{p}}}\left(  \left[  \mathbf{T}%
^{\mathcal{W}_{D^{q}}}M\rightarrow M\right]  \times\mathbf{T}^{\mathcal{W}%
_{D^{q}}}M\right) \\
& \underrightarrow{\mathrm{id}_{\left[  \mathbf{T}^{\mathcal{W}_{D^{p}}%
}M\rightarrow M\right]  }\times\mathbf{T}^{\mathcal{W}_{D^{p}}}\mathrm{ev}%
_{\left[  \mathbf{T}^{\mathcal{W}_{D^{q}}}M\rightarrow M\right]
\times\mathbf{T}^{\mathcal{W}_{D^{q}}}M}}\\
& \left[  \mathbf{T}^{\mathcal{W}_{D^{p}}}M\rightarrow M\right]
\times\mathbf{T}^{\mathcal{W}_{D^{p}}}M\\
& \underrightarrow{\mathrm{ev}_{\left[  \mathbf{T}^{\mathcal{W}_{D^{p}}%
}M\rightarrow M\right]  \times\mathbf{T}^{\mathcal{W}_{D^{p}}}M}}\\
& M\text{,}%
\end{align*}
while we define another morphism
\[
\overline{\mathrm{Conv}}_{p,q}^{M}:\left[  \mathbf{T}^{\mathcal{W}_{D^{p}}%
}M\rightarrow M\right]  \times\left[  \mathbf{T}^{\mathcal{W}_{D^{q}}%
}M\rightarrow M\right]  \rightarrow\left[  \mathbf{T}^{\mathcal{W}_{D^{p+q}}%
}M\rightarrow M\right]
\]
in the category $\mathcal{K}$\ to be the exponential transpose of
\begin{align*}
& \left[  \mathbf{T}^{\mathcal{W}_{D^{p}}}M\rightarrow M\right]  \times\left[
\mathbf{T}^{\mathcal{W}_{D^{q}}}M\rightarrow M\right]  \times\mathbf{T}%
^{\mathcal{W}_{D^{p+q}}}M\\
& =\left[  \mathbf{T}^{\mathcal{W}_{D^{p}}}M\rightarrow M\right]
\times\left[  \mathbf{T}^{\mathcal{W}_{D^{q}}}M\rightarrow M\right]
\times\mathbf{T}^{\mathcal{W}_{D^{p}}\otimes_{k}\mathcal{W}_{D^{q}}}M\\
& =\left[  \mathbf{T}^{\mathcal{W}_{D^{q}}}M\rightarrow M\right]
\times\left[  \mathbf{T}^{\mathcal{W}_{D^{p}}}M\rightarrow M\right]
\times\mathbf{T}^{\mathcal{W}_{D^{q}}}\left(  \mathbf{T}^{\mathcal{W}_{D^{p}}%
}M\right) \\
& \underrightarrow{\mathrm{id}_{\left[  \mathbf{T}^{\mathcal{W}_{D^{q}}%
}M\rightarrow M\right]  }\times\alpha_{\mathcal{W}_{D^{q}\rightarrow1}}\left(
\left[  \mathbf{T}^{\mathcal{W}_{D^{p}}}M\rightarrow M\right]  \right)
\times\mathrm{id}_{\mathbf{T}^{\mathcal{W}_{D^{q}}}\left(  \mathbf{T}%
^{\mathcal{W}_{D^{p}}}M\right)  }}\\
& \left[  \mathbf{T}^{\mathcal{W}_{D^{q}}}M\rightarrow M\right]
\times\mathbf{T}^{\mathcal{W}_{D^{q}}}\left[  \mathbf{T}^{\mathcal{W}_{D^{p}}%
}M\rightarrow M\right]  \times\mathbf{T}^{\mathcal{W}_{D^{q}}}\left(
\mathbf{T}^{\mathcal{W}_{D^{p}}}M\right) \\
& =\left[  \mathbf{T}^{\mathcal{W}_{D^{q}}}M\rightarrow M\right]
\times\mathbf{T}^{\mathcal{W}_{D^{q}}}\left(  \left[  \mathbf{T}%
^{\mathcal{W}_{D^{p}}}M\rightarrow M\right]  \times\mathbf{T}^{\mathcal{W}%
_{D^{p}}}M\right) \\
& \underrightarrow{\mathrm{id}_{\left[  \mathbf{T}^{\mathcal{W}_{D^{q}}%
}M\rightarrow M\right]  }\times\mathbf{T}^{\mathcal{W}_{D^{q}}}\mathrm{ev}%
_{\left[  \mathbf{T}^{\mathcal{W}_{D^{p}}}M\rightarrow M\right]
\times\mathbf{T}^{\mathcal{W}_{D^{p}}}M}}\\
& \left[  \mathbf{T}^{\mathcal{W}_{D^{q}}}M\rightarrow M\right]
\times\mathbf{T}^{\mathcal{W}_{D^{q}}}M\\
& \underrightarrow{\mathrm{ev}_{\left[  \mathbf{T}^{\mathcal{W}_{D^{q}}%
}M\rightarrow M\right]  \times\mathbf{T}^{\mathcal{W}_{D^{q}}}M}}\\
& M
\end{align*}

\end{definition}

\begin{remark}
\thinspace

\begin{enumerate}
\item Our two convolutions $\underline{\mathrm{Conv}}_{p,q}^{M}$\ and
$\overline{\mathrm{Conv}}_{p,q}^{M}$\ are reminiscent of the familiar ones in
abstract harmonic analysis and the theory of Schwartz distributions.

\item In case that $p=q=0$, it obtains that
\[
M\otimes\mathcal{W}_{D^{p}}=M\otimes\mathcal{W}_{D^{q}}=M\otimes
\mathcal{W}_{D^{p+q}}=M
\]
so that
\[
\left[  M\otimes\mathcal{W}_{D^{p}}\rightarrow M\right]  =\left[
M\otimes\mathcal{W}_{D^{q}}\rightarrow M\right]  =\left[  M\otimes
\mathcal{W}_{D^{p+q}}\rightarrow M\right]  =[M\rightarrow M]\text{,}%
\]
in which we have
\[
\underline{\mathrm{Conv}}_{p,q}^{M}=\mathrm{ass}_{M}%
\]
and the morphism $\overline{\mathrm{Conv}}_{p,q}^{M}$ is identical to the
morphism
\begin{align*}
& \underset{1}{[M\rightarrow M]}\times\underset{2}{[M\rightarrow M]}\\
& =\underset{2}{[M\rightarrow M]}\times\underset{1}{[M\rightarrow M]}\\
& \underrightarrow{\mathrm{ass}_{M}}\\
\lbrack M  & \rightarrow M]
\end{align*}
where $\mathrm{ass}_{M}$ stands for composition.
\end{enumerate}
\end{remark}

It should be obvious that

\begin{proposition}
\label{t5.1.1}Given natural numbers $p,q$, the morphism
\begin{align*}
& \left[  \mathbf{T}^{\mathcal{W}_{D^{p}}}M\rightarrow M\right]  \times\left[
\mathbf{T}^{\mathcal{W}_{D^{q}}}M\rightarrow M\right] \\
& \underrightarrow{\underline{\mathrm{Conv}}_{p,q}^{M}}\\
& \left[  \mathbf{T}^{\mathcal{W}_{D^{p+q}}}M\rightarrow M\right] \\
& \underrightarrow{\left(  \cdot^{\sigma_{p,q}}\right)  _{\left[
\mathbf{T}^{\mathcal{W}_{D^{p+q}}}M\rightarrow M\right]  }}\\
& \left[  \mathbf{T}^{\mathcal{W}_{D^{p+q}}}M\rightarrow M\right]
\end{align*}
is identical to the morphism
\begin{align*}
& \left[  \mathbf{T}^{\mathcal{W}_{D^{p}}}M\rightarrow M\right]  \times\left[
\mathbf{T}^{\mathcal{W}_{D^{q}}}M\rightarrow M\right] \\
& =\left[  \mathbf{T}^{\mathcal{W}_{D^{q}}}M\rightarrow M\right]
\times\left[  \mathbf{T}^{\mathcal{W}_{D^{p}}}M\rightarrow M\right] \\
& \underrightarrow{\overline{\mathrm{Conv}}_{q,p}^{M}}\\
& \left[  \mathbf{T}^{\mathcal{W}_{D^{p+q}}}M\rightarrow M\right]  \text{, }%
\end{align*}
while the morphism
\begin{align*}
& \left[  \mathbf{T}^{\mathcal{W}_{D^{p}}}M\rightarrow M\right]  \times\left[
\mathbf{T}^{\mathcal{W}_{D^{q}}}M\rightarrow M\right] \\
& \underrightarrow{\overline{\mathrm{Conv}}_{p,q}^{M}}\\
& \left[  \mathbf{T}^{\mathcal{W}_{D^{p+q}}}M\rightarrow M\right] \\
& \left(  \cdot^{\sigma_{p,q}}\right)  _{\mathbf{T}^{\mathcal{W}_{D^{p+q}}}%
M}\rightarrow\\
& \left[  \mathbf{T}^{\mathcal{W}_{D^{p+q}}}M\rightarrow M\right]
\end{align*}
is identical to the morphism
\begin{align*}
& \left[  \mathbf{T}^{\mathcal{W}_{D^{p}}}M\rightarrow M\right]  \times\left[
\mathbf{T}^{\mathcal{W}_{D^{q}}}M\rightarrow M\right] \\
& =\left[  \mathbf{T}^{\mathcal{W}_{D^{q}}}M\rightarrow M\right]
\times\left[  \mathbf{T}^{\mathcal{W}_{D^{p}}}M\rightarrow M\right] \\
& \underrightarrow{\underline{\mathrm{Conv}}_{q,p}^{M}}\\
& \left[  \mathbf{T}^{\mathcal{W}_{D^{p+q}}}M\rightarrow M\right]
\end{align*}
where $\sigma_{p,q}$ is the permutation mapping the sequence
$1,...,q,q+1,...,p+q$ to the sequence $q+1,...,q+p,1,...,q$, namely,
\[
\sigma_{p,q}=\left(
\begin{array}
[c]{cccccc}%
1 & ... & p & p+1 & ... & p+q\\
q+1 & ... & q+p & 1 & ... & q
\end{array}
\right)
\]

\end{proposition}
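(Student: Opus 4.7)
The plan is to unwind the two defining chains of morphisms and to observe that the only formal difference between them is the choice of canonical identification of $\mathbf{T}^{\mathcal{W}_{D^{p+q}}}M$ with an iterated Weil functor, and that the two available choices differ precisely by the Weil image of $\sigma_{p,q}$. First I would read off the following structural fact from the two definitions: both $\underline{\mathrm{Conv}}_{p,q}^{M}(f,g)$ and $\overline{\mathrm{Conv}}_{q,p}^{M}(g,f)$ are, after their respective identifications of the input object with $\mathbf{T}^{\mathcal{W}_{D^{p}}}(\mathbf{T}^{\mathcal{W}_{D^{q}}}M)$, literally the same three-step composition: lift $g$ through $\mathbf{T}^{\mathcal{W}_{D^{p}}}$ and apply it to the inner $\mathbf{T}^{\mathcal{W}_{D^{q}}}M$-valued point, then apply $f:\mathbf{T}^{\mathcal{W}_{D^{p}}}M\to M$ to the result. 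So the whole content of the proposition lies in comparing the two identifications.

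Next I would make the comparison explicit. The identification used in $\underline{\mathrm{Conv}}_{p,q}^{M}$ is the canonical one, namely $\mathcal{W}_{D^{p+q}}=\mathcal{W}_{D^{p}}\otimes_{k}\mathcal{W}_{D^{q}}$, placing the first $p$ coordinates of $D^{p+q}$ on the left and the last $q$ on the right. The identification used in $\overline{\mathrm{Conv}}_{q,p}^{M}$ starts from $\mathcal{W}_{D^{q+p}}=\mathcal{W}_{D^{q}}\otimes_{k}\mathcal{W}_{D^{p}}$, with the first $q$ coordinates on the left and the last $p$ on the right, and then applies the tensor-commutation isomorphism to re-present it as $\mathcal{W}_{D^{p}}\otimes_{k}\mathcal{W}_{D^{q}}$. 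Under the contravariant duality between infinitesimal objects and Weil algebras, this commutation of tensor factors is the image under $\mathcal{W}$ of exactly the coordinate permutation $\sigma_{p,q}$ sending $(d_{1},\ldots,d_{q},d_{q+1},\ldots,d_{p+q})$ to $(d_{q+1},\ldots,d_{q+p},d_{1},\ldots,d_{q})$, since $\sigma_{p,q}$ is precisely the rule that exchanges the ``first-$p$, last-$q$'' block structure with the ``first-$q$, last-$p$'' block structure. Hence pre-composing the canonical identification with $\alpha_{\mathcal{W}_{\sigma_{p,q}}}(M)$, i.e.\ with $(\cdot^{\sigma_{p,q}})_{\mathbf{T}^{\mathcal{W}_{D^{p+q}}}M}$, converts the identification used in $\underline{\mathrm{Conv}}_{p,q}^{M}$ into the one used in $\overline{\mathrm{Conv}}_{q,p}^{M}$. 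Inserting this into the common three-step composition yields the first asserted equality. The second equality is obtained by the same argument with the roles of $\underline{\mathrm{Conv}}$ and $\overline{\mathrm{Conv}}$ exchanged.

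The only real obstacle is bookkeeping. The definitions involve several lifted evaluations and exponential transposes, and one must verify that all of them are natural with respect to the commutation of tensor factors, so that inserting $\alpha_{\mathcal{W}_{\sigma_{p,q}}}(M)$ at the input end of one chain produces exactly the other chain and not merely a chain equal to it up to some further identification. Once the naturality squares for $\mathrm{ev}$ and for the two $\alpha_{\mathcal{W}_{D^{?}\rightarrow 1}}$'s are recorded, the diagram chase is mechanical.
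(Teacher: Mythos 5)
Your argument is correct and supplies exactly the justification the paper omits (the proposition is only prefaced by ``It should be obvious that,'' with no proof given): both composites reduce, after the respective canonical identifications of $\mathbf{T}^{\mathcal{W}_{D^{p+q}}}M$ with $\mathbf{T}^{\mathcal{W}_{D^{p}}}\left(\mathbf{T}^{\mathcal{W}_{D^{q}}}M\right)$, to the same morphism $f\circ\mathbf{T}^{\mathcal{W}_{D^{p}}}(g)$, and the two identifications differ precisely by the block-transposition $\sigma_{p,q}$ realized as $\left(\cdot^{\sigma_{p,q}}\right)$. The only point to double-check in the bookkeeping you defer is the direction of the permutation (i.e.\ $\sigma_{p,q}$ versus $\sigma_{q,p}=\sigma_{p,q}^{-1}$), a convention the paper itself does not pin down carefully.
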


It should be obvious that

\begin{proposition}
\label{t5.1.2}Both $\underline{\mathrm{Conv}}^{M}$\ and $\overline
{\mathrm{Conv}}^{M}$ are associative in the sense that, given an object
$M$\ in the category $\mathcal{K}$ and natural numbers $p,q,r$, the morphism
\begin{align*}
& \left[  \mathbf{T}^{\mathcal{W}_{D^{p}}}M\rightarrow M\right]  \times\left[
\mathbf{T}^{\mathcal{W}_{D^{q}}}M\rightarrow M\right]  \times\left[
\mathbf{T}^{\mathcal{W}_{D^{r}}}M\rightarrow M\right] \\
& \underrightarrow{\underline{\mathrm{Conv}}_{p,q}^{M}\times\mathrm{id}%
_{\left[  \mathbf{T}^{\mathcal{W}_{D^{r}}}M\rightarrow M\right]  }}\\
& \left[  \mathbf{T}^{\mathcal{W}_{D^{p+q}}}M\rightarrow M\right]
\times\left[  \mathbf{T}^{\mathcal{W}_{D^{r}}}M\rightarrow M\right] \\
& \underline{\mathrm{Conv}}_{p+q,r}^{M}\\
& \left[  \mathbf{T}^{\mathcal{W}_{D^{p+q+r}}}M\rightarrow M\right]
\end{align*}
is identical to the morphism
\begin{align*}
& \left[  \mathbf{T}^{\mathcal{W}_{D^{p}}}M\rightarrow M\right]  \times\left[
\mathbf{T}^{\mathcal{W}_{D^{q}}}M\rightarrow M\right]  \times\left[
\mathbf{T}^{\mathcal{W}_{D^{r}}}M\rightarrow M\right] \\
& \underrightarrow{\mathrm{id}_{\left[  \mathbf{T}^{\mathcal{W}_{D^{p}}%
}M\rightarrow M\right]  }\times\underline{\mathrm{Conv}}_{q,r}^{M}}\\
& \left[  \mathbf{T}^{\mathcal{W}_{D^{p}}}M\rightarrow M\right]  \times\left[
\mathbf{T}^{\mathcal{W}_{D^{q+r}}}M\rightarrow M\right] \\
& \underrightarrow{\underline{\mathrm{Conv}}_{p,q+r}^{M}}\\
& \left[  \mathbf{T}^{\mathcal{W}_{D^{p+q+r}}}M\rightarrow M\right]  \text{,}%
\end{align*}
and we have a similar identification for $\overline{\mathrm{Conv}}^{M}$.
\end{proposition}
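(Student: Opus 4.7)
The plan is first to replace the wordy exponential-transpose definition of the convolution by a one-line description. Unwinding the transpose in the definition of $\underline{\mathrm{Conv}}_{p,q}^{M}$, using the identification $\mathbf{T}^{\mathcal{W}_{D^{p+q}}}M=\mathbf{T}^{\mathcal{W}_{D^{p}}}(\mathbf{T}^{\mathcal{W}_{D^{q}}}M)$ and the fact that $\mathbf{T}^{\mathcal{W}_{D^{p}}}$ preserves finite products, one checks that
\[
\underline{\mathrm{Conv}}_{p,q}^{M}(\omega_{p},\omega_{q})=\omega_{p}\circ\mathbf{T}^{\mathcal{W}_{D^{p}}}\omega_{q}
\]
as morphisms $\mathbf{T}^{\mathcal{W}_{D^{p+q}}}M\rightarrow M$; the step $\alpha_{\mathcal{W}_{D^{p}\rightarrow1}}$ followed by $\mathbf{T}^{\mathcal{W}_{D^{p}}}\mathrm{ev}$ is precisely what implements "apply $\omega_{q}$ to the $D^{p}$-parameterised family". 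The symmetric calculation yields $\overline{\mathrm{Conv}}_{p,q}^{M}(\omega_{p},\omega_{q})=\omega_{q}\circ\mathbf{T}^{\mathcal{W}_{D^{q}}}\omega_{p}$ under the dual identification $\mathbf{T}^{\mathcal{W}_{D^{p+q}}}M=\mathbf{T}^{\mathcal{W}_{D^{q}}}(\mathbf{T}^{\mathcal{W}_{D^{p}}}M)$.

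With this reformulation in hand, associativity of $\underline{\mathrm{Conv}}^{M}$ becomes the identity
\[
\bigl(\omega_{p}\circ\mathbf{T}^{\mathcal{W}_{D^{p}}}\omega_{q}\bigr)\circ\mathbf{T}^{\mathcal{W}_{D^{p+q}}}\omega_{r}=\omega_{p}\circ\mathbf{T}^{\mathcal{W}_{D^{p}}}\bigl(\omega_{q}\circ\mathbf{T}^{\mathcal{W}_{D^{q}}}\omega_{r}\bigr).
\]
Functoriality of $\mathbf{T}^{\mathcal{W}_{D^{p}}}$ expands the right-hand side to $\omega_{p}\circ\mathbf{T}^{\mathcal{W}_{D^{p}}}\omega_{q}\circ\mathbf{T}^{\mathcal{W}_{D^{p}}}\mathbf{T}^{\mathcal{W}_{D^{q}}}\omega_{r}$, and the composition law $\mathbf{T}^{\mathcal{W}_{1}}\mathbf{T}^{\mathcal{W}_{2}}=\mathbf{T}^{\mathcal{W}_{1}\otimes_{k}\mathcal{W}_{2}}$ for Weil prolongations, together with $\mathcal{W}_{D^{p}}\otimes_{k}\mathcal{W}_{D^{q}}=\mathcal{W}_{D^{p+q}}$, identifies $\mathbf{T}^{\mathcal{W}_{D^{p}}}\mathbf{T}^{\mathcal{W}_{D^{q}}}\omega_{r}$ with $\mathbf{T}^{\mathcal{W}_{D^{p+q}}}\omega_{r}$. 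The corresponding identity for $\overline{\mathrm{Conv}}^{M}$ is obtained by the mirror calculation with the innermost and outermost prolongations swapped throughout.

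I expect no conceptual obstacle here, only a small amount of coherence bookkeeping. The place where care is genuinely needed is at the very end, where one must reconcile the two bracketings $\mathbf{T}^{\mathcal{W}_{D^{p+q}}}(\mathbf{T}^{\mathcal{W}_{D^{r}}}M)$ and $\mathbf{T}^{\mathcal{W}_{D^{p}}}(\mathbf{T}^{\mathcal{W}_{D^{q+r}}}M)$ of $\mathbf{T}^{\mathcal{W}_{D^{p+q+r}}}M$ used on the two sides of the associativity equation: this forces one to invoke the associativity of $\otimes_{k}$ on Weil algebras, transported through the natural transformations $\alpha$ built into the DG-category. That coherence is automatic from the axioms of the DG-category $(\mathcal{K},\mathbb{R},\mathbf{T},\alpha)$, but it is the one spot where one must verify that the two candidate compositions really are morphisms with the same source and the same target before equating them.
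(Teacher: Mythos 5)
The paper offers no argument for Proposition \ref{t5.1.2} at all: it is prefaced by ``It should be obvious that'' and left unproved, so there is no official proof to diverge from. Your proposal is correct and supplies the argument in the natural way. The reformulation $\underline{\mathrm{Conv}}_{p,q}^{M}(\omega_{p},\omega_{q})=\omega_{p}\circ\mathbf{T}^{\mathcal{W}_{D^{p}}}\omega_{q}$ is exactly what the chain of morphisms in the definition internalizes (the $\alpha_{\mathcal{W}_{D^{p}\rightarrow 1}}$ step followed by $\mathbf{T}^{\mathcal{W}_{D^{p}}}\mathrm{ev}$ is the internal description of applying $\mathbf{T}^{\mathcal{W}_{D^{p}}}$ to the generalized element $\omega_{q}$), and once that is in hand associativity reduces to functoriality of $\mathbf{T}^{\mathcal{W}_{D^{p}}}$ together with $\mathbf{T}^{\mathcal{W}_{D^{p}}}\circ\mathbf{T}^{\mathcal{W}_{D^{q}}}=\mathbf{T}^{\mathcal{W}_{D^{p}}\otimes_{k}\mathcal{W}_{D^{q}}}$. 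Your closing caveat about reconciling the two bracketings is the right one to flag, and it is indeed discharged by the coherence of $\otimes_{k}$ and the naturality of $\alpha$ in the DG-category axioms. Two small points of hygiene: your element-wise computation must be read with generalized elements (or carried out directly on the transposed morphisms), since $\mathcal{K}$ is an abstract category --- this is routine but worth saying; and for $\overline{\mathrm{Conv}}$ note that the identification being used is $\mathbf{T}^{\mathcal{W}_{D^{p+q}}}M=\mathbf{T}^{\mathcal{W}_{D^{q}}}\bigl(\mathbf{T}^{\mathcal{W}_{D^{p}}}M\bigr)$, i.e.\ the opposite nesting, which is what makes the mirror calculation go through. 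For what it is worth, your argument is also consistent in style with the paper's own proof of the analogous Proposition \ref{t5.1.4}, which rewrites both composites as a single three-factor morphism and then appeals back to \ref{t5.1.2}.
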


\begin{remark}
This proposition enables us to write
\begin{align*}
\underline{\mathrm{Conv}}_{p.q.r}^{M}  & :\left[  \mathbf{T}^{\mathcal{W}%
_{D^{p}}}M\rightarrow M\right]  \times\left[  \mathbf{T}^{\mathcal{W}_{D^{q}}%
}M\rightarrow M\right]  \times\left[  \mathbf{T}^{\mathcal{W}_{D^{r}}%
}M\rightarrow M\right] \\
& \rightarrow\left[  \mathbf{T}^{\mathcal{W}_{D^{p+q+r}}}M\rightarrow
M\right]
\end{align*}
to denote one of the above two identical morphisms without any ambiguity, and
similarly for
\begin{align*}
\overline{\mathrm{Conv}}_{p.q.r}^{M}  & :\left[  \mathbf{T}^{\mathcal{W}%
_{D^{p}}}M\rightarrow M\right]  \times\left[  \mathbf{T}^{\mathcal{W}_{D^{q}}%
}M\rightarrow M\right]  \times\left[  \mathbf{T}^{\mathcal{W}_{D^{r}}%
}M\rightarrow M\right] \\
& \rightarrow\left[  \mathbf{T}^{\mathcal{W}_{D^{p+q+r}}}M\rightarrow
M\right]
\end{align*}

\end{remark}

\begin{notation}
Given a natural number $p$, we write
\[
\left[  \mathbf{T}^{\mathcal{W}_{D^{p}}}M\rightarrow M\right]  _{\mathrm{id}%
_{M}}%
\]
for the pullback of
\[%
\begin{array}
[c]{ccc}%
\left[  \mathbf{T}^{\mathcal{W}_{D^{p}}}M\rightarrow M\right]  _{\mathrm{id}%
_{M}} & \rightarrow & \left[  \mathbf{T}^{\mathcal{W}_{D^{p}}}M\rightarrow
M\right] \\
\downarrow &  & \downarrow\\
1 & \rightarrow & \left[  M\rightarrow M\right]
\end{array}
\text{,}%
\]
where the right vertical arrow is
\[
\left[  \alpha_{\mathcal{W}_{D^{p}\rightarrow1}}\rightarrow\mathrm{id}%
_{M}\right]  :\left[  \mathbf{T}^{\mathcal{W}_{D^{p}}}M\rightarrow M\right]
\rightarrow\left[  M\rightarrow M\right]
\]
is the canonical projection, while the bottom horizontal arrow is the
exponential transpose of
\[
\mathrm{id}_{M}:1\times M=M\rightarrow M\text{.}%
\]

\end{notation}

The following proposition should be evident.

\begin{proposition}
\label{t5.1.3}The morphism
\begin{align*}
& \left[  \mathbf{T}^{\mathcal{W}_{D^{p}}}M\rightarrow M\right]
_{\mathrm{id}_{M}}\times\left[  \mathbf{T}^{\mathcal{W}_{D^{q}}}M\rightarrow
M\right] \\
& \underrightarrow{i_{\left[  \mathbf{T}^{\mathcal{W}_{D^{p}}}M\rightarrow
M\right]  _{\mathrm{id}_{M}}}^{\left[  \mathbf{T}^{\mathcal{W}_{D^{p}}%
}M\rightarrow M\right]  }\times\mathrm{id}_{\left[  \mathbf{T}^{\mathcal{W}%
_{D^{q}}}M\rightarrow M\right]  }}\\
& \left[  \mathbf{T}^{\mathcal{W}_{D^{p}}}M\rightarrow M\right]  \times\left[
\mathbf{T}^{\mathcal{W}_{D^{q}}}M\rightarrow M\right] \\
& \underrightarrow{\underline{\mathrm{Conv}}_{p,q}^{M}}\\
& \left[  \mathbf{T}^{\mathcal{W}_{D^{p+q}}}M\rightarrow M\right]
\end{align*}
is identical to the morphism
\begin{align*}
& \left[  \mathbf{T}^{\mathcal{W}_{D^{p}}}M\rightarrow M\right]
_{\mathrm{id}_{M}}\times\left[  \mathbf{T}^{\mathcal{W}_{D^{q}}}M\rightarrow
M\right] \\
& \underrightarrow{i_{\left[  \mathbf{T}^{\mathcal{W}_{D^{p}}}M\rightarrow
M\right]  _{\mathrm{id}_{M}}}^{\left[  \mathbf{T}^{\mathcal{W}_{D^{p}}%
}M\rightarrow M\right]  }\times\mathrm{id}_{\left[  \mathbf{T}^{\mathcal{W}%
_{D^{q}}}M\rightarrow M\right]  }}\\
& \left[  \mathbf{T}^{\mathcal{W}_{D^{p}}}M\rightarrow M\right]  \times\left[
\mathbf{T}^{\mathcal{W}_{D^{q}}}M\rightarrow M\right] \\
& \underrightarrow{\overline{\mathrm{Conv}}_{p,q}^{M}}\\
& \left[  \mathbf{T}^{\mathcal{W}_{D^{p+q}}}M\rightarrow M\right]
\end{align*}
Similarly, both of the morphisms
\begin{align*}
& \left[  \mathbf{T}^{\mathcal{W}_{D^{p}}}M\rightarrow M\right]  \times\left[
\mathbf{T}^{\mathcal{W}_{D^{q}}}M\rightarrow M\right]  _{\mathrm{id}_{M}}\\
& \underrightarrow{\mathrm{id}_{\left[  \mathbf{T}^{\mathcal{W}_{D^{p}}%
}M\rightarrow M\right]  }\times i_{\left[  \mathbf{T}^{\mathcal{W}_{D^{q}}%
}M\rightarrow M\right]  _{\mathrm{id}_{M}}}^{\left[  \mathbf{T}^{\mathcal{W}%
_{D^{q}}}M\rightarrow M\right]  }}\\
& \left[  \mathbf{T}^{\mathcal{W}_{D^{p}}}M\rightarrow M\right]  \times\left[
\mathbf{T}^{\mathcal{W}_{D^{q}}}M\rightarrow M\right] \\
& \underrightarrow{\underline{\mathrm{Conv}}_{p,q}^{M}}\\
& \left[  \mathbf{T}^{\mathcal{W}_{D^{p+q}}}M\rightarrow M\right]
\end{align*}
and
\begin{align*}
& \left[  \mathbf{T}^{\mathcal{W}_{D^{p}}}M\rightarrow M\right]  \times\left[
\mathbf{T}^{\mathcal{W}_{D^{q}}}M\rightarrow M\right]  _{\mathrm{id}_{M}}\\
& \underrightarrow{\mathrm{id}_{\left[  \mathbf{T}^{\mathcal{W}_{D^{p}}%
}M\rightarrow M\right]  }\times i_{\left[  \mathbf{T}^{\mathcal{W}_{D^{q}}%
}M\rightarrow M\right]  _{\mathrm{id}_{M}}}^{\left[  \mathbf{T}^{\mathcal{W}%
_{D^{q}}}M\rightarrow M\right]  }}\\
& \left[  \mathbf{T}^{\mathcal{W}_{D^{p}}}M\rightarrow M\right]  \times\left[
\mathbf{T}^{\mathcal{W}_{D^{q}}}M\rightarrow M\right] \\
& \underrightarrow{\overline{\mathrm{Conv}}_{p,q}^{M}}\\
& \left[  \mathbf{T}^{\mathcal{W}_{D^{p+q}}}M\rightarrow M\right]
\end{align*}
are identical. Besides, the morphism
\begin{align*}
& \left[  \mathbf{T}^{\mathcal{W}_{D^{p}}}M\rightarrow M\right]
_{\mathrm{id}_{M}}\times\left[  \mathbf{T}^{\mathcal{W}_{D^{q}}}M\rightarrow
M\right]  _{\mathrm{id}_{M}}\\
& \underrightarrow{i_{\left[  \mathbf{T}^{\mathcal{W}_{D^{p}}}M\rightarrow
M\right]  _{\mathrm{id}_{M}}}^{\left[  \mathbf{T}^{\mathcal{W}_{D^{p}}%
}M\rightarrow M\right]  }\times i_{\left[  \mathbf{T}^{\mathcal{W}_{D^{q}}%
}M\rightarrow M\right]  _{\mathrm{id}_{M}}}^{\left[  \mathbf{T}^{\mathcal{W}%
_{D^{q}}}M\rightarrow M\right]  }}\\
& \left[  \mathbf{T}^{\mathcal{W}_{D^{p}}}M\rightarrow M\right]  \times\left[
\mathbf{T}^{\mathcal{W}_{D^{q}}}M\rightarrow M\right] \\
& \underrightarrow{\underline{\mathrm{Conv}}_{p,q}^{M}}\\
& \left[  \mathbf{T}^{\mathcal{W}_{D^{p+q}}}M\rightarrow M\right]  \text{,}%
\end{align*}
which is identical to the morphism
\begin{align*}
& \left[  \mathbf{T}^{\mathcal{W}_{D^{p}}}M\rightarrow M\right]
_{\mathrm{id}_{M}}\times\left[  \mathbf{T}^{\mathcal{W}_{D^{q}}}M\rightarrow
M\right]  _{\mathrm{id}_{M}}\\
& \underrightarrow{i_{\left[  \mathbf{T}^{\mathcal{W}_{D^{p}}}M\rightarrow
M\right]  _{\mathrm{id}_{M}}}^{\left[  \mathbf{T}^{\mathcal{W}_{D^{p}}%
}M\rightarrow M\right]  }\times i_{\left[  \mathbf{T}^{\mathcal{W}_{D^{q}}%
}M\rightarrow M\right]  _{\mathrm{id}_{M}}}^{\left[  \mathbf{T}^{\mathcal{W}%
_{D^{q}}}M\rightarrow M\right]  }}\\
& \left[  \mathbf{T}^{\mathcal{W}_{D^{p}}}M\rightarrow M\right]  \times\left[
\mathbf{T}^{\mathcal{W}_{D^{q}}}M\rightarrow M\right] \\
& \underrightarrow{\overline{\mathrm{Conv}}_{p,q}^{M}}\\
& \left[  \mathbf{T}^{\mathcal{W}_{D^{p+q}}}M\rightarrow M\right]  \text{,}%
\end{align*}
is to be factored through the canonical injection
\[
i_{\left[  \mathbf{T}^{\mathcal{W}_{D^{p+q}}}M\rightarrow M\right]
_{\mathrm{id}_{M}}}^{\left[  \mathbf{T}^{\mathcal{W}_{D^{p+q}}}M\rightarrow
M\right]  }:\left[  \mathbf{T}^{\mathcal{W}_{D^{p+q}}}M\rightarrow M\right]
_{\mathrm{id}_{M}}\rightarrow\left[  \mathbf{T}^{\mathcal{W}_{D^{p+q}}%
}M\rightarrow M\right]
\]

\end{proposition}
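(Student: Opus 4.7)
The plan is to verify the three assertions by taking exponential transposes and running diagram chases, exploiting the Dirac condition $f\circ\alpha_{\mathcal{W}_{D^p\to 1}}(M)=\mathrm{id}_M$ that defines $[\mathbf{T}^{\mathcal{W}_{D^p}}M\to M]_{\mathrm{id}_M}$ (and its analogue for $g$). The third assertion is the cleanest, and I would handle it first on its own: granted the first two, the common morphism $\underline{\mathrm{Conv}}_{p,q}^M(f,g)$ with both $f$ and $g$ Dirac is to be shown Dirac itself, i.e.\ its composition with $\alpha_{\mathcal{W}_{D^{p+q}\to 1}}(M)\colon M\to \mathbf{T}^{\mathcal{W}_{D^{p+q}}}M$ equals $\mathrm{id}_M$. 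Under the identification $\mathbf{T}^{\mathcal{W}_{D^{p+q}}}M=\mathbf{T}^{\mathcal{W}_{D^p}}(\mathbf{T}^{\mathcal{W}_{D^q}}M)$, this zero section factors as the zero section into $\mathbf{T}^{\mathcal{W}_{D^q}}M$ followed by the $\mathbf{T}^{\mathcal{W}_{D^p}}$-image of that; plugging this into the unfolded definition of $\underline{\mathrm{Conv}}$, the Dirac condition on $g$ collapses the inner evaluation and the Dirac condition on $f$ collapses the outer, producing $\mathrm{id}_M$.

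For the first assertion I would take exponential transposes and compare the two composites
\[
[\mathbf{T}^{\mathcal{W}_{D^p}}M\to M]_{\mathrm{id}_M}\times[\mathbf{T}^{\mathcal{W}_{D^q}}M\to M]\times \mathbf{T}^{\mathcal{W}_{D^{p+q}}}M\longrightarrow M
\]
obtained by unfolding $\underline{\mathrm{Conv}}$ and $\overline{\mathrm{Conv}}$. Each composite inserts a zero section into one of the two slots, passes through $\mathbf{T}^{\mathcal{W}_{D^p}}\mathrm{ev}$ or $\mathbf{T}^{\mathcal{W}_{D^q}}\mathrm{ev}$, and finishes with a final evaluation. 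The plan is to exploit naturality of the zero section and the symmetry $\mathcal{W}_{D^p}\otimes_k\mathcal{W}_{D^q}\simeq\mathcal{W}_{D^q}\otimes_k\mathcal{W}_{D^p}$ to massage both composites into a common shape, at which point the Dirac hypothesis on $f$ identifies the remaining evaluation against $f$ with the appropriate identity morphism. The second assertion is entirely symmetric, with the roles of $p,f$ and $q,g$ interchanged.

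The main obstacle is sheer bookkeeping. Unfolding $\underline{\mathrm{Conv}}$ and $\overline{\mathrm{Conv}}$ produces long compositions chained together by swaps of tensor factors, applications of $\mathbf{T}^{\mathcal{W}_{D^p}}$ and $\mathbf{T}^{\mathcal{W}_{D^q}}$ to evaluations and to exponentials, and the natural transformations $\alpha$; verifying commutativity of every intervening naturality square is what turns ``should be evident'' into a lengthy calculation. A cleaner route might be to argue with generalized elements in the internal language of $\mathcal{K}$, where the zero-section insertion and the Dirac property become substitution rules that collapse the two composites pointwise.
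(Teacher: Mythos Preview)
The paper offers no proof here beyond the remark that the proposition ``should be evident,'' so there is no argument to compare against. Your handling of the third assertion is correct: plugging the zero section $M\hookrightarrow\mathbf{T}^{\mathcal{W}_{D^{p+q}}}M$ into $\underline{\mathrm{Conv}}_{p,q}^M(f,g)$ and collapsing first the inner and then the outer evaluation via the Dirac hypotheses on $g$ and $f$ does yield $\mathrm{id}_M$, exactly as you describe.

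For the first two assertions, however, your plan has a real gap. You write that after massaging the two composites into a common shape ``the Dirac hypothesis on $f$ identifies the remaining evaluation against $f$ with the appropriate identity morphism,'' but the Dirac condition only controls $f$ on the image of the zero section $M\hookrightarrow\mathbf{T}^{\mathcal{W}_{D^p}}M$, not on all of $\mathbf{T}^{\mathcal{W}_{D^p}}M$. In the internal language the two composites unfold to
\[
\underline{\mathrm{Conv}}_{p,q}^M(f,g)(\gamma)=f\bigl(d\mapsto g(e\mapsto\gamma(d,e))\bigr),\qquad
\overline{\mathrm{Conv}}_{p,q}^M(f,g)(\gamma)=g\bigl(e\mapsto f(d\mapsto\gamma(d,e))\bigr),
\]
and asking these to agree for every $g$ and every $\gamma$ forces $f$ to commute with arbitrary post-composition, which is strictly stronger than being Dirac (for instance with $p=1$, $q=0$, $M=\mathbb{R}$, the Dirac map $f(\gamma)=\gamma(0)+\gamma'(0)^{2}$ together with $g(x)=2x$ already gives $2a+4b^{2}\neq 2a+2b^{2}$). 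The step where you ``collapse the remaining evaluation against $f$'' therefore does not follow from the Dirac hypothesis alone; carrying out your bookkeeping would expose this rather than close it, and the proposition as literally stated seems to require more than what you (or the paper) invoke.
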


\begin{definition}
We define a morphism
\begin{align*}
\underline{\mathrm{\Pr od}}_{\left(  p,m\right)  ,\left(  q,n\right)  }^{M}  &
:\mathbf{T}^{\mathcal{W}_{D^{m}}}\left[  \mathbf{T}^{\mathcal{W}_{D^{p}}%
}M\rightarrow M\right]  \times\mathbf{T}^{\mathcal{W}_{D^{n}}}\left[
\mathbf{T}^{\mathcal{W}_{D^{q}}}M\rightarrow M\right] \\
& \rightarrow\mathbf{T}^{\mathcal{W}_{D^{m+n}}}\left[  \mathbf{T}%
^{\mathcal{W}_{D^{p+q}}}M\rightarrow M\right]
\end{align*}
to be
\begin{align*}
& \mathbf{T}^{\mathcal{W}_{D^{m}}}\left[  \mathbf{T}^{\mathcal{W}_{D^{p}}%
}M\rightarrow M\right]  \times\mathbf{T}^{\mathcal{W}_{D^{n}}}\left[
\mathbf{T}^{\mathcal{W}_{D^{q}}}M\rightarrow M\right] \\
& \underline{\alpha_{\mathcal{W}_{\left(  d_{1},...,d_{m},d_{m+1}%
,...,d_{m+n}\right)  \in D^{m+n}\mapsto\left(  d_{1},...,d_{m}\right)  \in
D^{m}}}\left(  \left[  \mathbf{T}^{\mathcal{W}_{D^{p}}}M\rightarrow M\right]
\right)  \times}\\
& \underrightarrow{\alpha_{\mathcal{W}_{\left(  d_{1},...,d_{m},d_{m+1}%
,...,d_{m+n}\right)  \in D^{m+n}\mapsto\left(  d_{m+1},...,d_{m+n}\right)  \in
D^{n}}}\left(  \left[  \mathbf{T}^{\mathcal{W}_{D^{q}}}M\rightarrow M\right]
\right)  }\\
& \mathbf{T}^{\mathcal{W}_{D^{m+n}}}\left[  \mathbf{T}^{\mathcal{W}_{D^{p}}%
}M\rightarrow M\right]  \times\mathbf{T}^{\mathcal{W}_{D^{m+n}}}\left[
\mathbf{T}^{\mathcal{W}_{D^{q}}}M\rightarrow M\right] \\
& =\mathbf{T}^{\mathcal{W}_{D^{m+n}}}\left(  \left[  \mathbf{T}^{\mathcal{W}%
_{D^{p}}}M\rightarrow M\right]  \times\left[  \mathbf{T}^{\mathcal{W}_{D^{q}}%
}M\rightarrow M\right]  \right) \\
& \underrightarrow{\mathbf{T}^{\mathcal{W}_{D^{m+n}}}\underline{\mathrm{Conv}%
}_{p,q}^{M}}\\
& \mathbf{T}^{\mathcal{W}_{D^{m+n}}}\left[  \mathbf{T}^{\mathcal{W}_{D^{p+q}}%
}M\rightarrow M\right]  \text{,}%
\end{align*}
while we define a morphism
\begin{align*}
\overline{\mathrm{\Pr od}}_{\left(  p,m\right)  ,\left(  q,n\right)  }^{M}  &
:\mathbf{T}^{\mathcal{W}_{D^{m}}}\left[  \mathbf{T}^{\mathcal{W}_{D^{p}}%
}M\rightarrow M\right]  \times\mathbf{T}^{\mathcal{W}_{D^{n}}}\left[
\mathbf{T}^{\mathcal{W}_{D^{q}}}M\rightarrow M\right] \\
& \rightarrow\mathbf{T}^{\mathcal{W}_{D^{m+n}}}\left[  \mathbf{T}%
^{\mathcal{W}_{D^{p+q}}}M\rightarrow M\right]
\end{align*}
to be
\begin{align*}
& \mathbf{T}^{\mathcal{W}_{D^{m}}}\left[  \mathbf{T}^{\mathcal{W}_{D^{p}}%
}M\rightarrow M\right]  \times\mathbf{T}^{\mathcal{W}_{D^{n}}}\left[
\mathbf{T}^{\mathcal{W}_{D^{q}}}M\rightarrow M\right] \\
& \underline{\alpha_{\mathcal{W}_{\left(  d_{1},...,d_{m},d_{m+1}%
,...,d_{m+n}\right)  \in D^{m+n}\mapsto\left(  d_{1},...,d_{m}\right)  \in
D^{m}}}\left(  \left[  \mathbf{T}^{\mathcal{W}_{D^{p}}}M\rightarrow M\right]
\right)  \times}\\
& \underrightarrow{\alpha_{\mathcal{W}_{\left(  d_{1},...,d_{m},d_{m+1}%
,...,d_{m+n}\right)  \in D^{m+n}\mapsto\left(  d_{m+1},...,d_{m+n}\right)  \in
D^{n}}}\left(  \left[  \mathbf{T}^{\mathcal{W}_{D^{q}}}M\rightarrow M\right]
\right)  }\\
& \mathbf{T}^{\mathcal{W}_{D^{m+n}}}\left[  \mathbf{T}^{\mathcal{W}_{D^{p}}%
}M\rightarrow M\right]  \times\mathbf{T}^{\mathcal{W}_{D^{m+n}}}\left[
\mathbf{T}^{\mathcal{W}_{D^{q}}}M\rightarrow M\right] \\
& =\mathbf{T}^{\mathcal{W}_{D^{m+n}}}\left(  \left[  \mathbf{T}^{\mathcal{W}%
_{D^{p}}}M\rightarrow M\right]  \times\left[  \mathbf{T}^{\mathcal{W}_{D^{q}}%
}M\rightarrow M\right]  \right) \\
& \underrightarrow{\mathbf{T}^{\mathcal{W}_{D^{m+n}}}\overline{\mathrm{Conv}%
}_{p,q}^{M}}\\
& \mathbf{T}^{\mathcal{W}_{D^{m+n}}}\left[  \mathbf{T}^{\mathcal{W}_{D^{p+q}}%
}M\rightarrow M\right]
\end{align*}

\end{definition}

It should be obvious that

\begin{proposition}
\label{t5.1.4}Both $\underline{\mathrm{\Pr od}}^{M}$\ and $\overline
{\mathrm{\Pr od}}^{M}$ are associative in the sense that, given an object
$M$\ in the category $\mathcal{K}$ and natural numbers $l,m,n,p,q,r$, the
morphism
\begin{align}
& \mathbf{T}^{\mathcal{W}_{D^{l}}}\left[  \mathbf{T}^{\mathcal{W}_{D^{p}}%
}M\rightarrow M\right]  \times\mathbf{T}^{\mathcal{W}_{D^{m}}}\left[
\mathbf{T}^{\mathcal{W}_{D^{q}}}M\rightarrow M\right]  \times\mathbf{T}%
^{\mathcal{W}_{D^{n}}}\left[  \mathbf{T}^{\mathcal{W}_{D^{r}}}M\rightarrow
M\right] \nonumber\\
& \underrightarrow{\underline{\mathrm{\Pr od}}_{\left(  p,l\right)  ,\left(
q,m\right)  }^{M}\times\mathrm{id}_{\mathbf{T}^{\mathcal{W}_{D^{n}}}\left[
\mathbf{T}^{\mathcal{W}_{D^{r}}}M\rightarrow M\right]  }}\nonumber\\
& \mathbf{T}^{\mathcal{W}_{D^{m+n}}}\left[  \mathbf{T}^{\mathcal{W}_{D^{p+q}}%
}M\rightarrow M\right]  \times\mathbf{T}^{\mathcal{W}_{D^{n}}}\left[
\mathbf{T}^{\mathcal{W}_{D^{r}}}M\rightarrow M\right] \nonumber\\
& \underline{\mathrm{\Pr od}}_{\left(  p+q,l+m\right)  ,\left(  r,n\right)
}^{M}\nonumber\\
& \mathbf{T}^{\mathcal{W}_{D^{l+m+n}}}\left[  \mathbf{T}^{\mathcal{W}%
_{D^{p+q+r}}}M\rightarrow M\right] \label{5.1.4.1}%
\end{align}
is identical to the morphism
\begin{align}
& \mathbf{T}^{\mathcal{W}_{D^{l}}}\left[  \mathbf{T}^{\mathcal{W}_{D^{p}}%
}M\rightarrow M\right]  \times\mathbf{T}^{\mathcal{W}_{D^{m}}}\left[
\mathbf{T}^{\mathcal{W}_{D^{q}}}M\rightarrow M\right]  \times\mathbf{T}%
^{\mathcal{W}_{D^{n}}}\left[  \mathbf{T}^{\mathcal{W}_{D^{r}}}M\rightarrow
M\right] \nonumber\\
& \underrightarrow{\mathrm{id}_{\mathbf{T}^{\mathcal{W}_{D^{l}}}\left[
\mathbf{T}^{\mathcal{W}_{D^{p}}}M\rightarrow M\right]  }\times\underline
{\mathrm{\Pr od}}_{\left(  q,m\right)  ,\left(  r,n\right)  }^{M}}\nonumber\\
& \mathbf{T}^{\mathcal{W}_{D^{l}}}\left[  \mathbf{T}^{\mathcal{W}_{D^{p}}%
}M\rightarrow M\right]  \times\mathbf{T}^{\mathcal{W}_{D^{m+n}}}\left[
\mathbf{T}^{\mathcal{W}_{D^{q+r}}}M\rightarrow M\right] \nonumber\\
& \underrightarrow{\underline{\mathrm{\Pr od}}_{\left(  p,l\right)  ,\left(
q+r,m+n\right)  }^{M}}\nonumber\\
& \mathbf{T}^{\mathcal{W}_{D^{l+m+n}}}\left[  \mathbf{T}^{\mathcal{W}%
_{D^{p+q+r}}}M\rightarrow M\right]  \text{,}\label{5.1.4.2}%
\end{align}
and similarly for $\overline{\mathrm{\Pr od}}^{M}$.
\end{proposition}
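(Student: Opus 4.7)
The plan is to reduce associativity of $\underline{\mathrm{\Pr od}}^{M}$ (and symmetrically $\overline{\mathrm{\Pr od}}^{M}$) to the associativity of $\underline{\mathrm{Conv}}^{M}$ established in Proposition \ref{t5.1.2}, by exploiting the contravariant functoriality of $\mathcal{W}$ and the functoriality of $\mathbf{T}^{\mathcal{W}_{-}}$. Recall that $\underline{\mathrm{\Pr od}}_{(p,m),(q,n)}^{M}$ is, by definition, the composition of two reindexings $\alpha_{\mathcal{W}_{\pi_{1}}}$ and $\alpha_{\mathcal{W}_{\pi_{2}}}$, pushing the two factors from $\mathbf{T}^{\mathcal{W}_{D^{m}}}$ and $\mathbf{T}^{\mathcal{W}_{D^{n}}}$ respectively into a common $\mathbf{T}^{\mathcal{W}_{D^{m+n}}}$, followed by $\mathbf{T}^{\mathcal{W}_{D^{m+n}}}\underline{\mathrm{Conv}}_{p,q}^{M}$. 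So each side of the equality between (\ref{5.1.4.1}) and (\ref{5.1.4.2}) is ultimately of the shape ``reindex the three factors into a common $\mathbf{T}^{\mathcal{W}_{D^{l+m+n}}}$, then apply $\mathbf{T}^{\mathcal{W}_{D^{l+m+n}}}$ to some iterated convolution''.

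First I would unfold (\ref{5.1.4.1}). The inner $\underline{\mathrm{\Pr od}}_{(p,l),(q,m)}^{M}$ reindexes the first two factors from $D^{l}$ and $D^{m}$ up to $D^{l+m}$ via the canonical projections, then applies $\mathbf{T}^{\mathcal{W}_{D^{l+m}}}\underline{\mathrm{Conv}}_{p,q}^{M}$. The outer $\underline{\mathrm{\Pr od}}_{(p+q,l+m),(r,n)}^{M}$ reindexes this output further from $D^{l+m}$ to $D^{l+m+n}$ and the third factor from $D^{n}$ to $D^{l+m+n}$, then applies $\mathbf{T}^{\mathcal{W}_{D^{l+m+n}}}\underline{\mathrm{Conv}}_{p+q,r}^{M}$. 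Here I would invoke two observations: (i) the contravariant functoriality of $\mathcal{W}$ combined with the natural transformation property of $\alpha$ shows that reindexing from $D^{l}$ to $D^{l+m}$ and then to $D^{l+m+n}$ coincides with the single reindexing from $D^{l}$ to $D^{l+m+n}$ along the composite projection, and similarly for the second factor; (ii) the functoriality of $\mathbf{T}^{\mathcal{W}_{D^{l+m+n}}}$ lets me commute the outer reindexing with $\mathbf{T}^{\mathcal{W}_{D^{l+m}}}\underline{\mathrm{Conv}}_{p,q}^{M}$, so that everything becomes $\mathbf{T}^{\mathcal{W}_{D^{l+m+n}}}(\underline{\mathrm{Conv}}_{p+q,r}^{M}\circ(\underline{\mathrm{Conv}}_{p,q}^{M}\times\mathrm{id}))$ preceded by the three separate reindexings into $D^{l+m+n}$.

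Unfolding (\ref{5.1.4.2}) by the symmetric procedure yields the same outer shape, but with $\mathbf{T}^{\mathcal{W}_{D^{l+m+n}}}(\underline{\mathrm{Conv}}_{p,q+r}^{M}\circ(\mathrm{id}\times\underline{\mathrm{Conv}}_{q,r}^{M}))$ in place of the previous triple convolution. At this point Proposition \ref{t5.1.2} identifies the two inner convolution composites as a single morphism $\underline{\mathrm{Conv}}_{p,q,r}^{M}$, and applying the functor $\mathbf{T}^{\mathcal{W}_{D^{l+m+n}}}$ preserves this identification. Since the reindexing prefix is manifestly the same on both sides (three separate $\alpha_{\mathcal{W}}$ transports along the three canonical coordinate-insertion maps $D^{l},D^{m},D^{n}\to D^{l+m+n}$), the two sides coincide. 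The argument for $\overline{\mathrm{\Pr od}}^{M}$ is word-for-word identical, using the corresponding associativity clause of Proposition \ref{t5.1.2} for $\overline{\mathrm{Conv}}^{M}$.

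The main obstacle is purely bookkeeping: one must verify that the intermediate reindexing built into the \emph{grouped} outer $\underline{\mathrm{\Pr od}}$ (which pushes a factor already living in $\mathbf{T}^{\mathcal{W}_{D^{l+m}}}$ further up to $\mathbf{T}^{\mathcal{W}_{D^{l+m+n}}}$) agrees, after commuting past the $\mathbf{T}^{\mathcal{W}}$-functor applied to $\underline{\mathrm{Conv}}_{p,q}^{M}$, with the two independent reindexings of the original $\mathbf{T}^{\mathcal{W}_{D^{l}}}$ and $\mathbf{T}^{\mathcal{W}_{D^{m}}}$ factors into $\mathbf{T}^{\mathcal{W}_{D^{l+m+n}}}$. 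This coherence is exactly what contravariant functoriality of $\mathcal{W}$ on the composable coordinate-insertion maps $D^{l}\hookrightarrow D^{l+m}\hookrightarrow D^{l+m+n}$ provides, and modulo this diagram-chase the result is immediate.
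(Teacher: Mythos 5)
Your proposal is correct and follows essentially the same route as the paper: both unfold each grouped $\underline{\mathrm{\Pr od}}$ into a single reindexing of all three factors into $\mathbf{T}^{\mathcal{W}_{D^{l+m+n}}}$ followed by $\mathbf{T}^{\mathcal{W}_{D^{l+m+n}}}$ applied to an iterated $\underline{\mathrm{Conv}}$, and then invoke Proposition \ref{t5.1.2}. The paper simply asserts the two unfolded forms without spelling out the naturality/functoriality bookkeeping that you make explicit (note only that the reindexings are induced by projections $D^{l+m+n}\rightarrow D^{l}$, etc., not insertions).
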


\begin{proof}
To prove the first statement, we note that the morphism (\ref{5.1.4.1})\ is
identical to the morphism
\begin{align*}
& \mathbf{T}^{\mathcal{W}_{D^{l}}}\left[  \mathbf{T}^{\mathcal{W}_{D^{p}}%
}M\rightarrow M\right]  \times\mathbf{T}^{\mathcal{W}_{D^{m}}}\left[
\mathbf{T}^{\mathcal{W}_{D^{q}}}M\rightarrow M\right]  \times\mathbf{T}%
^{\mathcal{W}_{D^{n}}}\left[  \mathbf{T}^{\mathcal{W}_{D^{r}}}M\rightarrow
M\right] \\
& \underline{\alpha_{\mathcal{W}_{\left(  d_{1},...,d_{l},d_{l+1}%
,...,d_{l+m},d_{l+m+1},...,d_{l+m+n}\right)  \in D^{l+m+n}\mapsto\left(
d_{1},...,d_{l}\right)  \in D^{l}}}\left(  \left[  \mathbf{T}^{\mathcal{W}%
_{D^{p}}}M\rightarrow M\right]  \right)  \times}\\
& \underline{\alpha_{\mathcal{W}_{\left(  d_{1},...,d_{l},d_{l+1}%
,...,d_{l+m},d_{l+m+1},...,d_{l+m+n}\right)  \in D^{l+m+n}\mapsto\left(
d_{l+1},...,d_{l+m}\right)  \in D^{m}}}\left(  \left[  \mathbf{T}%
^{\mathcal{W}_{D^{q}}}M\rightarrow M\right]  \right)  \times}\\
& \underrightarrow{\alpha_{\mathcal{W}_{\left(  d_{1},...,d_{l},d_{l+1}%
,...,d_{l+m},d_{l+m+1},...,d_{l+m+n}\right)  \in D^{l+m+n}\mapsto\left(
d_{l+m+1},...,d_{l+m+n}\right)  \in D^{n}}}\left(  \left[  \mathbf{T}%
^{\mathcal{W}_{D^{r}}}M\rightarrow M\right]  \right)  }\\
& \mathbf{T}^{\mathcal{W}_{D^{l+m+n}}}\left[  \mathbf{T}^{\mathcal{W}_{D^{p}}%
}M\rightarrow M\right]  \times\mathbf{T}^{\mathcal{W}_{D^{l+m+n}}}\left[
\mathbf{T}^{\mathcal{W}_{D^{q}}}M\rightarrow M\right]  \times\mathbf{T}%
^{\mathcal{W}_{D^{l+m+n}}}\left[  \mathbf{T}^{\mathcal{W}_{D^{r}}}M\rightarrow
M\right] \\
& =\mathbf{T}^{\mathcal{W}_{D^{l+m+n}}}\left(  \left[  \mathbf{T}%
^{\mathcal{W}_{D^{p}}}M\rightarrow M\right]  \times\left[  \mathbf{T}%
^{\mathcal{W}_{D^{q}}}M\rightarrow M\right]  \times\left[  \mathbf{T}%
^{\mathcal{W}_{D^{r}}}M\rightarrow M\right]  \right) \\
& \underrightarrow{\mathbf{T}^{\mathcal{W}_{D^{l+m+n}}}\left(  \underline
{\mathrm{Conv}}_{p,q}^{M}\times\mathrm{id}_{\left[  \mathbf{T}^{\mathcal{W}%
_{D^{r}}}M\rightarrow M\right]  }\right)  }\\
& \mathbf{T}^{\mathcal{W}_{D^{l+m+n}}}\left(  \left[  \mathbf{T}%
^{\mathcal{W}_{D^{p+q}}}M\rightarrow M\right]  \times\left[  \mathbf{T}%
^{\mathcal{W}_{D^{r}}}M\rightarrow M\right]  \right) \\
& \underrightarrow{\mathbf{T}^{\mathcal{W}_{D^{l+m+n}}}\left(  \underline
{\mathrm{Conv}}_{p+q,r}^{M}\right)  }\\
& \mathbf{T}^{\mathcal{W}_{D^{l+m+n}}}\left[  \mathbf{T}^{\mathcal{W}%
_{D^{p+q+r}}}M\rightarrow M\right]
\end{align*}
while the morphism (\ref{5.1.4.2})\ is identical to the morphism
\begin{align*}
& \mathbf{T}^{\mathcal{W}_{D^{l}}}\left[  \mathbf{T}^{\mathcal{W}_{D^{p}}%
}M\rightarrow M\right]  \times\mathbf{T}^{\mathcal{W}_{D^{m}}}\left[
\mathbf{T}^{\mathcal{W}_{D^{q}}}M\rightarrow M\right]  \times\mathbf{T}%
^{\mathcal{W}_{D^{n}}}\left[  \mathbf{T}^{\mathcal{W}_{D^{r}}}M\rightarrow
M\right] \\
& \underline{\alpha_{\mathcal{W}_{\left(  d_{1},...,d_{l},d_{l+1}%
,...,d_{l+m},d_{l+m+1},...,d_{l+m+n}\right)  \in D^{l+m+n}\mapsto\left(
d_{1},...,d_{l}\right)  \in D^{l}}}\left(  \left[  \mathbf{T}^{\mathcal{W}%
_{D^{p}}}M\rightarrow M\right]  \right)  \times}\\
& \underline{\alpha_{\mathcal{W}_{\left(  d_{1},...,d_{l},d_{l+1}%
,...,d_{l+m},d_{l+m+1},...,d_{l+m+n}\right)  \in D^{l+m+n}\mapsto\left(
d_{l+1},...,d_{l+m}\right)  \in D^{m}}}\left(  \left[  \mathbf{T}%
^{\mathcal{W}_{D^{q}}}M\rightarrow M\right]  \right)  \times}\\
& \underrightarrow{\alpha_{\mathcal{W}_{\left(  d_{1},...,d_{l},d_{l+1}%
,...,d_{l+m},d_{l+m+1},...,d_{l+m+n}\right)  \in D^{l+m+n}\mapsto\left(
d_{l+m+1},...,d_{l+m+n}\right)  \in D^{n}}}\left(  \left[  \mathbf{T}%
^{\mathcal{W}_{D^{r}}}M\rightarrow M\right]  \right)  }\\
& \mathbf{T}^{\mathcal{W}_{D^{l+m+n}}}\left[  \mathbf{T}^{\mathcal{W}_{D^{p}}%
}M\rightarrow M\right]  \times\mathbf{T}^{\mathcal{W}_{D^{l+m+n}}}\left[
\mathbf{T}^{\mathcal{W}_{D^{q}}}M\rightarrow M\right]  \times\mathbf{T}%
^{\mathcal{W}_{D^{l+m+n}}}\left[  \mathbf{T}^{\mathcal{W}_{D^{r}}}M\rightarrow
M\right] \\
& =\mathbf{T}^{\mathcal{W}_{D^{l+m+n}}}\left(  \left[  \mathbf{T}%
^{\mathcal{W}_{D^{p}}}M\rightarrow M\right]  \times\left[  \mathbf{T}%
^{\mathcal{W}_{D^{q}}}M\rightarrow M\right]  \times\left[  \mathbf{T}%
^{\mathcal{W}_{D^{r}}}M\rightarrow M\right]  \right) \\
& \underrightarrow{\mathbf{T}^{\mathcal{W}_{D^{l+m+n}}}\left(  \mathrm{id}%
_{\left[  \mathbf{T}^{\mathcal{W}_{D^{p}}}M\rightarrow M\right]  }%
\times\underline{\mathrm{Conv}}_{q,r}^{M}\right)  }\\
& \mathbf{T}^{\mathcal{W}_{D^{l+m+n}}}\left(  \left[  \mathbf{T}%
^{\mathcal{W}_{D^{p}}}M\rightarrow M\right]  \times\left[  \mathbf{T}%
^{\mathcal{W}_{D^{q+r}}}M\rightarrow M\right]  \right) \\
& \underrightarrow{\mathbf{T}^{\mathcal{W}_{D^{l+m+n}}}\left(  \underline
{\mathrm{Conv}}_{p,q+r}^{M}\right)  }\\
& \mathbf{T}^{\mathcal{W}_{D^{l+m+n}}}\left[  \mathbf{T}^{\mathcal{W}%
_{D^{p+q+r}}}M\rightarrow M\right]
\end{align*}
Therefore the desired result follows directly from Proposition \ref{t5.1.2}.
Similarly for the second statement.
\end{proof}

\begin{remark}
This proposition enables us to write
\begin{align*}
& \underline{\mathrm{\Pr od}}_{\left(  p,l\right)  ,\left(  q,m\right)
,\left(  r,n\right)  }^{M}\\
& :\mathbf{T}^{\mathcal{W}_{D^{l}}}\left[  \mathbf{T}^{\mathcal{W}_{D^{p}}%
}M\rightarrow M\right]  \times\mathbf{T}^{\mathcal{W}_{D^{m}}}\left[
\mathbf{T}^{\mathcal{W}_{D^{q}}}M\rightarrow M\right]  \times\mathbf{T}%
^{\mathcal{W}_{D^{n}}}\left[  \mathbf{T}^{\mathcal{W}_{D^{r}}}M\rightarrow
M\right] \\
& \rightarrow\mathbf{T}^{\mathcal{W}_{D^{l+m+n}}}\left[  \mathbf{T}%
^{\mathcal{W}_{D^{p+q+r}}}M\rightarrow M\right]
\end{align*}
to denote one of the above two identical morphisms without any ambiguity, and
similarly for
\begin{align*}
& \overline{\mathrm{\Pr od}}_{\left(  p,l\right)  ,\left(  q,m\right)
,\left(  r,n\right)  }^{M}\\
& :\mathbf{T}^{\mathcal{W}_{D^{l}}}\left[  \mathbf{T}^{\mathcal{W}_{D^{p}}%
}M\rightarrow M\right]  \times\mathbf{T}^{\mathcal{W}_{D^{m}}}\left[
\mathbf{T}^{\mathcal{W}_{D^{q}}}M\rightarrow M\right]  \times\mathbf{T}%
^{\mathcal{W}_{D^{n}}}\left[  \mathbf{T}^{\mathcal{W}_{D^{r}}}M\rightarrow
M\right] \\
& \rightarrow\mathbf{T}^{\mathcal{W}_{D^{l+m+n}}}\left[  \mathbf{T}%
^{\mathcal{W}_{D^{p+q+r}}}M\rightarrow M\right]
\end{align*}

\end{remark}

\subsection{\label{s5.2}The First Consideration}

In this subsection we are concerned with the Lie algebra structure of
$\overline{\Omega}_{\left(  1\right)  }^{\left(  p,1\right)  }\left(
M\right)  $'s. Let us begin with

\begin{lemma}
\label{t5.2.1}The morphism
\begin{align*}
& \overline{\Omega}_{\left(  1\right)  }^{\left(  p,1\right)  }\left(
M\right)  \times\overline{\Omega}_{\left(  1\right)  }^{\left(  q,1\right)
}\left(  M\right) \\
& \underrightarrow{i_{\overline{\Omega}_{\left(  1\right)  }^{\left(
p,1\right)  }\left(  M\right)  }^{\overline{\Omega}_{\left(  0\right)
}^{\left(  p,1\right)  }\left(  M\right)  }\times i_{\overline{\Omega
}_{\left(  1\right)  }^{\left(  q,1\right)  }\left(  M\right)  }%
^{\overline{\Omega}_{\left(  0\right)  }^{\left(  q,1\right)  }\left(
M\right)  }}\\
& \overline{\Omega}_{\left(  0\right)  }^{\left(  p,1\right)  }\left(
M\right)  \times\overline{\Omega}_{\left(  0\right)  }^{\left(  q,1\right)
}\left(  M\right) \\
& \underrightarrow{\underline{\mathrm{\Pr od}}_{\left(  p,1\right)  ,\left(
q,1\right)  }^{M}}\\
& \mathbf{T}^{\mathcal{W}_{D^{2}}}\left[  \mathbf{T}^{\mathcal{W}_{D^{p+q}}%
}M\rightarrow M\right] \\
& \underrightarrow{\alpha_{\mathcal{W}_{\left(  d_{1},d_{2}\right)  \in
D(2)\mapsto\left(  d_{1},d_{2}\right)  \in D^{2}}}\left(  \left[
\mathbf{T}^{\mathcal{W}_{D^{p+q}}}M\rightarrow M\right]  \right)  }\\
& \mathbf{T}^{\mathcal{W}_{D(2)}}\left[  \mathbf{T}^{\mathcal{W}_{D^{p+q}}%
}M\rightarrow M\right]
\end{align*}
is identical to the morphism
\begin{align*}
& \overline{\Omega}_{\left(  1\right)  }^{\left(  p,1\right)  }\left(
M\right)  \times\overline{\Omega}_{\left(  1\right)  }^{\left(  q,1\right)
}\left(  M\right) \\
& \underrightarrow{i_{\overline{\Omega}_{\left(  1\right)  }^{\left(
p,1\right)  }\left(  M\right)  }^{\overline{\Omega}_{\left(  0\right)
}^{\left(  p,1\right)  }\left(  M\right)  }\times i_{\overline{\Omega
}_{\left(  1\right)  }^{\left(  q,1\right)  }\left(  M\right)  }%
^{\overline{\Omega}_{\left(  0\right)  }^{\left(  q,1\right)  }\left(
M\right)  }}\\
& \overline{\Omega}_{\left(  0\right)  }^{\left(  p,1\right)  }\left(
M\right)  \times\overline{\Omega}_{\left(  0\right)  }^{\left(  q,1\right)
}\left(  M\right) \\
& \underrightarrow{\overline{\mathrm{\Pr od}}_{\left(  p,1\right)  ,\left(
q,1\right)  }^{M}}\\
& \mathbf{T}^{\mathcal{W}_{D^{2}}}\left[  \mathbf{T}^{\mathcal{W}_{D^{p+q}}%
}M\rightarrow M\right] \\
& \underrightarrow{\alpha_{\mathcal{W}_{\left(  d_{1},d_{2}\right)  \in
D(2)\mapsto\left(  d_{1},d_{2}\right)  \in D^{2}}}\left(  \left[
\mathbf{T}^{\mathcal{W}_{D^{p+q}}}M\rightarrow M\right]  \right)  }\\
& \mathbf{T}^{\mathcal{W}_{D(2)}}\left[  \mathbf{T}^{\mathcal{W}_{D^{p+q}}%
}M\rightarrow M\right]
\end{align*}

\end{lemma}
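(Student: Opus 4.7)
The plan is to verify the claimed equality by exploiting the pullback decomposition $\mathcal{W}_{D(2)}=\mathcal{W}_{D}\times_{\mathbb{R}}\mathcal{W}_{D}$ (equivalently, $D(2)=D\vee D$ in the shade) together with the Dirac-type equalizer conditions defining $\overline{\Omega}_{(1)}^{(p,1)}(M)$ and $\overline{\Omega}_{(1)}^{(q,1)}(M)$. First I would unfold both composites using the definitions of $\underline{\mathrm{\Pr od}}_{(p,1),(q,1)}^{M}$ and $\overline{\mathrm{\Pr od}}_{(p,1),(q,1)}^{M}$. The two prefixes (the pair of Weil-lifts along the two projections $D^{2}\rightarrow D$, followed by the canonical identification turning a product of Weil functors into a Weil functor of a product) are literally the same on both sides, so the entire difference is concentrated in a single factor $\mathbf{T}^{\mathcal{W}_{D^{2}}}\underline{\mathrm{Conv}}_{p,q}^{M}$ versus $\mathbf{T}^{\mathcal{W}_{D^{2}}}\overline{\mathrm{Conv}}_{p,q}^{M}$.

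Next I would use microlinearity of the exponential $[\mathbf{T}^{\mathcal{W}_{D^{p+q}}}M\rightarrow M]$ (inherited from $M$) to express
\[
\mathbf{T}^{\mathcal{W}_{D(2)}}[\mathbf{T}^{\mathcal{W}_{D^{p+q}}}M\rightarrow M]
\]
as the fibre product of two copies of $\mathbf{T}^{\mathcal{W}_{D}}[\mathbf{T}^{\mathcal{W}_{D^{p+q}}}M\rightarrow M]$ over $[\mathbf{T}^{\mathcal{W}_{D^{p+q}}}M\rightarrow M]$ along the basepoint projections; this is the pullback analogue, for the wedge $D\vee D$, of the pullback diagrams in Section~\ref{s3}. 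By the universal property of this fibre product, the two composites coincide if and only if they become equal after post-composition with each of the two Weil-functor images of the axis inclusions $d\mapsto(d,0)$ and $d\mapsto(0,d)$ of $D\hookrightarrow D(2)$.

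On the first axis ($d_{2}=0$), the Weil-lift of $\eta$ factors through $\alpha_{\mathcal{W}_{1\rightarrow D}}(\eta)$, which, by the equalizer condition defining $\overline{\Omega}_{(1)}^{(q,1)}(M)$, coincides with the canonical projection $\alpha_{\mathcal{W}_{D^{q}\rightarrow 1}}(M):\mathbf{T}^{\mathcal{W}_{D^{q}}}M\rightarrow M$. For this degenerate choice of the second convolution argument, both $\underline{\mathrm{Conv}}_{p,q}^{M}$ and $\overline{\mathrm{Conv}}_{p,q}^{M}$ collapse, via naturality of $\alpha$ and the tensor symmetry $\mathcal{W}_{D^{p}}\otimes\mathcal{W}_{D^{q}}=\mathcal{W}_{D^{q}}\otimes\mathcal{W}_{D^{p}}$ already invoked in the definitions, to the single morphism obtained by precomposing $\omega_{d_{1}}$ with the canonical projection $\mathbf{T}^{\mathcal{W}_{D^{p+q}}}M\rightarrow\mathbf{T}^{\mathcal{W}_{D^{p}}}M$. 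The second axis is handled symmetrically, with the roles of $\omega$ and $\eta$ interchanged.

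The main obstacle will be the bookkeeping in this last step: tracking how the two different orderings of $\mathbf{T}^{\mathcal{W}_{D^{p}}}$ and $\mathbf{T}^{\mathcal{W}_{D^{q}}}$ in $\underline{\mathrm{Conv}}$ and $\overline{\mathrm{Conv}}$ interact with the several exponential transposes, and verifying that the resulting diagrams genuinely collapse to the same composite once one convolution argument is replaced by the canonical projection. The collapse itself is a routine diagram chase resting on the naturality of $\alpha$, but the diagrams are sizeable and require care.
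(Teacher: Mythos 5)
Your proposal is correct, and it actually supplies more than the paper's own proof does. The paper disposes of this lemma with a single appeal to Proposition \ref{t5.1.3}, asserting that the two composites are already identical \emph{before} the restriction $\mathbf{T}^{\mathcal{W}_{D^{2}}}\left[\mathbf{T}^{\mathcal{W}_{D^{p+q}}}M\rightarrow M\right]\rightarrow\mathbf{T}^{\mathcal{W}_{D(2)}}\left[\mathbf{T}^{\mathcal{W}_{D^{p+q}}}M\rightarrow M\right]$ is applied; taken literally that is too strong (it would force $\zeta_{p,q}^{L_{1}}\left(M\right)$ to vanish identically, since the two composites into $\mathbf{T}^{\mathcal{W}_{D^{2}}}\left[\cdots\right]$ differ precisely by the bracket), and the hypotheses of Proposition \ref{t5.1.3} are not met on the nose, because the equalizer condition defining $\overline{\Omega}_{\left(1\right)}^{\left(p,1\right)}\left(M\right)$ pins down only the value at $0\in D$, not the values at general $d$, so those values need not lie in the $\mathrm{id}_{M}$-fibres. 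Your route inserts exactly the step that makes the argument work: the quasi-colimit presentation of $D(2)$ by the two axes $D\leftarrow 1\rightarrow D$, which by microlinearity of the exponential makes the comparison into $\mathbf{T}^{\mathcal{W}_{D}}\left[\cdots\right]\times_{\left[\cdots\right]}\mathbf{T}^{\mathcal{W}_{D}}\left[\cdots\right]$ monic and reduces the identity to the two axis restrictions; only there, where one convolution argument has been specialized to its value at $0$ --- hence, by the Dirac condition, to the canonical projection --- is the content of Proposition \ref{t5.1.3} (agreement of $\underline{\mathrm{Conv}}$ and $\overline{\mathrm{Conv}}$ when one argument is degenerate) genuinely and legitimately used. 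Your axis computations are right: with the second argument equal to the canonical projection $\mathbf{T}^{\mathcal{W}_{D^{q}}}M\rightarrow M$, both convolutions reduce to precomposition with the projection $\mathbf{T}^{\mathcal{W}_{D^{p+q}}}M\rightarrow\mathbf{T}^{\mathcal{W}_{D^{p}}}M$, and symmetrically on the other axis. What your approach buys over the paper's one-line citation is that it makes visible where the restriction to $D(2)$ and the Dirac condition are each indispensable, which the paper's proof elides.
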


\begin{proof}
By Proposition \ref{t5.1.3}, the morphism
\begin{align*}
& \overline{\Omega}_{\left(  1\right)  }^{\left(  p,1\right)  }\left(
M\right)  \times\overline{\Omega}_{\left(  1\right)  }^{\left(  q,1\right)
}\left(  M\right) \\
& \underrightarrow{i_{\overline{\Omega}_{\left(  1\right)  }^{\left(
p,1\right)  }\left(  M\right)  }^{\overline{\Omega}_{\left(  0\right)
}^{\left(  p,1\right)  }\left(  M\right)  }\times i_{\overline{\Omega
}_{\left(  1\right)  }^{\left(  q,1\right)  }\left(  M\right)  }%
^{\overline{\Omega}_{\left(  0\right)  }^{\left(  q,1\right)  }\left(
M\right)  }}\\
& \overline{\Omega}_{\left(  0\right)  }^{\left(  p,1\right)  }\left(
M\right)  \times\overline{\Omega}_{\left(  0\right)  }^{\left(  q,1\right)
}\left(  M\right) \\
& \underrightarrow{\underline{\mathrm{\Pr od}}_{\left(  p,1\right)  ,\left(
q,1\right)  }^{M}}\\
& \mathbf{T}^{\mathcal{W}_{D^{2}}}\left[  \mathbf{T}^{\mathcal{W}_{D^{p+q}}%
}M\rightarrow M\right]
\end{align*}
is identical to the morphism
\begin{align*}
& \overline{\Omega}_{\left(  1\right)  }^{\left(  p,1\right)  }\left(
M\right)  \times\overline{\Omega}_{\left(  1\right)  }^{\left(  q,1\right)
}\left(  M\right) \\
& \underrightarrow{i_{\overline{\Omega}_{\left(  1\right)  }^{\left(
p,1\right)  }\left(  M\right)  }^{\overline{\Omega}_{\left(  0\right)
}^{\left(  p,1\right)  }\left(  M\right)  }\times i_{\overline{\Omega
}_{\left(  1\right)  }^{\left(  q,1\right)  }\left(  M\right)  }%
^{\overline{\Omega}_{\left(  0\right)  }^{\left(  q,1\right)  }\left(
M\right)  }}\\
& \overline{\Omega}_{\left(  0\right)  }^{\left(  p,1\right)  }\left(
M\right)  \times\overline{\Omega}_{\left(  0\right)  }^{\left(  q,1\right)
}\left(  M\right) \\
& \underrightarrow{\overline{\mathrm{\Pr od}}_{\left(  p,1\right)  ,\left(
q,1\right)  }^{M}}\\
& \mathbf{T}^{\mathcal{W}_{D^{2}}}\left[  \mathbf{T}^{\mathcal{W}_{D^{p+q}}%
}M\rightarrow M\right]
\end{align*}
so that their compositions with the morphism
\begin{align*}
& \mathbf{T}^{\mathcal{W}_{D^{2}}}\left[  \mathbf{T}^{\mathcal{W}_{D^{p+q}}%
}M\rightarrow M\right] \\
& \underrightarrow{\alpha_{\mathcal{W}_{\left(  d_{1},d_{2}\right)  \in
D(2)\mapsto\left(  d_{1},d_{2}\right)  \in D^{2}}}\left(  \left[
\mathbf{T}^{\mathcal{W}_{D^{p+q}}}M\rightarrow M\right]  \right)  }\\
& \mathbf{T}^{\mathcal{W}_{D(2)}}\left[  \mathbf{T}^{\mathcal{W}_{D^{p+q}}%
}M\rightarrow M\right]
\end{align*}
should evidently be identical.
\end{proof}

\begin{corollary}
\label{t5.2.1'}The morphism
\begin{align*}
& \overline{\Omega}_{\left(  1\right)  }^{\left(  p,1\right)  }\left(
M\right)  \times\overline{\Omega}_{\left(  1\right)  }^{\left(  q,1\right)
}\left(  M\right) \\
& \underrightarrow{i_{\overline{\Omega}_{\left(  1\right)  }^{\left(
p,1\right)  }\left(  M\right)  }^{\overline{\Omega}_{\left(  0\right)
}^{\left(  p,1\right)  }\left(  M\right)  }\times i_{\overline{\Omega
}_{\left(  1\right)  }^{\left(  q,1\right)  }\left(  M\right)  }%
^{\overline{\Omega}_{\left(  0\right)  }^{\left(  q,1\right)  }\left(
M\right)  }}\\
& \overline{\Omega}_{\left(  0\right)  }^{\left(  p,1\right)  }\left(
M\right)  \times\overline{\Omega}_{\left(  0\right)  }^{\left(  q,1\right)
}\left(  M\right) \\
& \underrightarrow{\left(  \underline{\mathrm{\Pr od}}_{\left(  p,1\right)
,\left(  q,1\right)  }^{M},\overline{\mathrm{\Pr od}}_{\left(  p,1\right)
,\left(  q,1\right)  }^{M}\right)  }\\
& \mathbf{T}^{\mathcal{W}_{D^{2}}}\left[  \mathbf{T}^{\mathcal{W}_{D^{p+q}}%
}M\rightarrow M\right]  \times\mathbf{T}^{\mathcal{W}_{D^{2}}}\left[
\mathbf{T}^{\mathcal{W}_{D^{p+q}}}M\rightarrow M\right]
\end{align*}
is to be factored through the canonical injection
\begin{align*}
& \mathbf{T}^{\mathcal{W}_{D^{2}}}\left[  \mathbf{T}^{\mathcal{W}_{D^{p+q}}%
}M\rightarrow M\right]  \times_{\mathbf{T}^{\mathcal{W}_{D(2)}}\left[
\mathbf{T}^{\mathcal{W}_{D^{p+q}}}M\rightarrow M\right]  }\mathbf{T}%
^{\mathcal{W}_{D^{2}}}\left[  \mathbf{T}^{\mathcal{W}_{D^{p+q}}}M\rightarrow
M\right] \\
& \rightarrow\mathbf{T}^{\mathcal{W}_{D^{2}}}\left[  \mathbf{T}^{\mathcal{W}%
_{D^{p+q}}}M\rightarrow M\right]  \times\mathbf{T}^{\mathcal{W}_{D^{2}}%
}\left[  \mathbf{T}^{\mathcal{W}_{D^{p+q}}}M\rightarrow M\right]
\end{align*}

\end{corollary}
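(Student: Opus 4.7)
The plan is to deduce the corollary directly from Lemma \ref{t5.2.1} by invoking the universal property of the pullback. The fiber product
\[
\mathbf{T}^{\mathcal{W}_{D^{2}}}\bigl[\mathbf{T}^{\mathcal{W}_{D^{p+q}}}M\rightarrow M\bigr]\times_{\mathbf{T}^{\mathcal{W}_{D(2)}}[\mathbf{T}^{\mathcal{W}_{D^{p+q}}}M\rightarrow M]}\mathbf{T}^{\mathcal{W}_{D^{2}}}\bigl[\mathbf{T}^{\mathcal{W}_{D^{p+q}}}M\rightarrow M\bigr]
\]
is, by definition, formed along the two copies of the morphism $\alpha_{\mathcal{W}_{(d_1,d_2)\in D(2)\mapsto (d_1,d_2)\in D^2}}([\mathbf{T}^{\mathcal{W}_{D^{p+q}}}M\rightarrow M])$. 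To exhibit a factorization of $(\underline{\mathrm{\Pr od}}_{(p,1),(q,1)}^{M},\overline{\mathrm{\Pr od}}_{(p,1),(q,1)}^{M})$ through the canonical injection from this pullback into the cartesian product, it suffices to verify that the two components have the same image under the relevant projection to $\mathbf{T}^{\mathcal{W}_{D(2)}}[\mathbf{T}^{\mathcal{W}_{D^{p+q}}}M\rightarrow M]$.

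First I would post-compose each component with $\alpha_{\mathcal{W}_{(d_1,d_2)\in D(2)\mapsto(d_1,d_2)\in D^2}}([\mathbf{T}^{\mathcal{W}_{D^{p+q}}}M\rightarrow M])$, obtaining two morphisms from $\overline{\Omega}_{(1)}^{(p,1)}(M)\times\overline{\Omega}_{(1)}^{(q,1)}(M)$ into $\mathbf{T}^{\mathcal{W}_{D(2)}}[\mathbf{T}^{\mathcal{W}_{D^{p+q}}}M\rightarrow M]$. These are precisely the two morphisms that Lemma \ref{t5.2.1} declares to be identical. Thus the equalizing condition for the fiber product is satisfied on the nose, and the universal property of the pullback yields the desired factorization through the canonical injection.

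There is no real obstacle here: the entire content of the corollary is the observation that the compatibility already established in Lemma \ref{t5.2.1} is exactly what is needed to invoke the universal property of the pullback. The only care required is bookkeeping, namely making sure that the two projections of the pullback are indeed being compared along the same map $\alpha_{\mathcal{W}_{(d_1,d_2)\in D(2)\mapsto(d_1,d_2)\in D^2}}$ and that the injections $i_{\overline{\Omega}_{(1)}^{(\cdot,1)}(M)}^{\overline{\Omega}_{(0)}^{(\cdot,1)}(M)}$ are inserted symmetrically in both components, which is visible from the displays themselves.
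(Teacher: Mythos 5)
Your argument is correct and is exactly the intended derivation: the fiber product in question is formed along the two copies of $\alpha_{\mathcal{W}_{(d_1,d_2)\in D(2)\mapsto(d_1,d_2)\in D^{2}}}\left(\left[\mathbf{T}^{\mathcal{W}_{D^{p+q}}}M\rightarrow M\right]\right)$, and Lemma \ref{t5.2.1} supplies precisely the equalizing condition needed to invoke the universal property of that pullback. The paper leaves this step implicit (stating the result as an immediate corollary), so your write-up matches its approach.
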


It is easy to see that

\begin{proposition}
\label{t5.2.2}The factored morphism
\begin{align*}
& \overline{\Omega}_{\left(  1\right)  }^{\left(  p,1\right)  }\left(
M\right)  \times\overline{\Omega}_{\left(  1\right)  }^{\left(  q,1\right)
}\left(  M\right) \\
& \rightarrow\mathbf{T}^{\mathcal{W}_{D^{2}}}\left[  \mathbf{T}^{\mathcal{W}%
_{D^{p+q}}}M\rightarrow M\right]  \times_{\mathbf{T}^{\mathcal{W}_{D(2)}%
}\left[  \mathbf{T}^{\mathcal{W}_{D^{p+q}}}M\rightarrow M\right]  }%
\mathbf{T}^{\mathcal{W}_{D^{2}}}\left[  \mathbf{T}^{\mathcal{W}_{D^{p+q}}%
}M\rightarrow M\right]
\end{align*}
in Corollary \ref{t5.2.1'} followed by the morphism
\begin{align*}
& \mathbf{T}^{\mathcal{W}_{D^{2}}}\left[  \mathbf{T}^{\mathcal{W}_{D^{p+q}}%
}M\rightarrow M\right]  \times_{\mathbf{T}^{\mathcal{W}_{D(2)}}\left[
\mathbf{T}^{\mathcal{W}_{D^{p+q}}}M\rightarrow M\right]  }\mathbf{T}%
^{\mathcal{W}_{D^{2}}}\left[  \mathbf{T}^{\mathcal{W}_{D^{p+q}}}M\rightarrow
M\right] \\
& \underrightarrow{\zeta^{\overset{\cdot}{-}}\left(  \left[  \mathbf{T}%
^{\mathcal{W}_{D^{p+q}}}M\rightarrow M\right]  \right)  }\\
& \mathbf{T}^{\mathcal{W}_{D}}\left[  \mathbf{T}^{\mathcal{W}_{D^{p+q}}%
}M\rightarrow M\right]
\end{align*}
is to be factored uniquely into a morphism
\[
\zeta_{p,q}^{L_{1}}\left(  M\right)  :\overline{\Omega}_{\left(  1\right)
}^{\left(  p,1\right)  }\left(  M\right)  \times\overline{\Omega}_{\left(
1\right)  }^{\left(  q,1\right)  }\left(  M\right)  \rightarrow\overline
{\Omega}_{\left(  1\right)  }^{\left(  p+q,1\right)  }\left(  M\right)
\]
and the canonical injection
\[
i_{\overline{\Omega}_{\left(  1\right)  }^{\left(  ^{p+q},1\right)  }\left(
M\right)  }^{\mathbf{T}^{\mathcal{W}_{D}}\left[  \mathbf{T}^{\mathcal{W}%
_{D^{p+q}}}M\rightarrow M\right]  }:\overline{\Omega}_{\left(  1\right)
}^{\left(  p+q,1\right)  }\left(  M\right)  \rightarrow\mathbf{T}%
^{\mathcal{W}_{D}}\left[  \mathbf{T}^{\mathcal{W}_{D^{p+q}}}M\rightarrow
M\right]
\]

\end{proposition}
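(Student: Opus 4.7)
The plan is to invoke the universal property of the equalizer defining $\overline{\Omega}_{(1)}^{(p+q,1)}(M)$. By Proposition \ref{t4.2}, this equalizer can be described either as $\underline{\Omega}_{(1)}^{(p+q,1)}(M)$ or $\overline{\Omega}_{(1)}^{(p+q,1)}(M)$; it suffices to verify that the composite in question, when postcomposed with the two parallel arrows in either equalizer presentation, yields identical morphisms. Thus the claim reduces to a single Dirac-type identity: after evaluating the resulting $\mathbf{T}^{\mathcal{W}_D}[\mathbf{T}^{\mathcal{W}_{D^{p+q}}}M\to M]$-valued morphism against an arbitrary point of $\mathbf{T}^{\mathcal{W}_{D^{p+q}}}M$, applying the basepoint morphism $\alpha_{\mathcal{W}_{1\to D}}$ should agree with applying $\alpha_{\mathcal{W}_{1\to D^{p+q}}}$ to the input first.

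First, I would unpack $\zeta^{\overset{\cdot}{-}}(M)$: by the Notation following Corollary \ref{t3.1'}, an element of the pullback $\mathbf{T}^{\mathcal{W}_{D^2}}(-) \times_{\mathbf{T}^{\mathcal{W}_{D(2)}}(-)} \mathbf{T}^{\mathcal{W}_{D^2}}(-)$ is identified with an element of $\mathbf{T}^{\mathcal{W}_{D^3\{(1,3),(2,3)\}}}(-)$, and $\zeta^{\overset{\cdot}{-}}$ is then $\alpha_{\mathcal{W}_{d\in D\mapsto(0,0,d)\in D^3\{(1,3),(2,3)\}}}$. The essential point is that the Weil-algebra map $d \mapsto (0,0,d)$ vanishes in the first two coordinates, so when the two components fed into the pullback are built from the two convolutions $\underline{\mathrm{Prod}}_{(p,1),(q,1)}^M$ and $\overline{\mathrm{Prod}}_{(p,1),(q,1)}^M$ (which encode the $\mathbf{T}^{\mathcal{W}_{D^2}}$-parametrized functionals built from the $p$-form and the $q$-form), the result only sees the "diagonal" $D$-direction after the two $D$-directions from the respective factors are killed.

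Second, I would transport the Dirac conditions of the two input factors through the reparameterizations defining $\underline{\mathrm{Prod}}$ and $\overline{\mathrm{Prod}}$. Concretely, one factor of $\overline{\Omega}_{(1)}^{(p,1)}(M)$ satisfies that the composite with $\alpha_{\mathcal{W}_{1\to D}}$ equals the composite with the basepoint map on its $\mathbf{T}^{\mathcal{W}_{D^p}}M$-argument, and likewise for the $q$-factor; these two Dirac conditions, combined with the fact that the Weil-algebra maps $(d_1,d_2)\in D^2 \mapsto (d_1,\ldots,0)\in D^p$ and $\mapsto (0,\ldots,d_2)\in D^q$ appearing inside $\underline{\mathrm{Conv}}$ and $\overline{\mathrm{Conv}}$ both send the basepoint $d_1=d_2=0$ to the basepoints of $D^p$ and $D^q$ respectively, are precisely what is needed so that the pullback output over $\mathbf{T}^{\mathcal{W}_{D(2)}}[\mathbf{T}^{\mathcal{W}_{D^{p+q}}}M\to M]$ satisfies the Dirac condition before $\alpha_{\mathcal{W}_{d\mapsto(0,0,d)}}$ is applied.

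The main obstacle is bookkeeping: the statement involves three nested layers of exponentials and Weil functors $\mathbf{T}^{\mathcal{W}_?}$, and the Dirac equalizer condition must be checked at the outermost layer after the two convolutions have been merged via the pullback along $\mathbf{T}^{\mathcal{W}_{D(2)}} \hookrightarrow \mathbf{T}^{\mathcal{W}_{D^2}}$. Once Lemma \ref{t5.2.1} and Corollary \ref{t5.2.1'} are in hand, the actual verification is essentially a diagram chase: the key identity reduces to commutativity of two composites of $\alpha_{\mathcal{W}_?}$ morphisms between simplicial objects, where both composites, on the nose, correspond to the same Weil-algebra homomorphism with the $(1,3)$- and $(2,3)$-coordinates set to zero. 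No microlinearity beyond what is already encoded in Propositions \ref{t3.1} and \ref{t5.1.1}--\ref{t5.1.4} is needed.
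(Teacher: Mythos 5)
The paper leaves Proposition \ref{t5.2.2} unproved (it is prefaced by ``It is easy to see that''), and your argument supplies exactly the intended verification: factorization through the equalizer defining $\overline{\Omega}_{\left(1\right)}^{\left(p+q,1\right)}\left(M\right)$ reduces to checking the Dirac condition, which holds because $\zeta^{\overset{\cdot}{-}}$ followed by $\alpha_{\mathcal{W}_{1\rightarrow D}}$ amounts to evaluation at the basepoint $(0,0,0)$ of $D^{3}\{(1,3),(2,3)\}$, where the Dirac conditions on the two input factors force the resulting functional on $\mathbf{T}^{\mathcal{W}_{D^{p+q}}}M$ to be the basepoint map $\alpha_{\mathcal{W}_{1\rightarrow D^{p+q}}}\left(M\right)$, while uniqueness is automatic since the canonical injection is monic. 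Your description of the reparameterizations occurring inside $\underline{\mathrm{Conv}}_{p,q}^{M}$ and $\overline{\mathrm{Conv}}_{p,q}^{M}$ is slightly loose (the maps actually involved are $\alpha_{\mathcal{W}_{D^{p}\rightarrow 1}}$, respectively $\alpha_{\mathcal{W}_{D^{q}\rightarrow 1}}$, together with the identification $\mathcal{W}_{D^{p+q}}=\mathcal{W}_{D^{p}}\otimes_{k}\mathcal{W}_{D^{q}}$, not coordinate inclusions of $D^{2}$ into $D^{p}$ or $D^{q}$), but this does not affect the substance of the argument.
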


\begin{notation}
Given $\xi\in\left[  M\otimes\mathcal{W}_{D^{p}}\rightarrow M\right]
\otimes\mathcal{W}_{D^{n}}$ and $\sigma\in\mathbb{S}_{p}$, $\xi^{\sigma} $
denotes
\[
\left(  \left(  \,\right)  _{\left[  M\otimes\mathcal{W}_{D^{p}}\rightarrow
M\right]  }^{\sigma}\otimes\mathrm{id}_{\mathcal{W}_{D^{n}}}\right)  \left(
\xi\right)
\]
where $\left(  \,\right)  _{\left[  M\otimes\mathcal{W}_{D^{p}}\rightarrow
M\right]  }^{\sigma}:\left[  M\otimes\mathcal{W}_{D^{p}}\rightarrow M\right]
\rightarrow\left[  M\otimes\mathcal{W}_{D^{p}}\rightarrow M\right]  $ denotes
the operation
\[
\eta\in\left[  M\otimes\mathcal{W}_{D^{p}}\rightarrow M\right]  \mapsto
\eta\circ\left(  \mathrm{id}_{M}\otimes\mathcal{W}_{\left(  d_{1}%
,...,d_{p}\right)  \in D^{p}\mapsto\left(  d_{\sigma\left(  1\right)
},...,d_{\sigma\left(  p\right)  }\right)  \in D^{p}}\right)
\]

\end{notation}

We will show that the morphism
\[
\zeta_{p,q}^{L_{1}}\left(  M\right)  :\overline{\Omega}_{\left(  1\right)
}^{\left(  p,1\right)  }\left(  M\right)  \times\overline{\Omega}_{\left(
1\right)  }^{\left(  q,1\right)  }\left(  M\right)  \rightarrow\overline
{\Omega}_{\left(  1\right)  }^{\left(  p+q,1\right)  }\left(  M\right)
\]
is antisymmetric in the following sense.

\begin{proposition}
\label{t5.2.3}The morphism
\begin{align}
& \overline{\Omega}_{\left(  1\right)  }^{\left(  p,1\right)  }\left(
M\right)  \times\overline{\Omega}_{\left(  1\right)  }^{\left(  q,1\right)
}\left(  M\right) \nonumber\\
& \underrightarrow{\zeta_{p,q}^{L_{1}}\left(  M\right)  }\nonumber\\
& \overline{\Omega}_{\left(  1\right)  }^{\left(  p+q,1\right)  }\left(
M\right) \label{5.2.3.1}%
\end{align}
and the morphism
\begin{align}
& \overline{\Omega}_{\left(  1\right)  }^{\left(  p,1\right)  }\left(
M\right)  \times\overline{\Omega}_{\left(  1\right)  }^{\left(  q,1\right)
}\left(  M\right) \nonumber\\
& =\overline{\Omega}_{\left(  1\right)  }^{\left(  q,1\right)  }\left(
M\right)  \times\overline{\Omega}_{\left(  1\right)  }^{\left(  p,1\right)
}\left(  M\right) \nonumber\\
& \underrightarrow{\zeta_{q,p}^{L_{1}}\left(  M\right)  }\nonumber\\
& \overline{\Omega}_{\left(  1\right)  }^{\left(  p+q,1\right)  }\left(
M\right) \nonumber\\
& \underrightarrow{\left(  \cdot^{\sigma_{p,q}}\right)  _{\overline{\Omega
}_{\left(  1\right)  }^{\left(  p+q,1\right)  }\left(  M\right)  }}\nonumber\\
& \overline{\Omega}_{\left(  1\right)  }^{\left(  p+q,1\right)  }\left(
M\right) \label{5.2.3.2}%
\end{align}
sum up only to vanish.
\end{proposition}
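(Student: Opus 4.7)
The strategy is to reduce the antisymmetry claim to Proposition \ref{t3.2} (which gives that $\zeta^{\overset{\cdot}{-}}\left(N\right)$ applied to a pair and to its swap sum to zero) by exploiting Proposition \ref{t5.1.1} (which says post-composition with $\left(\cdot^{\sigma_{p,q}}\right)$ interchanges $\underline{\mathrm{Conv}}_{p,q}^{M}$ and $\overline{\mathrm{Conv}}_{q,p}^{M}\circ\mathrm{swap}$, and vice versa). Schematically, $\zeta_{p,q}^{L_{1}}$ is the $\zeta^{\overset{\cdot}{-}}$-difference of the two orderings $\underline{\mathrm{\Pr od}}$ and $\overline{\mathrm{\Pr od}}$; swapping $\left(p,q\right)$ and twisting by $\sigma_{p,q}$ interchanges the roles of $\underline{\mathrm{\Pr od}}$ and $\overline{\mathrm{\Pr od}}$, so (\ref{5.2.3.2}) becomes the $\zeta^{\overset{\cdot}{-}}$-difference of the reversed pair, which cancels (\ref{5.2.3.1}).

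First I unfold the definition supplied by Proposition \ref{t5.2.2}: the composition of (\ref{5.2.3.1}) with the canonical injection into $\mathbf{T}^{\mathcal{W}_{D}}\left[\mathbf{T}^{\mathcal{W}_{D^{p+q}}}M\rightarrow M\right]$ equals $\zeta^{\overset{\cdot}{-}}\left(\left[\mathbf{T}^{\mathcal{W}_{D^{p+q}}}M\rightarrow M\right]\right)$ applied to the pair $\left(\underline{\mathrm{\Pr od}}_{\left(p,1\right),\left(q,1\right)}^{M},\overline{\mathrm{\Pr od}}_{\left(p,1\right),\left(q,1\right)}^{M}\right)$ of the canonically injected arguments; by Corollary \ref{t5.2.1'} this pair does land in the requisite fiber product. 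The analogous unfolding of (\ref{5.2.3.2}) gives $\zeta^{\overset{\cdot}{-}}\left(\left[\mathbf{T}^{\mathcal{W}_{D^{p+q}}}M\rightarrow M\right]\right)$ applied to $\left(\underline{\mathrm{\Pr od}}_{\left(q,1\right),\left(p,1\right)}^{M},\overline{\mathrm{\Pr od}}_{\left(q,1\right),\left(p,1\right)}^{M}\right)$ composed with $\mathrm{swap}$, followed by the induced action of $\left(\cdot^{\sigma_{p,q}}\right)_{\left[\mathbf{T}^{\mathcal{W}_{D^{p+q}}}M\rightarrow M\right]}$ on $\overline{\Omega}_{\left(1\right)}^{\left(p+q,1\right)}\left(M\right)$.

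Next I lift Proposition \ref{t5.1.1} to $\mathrm{\Pr od}$. Since $\underline{\mathrm{\Pr od}}_{\left(p,1\right),\left(q,1\right)}^{M}$ is, by its very definition, $\mathbf{T}^{\mathcal{W}_{D^{2}}}\underline{\mathrm{Conv}}_{p,q}^{M}$ precomposed with two $\alpha_{\mathcal{W}_{-}}$-morphisms that redistribute the $D^{2}$-parameter, functoriality of $\mathbf{T}^{\mathcal{W}_{D^{2}}}(-)$ and naturality of those $\alpha$'s transport the identities of Proposition \ref{t5.1.1} into the identities
\[
\mathbf{T}^{\mathcal{W}_{D^{2}}}\left(\cdot^{\sigma_{p,q}}\right)\circ\underline{\mathrm{\Pr od}}_{\left(q,1\right),\left(p,1\right)}^{M}\circ\mathrm{swap}=\overline{\mathrm{\Pr od}}_{\left(p,1\right),\left(q,1\right)}^{M}
\]
and the symmetric one with $\underline{\mathrm{\Pr od}}$ and $\overline{\mathrm{\Pr od}}$ exchanged. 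Moreover the pullback square of Proposition \ref{t3.1} is natural in its object argument, so $\zeta^{\overset{\cdot}{-}}\left(-\right)$ is natural and commutes with applying $\mathbf{T}^{\mathcal{W}_{D}}\left(\cdot^{\sigma_{p,q}}\right)$ coordinate-wise in the fiber product and on the target.

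Combining these, the canonical injection of (\ref{5.2.3.2}) is $\zeta^{\overset{\cdot}{-}}\left(\left[\mathbf{T}^{\mathcal{W}_{D^{p+q}}}M\rightarrow M\right]\right)$ applied to the pair $\left(\overline{\mathrm{\Pr od}}_{\left(p,1\right),\left(q,1\right)}^{M},\underline{\mathrm{\Pr od}}_{\left(p,1\right),\left(q,1\right)}^{M}\right)$, which is exactly the swap of the pair defining (\ref{5.2.3.1}). Proposition \ref{t3.2} now forces the sum of the two injected morphisms to vanish; since the canonical injection $i_{\overline{\Omega}_{\left(1\right)}^{\left(p+q,1\right)}\left(M\right)}^{\mathbf{T}^{\mathcal{W}_{D}}\left[\mathbf{T}^{\mathcal{W}_{D^{p+q}}}M\rightarrow M\right]}$ is a monomorphism (it is an equalizer), the sum already vanishes in $\overline{\Omega}_{\left(1\right)}^{\left(p+q,1\right)}\left(M\right)$. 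The main obstacle is the lifting step: one must verify, by bookkeeping of indices, that $\sigma_{p,q}$ commutes appropriately with the redistributing $\alpha_{\mathcal{W}_{-}}$-morphisms hidden in the definition of $\mathrm{\Pr od}$. This is a purely naturality argument, but it is where the combinatorial detail lives; once it is checked, everything else is formal.
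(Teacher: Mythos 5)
Your proposal is correct and follows essentially the same route as the paper's own proof, which likewise reduces the claim to the combination of Proposition \ref{t3.2} (antisymmetry of $\zeta^{\overset{\cdot}{-}}$ under interchange of the two entries of the fibre product) with Proposition \ref{t5.1.1} (the $\sigma_{p,q}$-twist interchanges $\underline{\mathrm{Conv}}$ and $\overline{\mathrm{Conv}}$ up to swapping the arguments). One small caveat on your displayed lifted identity: $\mathbf{T}^{\mathcal{W}_{D^{2}}}\left(\cdot^{\sigma_{p,q}}\right)\circ\underline{\mathrm{\Pr od}}_{\left(q,1\right),\left(p,1\right)}^{M}\circ\mathrm{swap}$ agrees with $\overline{\mathrm{\Pr od}}_{\left(p,1\right),\left(q,1\right)}^{M}$ only after additionally flipping the two outer $D$-directions of $D^{2}$, but this is harmless because $\zeta^{\overset{\cdot}{-}}$ is invariant under applying that flip simultaneously to both components (the maps $\varphi$ and $\psi$ of Proposition \ref{t3.1} commute with the flip of the first two coordinates of $D^{3}\{(1,3),(2,3)\}$, which fixes $(0,0,d)$), so the cancellation via Proposition \ref{t3.2} goes through unchanged.
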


\begin{proof}
This follows easily from Propositions \ref{t3.2} and \ref{t5.1.1}.It is easy
to see that the morphism
\begin{align*}
& \overline{\Omega}_{\left(  1\right)  }^{\left(  p,1\right)  }\left(
M\right)  \times\overline{\Omega}_{\left(  1\right)  }^{\left(  q,1\right)
}\left(  M\right) \\
& \underrightarrow{i_{\overline{\Omega}_{\left(  1\right)  }^{\left(
p,1\right)  }\left(  M\right)  }^{\overline{\Omega}_{\left(  0\right)
}^{\left(  p,1\right)  }\left(  M\right)  }\times i_{\overline{\Omega
}_{\left(  1\right)  }^{\left(  q,1\right)  }\left(  M\right)  }%
^{\overline{\Omega}_{\left(  0\right)  }^{\left(  q,1\right)  }\left(
M\right)  }}\\
& \overline{\Omega}_{\left(  0\right)  }^{\left(  q,1\right)  }\left(
M\right)  \times\overline{\Omega}_{\left(  0\right)  }^{\left(  p,1\right)
}\left(  M\right) \\
& \underrightarrow{\left(  \underline{\mathrm{\Pr od}}_{\left(  q,1\right)
,\left(  p,1\right)  }^{M},\overline{\mathrm{\Pr od}}_{\left(  q,1\right)
,\left(  p,1\right)  }^{M}\right)  }\\
& \mathbf{T}^{\mathcal{W}_{D^{2}}}\left[  \mathbf{T}^{\mathcal{W}_{D^{p+q}}%
}M\rightarrow M\right]  \times_{\mathbf{T}^{\mathcal{W}_{D(2)}}\left[
\mathbf{T}^{\mathcal{W}_{D^{p+q}}}M\rightarrow M\right]  }\mathbf{T}%
^{\mathcal{W}_{D^{2}}}\left[  \mathbf{T}^{\mathcal{W}_{D^{p+q}}}M\rightarrow
M\right] \\
& \underrightarrow{\zeta^{\overset{\cdot}{-}}\left(  \left[  \mathbf{T}%
^{\mathcal{W}_{D^{p+q}}}M\rightarrow M\right]  \right)  }\\
& \overline{\Omega}_{\left(  0\right)  }^{\left(  p+q,1\right)  }\left(
M\right) \\
& \underrightarrow{\left(  \cdot^{\sigma_{p,q}}\right)  _{\overline{\Omega
}_{\left(  0\right)  }^{\left(  p+q,1\right)  }\left(  M\right)  }}\\
& \overline{\Omega}_{\left(  0\right)  }^{\left(  p+q,1\right)  }\left(
M\right)
\end{align*}
is identical to the morphism
\begin{align*}
& \overline{\Omega}_{\left(  1\right)  }^{\left(  p,1\right)  }\left(
M\right)  \times\overline{\Omega}_{\left(  1\right)  }^{\left(  q,1\right)
}\left(  M\right) \\
& \underrightarrow{i_{\overline{\Omega}_{\left(  1\right)  }^{\left(
p,1\right)  }\left(  M\right)  }^{\overline{\Omega}_{\left(  0\right)
}^{\left(  p,1\right)  }\left(  M\right)  }\times i_{\overline{\Omega
}_{\left(  1\right)  }^{\left(  q,1\right)  }\left(  M\right)  }%
^{\overline{\Omega}_{\left(  0\right)  }^{\left(  q,1\right)  }\left(
M\right)  }}\\
& \overline{\Omega}_{\left(  0\right)  }^{\left(  q,1\right)  }\left(
M\right)  \times\overline{\Omega}_{\left(  0\right)  }^{\left(  p,1\right)
}\left(  M\right) \\
& \underrightarrow{\left(  \left(  \cdot^{\sigma_{p,q}}\right)  _{\overline
{\Omega}_{\left(  0\right)  }^{\left(  p+q,1\right)  }\left(  M\right)  }%
\circ\underline{\mathrm{\Pr od}}_{\left(  q,1\right)  ,\left(  p,1\right)
}^{M},\left(  \cdot^{\sigma_{p,q}}\right)  _{\overline{\Omega}_{\left(
0\right)  }^{\left(  p+q,1\right)  }\left(  M\right)  }\circ\overline
{\mathrm{\Pr od}}_{\left(  q,1\right)  ,\left(  p,1\right)  }^{M}\right)  }\\
& \mathbf{T}^{\mathcal{W}_{D^{2}}}\left[  \mathbf{T}^{\mathcal{W}_{D^{p+q}}%
}M\rightarrow M\right]  \times_{\mathbf{T}^{\mathcal{W}_{D(2)}}\left[
\mathbf{T}^{\mathcal{W}_{D^{p+q}}}M\rightarrow M\right]  }\mathbf{T}%
^{\mathcal{W}_{D^{2}}}\left[  \mathbf{T}^{\mathcal{W}_{D^{p+q}}}M\rightarrow
M\right] \\
& \underrightarrow{\zeta^{\overset{\cdot}{-}}\left(  \left[  \mathbf{T}%
^{\mathcal{W}_{D^{p+q}}}M\rightarrow M\right]  \right)  }\\
& \overline{\Omega}_{\left(  0\right)  }^{\left(  p+q,1\right)  }\left(
M\right)
\end{align*}
However we know well by Proposition \ref{t5.1.1} that
\[
\]
This follows from Propositions 4 and 6 in \S 3.4 of Lavendhomme \cite{lav}.
More specifically we have
\begin{align*}
&  \left[  \xi_{1},\xi_{2}\right]  _{L}+\left(  \left[  \xi_{2},\xi
_{1}\right]  _{L}\right)  ^{\sigma_{p,q}}\\
&  =(\xi_{1}\widetilde{\circledast}\xi_{2}\overset{\cdot}{-}\xi_{1}%
\circledast\xi_{2})+(\left(  \xi_{2}\widetilde{\circledast}\xi_{1}\right)
^{\sigma_{p,q}}\overset{\cdot}{-}\left(  \xi_{2}\circledast\xi_{1}\right)
^{\sigma_{p,q}})\\
&  =(\xi_{1}\widetilde{\circledast}\xi_{2}\overset{\cdot}{-}\xi_{1}%
\circledast\xi_{2})+\left(  \xi_{1}\circledast\xi_{2}\overset{\cdot}{-}\xi
_{1}\widetilde{\circledast}\xi_{2}\right) \\
&  \text{[By Proposition \ref{t4.1.1}]}\\
&  =0\text{ \ }%
\end{align*}

\end{proof}

We have the following Jacobi identity.

\begin{theorem}
\label{t5.2.4}The three morphisms
\begin{align}
& \overline{\Omega}_{\left(  1\right)  }^{\left(  p,1\right)  }\left(
M\right)  \times\overline{\Omega}_{\left(  1\right)  }^{\left(  q,1\right)
}\left(  M\right)  \times\overline{\Omega}_{\left(  1\right)  }^{\left(
r,1\right)  }\left(  M\right) \nonumber\\
& \underrightarrow{\mathrm{id}_{\overline{\Omega}_{\left(  1\right)
}^{\left(  p,1\right)  }\left(  M\right)  }\times\zeta_{q,r}^{L_{1}}\left(
M\right)  }\nonumber\\
& \overline{\Omega}_{\left(  1\right)  }^{\left(  p,1\right)  }\left(
M\right)  \times\overline{\Omega}_{\left(  1\right)  }^{\left(  q+r,1\right)
}\left(  M\right) \nonumber\\
& \underrightarrow{\zeta_{p,q+r}^{L_{1}}\left(  M\right)  }\nonumber\\
& \overline{\Omega}_{\left(  1\right)  }^{\left(  p+q+r,1\right)  }\left(
M\right)  \text{,}\label{5.2.4.1}%
\end{align}
\begin{align}
& \overline{\Omega}_{\left(  1\right)  }^{\left(  p,1\right)  }\left(
M\right)  \times\overline{\Omega}_{\left(  1\right)  }^{\left(  q,1\right)
}\left(  M\right)  \times\overline{\Omega}_{\left(  1\right)  }^{\left(
r,1\right)  }\left(  M\right) \nonumber\\
& =\overline{\Omega}_{\left(  1\right)  }^{\left(  q,1\right)  }\left(
M\right)  \times\overline{\Omega}_{\left(  1\right)  }^{\left(  r,1\right)
}\left(  M\right)  \times\overline{\Omega}_{\left(  1\right)  }^{\left(
p,1\right)  }\left(  M\right) \nonumber\\
& \underrightarrow{\mathrm{id}_{\overline{\Omega}_{\left(  1\right)
}^{\left(  q,1\right)  }\left(  M\right)  }\times\zeta_{r,p}^{L_{1}}\left(
M\right)  }\nonumber\\
& \overline{\Omega}_{\left(  1\right)  }^{\left(  q,1\right)  }\left(
M\right)  \times\overline{\Omega}_{\left(  1\right)  }^{\left(  r+p,1\right)
}\left(  M\right) \nonumber\\
& \underrightarrow{\zeta_{q,r+p}^{L_{1}}\left(  M\right)  }\nonumber\\
& \overline{\Omega}_{\left(  1\right)  }^{\left(  p+q+r,1\right)  }\left(
M\right) \nonumber\\
& \underrightarrow{\left(  \cdot^{\sigma_{p,q+r}}\right)  _{\overline{\Omega
}_{\left(  1\right)  }^{\left(  p+q+r,1\right)  }\left(  M\right)  }%
}\nonumber\\
& \overline{\Omega}_{\left(  1\right)  }^{\left(  p+q+r,1\right)  }\left(
M\right) \label{5.2.4.2}%
\end{align}
and
\begin{align}
& \overline{\Omega}_{\left(  1\right)  }^{\left(  p,1\right)  }\left(
M\right)  \times\overline{\Omega}_{\left(  1\right)  }^{\left(  q,1\right)
}\left(  M\right)  \times\overline{\Omega}_{\left(  1\right)  }^{\left(
r,1\right)  }\left(  M\right) \nonumber\\
& =\overline{\Omega}_{\left(  1\right)  }^{\left(  r,1\right)  }\left(
M\right)  \times\overline{\Omega}_{\left(  1\right)  }^{\left(  p,1\right)
}\left(  M\right)  \times\overline{\Omega}_{\left(  1\right)  }^{\left(
q,1\right)  }\left(  M\right) \nonumber\\
& \underrightarrow{\mathrm{id}_{\overline{\Omega}_{\left(  1\right)
}^{\left(  r,1\right)  }\left(  M\right)  }\times\zeta_{p,q}^{L_{1}}\left(
M\right)  }\nonumber\\
& \overline{\Omega}_{\left(  1\right)  }^{\left(  r,1\right)  }\left(
M\right)  \times\overline{\Omega}_{\left(  1\right)  }^{\left(  p+q,1\right)
}\left(  M\right) \nonumber\\
& \underrightarrow{\zeta_{r,p+q}^{L_{1}}\left(  M\right)  }\nonumber\\
& \overline{\Omega}_{\left(  1\right)  }^{\left(  p+q+r,1\right)  }\left(
M\right) \nonumber\\
& \underrightarrow{\left(  \cdot^{\sigma_{r,p+q}}\right)  _{\overline{\Omega
}_{\left(  1\right)  }^{\left(  p+q+r,1\right)  }\left(  M\right)  }%
}\nonumber\\
& \overline{\Omega}_{\left(  1\right)  }^{\left(  p+q+r,1\right)  }\left(
M\right) \label{5.2.4.3}%
\end{align}
sum up only to vanish.
\end{theorem}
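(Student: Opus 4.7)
The plan is to reduce the stated Jacobi identity to the general Jacobi identity of Theorem \ref{t3.6} via the universal factorization of Proposition \ref{t5.2.2}. By that proposition, the morphism $\zeta_{p,q}^{L_{1}}\left(M\right)$ is the unique factorization, through the canonical inclusion $\overline{\Omega}_{\left(1\right)}^{\left(p+q,1\right)}\left(M\right)\hookrightarrow\mathbf{T}^{\mathcal{W}_{D}}\left[\mathbf{T}^{\mathcal{W}_{D^{p+q}}}M\rightarrow M\right]$, of the composite $\zeta^{\overset{\cdot}{-}}\left(\left[\mathbf{T}^{\mathcal{W}_{D^{p+q}}}M\rightarrow M\right]\right)\circ\left(\underline{\mathrm{\Pr od}}_{\left(p,1\right),\left(q,1\right)}^{M},\overline{\mathrm{\Pr od}}_{\left(p,1\right),\left(q,1\right)}^{M}\right)$. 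Consequently, to prove that the three morphisms (5.2.4.1)--(5.2.4.3) sum to zero in $\overline{\Omega}_{\left(1\right)}^{\left(p+q+r,1\right)}\left(M\right)$, it suffices to verify the analogous identity after applying the canonical injection into $\mathbf{T}^{\mathcal{W}_{D}}\left[\mathbf{T}^{\mathcal{W}_{D^{p+q+r}}}M\rightarrow M\right]$.

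Next, I would lift each of the three iterated brackets to a morphism factoring through $\triangle\left(M'\right)$, where $M'=\left[\mathbf{T}^{\mathcal{W}_{D^{p+q+r}}}M\rightarrow M\right]$. Concretely, when I substitute the definition of the inner bracket into the outer, the inner $\zeta^{\overset{\cdot}{-}}$ commutes past the outer product formations by the associativity of Proposition \ref{t5.1.4} and the identification, via Corollaries \ref{t3.3'}, \ref{t3.4'}, \ref{t3.5'}, of the relevant pullbacks with $\mathbf{T}^{\mathcal{W}_{D^{4}\{(i,4),(j,4)\}}}$. Thus the first iterated bracket (5.2.4.1) factors through an element of the pullback appearing in $\zeta^{\overset{\cdot}{\underset{1}{-}}}\left(M'\right)$, the cyclically shifted version (5.2.4.2) through $\zeta^{\overset{\cdot}{\underset{2}{-}}}\left(M'\right)$, and (5.2.4.3) through $\zeta^{\overset{\cdot}{\underset{3}{-}}}\left(M'\right)$. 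The four occurrences of $\underline{\mathrm{\Pr od}}$ versus $\overline{\mathrm{\Pr od}}$ (two for the inner bracket, two for the outer) fan out into the four indexed terms $\ast_{ijk}$ within each parenthesized block of Theorem \ref{t3.6}, using Proposition \ref{t5.1.1} to convert each swap between $\underline{\mathrm{Conv}}$ and $\overline{\mathrm{Conv}}$ into the permutation action $\left(\cdot^{\sigma}\right)$ by an appropriate $\sigma\in\mathbb{S}_{3}$. The outer permutation actions $\left(\cdot^{\sigma_{p,q+r}}\right)$ and $\left(\cdot^{\sigma_{r,p+q}}\right)$ in (5.2.4.2) and (5.2.4.3) precisely account for cycling the roles of $\xi_{1},\xi_{2},\xi_{3}$ so that every term lands in $\overline{\Omega}_{\left(1\right)}^{\left(p+q+r,1\right)}\left(M\right)$ with its arguments in the canonical order.

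Once this dictionary is in place, the sum of the lifted morphisms equals the pullback along $\left(\xi_{1},\xi_{2},\xi_{3}\right)$ of exactly the sum of the three morphisms in Theorem \ref{t3.6} applied to $M'$, which vanishes. Postcomposing with the factorization given by Proposition \ref{t5.2.2} then yields the desired identity in $\overline{\Omega}_{\left(1\right)}^{\left(p+q+r,1\right)}\left(M\right)$.

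The main obstacle is the combinatorial bookkeeping in the middle paragraph: one must match each of the six permutations $\sigma\in\mathbb{S}_{3}$ indexing the terms $\ast_{\sigma\left(1\right)\sigma\left(2\right)\sigma\left(3\right)}$ of Theorem \ref{t3.6} with a specific triple (cyclic index, inner $\underline{\mathrm{\Pr od}}$/$\overline{\mathrm{\Pr od}}$ choice, outer $\underline{\mathrm{\Pr od}}$/$\overline{\mathrm{\Pr od}}$ choice), and verify compatibility of the outer permutation actions $\sigma_{p,q+r}$, $\sigma_{r,p+q}$ with the $\alpha_{\mathcal{W}_{\varphi}}$-maps appearing in the definitions of $\zeta^{\overset{\cdot}{\underset{i}{-}}}$. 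Everything else is routine diagram chasing once Propositions \ref{t5.1.1}, \ref{t5.1.2}, \ref{t5.1.4} and Corollaries \ref{t3.3'}--\ref{t3.5'} are in hand.
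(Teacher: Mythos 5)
Your proposal is correct and follows essentially the same route as the paper: both reduce the identity, after injecting into $\mathbf{T}^{\mathcal{W}_{D}}\left[\mathbf{T}^{\mathcal{W}_{D^{p+q+r}}}M\rightarrow M\right]$, to the general Jacobi identity of Theorem \ref{t3.6} applied to the object $\left[\mathbf{T}^{\mathcal{W}_{D^{p+q+r}}}M\rightarrow M\right]$, with the six under/over iterated products playing the roles of the six terms $\ast_{ijk}$ and Proposition \ref{t5.1.1} absorbing the outer permutation actions $\left(\cdot^{\sigma_{p,q+r}}\right)$ and $\left(\cdot^{\sigma_{r,p+q}}\right)$. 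The one caveat is that the interchange of the inner $\zeta^{\overset{\cdot}{-}}$ with the outer product formation is not a formal consequence of the associativity in Proposition \ref{t5.1.4} together with Corollaries \ref{t3.3'}--\ref{t3.5'}; it is exactly the content of the paper's Lemma \ref{t5.2.5} (a generalization of Proposition 2.6 of \cite{nishi-a}), which must be invoked, or proved, at that step.
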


In order to establish the above theorem, we need the following simple lemma,
which is a tiny generalization of Proposition 2.6 of \cite{nishi-a}.

\begin{lemma}
\label{t5.2.5}The morphism
\begin{align*}
& \overline{\Omega}_{\left(  1\right)  }^{\left(  p,1\right)  }\left(
M\right)  \times\left(  \overline{\Omega}_{\left(  1\right)  }^{\left(
q,2\right)  }\left(  M\right)  \times_{\mathbf{T}^{\mathcal{W}_{D(2)}}\left[
\mathbf{T}^{\mathcal{W}_{D^{q}}}M\rightarrow M\right]  }\overline{\Omega
}_{\left(  1\right)  }^{\left(  q,2\right)  }\left(  M\right)  \right) \\
& \underrightarrow{\mathrm{id}_{\overline{\Omega}_{\left(  1\right)
}^{\left(  p,1\right)  }\left(  M\right)  }\times\zeta^{\overset{\cdot}{-}%
}\left(  \left[  \mathbf{T}^{\mathcal{W}_{D^{q}}}M\rightarrow M\right]
\right)  }\\
& \overline{\Omega}_{\left(  1\right)  }^{\left(  p,1\right)  }\left(
M\right)  \times\overline{\Omega}_{\left(  1\right)  }^{\left(  q,1\right)
}\left(  M\right) \\
& \underrightarrow{\mathrm{\Pr od}_{\left(  p,1\right)  ,\left(  q,1\right)
}^{M}}\\
& \overline{\Omega}_{\left(  1\right)  }^{\left(  p+q,2\right)  }\left(
M\right)
\end{align*}
is identical to
\begin{align*}
& \overline{\Omega}_{\left(  1\right)  }^{\left(  p,1\right)  }\left(
M\right)  \times\left(  \overline{\Omega}_{\left(  1\right)  }^{\left(
q,2\right)  }\left(  M\right)  \times_{\mathbf{T}^{\mathcal{W}_{D(2)}}\left[
\mathbf{T}^{\mathcal{W}_{D^{q}}}M\rightarrow M\right]  }\overline{\Omega
}_{\left(  1\right)  }^{\left(  q,2\right)  }\left(  M\right)  \right) \\
& =\left(  \overline{\Omega}_{\left(  1\right)  }^{\left(  p,1\right)
}\left(  M\right)  \times\overline{\Omega}_{\left(  1\right)  }^{\left(
q,2\right)  }\left(  M\right)  \right)  \times_{\mathbf{T}^{\mathcal{W}%
_{D(2)}}\left[  \mathbf{T}^{\mathcal{W}_{D^{q}}}M\rightarrow M\right]
}\left(  \overline{\Omega}_{\left(  1\right)  }^{\left(  p,1\right)  }\left(
M\right)  \times\overline{\Omega}_{\left(  1\right)  }^{\left(  q,2\right)
}\left(  M\right)  \right) \\
& \underrightarrow{\mathrm{\Pr od}_{\left(  p,1\right)  ,\left(  q,2\right)
}^{M}\times_{\mathbf{T}^{\mathcal{W}_{D(2)}}\left[  \mathbf{T}^{\mathcal{W}%
_{D^{q}}}M\rightarrow M\right]  }\mathrm{\Pr od}_{\left(  p,1\right)  ,\left(
q,2\right)  }^{M}}\\
& \overline{\Omega}_{\left(  1\right)  }^{\left(  p+q,3\right)  }\left(
M\right)  \times_{\mathbf{T}^{\mathcal{W}_{D^{3}\{(2,3)\}}}\left[
\mathbf{T}^{\mathcal{W}_{D^{p+q}}}M\rightarrow M\right]  }\overline{\Omega
}_{\left(  1\right)  }^{\left(  p+q,3\right)  }\left(  M\right) \\
& \underrightarrow{\zeta^{\underset{1}{\overset{\cdot}{-}}}\left(  \left[
\mathbf{T}^{\mathcal{W}_{D^{p+q}}}M\rightarrow M\right]  \right)  }\\
& \overline{\Omega}_{\left(  1\right)  }^{\left(  p+q,2\right)  }\left(
M\right)  \text{,}%
\end{align*}
while the morphism
\begin{align*}
& \left(  \overline{\Omega}_{\left(  1\right)  }^{\left(  p,2\right)  }\left(
M\right)  \times_{\mathbf{T}^{\mathcal{W}_{D(2)}}\left[  \mathbf{T}%
^{\mathcal{W}_{D^{p}}}M\rightarrow M\right]  }\overline{\Omega}_{\left(
1\right)  }^{\left(  p,2\right)  }\left(  M\right)  \right)  \times
\overline{\Omega}_{\left(  1\right)  }^{\left(  q,1\right)  }\left(  M\right)
\\
& \underrightarrow{\zeta^{\overset{\cdot}{-}}\left(  \left[  \mathbf{T}%
^{\mathcal{W}_{D^{p}}}M\rightarrow M\right]  \right)  \times\mathrm{id}%
_{\overline{\Omega}_{\left(  1\right)  }^{\left(  q,1\right)  }\left(
M\right)  }}\\
& \overline{\Omega}_{\left(  1\right)  }^{\left(  p,1\right)  }\left(
M\right)  \times\overline{\Omega}_{\left(  1\right)  }^{\left(  q,1\right)
}\left(  M\right) \\
& \underrightarrow{\mathrm{\Pr od}_{\left(  p,1\right)  ,\left(  q,1\right)
}^{M}}\\
& \overline{\Omega}_{\left(  1\right)  }^{\left(  p+q,2\right)  }\left(
M\right)
\end{align*}
is identical to
\begin{align*}
& \left(  \overline{\Omega}_{\left(  1\right)  }^{\left(  p,2\right)  }\left(
M\right)  \times_{\mathbf{T}^{\mathcal{W}_{D(2)}}\left[  \mathbf{T}%
^{\mathcal{W}_{D^{p}}}M\rightarrow M\right]  }\overline{\Omega}_{\left(
1\right)  }^{\left(  p,2\right)  }\left(  M\right)  \right)  \times
\overline{\Omega}_{\left(  1\right)  }^{\left(  q,1\right)  }\left(  M\right)
\\
& =\left(  \overline{\Omega}_{\left(  1\right)  }^{\left(  p,2\right)
}\left(  M\right)  \times\overline{\Omega}_{\left(  1\right)  }^{\left(
q,1\right)  }\left(  M\right)  \right)  \times_{\mathbf{T}^{\mathcal{W}%
_{D(2)}}\left[  \mathbf{T}^{\mathcal{W}_{D^{q}}}M\rightarrow M\right]
}\left(  \overline{\Omega}_{\left(  1\right)  }^{\left(  p,2\right)  }\left(
M\right)  \times\overline{\Omega}_{\left(  1\right)  }^{\left(  q,1\right)
}\left(  M\right)  \right) \\
& \underrightarrow{\mathrm{\Pr od}_{\left(  p,2\right)  ,\left(  q,1\right)
}^{M}\times_{\mathbf{T}^{\mathcal{W}_{D(2)}}\left[  \mathbf{T}^{\mathcal{W}%
_{D^{q}}}M\rightarrow M\right]  }\mathrm{\Pr od}_{\left(  p,2\right)  ,\left(
q,1\right)  }^{M}}\\
& \overline{\Omega}_{\left(  1\right)  }^{\left(  p+q,3\right)  }\left(
M\right)  \times_{\mathbf{T}^{\mathcal{W}_{D^{3}\{1,(2)\}}}\left[
\mathbf{T}^{\mathcal{W}_{D^{p+q}}}M\rightarrow M\right]  }\overline{\Omega
}_{\left(  1\right)  }^{\left(  p+q,3\right)  }\left(  M\right) \\
& \underrightarrow{\zeta^{\underset{3}{\overset{\cdot}{-}}}\left(  \left[
\mathbf{T}^{\mathcal{W}_{D^{p+q}}}M\rightarrow M\right]  \right)  }\\
& \overline{\Omega}_{\left(  1\right)  }^{\left(  p+q,2\right)  }\left(
M\right)
\end{align*}

\end{lemma}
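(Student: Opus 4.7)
The plan is to unfold both sides of each identity using the definitions of $\mathrm{Prod}$ (in its associative three-factor form guaranteed by Proposition \ref{t5.1.4}) and of the $\zeta$-morphisms, and then exploit the naturality of $\alpha_{\mathcal{W}_\varphi}$ together with the pullback identifications of Corollaries \ref{t3.1'}, \ref{t3.3'}, \ref{t3.4'}, and \ref{t3.5'}. Both sides of the first equality are built entirely out of exponentials, convolutions, and applications of $\alpha$ to morphisms of simplicial objects; the claim then reduces to a diagram chase showing that the two routes agree as maps into $\overline{\Omega}_{(1)}^{(p+q,2)}(M)$.

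For the first identity I would proceed as follows. On the left, the morphism $\mathrm{id}_{\overline{\Omega}_{(1)}^{(p,1)}(M)}\times \zeta^{\overset{\cdot}{-}}\bigl([\mathbf{T}^{\mathcal{W}_{D^q}}M\to M]\bigr)$ collapses the $\mathbf{T}^{\mathcal{W}_{D^2}}$-structure of the right factor down to a single $\mathbf{T}^{\mathcal{W}_D}$ via the simplicial inclusion $d\in D\mapsto(0,0,d)\in D^3\{(1,3),(2,3)\}$; postcomposing with $\mathrm{Prod}_{(p,1),(q,1)}^{M}$ then amounts, after naturality of $\alpha$, to a single $\mathbf{T}^{\mathcal{W}_{D^{\bullet}}}$-application of the convolution $\mathrm{Conv}_{p,q}^{M}$. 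On the right, one first forms the pair of $\mathrm{Prod}_{(p,1),(q,2)}^{M}$ morphisms valued in $\overline{\Omega}_{(1)}^{(p+q,3)}(M)$, then applies $\zeta^{\underset{1}{\overset{\cdot}{-}}}\bigl([\mathbf{T}^{\mathcal{W}_{D^{p+q}}}M\to M]\bigr)$, which collapses the $(2,3)$-identified pair via the simplicial map $(d_1,d_2)\in D^2\mapsto(d_1,0,0,d_2)\in D^4\{(2,4),(3,4)\}$. The key observation is that the Weil-algebra homomorphism obtained by following these two routes is the same, because the simplicial maps involved both reduce to inserting a single $D$-coordinate into the slot corresponding to the second tensor factor.

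The second identity is handled by a symmetric argument in which the roles of the two factors are swapped and Corollary \ref{t3.5'} replaces Corollary \ref{t3.3'}; now the relevant simplicial map is $(d_1,d_2)\in D^2\mapsto(0,0,d_1,d_2)\in D^4\{(1,4),(2,4)\}$, corresponding to collapsing through the first factor rather than the second. In both cases the verification reduces to checking equality of two explicit morphisms $\mathcal{W}_{D^2}\to \mathcal{W}_{D^{p+q+2}}$ of Weil algebras, which is direct from their definitions.

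The principal obstacle will be notational bookkeeping rather than any genuine conceptual difficulty: keeping track of which copies of $\mathbf{T}^{\mathcal{W}_{D^3}}[\mathbf{T}^{\mathcal{W}_{D^{p+q}}}M\to M]$ are being pulled back over which $\mathbf{T}^{\mathcal{W}_{D^3\{(i,j)\}}}[\mathbf{T}^{\mathcal{W}_{D^{p+q}}}M\to M]$, and ensuring the $\alpha_{\mathcal{W}_{(-)}}$ factors commute past $\mathrm{Prod}$ in the correct order. Once these are laid out carefully in a commutative diagram, the content of the proof is exactly the naturality of $\alpha$ together with Proposition \ref{t5.1.4}.
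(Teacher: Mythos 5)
Your outline is sound, and it is worth noting that the paper itself offers no proof of Lemma \ref{t5.2.5} at all: it merely declares the lemma ``a tiny generalization of Proposition 2.6 of \cite{nishi-a}'' and moves on. So your proposal supplies strictly more than the paper does, and it supplies the right thing. The content really is what you identify: unfold $\mathrm{\Pr od}$ into its definition (projections $\alpha_{\mathcal{W}_{D^{m+n}\to D^{m}}}$, $\alpha_{\mathcal{W}_{D^{m+n}\to D^{n}}}$ followed by $\mathbf{T}^{\mathcal{W}_{D^{m+n}}}\mathrm{Conv}_{p,q}^{M}$), observe that $\zeta^{\overset{\cdot}{-}}$ and $\zeta^{\underset{1}{\overset{\cdot}{-}}}$ are themselves instances of $\alpha_{\mathcal{W}_{\varphi}}$ for the simplicial maps $d\mapsto(0,0,d)$ and $(d_{1},d_{2})\mapsto(d_{1},0,0,d_{2})$ after the pullback identifications of Corollaries \ref{t3.1'} and \ref{t3.3'}, and then check that the two composite Weil-algebra morphisms coincide --- both being induced by $(d_{1},d_{2})\mapsto(d_{1},0,0,d_{2})$ under the identification of $D^{4}\{(2,4),(3,4)\}$ with $D\times D^{3}\{(1,3),(2,3)\}$. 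Two small calibrations: first, the lemma is really an instance of naturality of $\alpha_{\mathcal{W}_{(-)}}$ in the \emph{outer} Weil algebra of one $\mathrm{\Pr od}$-argument (i.e.\ that $\mathrm{\Pr od}$ commutes with $\alpha_{\mathcal{W}_{\varphi}}$ applied to either factor), rather than of the associativity statement of Proposition \ref{t5.1.4}, which concerns three-fold products; you invoke both, and it is the naturality that does the work. Second, your identification of the simplicial map for the second half as landing in $D^{4}\{(1,4),(2,4)\}$ is correct and in fact silently repairs a typographical slip in the paper's definition of $\zeta^{\underset{3}{\overset{\cdot}{-}}}$, which names $D^{4}\{(1,4),(3,4)\}$ where $D^{4}\{(1,4),(2,4)\}$ is meant. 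What remains in your write-up is, as you say, bookkeeping; but be aware that for a complete proof that bookkeeping (the explicit commuting diagram exhibiting the two factorizations through $\mathbf{T}^{\mathcal{W}_{D^{4}\{(2,4),(3,4)\}}}\left[\mathbf{T}^{\mathcal{W}_{D^{p+q}}}M\rightarrow M\right]$) still has to be written out.
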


\begin{notation}
For the sake of the proof of Theorem \ref{t5.2.4}, we introduce the following
six notations:

\begin{enumerate}
\item We write $\xi_{123}$\ for the morphism
\begin{align*}
& \overline{\Omega}_{\left(  1\right)  }^{\left(  p,1\right)  }\left(
M\right)  \times\overline{\Omega}_{\left(  1\right)  }^{\left(  q,1\right)
}\left(  M\right)  \times\overline{\Omega}_{\left(  1\right)  }^{\left(
r,1\right)  }\left(  M\right) \\
& \underrightarrow{\underline{\mathrm{\Pr od}}_{\left(  p,1\right)  ,\left(
q,1\right)  ,\left(  r,1\right)  }^{M}}\\
& \overline{\Omega}_{\left(  1\right)  }^{\left(  p+q+r,3\right)  }\left(
M\right)
\end{align*}

\item We write $\xi_{132}$\ for the morphism
\begin{align*}
& \overline{\Omega}_{\left(  1\right)  }^{\left(  p,1\right)  }\left(
M\right)  \times\overline{\Omega}_{\left(  1\right)  }^{\left(  q,1\right)
}\left(  M\right)  \times\overline{\Omega}_{\left(  1\right)  }^{\left(
r,1\right)  }\left(  M\right) \\
& \underrightarrow{\mathrm{id}_{\overline{\Omega}_{\left(  1\right)
}^{\left(  p,1\right)  }\left(  M\right)  }\times\overline{\mathrm{\Pr od}%
}_{\left(  q,1\right)  ,\left(  r,1\right)  }^{M}}\\
& \overline{\Omega}_{\left(  1\right)  }^{\left(  p,1\right)  }\left(
M\right)  \times\overline{\Omega}_{\left(  1\right)  }^{\left(  q+r,1\right)
}\left(  M\right) \\
& \underrightarrow{\underline{\mathrm{\Pr od}}_{\left(  p,1\right)  ,\left(
q+r,1\right)  }^{M}}\\
& \overline{\Omega}_{\left(  1\right)  }^{\left(  p+q+r,3\right)  }\left(
M\right)
\end{align*}

\item We write $\xi_{213}$\ for the morphism
\begin{align*}
& \overline{\Omega}_{\left(  1\right)  }^{\left(  p,1\right)  }\left(
M\right)  \times\overline{\Omega}_{\left(  1\right)  }^{\left(  q,1\right)
}\left(  M\right)  \times\overline{\Omega}_{\left(  1\right)  }^{\left(
r,1\right)  }\left(  M\right) \\
& \underrightarrow{\overline{\mathrm{\Pr od}}_{\left(  p,1\right)  ,\left(
q,1\right)  }^{M}\times\mathrm{id}_{\overline{\Omega}_{\left(  1\right)
}^{\left(  r,1\right)  }\left(  M\right)  }}\\
& \overline{\Omega}_{\left(  1\right)  }^{\left(  p+q,1\right)  }\left(
M\right)  \times\overline{\Omega}_{\left(  1\right)  }^{\left(  r,1\right)
}\left(  M\right) \\
& \underrightarrow{\underline{\mathrm{\Pr od}}_{\left(  p+q,1\right)  ,\left(
r,1\right)  }^{M}}\\
& \overline{\Omega}_{\left(  1\right)  }^{\left(  p+q+r,3\right)  }\left(
M\right)
\end{align*}

\item We write $\xi_{231}$\ for the morphism
\begin{align*}
& \overline{\Omega}_{\left(  1\right)  }^{\left(  p,1\right)  }\left(
M\right)  \times\overline{\Omega}_{\left(  1\right)  }^{\left(  q,1\right)
}\left(  M\right)  \times\overline{\Omega}_{\left(  1\right)  }^{\left(
r,1\right)  }\left(  M\right) \\
& \underrightarrow{\mathrm{id}_{\overline{\Omega}_{\left(  1\right)
}^{\left(  p,1\right)  }\left(  M\right)  }\times\underline{\mathrm{\Pr od}%
}_{\left(  q,1\right)  ,\left(  r,1\right)  }^{M}}\\
& \overline{\Omega}_{\left(  1\right)  }^{\left(  p,1\right)  }\left(
M\right)  \times\overline{\Omega}_{\left(  1\right)  }^{\left(  q+r,1\right)
}\left(  M\right) \\
& \underrightarrow{\overline{\mathrm{\Pr od}}_{\left(  p,1\right)  ,\left(
q+r,1\right)  }^{M}}\\
& \overline{\Omega}_{\left(  1\right)  }^{\left(  p+q+r,3\right)  }\left(
M\right)
\end{align*}

\item We write $\xi_{312}$\ for the morphism
\begin{align*}
& \overline{\Omega}_{\left(  1\right)  }^{\left(  p,1\right)  }\left(
M\right)  \times\overline{\Omega}_{\left(  1\right)  }^{\left(  q,1\right)
}\left(  M\right)  \times\overline{\Omega}_{\left(  1\right)  }^{\left(
r,1\right)  }\left(  M\right) \\
& \underrightarrow{\underline{\mathrm{\Pr od}}_{\left(  p,1\right)  ,\left(
q,1\right)  }^{M}\times\mathrm{id}_{\overline{\Omega}_{\left(  1\right)
}^{\left(  r,1\right)  }\left(  M\right)  }}\\
& \overline{\Omega}_{\left(  1\right)  }^{\left(  p+q,1\right)  }\left(
M\right)  \times\overline{\Omega}_{\left(  1\right)  }^{\left(  r,1\right)
}\left(  M\right) \\
& \underrightarrow{\overline{\mathrm{\Pr od}}_{\left(  p+q,1\right)  ,\left(
r,1\right)  }^{M}}\\
& \overline{\Omega}_{\left(  1\right)  }^{\left(  p+q+r,3\right)  }\left(
M\right)
\end{align*}

\item We write $\xi_{321}$\ for the morphism
\begin{align*}
& \overline{\Omega}_{\left(  1\right)  }^{\left(  p,1\right)  }\left(
M\right)  \times\overline{\Omega}_{\left(  1\right)  }^{\left(  q,1\right)
}\left(  M\right)  \times\overline{\Omega}_{\left(  1\right)  }^{\left(
r,1\right)  }\left(  M\right) \\
& \underrightarrow{\overline{\mathrm{\Pr od}}_{\left(  p,1\right)  ,\left(
q,1\right)  ,\left(  r,1\right)  }^{M}}\\
& \overline{\Omega}_{\left(  1\right)  }^{\left(  p+q+r,3\right)  }\left(
M\right)
\end{align*}

\end{enumerate}
\end{notation}

\begin{lemma}
\label{t5.2.6}We have the following statements:

\begin{enumerate}
\item The morphism
\begin{align*}
& \overline{\Omega}_{\left(  1\right)  }^{\left(  p,1\right)  }\left(
M\right)  \times\overline{\Omega}_{\left(  1\right)  }^{\left(  q,1\right)
}\left(  M\right)  \times\overline{\Omega}_{\left(  1\right)  }^{\left(
r,1\right)  }\left(  M\right) \\
& \underrightarrow{\left(  \xi_{123},\xi_{132}\right)  }\\
& \overline{\Omega}_{\left(  1\right)  }^{\left(  p+q+r,3\right)  }\left(
M\right)  \times\overline{\Omega}_{\left(  1\right)  }^{\left(
p+q+r,3\right)  }\left(  M\right)
\end{align*}
is to be factored uniquely through the canonical injection
\begin{align*}
& \overline{\Omega}_{\left(  1\right)  }^{\left(  p+q+r,3\right)  }\left(
M\right)  \times_{\mathbf{T}^{\mathcal{W}_{D^{3}\{(2,3)\}}}\left[
\mathbf{T}^{\mathcal{W}_{D^{p+q+r}}}M\rightarrow M\right]  }\overline{\Omega
}_{\left(  1\right)  }^{\left(  p+q+r,3\right)  }\left(  M\right) \\
& \rightarrow\overline{\Omega}_{\left(  1\right)  }^{\left(  p+q+r,3\right)
}\left(  M\right)  \times\overline{\Omega}_{\left(  1\right)  }^{\left(
p+q+r,3\right)  }\left(  M\right)
\end{align*}
into
\begin{align}
& \overline{\Omega}_{\left(  1\right)  }^{\left(  p,1\right)  }\left(
M\right)  \times\overline{\Omega}_{\left(  1\right)  }^{\left(  q,1\right)
}\left(  M\right)  \times\overline{\Omega}_{\left(  1\right)  }^{\left(
r,1\right)  }\left(  M\right) \nonumber\\
& \rightarrow\overline{\Omega}_{\left(  1\right)  }^{\left(  p+q+r,3\right)
}\left(  M\right)  \times_{\mathbf{T}^{\mathcal{W}_{D^{3}\{(2,3)\}}}\left[
\mathbf{T}^{\mathcal{W}_{D^{p+q+r}}}M\rightarrow M\right]  }\overline{\Omega
}_{\left(  1\right)  }^{\left(  p+q+r,3\right)  }\left(  M\right)
\label{5.2.6.1}%
\end{align}

\item The morphism
\begin{align*}
& \overline{\Omega}_{\left(  1\right)  }^{\left(  p,1\right)  }\left(
M\right)  \times\overline{\Omega}_{\left(  1\right)  }^{\left(  q,1\right)
}\left(  M\right)  \times\overline{\Omega}_{\left(  1\right)  }^{\left(
r,1\right)  }\left(  M\right) \\
& \underrightarrow{\left(  \xi_{231},\xi_{321}\right)  }\\
& \overline{\Omega}_{\left(  1\right)  }^{\left(  p+q+r,3\right)  }\left(
M\right)  \times\overline{\Omega}_{\left(  1\right)  }^{\left(
p+q+r,3\right)  }\left(  M\right)
\end{align*}
is to be factored uniquely through the canonical injection
\begin{align*}
& \overline{\Omega}_{\left(  1\right)  }^{\left(  p+q+r,3\right)  }\left(
M\right)  \times_{\mathbf{T}^{\mathcal{W}_{D^{3}\{(2,3)\}}}\left[
\mathbf{T}^{\mathcal{W}_{D^{p+q+r}}}M\rightarrow M\right]  }\overline{\Omega
}_{\left(  1\right)  }^{\left(  p+q+r,3\right)  }\left(  M\right) \\
& \rightarrow\overline{\Omega}_{\left(  1\right)  }^{\left(  p+q+r,3\right)
}\left(  M\right)  \times\overline{\Omega}_{\left(  1\right)  }^{\left(
p+q+r,3\right)  }\left(  M\right)
\end{align*}
into
\begin{align}
& \overline{\Omega}_{\left(  1\right)  }^{\left(  p,1\right)  }\left(
M\right)  \times\overline{\Omega}_{\left(  1\right)  }^{\left(  q,1\right)
}\left(  M\right)  \times\overline{\Omega}_{\left(  1\right)  }^{\left(
r,1\right)  }\left(  M\right) \nonumber\\
& \rightarrow\overline{\Omega}_{\left(  1\right)  }^{\left(  p+q+r,3\right)
}\left(  M\right)  \times_{\mathbf{T}^{\mathcal{W}_{D^{3}\{(2,3)\}}}\left[
\mathbf{T}^{\mathcal{W}_{Dp+q+r}}M\rightarrow M\right]  }\overline{\Omega
}_{\left(  1\right)  }^{\left(  p+q+r,3\right)  }\left(  M\right)
\label{5.2.6.2}%
\end{align}

\item The morphism
\begin{align*}
& \overline{\Omega}_{\left(  1\right)  }^{\left(  p,1\right)  }\left(
M\right)  \times\overline{\Omega}_{\left(  1\right)  }^{\left(  q,1\right)
}\left(  M\right)  \times\overline{\Omega}_{\left(  1\right)  }^{\left(
r,1\right)  }\left(  M\right) \\
& \underrightarrow{\left(  \xi_{231},\xi_{213}\right)  }\\
& \overline{\Omega}_{\left(  1\right)  }^{\left(  p+q+r,3\right)  }\left(
M\right)  \times\overline{\Omega}_{\left(  1\right)  }^{\left(
p+q+r,3\right)  }\left(  M\right)
\end{align*}
is to be factored uniquely through the canonical injection
\begin{align*}
& \overline{\Omega}_{\left(  1\right)  }^{\left(  p+q+r,3\right)  }\left(
M\right)  \times_{\mathbf{T}^{\mathcal{W}_{D^{3}\{(1,3)\}}}\left[
\mathbf{T}^{\mathcal{W}_{D^{p+q+r}}}M\rightarrow M\right]  }\overline{\Omega
}_{\left(  1\right)  }^{\left(  p+q+r,3\right)  }\left(  M\right) \\
& \rightarrow\overline{\Omega}_{\left(  1\right)  }^{\left(  p+q+r,3\right)
}\left(  M\right)  \times\overline{\Omega}_{\left(  1\right)  }^{\left(
p+q+r,3\right)  }\left(  M\right)
\end{align*}
into
\begin{align}
& \overline{\Omega}_{\left(  1\right)  }^{\left(  p,1\right)  }\left(
M\right)  \times\overline{\Omega}_{\left(  1\right)  }^{\left(  q,1\right)
}\left(  M\right)  \times\overline{\Omega}_{\left(  1\right)  }^{\left(
r,1\right)  }\left(  M\right) \nonumber\\
& \rightarrow\overline{\Omega}_{\left(  1\right)  }^{\left(  p+q+r,3\right)
}\left(  M\right)  \times_{\mathbf{T}^{\mathcal{W}_{D^{3}\{(1,3)\}}}\left[
\mathbf{T}^{\mathcal{W}_{Dp+q+r}}M\rightarrow M\right]  }\overline{\Omega
}_{\left(  1\right)  }^{\left(  p+q+r,3\right)  }\left(  M\right)
\label{5.2.6.3}%
\end{align}

\item The morphism
\begin{align*}
& \overline{\Omega}_{\left(  1\right)  }^{\left(  p,1\right)  }\left(
M\right)  \times\overline{\Omega}_{\left(  1\right)  }^{\left(  q,1\right)
}\left(  M\right)  \times\overline{\Omega}_{\left(  1\right)  }^{\left(
r,1\right)  }\left(  M\right) \\
& \underrightarrow{\left(  \xi_{312},\xi_{132}\right)  }\\
& \overline{\Omega}_{\left(  1\right)  }^{\left(  p+q+r,3\right)  }\left(
M\right)  \times\overline{\Omega}_{\left(  1\right)  }^{\left(
p+q+r,3\right)  }\left(  M\right)
\end{align*}
is to be factored uniquely through the canonical injection
\begin{align*}
& \overline{\Omega}_{\left(  1\right)  }^{\left(  p+q+r,3\right)  }\left(
M\right)  \times_{\mathbf{T}^{\mathcal{W}_{D^{3}\{(1,3)\}}}\left[
\mathbf{T}^{\mathcal{W}_{D^{p+q+r}}}M\rightarrow M\right]  }\overline{\Omega
}_{\left(  1\right)  }^{\left(  p+q+r,3\right)  }\left(  M\right) \\
& \rightarrow\overline{\Omega}_{\left(  1\right)  }^{\left(  p+q+r,3\right)
}\left(  M\right)  \times\overline{\Omega}_{\left(  1\right)  }^{\left(
p+q+r,3\right)  }\left(  M\right)
\end{align*}
into
\begin{align}
& \overline{\Omega}_{\left(  1\right)  }^{\left(  p,1\right)  }\left(
M\right)  \times\overline{\Omega}_{\left(  1\right)  }^{\left(  q,1\right)
}\left(  M\right)  \times\overline{\Omega}_{\left(  1\right)  }^{\left(
r,1\right)  }\left(  M\right) \nonumber\\
& \rightarrow\overline{\Omega}_{\left(  1\right)  }^{\left(  p+q+r,3\right)
}\left(  M\right)  \times_{\mathbf{T}^{\mathcal{W}_{D^{3}\{(1,3)\}}}\left[
\mathbf{T}^{\mathcal{W}_{Dp+q+r}}M\rightarrow M\right]  }\overline{\Omega
}_{\left(  1\right)  }^{\left(  p+q+r,3\right)  }\left(  M\right)
\label{5.2.6.4}%
\end{align}

\item The morphism
\begin{align*}
& \overline{\Omega}_{\left(  1\right)  }^{\left(  p,1\right)  }\left(
M\right)  \times\overline{\Omega}_{\left(  1\right)  }^{\left(  q,1\right)
}\left(  M\right)  \times\overline{\Omega}_{\left(  1\right)  }^{\left(
r,1\right)  }\left(  M\right) \\
& \underrightarrow{\left(  \xi_{312},\xi_{132}\right)  }\\
& \overline{\Omega}_{\left(  1\right)  }^{\left(  p+q+r,3\right)  }\left(
M\right)  \times\overline{\Omega}_{\left(  1\right)  }^{\left(
p+q+r,3\right)  }\left(  M\right)
\end{align*}
is to be factored uniquely through the canonical injection
\begin{align*}
& \overline{\Omega}_{\left(  1\right)  }^{\left(  p+q+r,3\right)  }\left(
M\right)  \times_{\mathbf{T}^{\mathcal{W}_{D^{3}\{(1,2)\}}}\left[
\mathbf{T}^{\mathcal{W}_{D^{p+q+r}}}M\rightarrow M\right]  }\overline{\Omega
}_{\left(  1\right)  }^{\left(  p+q+r,3\right)  }\left(  M\right) \\
& \rightarrow\overline{\Omega}_{\left(  1\right)  }^{\left(  p+q+r,3\right)
}\left(  M\right)  \times\overline{\Omega}_{\left(  1\right)  }^{\left(
p+q+r,3\right)  }\left(  M\right)
\end{align*}
into
\begin{align}
& \overline{\Omega}_{\left(  1\right)  }^{\left(  p,1\right)  }\left(
M\right)  \times\overline{\Omega}_{\left(  1\right)  }^{\left(  q,1\right)
}\left(  M\right)  \times\overline{\Omega}_{\left(  1\right)  }^{\left(
r,1\right)  }\left(  M\right) \nonumber\\
& \rightarrow\overline{\Omega}_{\left(  1\right)  }^{\left(  p+q+r,3\right)
}\left(  M\right)  \times_{\mathbf{T}^{\mathcal{W}_{D^{3}\{(1,2)\}}}\left[
\mathbf{T}^{\mathcal{W}_{Dp+q+r}}M\rightarrow M\right]  }\overline{\Omega
}_{\left(  1\right)  }^{\left(  p+q+r,3\right)  }\left(  M\right)
\label{5.2.6.5}%
\end{align}

\item The morphism
\begin{align*}
& \overline{\Omega}_{\left(  1\right)  }^{\left(  p,1\right)  }\left(
M\right)  \times\overline{\Omega}_{\left(  1\right)  }^{\left(  q,1\right)
}\left(  M\right)  \times\overline{\Omega}_{\left(  1\right)  }^{\left(
r,1\right)  }\left(  M\right) \\
& \underrightarrow{\left(  \xi_{312},\xi_{132}\right)  }\\
& \overline{\Omega}_{\left(  1\right)  }^{\left(  p+q+r,3\right)  }\left(
M\right)  \times\overline{\Omega}_{\left(  1\right)  }^{\left(
p+q+r,3\right)  }\left(  M\right)
\end{align*}
is to be factored uniquely through the canonical injection
\begin{align*}
& \overline{\Omega}_{\left(  1\right)  }^{\left(  p+q+r,3\right)  }\left(
M\right)  \times_{\mathbf{T}^{\mathcal{W}_{D^{3}\{(1,2)\}}}\left[
\mathbf{T}^{\mathcal{W}_{D^{p+q+r}}}M\rightarrow M\right]  }\overline{\Omega
}_{\left(  1\right)  }^{\left(  p+q+r,3\right)  }\left(  M\right) \\
& \rightarrow\overline{\Omega}_{\left(  1\right)  }^{\left(  p+q+r,3\right)
}\left(  M\right)  \times\overline{\Omega}_{\left(  1\right)  }^{\left(
p+q+r,3\right)  }\left(  M\right)
\end{align*}
into
\begin{align}
& \overline{\Omega}_{\left(  1\right)  }^{\left(  p,1\right)  }\left(
M\right)  \times\overline{\Omega}_{\left(  1\right)  }^{\left(  q,1\right)
}\left(  M\right)  \times\overline{\Omega}_{\left(  1\right)  }^{\left(
r,1\right)  }\left(  M\right) \nonumber\\
& \rightarrow\overline{\Omega}_{\left(  1\right)  }^{\left(  p+q+r,3\right)
}\left(  M\right)  \times_{\mathbf{T}^{\mathcal{W}_{D^{3}\{(1,2)\}}}\left[
\mathbf{T}^{\mathcal{W}_{Dp+q+r}}M\rightarrow M\right]  }\overline{\Omega
}_{\left(  1\right)  }^{\left(  p+q+r,3\right)  }\left(  M\right)
\label{5.2.6.6}%
\end{align}

\end{enumerate}
\end{lemma}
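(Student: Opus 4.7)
The plan is to verify, for each of the six items, the equalizer condition that the pair $(\xi_{abc},\xi_{def})$ agrees after composition with the projection
\[
\alpha_{\mathcal{W}_{i_{D^{3}\{(i,j)\}}^{D^{3}}}}\left(\left[\mathbf{T}^{\mathcal{W}_{D^{p+q+r}}}M\rightarrow M\right]\right):\mathbf{T}^{\mathcal{W}_{D^{3}}}\left[\mathbf{T}^{\mathcal{W}_{D^{p+q+r}}}M\rightarrow M\right]\rightarrow\mathbf{T}^{\mathcal{W}_{D^{3}\{(i,j)\}}}\left[\mathbf{T}^{\mathcal{W}_{D^{p+q+r}}}M\rightarrow M\right].
\]
Since each $\xi_{abc}$ manifestly factors through the subobject $\overline{\Omega}_{(1)}^{(p+q+r,3)}(M)$ by construction, this equalizer condition is precisely what is required to produce the desired factorization through the pullback.

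Take item 1 as the template. By the associativity of Proposition \ref{t5.1.4}, one can rewrite
\[
\xi_{123}=\underline{\mathrm{\Pr od}}_{(p,1),(q+r,2)}^{M}\circ(\mathrm{id}\times\underline{\mathrm{\Pr od}}_{(q,1),(r,1)}^{M}),
\]
while by definition
\[
\xi_{132}=\underline{\mathrm{\Pr od}}_{(p,1),(q+r,2)}^{M}\circ(\mathrm{id}\times\overline{\mathrm{\Pr od}}_{(q,1),(r,1)}^{M}),
\]
so the two morphisms differ only in the choice of inner subproduct on the last two factors. The $\mathrm{Prod}$-level analog of Proposition \ref{t5.1.1} (together with the $\mathrm{id}_{M}$ compatibility of Proposition \ref{t5.1.3}) shows, exactly as in the proof of Lemma \ref{t5.2.1}, that these two subproducts into $\mathbf{T}^{\mathcal{W}_{D^{2}}}[\mathbf{T}^{\mathcal{W}_{D^{q+r}}}M\rightarrow M]$ become identified once post-composed with the restriction $\alpha_{\mathcal{W}_{(d_{2},d_{3})\in D(2)\mapsto(d_{2},d_{3})\in D^{2}}}$. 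Applying the outer $\underline{\mathrm{\Pr od}}_{(p,1),(q+r,2)}^{M}$ and finally projecting along $\alpha_{\mathcal{W}_{i_{D^{3}\{(2,3)\}}^{D^{3}}}}$ then carries out exactly this restriction: the outer $D^{3}$ has $d_{2},d_{3}$ as the coordinates coming from the inner subproduct's own outer $D^{2}$, and the condition $d_{2}d_{3}=0$ is precisely the restriction to $D(2)$ on that inner copy. Hence item 1 follows.

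Items 2 through 6 admit the same argument after suitably permuting the roles of the three factors; in each case the pair $(i,j)\in\{(1,2),(1,3),(2,3)\}$ is read off as the two outer $D^{3}$-coordinates arising from the pair of factors whose inner subproduct is where $\xi_{abc}$ and $\xi_{def}$ disagree between $\underline{\mathrm{\Pr od}}$ and $\overline{\mathrm{\Pr od}}$. The principal obstacle is thus purely bookkeeping: matching, in each of the six items, the subscript permutation $abc$ to the correct outer index positions and to the correct nesting of the two triple-product associators provided by Proposition \ref{t5.1.4}. No new conceptual ingredient is needed beyond the machinery assembled in subsection \ref{s5.1} together with the pattern of Lemma \ref{t5.2.1}.
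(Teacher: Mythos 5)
The paper itself gives no proof of Lemma \ref{t5.2.6}, so your argument can only be judged on its own terms. Your overall strategy is the right one: factoring through the fibered product amounts to checking that the two components agree after composition with $\alpha_{\mathcal{W}_{i_{D^{3}\{(i,j)\}}^{D^{3}}}}\left(\left[\mathbf{T}^{\mathcal{W}_{D^{p+q+r}}}M\rightarrow M\right]\right)$, and your treatment of item 1 is correct. The same template does handle item 2 and the (evidently intended) pairs $\left(\xi_{312},\xi_{321}\right)$ and $\left(\xi_{123},\xi_{213}\right)$ of items 5 and 6, because in each of those four cases the two morphisms share a common nesting --- either $\left(1,(2,3)\right)$ or $\left((1,2),3\right)$ --- with a common outer product, and they differ only in the choice of $\underline{\mathrm{\Pr od}}$ versus $\overline{\mathrm{\Pr od}}$ on an adjacent pair of factors, exactly as in Lemma \ref{t5.2.1}.

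The gap is in items 3 and 4, the $D^{3}\{(1,3)\}$ cases. There $\xi_{231}$ is nested as $\left(1,(2,3)\right)$ (inner product on the $q,r$ factors, then the $p$ factor combined outside) while $\xi_{213}$ is nested as $\left((1,2),3\right)$ (inner product on the $p,q$ factors, then the $r$ factor combined outside); similarly for $\left(\xi_{312},\xi_{132}\right)$. So the two morphisms have no common outer product and no common pair of factors on which an ``inner subproduct disagrees,'' and your recipe for reading off $(i,j)$ does not apply as stated: naively it would point at $(2,3)$ for one morphism and $(1,2)$ for the other. To run the $D(2)$-argument here one must first re-express both morphisms with the middle factor outermost and the first and third factors convolved inside in the two opposite orders; this requires the interchange of $\underline{\mathrm{Conv}}$ and $\overline{\mathrm{Conv}}$ on the $\mathrm{id}_{M}$-fibered subobjects (Proposition \ref{t5.1.3} --- which is precisely why the hypothesis $\overline{\Omega}_{\left(1\right)}$ rather than $\overline{\Omega}_{\left(0\right)}$ matters), a mixed associativity combining $\underline{\mathrm{\Pr od}}$ and $\overline{\mathrm{\Pr od}}$ in a single nesting that is not literally covered by Proposition \ref{t5.1.4}, and the further bookkeeping that the inner pair then occupies the non-adjacent coordinates $1$ and $3$ of the outer $D^{3}$. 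It is no accident that Lemma \ref{t5.2.5} supplies the decompositions compatible with $\zeta^{\underset{1}{\overset{\cdot}{-}}}$ and $\zeta^{\underset{3}{\overset{\cdot}{-}}}$ but not with $\zeta^{\underset{2}{\overset{\cdot}{-}}}$: the $(1,3)$ case is the only genuinely nontrivial item of the lemma, and it is the one your proposal dismisses as pure bookkeeping.
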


\begin{notation}
We introduce the following six notations:

\begin{enumerate}
\item The composition of (\ref{5.2.6.1}) with
\begin{align*}
& \overline{\Omega}_{\left(  1\right)  }^{\left(  p+q+r,3\right)  }\left(
M\right)  \times_{\mathbf{T}^{\mathcal{W}_{D^{3}\{(2,3)\}}}\left[
\mathbf{T}^{\mathcal{W}_{Dp+q+r}}M\rightarrow M\right]  }\overline{\Omega
}_{\left(  1\right)  }^{\left(  p+q+r,3\right)  }\left(  M\right) \\
& \underrightarrow{\zeta^{\underset{1}{\overset{\cdot}{-}}}\left(  \left[
\mathbf{T}^{\mathcal{W}_{Dp+q+r}}M\rightarrow M\right]  \right)  }\\
& \overline{\Omega}_{\left(  1\right)  }^{\left(  p+q+r,2\right)  }\left(
M\right)
\end{align*}
is denoted
\[
\zeta^{\ast_{123}\underset{1}{\overset{\cdot}{-}}\ast_{132}}%
\]

\item The composition of (\ref{5.2.6.2}) with
\begin{align*}
& \overline{\Omega}_{\left(  1\right)  }^{\left(  p+q+r,3\right)  }\left(
M\right)  \times_{\mathbf{T}^{\mathcal{W}_{D^{3}\{(2,3)\}}}\left[
\mathbf{T}^{\mathcal{W}_{Dp+q+r}}M\rightarrow M\right]  }\overline{\Omega
}_{\left(  1\right)  }^{\left(  p+q+r,3\right)  }\left(  M\right) \\
& \underrightarrow{\zeta^{\underset{1}{\overset{\cdot}{-}}}\left(  \left[
\mathbf{T}^{\mathcal{W}_{Dp+q+r}}M\rightarrow M\right]  \right)  }\\
& \overline{\Omega}_{\left(  1\right)  }^{\left(  p+q+r,2\right)  }\left(
M\right)
\end{align*}
is denoted
\[
\zeta^{\ast_{231}\underset{1}{\overset{\cdot}{-}}\ast_{321}}%
\]

\item The composition of (\ref{5.2.6.3}) with
\begin{align*}
& \overline{\Omega}_{\left(  1\right)  }^{\left(  p+q+r,3\right)  }\left(
M\right)  \times_{\mathbf{T}^{\mathcal{W}_{D^{3}\{(1,3)\}}}\left[
\mathbf{T}^{\mathcal{W}_{Dp+q+r}}M\rightarrow M\right]  }\overline{\Omega
}_{\left(  1\right)  }^{\left(  p+q+r,3\right)  }\left(  M\right) \\
& \underrightarrow{\zeta^{\underset{2}{\overset{\cdot}{-}}}\left(  \left[
\mathbf{T}^{\mathcal{W}_{Dp+q+r}}M\rightarrow M\right]  \right)  }\\
& \overline{\Omega}_{\left(  1\right)  }^{\left(  p+q+r,2\right)  }\left(
M\right)
\end{align*}
is denoted
\[
\zeta^{\ast_{231}\underset{2}{\overset{\cdot}{-}}\ast_{213}}%
\]

\item The composition of (\ref{5.2.6.4}) with
\begin{align*}
& \overline{\Omega}_{\left(  1\right)  }^{\left(  p+q+r,3\right)  }\left(
M\right)  \times_{\mathbf{T}^{\mathcal{W}_{D^{3}\{(1,3)\}}}\left[
\mathbf{T}^{\mathcal{W}_{Dp+q+r}}M\rightarrow M\right]  }\overline{\Omega
}_{\left(  1\right)  }^{\left(  p+q+r,3\right)  }\left(  M\right) \\
& \underrightarrow{\zeta^{\underset{2}{\overset{\cdot}{-}}}\left(  \left[
\mathbf{T}^{\mathcal{W}_{Dp+q+r}}M\rightarrow M\right]  \right)  }\\
& \overline{\Omega}_{\left(  1\right)  }^{\left(  p+q+r,2\right)  }\left(
M\right)
\end{align*}
is denoted
\[
\zeta^{\ast_{312}\underset{2}{\overset{\cdot}{-}}\ast_{132}}%
\]

\item The composition of (\ref{5.2.6.5}) with
\begin{align*}
& \overline{\Omega}_{\left(  1\right)  }^{\left(  p+q+r,3\right)  }\left(
M\right)  \times_{\mathbf{T}^{\mathcal{W}_{D^{3}\{(1,3)\}}}\left[
\mathbf{T}^{\mathcal{W}_{Dp+q+r}}M\rightarrow M\right]  }\overline{\Omega
}_{\left(  1\right)  }^{\left(  p+q+r,3\right)  }\left(  M\right) \\
& \underrightarrow{\zeta^{\underset{3}{\overset{\cdot}{-}}}\left(  \left[
\mathbf{T}^{\mathcal{W}_{Dp+q+r}}M\rightarrow M\right]  \right)  }\\
& \overline{\Omega}_{\left(  1\right)  }^{\left(  p+q+r,2\right)  }\left(
M\right)
\end{align*}
is denoted
\[
\zeta^{\ast_{312}\underset{3}{\overset{\cdot}{-}}\ast_{321}}%
\]

\item The composition of (\ref{5.2.6.6}) with
\begin{align*}
& \overline{\Omega}_{\left(  1\right)  }^{\left(  p+q+r,3\right)  }\left(
M\right)  \times_{\mathbf{T}^{\mathcal{W}_{D^{3}\{(1,3)\}}}\left[
\mathbf{T}^{\mathcal{W}_{Dp+q+r}}M\rightarrow M\right]  }\overline{\Omega
}_{\left(  1\right)  }^{\left(  p+q+r,3\right)  }\left(  M\right) \\
& \underrightarrow{\zeta^{\underset{3}{\overset{\cdot}{-}}}\left(  \left[
\mathbf{T}^{\mathcal{W}_{Dp+q+r}}M\rightarrow M\right]  \right)  }\\
& \overline{\Omega}_{\left(  1\right)  }^{\left(  p+q+r,2\right)  }\left(
M\right)
\end{align*}
is denoted
\[
\zeta^{\ast_{123}\underset{3}{\overset{\cdot}{-}}\ast_{213}}%
\]

\end{enumerate}
\end{notation}

\begin{lemma}
\label{t5.2.7}We have the following three statements:

\begin{enumerate}
\item The morphism
\begin{align*}
& \overline{\Omega}_{\left(  1\right)  }^{\left(  p,1\right)  }\left(
M\right)  \times\overline{\Omega}_{\left(  1\right)  }^{\left(  q,1\right)
}\left(  M\right)  \times\overline{\Omega}_{\left(  1\right)  }^{\left(
r,1\right)  }\left(  M\right) \\
& \underrightarrow{\left(  \zeta^{\ast_{123}\underset{1}{\overset{\cdot}{-}%
}\ast_{132}},\zeta^{\ast_{231}\underset{1}{\overset{\cdot}{-}}\ast_{321}%
}\right)  }\\
& \overline{\Omega}_{\left(  1\right)  }^{\left(  p+q+r,2\right)  }\left(
M\right)  \times\overline{\Omega}_{\left(  1\right)  }^{\left(
p+q+r,2\right)  }\left(  M\right)
\end{align*}
is to be factored through the canonical injection
\begin{align*}
& \overline{\Omega}_{\left(  1\right)  }^{\left(  p+q+r,2\right)  }\left(
M\right)  \times_{\mathbf{T}^{\mathcal{W}_{D\left(  2\right)  }}\left[
\mathbf{T}^{\mathcal{W}_{Dp+q+r}}M\rightarrow M\right]  }\overline{\Omega
}_{\left(  1\right)  }^{\left(  p+q+r,2\right)  }\left(  M\right) \\
& \rightarrow\overline{\Omega}_{\left(  1\right)  }^{\left(  p+q+r,2\right)
}\left(  M\right)  \times\overline{\Omega}_{\left(  1\right)  }^{\left(
p+q+r,2\right)  }\left(  M\right)
\end{align*}
into
\begin{align}
& \overline{\Omega}_{\left(  1\right)  }^{\left(  p,1\right)  }\left(
M\right)  \times\overline{\Omega}_{\left(  1\right)  }^{\left(  q,1\right)
}\left(  M\right)  \times\overline{\Omega}_{\left(  1\right)  }^{\left(
r,1\right)  }\left(  M\right) \nonumber\\
& \rightarrow\overline{\Omega}_{\left(  1\right)  }^{\left(  p+q+r,2\right)
}\left(  M\right)  \times_{\mathbf{T}^{\mathcal{W}_{D\left(  2\right)  }%
}\left[  \mathbf{T}^{\mathcal{W}_{Dp+q+r}}M\rightarrow M\right]  }%
\overline{\Omega}_{\left(  1\right)  }^{\left(  p+q+r,2\right)  }\left(
M\right) \label{5.2.7.1}%
\end{align}

\item The morphism
\begin{align*}
& \overline{\Omega}_{\left(  1\right)  }^{\left(  p,1\right)  }\left(
M\right)  \times\overline{\Omega}_{\left(  1\right)  }^{\left(  q,1\right)
}\left(  M\right)  \times\overline{\Omega}_{\left(  1\right)  }^{\left(
r,1\right)  }\left(  M\right) \\
& \underrightarrow{\left(  \zeta^{\ast_{231}\underset{2}{\overset{\cdot}{-}%
}\ast_{213}},\zeta^{\ast_{312}\underset{2}{\overset{\cdot}{-}}\ast_{132}%
}\right)  }\\
& \overline{\Omega}_{\left(  1\right)  }^{\left(  p+q+r,2\right)  }\left(
M\right)  \times\overline{\Omega}_{\left(  1\right)  }^{\left(
p+q+r,2\right)  }\left(  M\right)
\end{align*}
is to be factored through the canonical injection
\begin{align*}
& \overline{\Omega}_{\left(  1\right)  }^{\left(  p+q+r,2\right)  }\left(
M\right)  \times_{\mathbf{T}^{\mathcal{W}_{D\left(  2\right)  }}\left[
\mathbf{T}^{\mathcal{W}_{Dp+q+r}}M\rightarrow M\right]  }\overline{\Omega
}_{\left(  1\right)  }^{\left(  p+q+r,2\right)  }\left(  M\right) \\
& \rightarrow\overline{\Omega}_{\left(  1\right)  }^{\left(  p+q+r,2\right)
}\left(  M\right)  \times\overline{\Omega}_{\left(  1\right)  }^{\left(
p+q+r,2\right)  }\left(  M\right)
\end{align*}
into
\begin{align}
& \overline{\Omega}_{\left(  1\right)  }^{\left(  p,1\right)  }\left(
M\right)  \times\overline{\Omega}_{\left(  1\right)  }^{\left(  q,1\right)
}\left(  M\right)  \times\overline{\Omega}_{\left(  1\right)  }^{\left(
r,1\right)  }\left(  M\right) \nonumber\\
& \rightarrow\overline{\Omega}_{\left(  1\right)  }^{\left(  p+q+r,2\right)
}\left(  M\right)  \times_{\mathbf{T}^{\mathcal{W}_{D\left(  2\right)  }%
}\left[  \mathbf{T}^{\mathcal{W}_{Dp+q+r}}M\rightarrow M\right]  }%
\overline{\Omega}_{\left(  1\right)  }^{\left(  p+q+r,2\right)  }\left(
M\right) \label{5.2.7.2}%
\end{align}

\item The morphism
\begin{align*}
& \overline{\Omega}_{\left(  1\right)  }^{\left(  p,1\right)  }\left(
M\right)  \times\overline{\Omega}_{\left(  1\right)  }^{\left(  q,1\right)
}\left(  M\right)  \times\overline{\Omega}_{\left(  1\right)  }^{\left(
r,1\right)  }\left(  M\right) \\
& \underrightarrow{\left(  \zeta^{\ast_{312}\underset{3}{\overset{\cdot}{-}%
}\ast_{321}},\zeta^{\ast_{123}\underset{3}{\overset{\cdot}{-}}\ast_{213}%
}\right)  }\\
& \overline{\Omega}_{\left(  1\right)  }^{\left(  p+q+r,2\right)  }\left(
M\right)  \times\overline{\Omega}_{\left(  1\right)  }^{\left(
p+q+r,2\right)  }\left(  M\right)
\end{align*}
is to be factored through the canonical injection
\begin{align*}
& \overline{\Omega}_{\left(  1\right)  }^{\left(  p+q+r,2\right)  }\left(
M\right)  \times_{\mathbf{T}^{\mathcal{W}_{D\left(  2\right)  }}\left[
\mathbf{T}^{\mathcal{W}_{Dp+q+r}}M\rightarrow M\right]  }\overline{\Omega
}_{\left(  1\right)  }^{\left(  p+q+r,2\right)  }\left(  M\right) \\
& \rightarrow\overline{\Omega}_{\left(  1\right)  }^{\left(  p+q+r,2\right)
}\left(  M\right)  \times\overline{\Omega}_{\left(  1\right)  }^{\left(
p+q+r,2\right)  }\left(  M\right)
\end{align*}
into
\begin{align}
& \overline{\Omega}_{\left(  1\right)  }^{\left(  p,1\right)  }\left(
M\right)  \times\overline{\Omega}_{\left(  1\right)  }^{\left(  q,1\right)
}\left(  M\right)  \times\overline{\Omega}_{\left(  1\right)  }^{\left(
r,1\right)  }\left(  M\right) \nonumber\\
& \rightarrow\overline{\Omega}_{\left(  1\right)  }^{\left(  p+q+r,2\right)
}\left(  M\right)  \times_{\mathbf{T}^{\mathcal{W}_{D\left(  2\right)  }%
}\left[  \mathbf{T}^{\mathcal{W}_{Dp+q+r}}M\rightarrow M\right]  }%
\overline{\Omega}_{\left(  1\right)  }^{\left(  p+q+r,2\right)  }\left(
M\right) \label{5.2.7.3}%
\end{align}

\end{enumerate}
\end{lemma}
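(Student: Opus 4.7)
The plan is to verify, for each of the three parts, that the paired morphism factors through the relevant fiber product over $\mathbf{T}^{\mathcal{W}_{D(2)}}\left[\mathbf{T}^{\mathcal{W}_{D^{p+q+r}}}M\rightarrow M\right]$. By the universal property of fiber products, this reduces to checking that postcomposing the two components of each pair with the canonical restriction $\alpha_{\mathcal{W}_{i_{D(2)}^{D^{2}}}}\!\left(\left[\mathbf{T}^{\mathcal{W}_{D^{p+q+r}}}M\rightarrow M\right]\right)$ yields one and the same morphism into $\mathbf{T}^{\mathcal{W}_{D(2)}}\left[\mathbf{T}^{\mathcal{W}_{D^{p+q+r}}}M\rightarrow M\right]$.

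First I would unpack each $\zeta^{\ast_{ijk}\underset{\ell}{\overset{\cdot}{-}}\ast_{i'j'k'}}$ according to the notation preceding the lemma: it assembles the pair $(\xi_{ijk},\xi_{i'j'k'})$, which factors through the appropriate fiber product by the relevant clause of Lemma~\ref{t5.2.6}, into an element of $\mathbf{T}^{\mathcal{W}_{D^{4}\{(m,4),(n,4)\}}}\left[\ldots\right]$ via the pullback decomposition of Corollaries~\ref{t3.3'},~\ref{t3.4'}, or~\ref{t3.5'}, and then pulls back along the explicit embedding $D^{2}\rightarrow D^{4}\{(m,4),(n,4)\}$ displayed in the notation (for instance, $(d_{1},d_{2})\mapsto(d_{1},0,0,d_{2})$ in part~1). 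Composing further with $i_{D(2)}^{D^{2}}$ replaces the $D^{2}$-parameter by $D(2)$, imposing the additional relation $d_{1}d_{2}=0$, so both components are eventually identified with pullbacks of the ambient triple convolution along the composite $D(2)\hookrightarrow D^{2}\rightarrow D^{4}\{(m,4),(n,4)\}$ into the parameter space of the three outer $D^{1}$-factors built into $\xi_{ijk}$.

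The substance of the argument is then to identify these composite pullbacks for the two components of each pair. After unfolding the definitions of the six $\xi_{ijk}$'s in terms of $\underline{\Pr od}$ and $\overline{\Pr od}$, their pairwise differences reduce, by Propositions~\ref{t5.1.1} and~\ref{t5.1.2}, to $\sigma$-conjugations of one of the outer $D^{1}$-parameters. But after restriction to $D(2)$ the constraint $d_{1}d_{2}=0$, together with the relations $d_{m}d_{n}=0$ already built into $D^{4}\{(m,4),(n,4)\}$, forces every such $\sigma$-conjugation of the outer parameter to act trivially, because $D(2)$ is symmetric in its two coordinates. This collapses the distinction between the two components of each pair, giving the desired factorization through the fiber product. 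The argument for parts~2 and~3 proceeds identically after cyclically relabelling the three factors $(p,q,r)\mapsto(q,r,p)$ and $(r,p,q)$, respectively.

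The main obstacle is the sheer density of notation: one must keep track of six $\xi$-labels, three choices of $\ell\in\{1,2,3\}$, and the three corresponding subsets $\{(i,j)\}\subset\{1,2,3\}$, while chasing all data through the nested pullback decompositions of Section~\ref{s3} and the associativity and symmetry statements of Section~\ref{s5.1}. Once the diagram chase for part~1 is laid out carefully, parts~2 and~3 are obtained by cyclic permutation of the three input factors $(p,q,r)$ without further effort.
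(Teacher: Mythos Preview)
The paper states this lemma without proof, but the placement of Lemma~\ref{t5.2.5} immediately beforehand signals the intended argument. For part~1, associativity (Proposition~\ref{t5.1.4}) lets one write $\xi_{123},\xi_{132}$ with the common outer factor $\underline{\mathrm{\Pr od}}_{(p,1),(q+r,2)}$ and $\xi_{231},\xi_{321}$ with the common outer factor $\overline{\mathrm{\Pr od}}_{(p,1),(q+r,2)}$, the inner factors being $\underline{\mathrm{\Pr od}}_{(q,1),(r,1)}$ and $\overline{\mathrm{\Pr od}}_{(q,1),(r,1)}$ in each case. The interchange identity of Lemma~\ref{t5.2.5} then rewrites the two strong differences as
\[
\underline{\mathrm{\Pr od}}_{(p,1),(q+r,1)}\circ\bigl(\mathrm{id}\times\zeta_{q,r}^{L_{1}}\bigr)
\qquad\text{and}\qquad
\overline{\mathrm{\Pr od}}_{(p,1),(q+r,1)}\circ\bigl(\mathrm{id}\times\zeta_{q,r}^{L_{1}}\bigr),
\]
with the \emph{same} second factor. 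These agree after restriction to $D(2)$ by exactly the argument of Lemma~\ref{t5.2.1} (i.e.\ by Proposition~\ref{t5.1.3}, since the base points lie in $\left[\,\cdot\,\right]_{\mathrm{id}_M}$). Parts~2 and~3 are the cyclic variants, using the second half of Lemma~\ref{t5.2.5}.

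Your reduction to checking agreement after composing with $\alpha_{\mathcal{W}_{i_{D(2)}^{D^{2}}}}$ is correct, but the mechanism you give for that agreement does not work. Propositions~\ref{t5.1.1} and~\ref{t5.1.2} concern permutations of the \emph{inner} $D^{p+q}$-coordinates and associativity of convolution; they say nothing about why the two components coincide on the \emph{outer} $D(2)$. The remark that ``$D(2)$ is symmetric in its two coordinates'' is true but beside the point: the two components you are comparing are not related by swapping the two outer coordinates. What is actually needed is that $\underline{\mathrm{\Pr od}}$ and $\overline{\mathrm{\Pr od}}$, applied to inputs satisfying the Dirac condition, restrict identically to $D(2)$---this is Lemma~\ref{t5.2.1}---and one only sees this after using Lemma~\ref{t5.2.5} to peel off the outer factor from the inner strong difference. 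You invoke neither of these lemmas, so the argument as written has a genuine gap at precisely the step where the two restrictions must be identified.
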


\begin{notation}
We introduce the following three notations:

\begin{enumerate}
\item The composition of (\ref{5.2.7.1}) with
\begin{align*}
& \overline{\Omega}_{\left(  1\right)  }^{\left(  p+q+r,2\right)  }\left(
M\right)  \times_{\mathbf{T}^{\mathcal{W}_{D\left(  2\right)  }}\left[
\mathbf{T}^{\mathcal{W}_{Dp+q+r}}M\rightarrow M\right]  }\overline{\Omega
}_{\left(  1\right)  }^{\left(  p+q+r,2\right)  }\left(  M\right) \\
& \underrightarrow{\zeta^{\overset{\cdot}{-}}\left(  \left[  \mathbf{T}%
^{\mathcal{W}_{Dp+q+r}}M\rightarrow M\right]  \right)  }\\
& \overline{\Omega}_{\left(  1\right)  }^{\left(  p+q+r,1\right)  }\left(
M\right)
\end{align*}
is denoted
\[
\zeta^{(\ast_{123}\underset{1}{\overset{\cdot}{-}}\ast_{132})\overset{\cdot
}{-}(\ast_{231}\underset{1}{\overset{\cdot}{-}}\ast_{321})}%
\]

\item The composition of (\ref{5.2.7.2}) with
\begin{align*}
& \overline{\Omega}_{\left(  1\right)  }^{\left(  p+q+r,2\right)  }\left(
M\right)  \times_{\mathbf{T}^{\mathcal{W}_{D\left(  2\right)  }}\left[
\mathbf{T}^{\mathcal{W}_{Dp+q+r}}M\rightarrow M\right]  }\overline{\Omega
}_{\left(  1\right)  }^{\left(  p+q+r,2\right)  }\left(  M\right) \\
& \underrightarrow{\zeta^{\overset{\cdot}{-}}\left(  \left[  \mathbf{T}%
^{\mathcal{W}_{Dp+q+r}}M\rightarrow M\right]  \right)  }\\
& \overline{\Omega}_{\left(  1\right)  }^{\left(  p+q+r,1\right)  }\left(
M\right)
\end{align*}
is denoted
\[
\zeta^{(\ast_{231}\underset{2}{\overset{\cdot}{-}}\ast_{213})\overset{\cdot
}{-}(\ast_{312}\underset{2}{\overset{\cdot}{-}}\ast_{132})}%
\]

\item The composition of (\ref{5.2.7.3}) with
\begin{align*}
& \overline{\Omega}_{\left(  1\right)  }^{\left(  p+q+r,2\right)  }\left(
M\right)  \times_{\mathbf{T}^{\mathcal{W}_{D\left(  2\right)  }}\left[
\mathbf{T}^{\mathcal{W}_{Dp+q+r}}M\rightarrow M\right]  }\overline{\Omega
}_{\left(  1\right)  }^{\left(  p+q+r,2\right)  }\left(  M\right) \\
& \underrightarrow{\zeta^{\overset{\cdot}{-}}\left(  \left[  \mathbf{T}%
^{\mathcal{W}_{Dp+q+r}}M\rightarrow M\right]  \right)  }\\
& \overline{\Omega}_{\left(  1\right)  }^{\left(  p+q+r,1\right)  }\left(
M\right)
\end{align*}
is denoted
\[
\zeta^{(\ast_{312}\underset{3}{\overset{\cdot}{-}}\ast_{321})\overset{\cdot
}{-}(\ast_{123}\underset{3}{\overset{\cdot}{-}}\ast_{213})}%
\]

\end{enumerate}
\end{notation}

Now we are ready to present a proof of Theorem \ref{t5.2.4}.

\begin{proof}
(of Theorem \ref{t5.2.4}). By the morphisms (\ref{5.2.4.1})-(\ref{5.2.4.3})
are identical to the morphisms
\begin{align*}
& \zeta^{(\ast_{123}\underset{1}{\overset{\cdot}{-}}\ast_{132})\overset{\cdot
}{-}(\ast_{231}\underset{1}{\overset{\cdot}{-}}\ast_{321})}\\
& \zeta^{(\ast_{231}\underset{2}{\overset{\cdot}{-}}\ast_{213})\overset{\cdot
}{-}(\ast_{312}\underset{2}{\overset{\cdot}{-}}\ast_{132})}\\
& \zeta^{(\ast_{312}\underset{3}{\overset{\cdot}{-}}\ast_{321})\overset{\cdot
}{-}(\ast_{123}\underset{3}{\overset{\cdot}{-}}\ast_{213})}%
\end{align*}
respectively. Therefore Theorem \ref{t5.2.4} follows from Theorem \ref{t3.6}.
\end{proof}

\subsection{\label{s5.3}The Second Consideration}

In this subsection we are concerned with the Lie algebra structure of
$\overline{\Omega}_{\left(  12\right)  }^{\left(  p,1\right)  }\left(
M\right)  $'s, where $p$\ ranges over natural numbers. It is easy to see that

\begin{lemma}
\label{t5.3.1}The morphism
\begin{align*}
& \overline{\Omega}_{\left(  12\right)  }^{\left(  p,1\right)  }\left(
M\right)  \times\overline{\Omega}_{\left(  12\right)  }^{\left(  q,1\right)
}\left(  M\right) \\
& \underrightarrow{i_{\overline{\Omega}_{\left(  12\right)  }^{\left(
p,1\right)  }\left(  M\right)  }^{\overline{\Omega}_{\left(  1\right)
}^{\left(  p,1\right)  }\left(  M\right)  }\times i_{\overline{\Omega
}_{\left(  12\right)  }^{\left(  q,1\right)  }\left(  M\right)  }%
^{\overline{\Omega}_{\left(  1\right)  }^{\left(  q,1\right)  }\left(
M\right)  }}\\
& \overline{\Omega}_{\left(  1\right)  }^{\left(  p,1\right)  }\left(
M\right)  \times\overline{\Omega}_{\left(  1\right)  }^{\left(  q,1\right)
}\left(  M\right) \\
& \underrightarrow{\zeta_{p,q}^{L_{1}}\left(  M\right)  }\\
& \overline{\Omega}_{\left(  1\right)  }^{\left(  p+q,1\right)  }\left(
M\right)
\end{align*}
is to be factored uniquely through the canonical injection
\[
i_{\overline{\Omega}_{\left(  12\right)  }^{\left(  p+q,1\right)  }\left(
M\right)  }^{\overline{\Omega}_{\left(  1\right)  }^{\left(  p+q,1\right)
}\left(  M\right)  }:\overline{\Omega}_{\left(  12\right)  }^{\left(
p+q,1\right)  }\left(  M\right)  \rightarrow\overline{\Omega}_{\left(
1\right)  }^{\left(  p+q,1\right)  }\left(  M\right)
\]
into a morphism
\[
\overline{\Omega}_{\left(  12\right)  }^{\left(  p,1\right)  }\left(
M\right)  \times\overline{\Omega}_{\left(  12\right)  }^{\left(  q,1\right)
}\left(  M\right)  \rightarrow\overline{\Omega}_{\left(  12\right)  }^{\left(
p+q,1\right)  }\left(  M\right)
\]

\end{lemma}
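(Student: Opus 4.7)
The plan is to verify that the composite morphism lands in the subobject $\overline{\Omega}_{(12)}^{(p+q,1)}(M) \subseteq \overline{\Omega}_{(1)}^{(p+q,1)}(M)$, i.e.\ that it satisfies the scalar-multiplication equalizer condition in every index $i \in \{1,\ldots,p+q\}$. Since Proposition \ref{t5.2.2} already exhibits $\zeta_{p,q}^{L_{1}}(M)$ as a factorization through $\overline{\Omega}_{(1)}^{(p+q,1)}(M)$, what remains is to check, for each such $i$, the equalizer entering the definition of $\overline{\Omega}_{(12)}^{(p+q,1)}(M)$; the desired factorization will then be automatic from the universal property of the intersection that defines $\overline{\Omega}_{(12)}^{(p+q,1)}(M)$.

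First I would split the verification into two symmetric cases according to whether $i \le p$ or $i > p$. Under the identification $\mathcal{W}_{D^{p+q}} = \mathcal{W}_{D^{p}} \otimes_{k} \mathcal{W}_{D^{q}}$ used throughout the definitions of $\underline{\mathrm{Conv}}_{p,q}^{M}$ and $\overline{\mathrm{Conv}}_{p,q}^{M}$, the scalar action $\left(\underset{i}{\cdot}\right)_{\mathbf{T}^{\mathcal{W}_{D^{p+q}}}M\times\mathbb{R}}$ corresponds, for $i\le p$, to scalar multiplication in coordinate $i$ of the $\mathcal{W}_{D^{p}}$-factor, and for $i>p$ to scalar multiplication in coordinate $i-p$ of the $\mathcal{W}_{D^{q}}$-factor. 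In either case this action commutes with the coordinate-projection morphisms of Weil-algebras entering $\underline{\mathrm{Prod}}_{(p,1),(q,1)}^{M}$ and $\overline{\mathrm{Prod}}_{(p,1),(q,1)}^{M}$, simply because projection onto the other block of coordinates ignores the rescaled one.

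Next I would chase the scalar action through the inner evaluations in $\underline{\mathrm{Conv}}_{p,q}^{M}$ and $\overline{\mathrm{Conv}}_{p,q}^{M}$. For $i\le p$, the hypothesis that the first input lies in $\overline{\Omega}_{(12)}^{(p,1)}(M)$ lets me pull the scalar out of $\mathrm{ev}_{[\mathbf{T}^{\mathcal{W}_{D^{p}}}M\rightarrow M]\times\mathbf{T}^{\mathcal{W}_{D^{p}}}M}$; symmetrically, for $i>p$, the multi-linearity of the second input handles the evaluation on the $\mathcal{W}_{D^{q}}$-side. The scalar then slides past the outer $\mathbf{T}^{\mathcal{W}_{D}}(-)$ by functoriality, past $\alpha_{\mathcal{W}_{(d_{1},d_{2})\in D(2)\mapsto(d_{1},d_{2})\in D^{2}}}$ by naturality of $\alpha$, and past the pullback presentation of $\zeta^{\overset{\cdot}{-}}$ of Proposition \ref{t3.1} because the relevant $\varphi$ and $\psi$ are equivariant under the $\mathbb{R}$-scaling in each coordinate.

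The main obstacle is purely bookkeeping: keeping track of the index $i$ while switching between the tensor decomposition $\mathcal{W}_{D^{p+q}}=\mathcal{W}_{D^{p}}\otimes_{k}\mathcal{W}_{D^{q}}$ and the global $D^{p+q}$-picture, and doing so in parallel for both $\underline{\mathrm{Prod}}_{(p,1),(q,1)}^{M}$ and $\overline{\mathrm{Prod}}_{(p,1),(q,1)}^{M}$ since both occur in the pairing that defines $\zeta_{p,q}^{L_{1}}(M)$. Once this is organized, the $(12)$-equalizer at index $i$ of the output reduces to the $(12)$-equalizer at a single index of exactly one of the two inputs, and the unique factorization through $i_{\overline{\Omega}_{(12)}^{(p+q,1)}(M)}^{\overline{\Omega}_{(1)}^{(p+q,1)}(M)}$ is then immediate.
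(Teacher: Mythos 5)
The paper itself offers no proof of Lemma \ref{t5.3.1}: it is introduced with ``It is easy to see that,'' so there is nothing to compare your argument against line by line. Judged on its own, your sketch takes the right route and is essentially correct: Proposition \ref{t5.2.2} already puts the output in $\overline{\Omega}_{\left(1\right)}^{\left(p+q,1\right)}\left(M\right)$, so the only thing to check is the homogeneity equalizer in each slot $i$, and the case split $i\le p$ versus $i>p$ correctly matches the slot to the input whose $\left(12\right)$-condition is being consumed (in $\underline{\mathrm{Conv}}_{p,q}^{M}$ the first $p$ coordinates feed the $p$-form through the outer evaluation and the last $q$ feed the $q$-form through the inner one, and dually for $\overline{\mathrm{Conv}}_{p,q}^{M}$). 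Two steps deserve a cleaner justification than you give. First, for $i>p$ in $\underline{\mathrm{Conv}}_{p,q}^{M}$ the homogeneity of the second input is applied \emph{inside} the functor $\mathbf{T}^{\mathcal{W}_{D^{p}}}\left(-\right)$, so you are implicitly using that this functor preserves the equalizers defining the $\left(12\right)$-condition; this is part of the standing axiomatics but should be named. Second, your reason for sliding the scalar past $\zeta^{\overset{\cdot}{-}}$ --- ``equivariance of $\varphi$ and $\psi$'' --- points at the wrong coordinates: $\varphi$ and $\psi$ concern the $D^{2}$ and $D^{3}\{(1,3),(2,3)\}$ directions, whereas the rescaling at issue acts on the base object $\left[\mathbf{T}^{\mathcal{W}_{D^{p+q}}}M\rightarrow M\right]$ by precomposition. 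The correct justification is naturality of $\zeta^{\overset{\cdot}{-}}\left(N\right)$ in $N$ (it is built from instances of $\alpha_{\mathcal{W}_{\left(-\right)}}$, which are natural transformations) for the left-hand side of the equalizer, together with the compatibility of the strong difference with scalar multiplication on $\mathbf{T}^{\mathcal{W}_{D}}N$ for the right-hand side. With those two points repaired the argument is complete and is at the level of detail the paper itself elides.
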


\begin{notation}
The morphism in (\ref{5.4.1.2}) is denoted
\[
\zeta_{p,q}^{L_{12}}\left(  M\right)  :\overline{\Omega}_{\left(  12\right)
}^{\left(  p,1\right)  }\left(  M\right)  \times\overline{\Omega}_{\left(
12\right)  }^{\left(  q,1\right)  }\left(  M\right)  \rightarrow
\overline{\Omega}_{\left(  12\right)  }^{\left(  p+q,1\right)  }\left(
M\right)
\]

\end{notation}

\begin{proposition}
\label{t5.3.2}The morphism
\begin{align*}
& \overline{\Omega}_{\left(  12\right)  }^{\left(  p,1\right)  }\left(
M\right)  \times\overline{\Omega}_{\left(  12\right)  }^{\left(  q,1\right)
}\left(  M\right) \\
& \underrightarrow{\zeta_{p,q}^{L_{12}}\left(  M\right)  }\\
& \overline{\Omega}_{\left(  12\right)  }^{\left(  p+q,1\right)  }\left(
M\right)
\end{align*}
and the morphism
\begin{align*}
& \overline{\Omega}_{\left(  12\right)  }^{\left(  p,1\right)  }\left(
M\right)  \times\overline{\Omega}_{\left(  12\right)  }^{\left(  q,1\right)
}\left(  M\right) \\
& =\overline{\Omega}_{\left(  12\right)  }^{\left(  q,1\right)  }\left(
M\right)  \times\overline{\Omega}_{\left(  12\right)  }^{\left(  p,1\right)
}\left(  M\right) \\
& \underrightarrow{\zeta_{q,p}^{L_{12}}\left(  M\right)  }\\
& \overline{\Omega}_{\left(  12\right)  }^{\left(  p+q,1\right)  }\left(
M\right) \\
& \underrightarrow{\left(  \cdot^{\sigma_{p,q}}\right)  _{\overline{\Omega
}_{\left(  12\right)  }^{\left(  p+q,1\right)  }\left(  M\right)  }}\\
& \overline{\Omega}_{\left(  12\right)  }^{\left(  p+q,1\right)  }\left(
M\right)
\end{align*}
sum up only to vanish.
\end{proposition}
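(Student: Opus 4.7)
The plan is to derive Proposition \ref{t5.3.2} from the already-established antisymmetry in Proposition \ref{t5.2.3} by exploiting the fact that $\overline{\Omega}_{(12)}^{(p,1)}(M)$ is a subobject of $\overline{\Omega}_{(1)}^{(p,1)}(M)$ and that $\zeta^{L_{12}}_{p,q}(M)$ was defined (Lemma \ref{t5.3.1}) precisely as the unique factorization of the restriction of $\zeta^{L_{1}}_{p,q}(M)$ through the canonical injection $i_{\overline{\Omega}_{(12)}^{(p+q,1)}(M)}^{\overline{\Omega}_{(1)}^{(p+q,1)}(M)}$.

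First, I would postcompose both morphisms in the statement with the canonical injection
\[
i_{\overline{\Omega}_{(12)}^{(p+q,1)}(M)}^{\overline{\Omega}_{(1)}^{(p+q,1)}(M)}:\overline{\Omega}_{(12)}^{(p+q,1)}(M)\rightarrow\overline{\Omega}_{(1)}^{(p+q,1)}(M).
\]
By the defining property of $\zeta^{L_{12}}_{p,q}(M)$ in Lemma \ref{t5.3.1}, this postcomposition turns the first morphism in the statement into
\[
\zeta^{L_{1}}_{p,q}(M)\circ\bigl(i_{\overline{\Omega}_{(12)}^{(p,1)}(M)}^{\overline{\Omega}_{(1)}^{(p,1)}(M)}\times i_{\overline{\Omega}_{(12)}^{(q,1)}(M)}^{\overline{\Omega}_{(1)}^{(q,1)}(M)}\bigr),
\]
and analogously for the second morphism, after noting that the twist $(\cdot^{\sigma_{p,q}})$ on $\overline{\Omega}_{(12)}^{(p+q,1)}(M)$ is by construction the restriction of the corresponding twist on $\overline{\Omega}_{(1)}^{(p+q,1)}(M)$ and therefore commutes with the canonical injection.

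Next, I would invoke Proposition \ref{t5.2.3}: the sum of the two resulting morphisms, viewed as morphisms into $\overline{\Omega}_{(1)}^{(p+q,1)}(M)$, vanishes. Finally, since $i_{\overline{\Omega}_{(12)}^{(p+q,1)}(M)}^{\overline{\Omega}_{(1)}^{(p+q,1)}(M)}$ is a monomorphism (being an equalizer-type injection of a subobject), the sum must already vanish in $\overline{\Omega}_{(12)}^{(p+q,1)}(M)$, which yields the desired conclusion.

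The only delicate point, and hence the main thing to verify carefully rather than assert, is the compatibility of the permutation action $(\cdot^{\sigma_{p,q}})$ with the canonical injection from $\overline{\Omega}_{(12)}^{(p+q,1)}(M)$ into $\overline{\Omega}_{(1)}^{(p+q,1)}(M)$; this reduces to checking that the multilinearity equalizers defining $\overline{\Omega}_{(12)}$ are preserved under the reindexing $\mathcal{W}_{(d_{1},\dots,d_{p+q})\mapsto(d_{\sigma(1)},\dots,d_{\sigma(p+q)})}$, which is a straightforward naturality argument of the kind already used repeatedly in Section \ref{s5.1}.
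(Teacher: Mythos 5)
Your proposal is correct and is essentially the paper's own argument: the paper proves Proposition \ref{t5.3.2} with the single line ``This follows directly from Proposition \ref{t5.2.3},'' and your write-up (postcompose with the monic canonical injection, use the defining factorization of $\zeta_{p,q}^{L_{12}}\left(  M\right)$ from Lemma \ref{t5.3.1} and the compatibility of $\left(  \cdot^{\sigma_{p,q}}\right)$ with that injection, then cancel the mono) is precisely the justification that sentence leaves implicit.
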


\begin{proof}
This follows directly from Proposition \ref{t5.2.3}.
\end{proof}

\begin{theorem}
\label{t5.3.3}The three morphisms
\begin{align*}
& \overline{\Omega}_{\left(  12\right)  }^{\left(  p,1\right)  }\left(
M\right)  \times\overline{\Omega}_{\left(  12\right)  }^{\left(  q,1\right)
}\left(  M\right)  \times\overline{\Omega}_{\left(  12\right)  }^{\left(
r,1\right)  }\left(  M\right) \\
& \underrightarrow{\mathrm{id}_{\overline{\Omega}_{\left(  12\right)
}^{\left(  p,1\right)  }\left(  M\right)  }\times\zeta_{q,r}^{L_{1}}\left(
M\right)  }\\
& \overline{\Omega}_{\left(  12\right)  }^{\left(  p,1\right)  }\left(
M\right)  \times\overline{\Omega}_{\left(  12\right)  }^{\left(  q+r,1\right)
}\left(  M\right) \\
& \underrightarrow{\zeta_{p,q+r}^{L_{12}}\left(  M\right)  }\\
& \overline{\Omega}_{\left(  12\right)  }^{\left(  p+q+r,1\right)  }\left(
M\right)  \text{,}%
\end{align*}
\begin{align*}
& \overline{\Omega}_{\left(  12\right)  }^{\left(  p,1\right)  }\left(
M\right)  \times\overline{\Omega}_{\left(  12\right)  }^{\left(  q,1\right)
}\left(  M\right)  \times\overline{\Omega}_{\left(  12\right)  }^{\left(
r,1\right)  }\left(  M\right) \\
& =\overline{\Omega}_{\left(  12\right)  }^{\left(  q,1\right)  }\left(
M\right)  \times\overline{\Omega}_{\left(  12\right)  }^{\left(  r,1\right)
}\left(  M\right)  \times\overline{\Omega}_{\left(  12\right)  }^{\left(
p,1\right)  }\left(  M\right) \\
& \underrightarrow{\mathrm{id}_{\overline{\Omega}_{\left(  12\right)
}^{\left(  q,1\right)  }\left(  M\right)  }\times\zeta_{r,p}^{L_{1}}\left(
M\right)  }\\
& \overline{\Omega}_{\left(  12\right)  }^{\left(  q,1\right)  }\left(
M\right)  \times\overline{\Omega}_{\left(  12\right)  }^{\left(  r+p,1\right)
}\left(  M\right) \\
& \underrightarrow{\zeta_{q,r+p}^{L_{12}}\left(  M\right)  }\\
& \overline{\Omega}_{\left(  12\right)  }^{\left(  p+q+r,1\right)  }\left(
M\right) \\
& \underrightarrow{\left(  \cdot^{\sigma_{p,q+r}}\right)  _{\overline{\Omega
}_{\left(  12\right)  }^{\left(  p+q+r,1\right)  }\left(  M\right)  }}\\
& \overline{\Omega}_{\left(  12\right)  }^{\left(  p+q+r,1\right)  }\left(
M\right)
\end{align*}
and
\begin{align*}
& \overline{\Omega}_{\left(  12\right)  }^{\left(  p,1\right)  }\left(
M\right)  \times\overline{\Omega}_{\left(  12\right)  }^{\left(  q,1\right)
}\left(  M\right)  \times\overline{\Omega}_{\left(  12\right)  }^{\left(
r,1\right)  }\left(  M\right) \\
& =\overline{\Omega}_{\left(  12\right)  }^{\left(  r,1\right)  }\left(
M\right)  \times\overline{\Omega}_{\left(  12\right)  }^{\left(  p,1\right)
}\left(  M\right)  \times\overline{\Omega}_{\left(  12\right)  }^{\left(
q,1\right)  }\left(  M\right) \\
& \underrightarrow{\mathrm{id}_{\overline{\Omega}_{\left(  12\right)
}^{\left(  r,1\right)  }\left(  M\right)  }\times\zeta_{p,q}^{L_{1}}\left(
M\right)  }\\
& \overline{\Omega}_{\left(  12\right)  }^{\left(  r,1\right)  }\left(
M\right)  \times\overline{\Omega}_{\left(  12\right)  }^{\left(  p+q,1\right)
}\left(  M\right) \\
& \underrightarrow{\zeta_{r,p+q}^{L_{12}}\left(  M\right)  }\\
& \overline{\Omega}_{\left(  12\right)  }^{\left(  p+q+r,1\right)  }\left(
M\right) \\
& \underrightarrow{\left(  \cdot^{\sigma_{r,p+q}}\right)  _{\overline{\Omega
}_{\left(  12\right)  }^{\left(  p+q+r,1\right)  }\left(  M\right)  }}\\
& \overline{\Omega}_{\left(  12\right)  }^{\left(  p+q+r,1\right)  }\left(
M\right)
\end{align*}
sum up only to vanish.
\end{theorem}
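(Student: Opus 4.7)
The strategy is to reduce Theorem \ref{t5.3.3} directly to Theorem \ref{t5.2.4} by exploiting the fact that $\zeta_{p,q}^{L_{12}}(M)$ was, by construction, obtained from $\zeta_{p,q}^{L_{1}}(M)$ through a unique factorization across the canonical monomorphism
\[
i_{\overline{\Omega}_{\left(  12\right)  }^{\left(  p+q,1\right)  }\left(
M\right)  }^{\overline{\Omega}_{\left(  1\right)  }^{\left(  p+q,1\right)
}\left(  M\right)  }:\overline{\Omega}_{\left(  12\right)  }^{\left(
p+q,1\right)  }\left(  M\right)  \rightarrow\overline{\Omega}_{\left(
1\right)  }^{\left(  p+q,1\right)  }\left(  M\right)
\]
furnished by Lemma \ref{t5.3.1}. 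In particular, the defining equation of $\zeta^{L_{12}}_{p,q}(M)$ reads
\[
i_{\overline{\Omega}_{\left(  12\right)  }^{\left(  p+q,1\right)  }(M)}^{\overline{\Omega}_{\left(  1\right)  }^{\left(  p+q,1\right)  }(M)}\circ\zeta_{p,q}^{L_{12}}(M)=\zeta_{p,q}^{L_{1}}(M)\circ\left(i_{\overline{\Omega}_{\left(  12\right)  }^{\left(  p,1\right)  }(M)}^{\overline{\Omega}_{\left(  1\right)  }^{\left(  p,1\right)  }(M)}\times i_{\overline{\Omega}_{\left(  12\right)  }^{\left(  q,1\right)  }(M)}^{\overline{\Omega}_{\left(  1\right)  }^{\left(  q,1\right)  }(M)}\right).
\]

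The plan is then as follows. First, I post-compose each of the three morphisms from the statement of Theorem \ref{t5.3.3} with the canonical injection $i_{\overline{\Omega}_{\left(  12\right)  }^{\left(  p+q+r,1\right)  }\left(  M\right)  }^{\overline{\Omega}_{\left(  1\right)  }^{\left(  p+q+r,1\right)  }\left(  M\right)  }$. Using the displayed identity above (and its analogues for $(q,r)$, $(r,p)$, $(p,q+r)$, $(q,r+p)$, $(r,p+q)$), together with the naturality of $(\cdot^{\sigma_{p,q+r}})$ and $(\cdot^{\sigma_{r,p+q}})$ with respect to the canonical injection, each post-composition becomes the corresponding morphism in Theorem \ref{t5.2.4} precomposed with the product of the three canonical injections $i_{\overline{\Omega}_{\left(  12\right)  }^{\left(  p,1\right)  }(M)}^{\overline{\Omega}_{\left(  1\right)  }^{\left(  p,1\right)  }(M)}\times i_{\overline{\Omega}_{\left(  12\right)  }^{\left(  q,1\right)  }(M)}^{\overline{\Omega}_{\left(  1\right)  }^{\left(  q,1\right)  }(M)}\times i_{\overline{\Omega}_{\left(  12\right)  }^{\left(  r,1\right)  }(M)}^{\overline{\Omega}_{\left(  1\right)  }^{\left(  r,1\right)  }(M)}$.

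Second, Theorem \ref{t5.2.4} asserts that the three corresponding morphisms in $\overline{\Omega}_{\left(  1\right)  }^{\left(  p+q+r,1\right)  }\left(  M\right)$ sum up to vanish; precomposing the vanishing sum with any fixed morphism (in this case, the product of canonical injections) still yields zero. Hence the sum of the three post-compositions of our target morphisms with $i_{\overline{\Omega}_{\left(  12\right)  }^{\left(  p+q+r,1\right)  }\left(  M\right)  }^{\overline{\Omega}_{\left(  1\right)  }^{\left(  p+q+r,1\right)  }\left(  M\right)  }$ vanishes. Finally, since that canonical injection is a monomorphism (it arises as an intersection of equalizers in Section \ref{s4}), cancellation yields the desired identity in $\overline{\Omega}_{\left(  12\right)  }^{\left(  p+q+r,1\right)  }\left(  M\right)$.

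The only step requiring any genuine care is verifying that the signature twists $(\cdot^{\sigma_{p,q+r}})$ and $(\cdot^{\sigma_{r,p+q}})$ commute with the canonical injection $i_{\overline{\Omega}_{\left(  12\right)  }^{\left(  \cdot,1\right)  }(M)}^{\overline{\Omega}_{\left(  1\right)  }^{\left(  \cdot,1\right)  }(M)}$; this is immediate because both the multi-linearity equalizer defining $\overline{\Omega}_{(12)}$ and the scalar-multiplication operation $(\cdot^{\sigma})$ are natural in the underlying form and operate by composition with the same reparametrization $\mathcal{W}_{(d_1,\ldots,d_{p+q+r})\mapsto(d_{\sigma(1)},\ldots,d_{\sigma(p+q+r)})}$, so stability of $\overline{\Omega}_{(12)}$ under this action, and the square commuting the injection with the twist, are routine. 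No genuine obstacle arises beyond invoking monicness of the canonical injection, which is the standard device for transferring identities from a larger object to a subobject carved out by equalizers.
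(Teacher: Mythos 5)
Your proposal is correct and follows exactly the route the paper intends: the paper's own proof of Theorem \ref{t5.3.3} is the single line ``This follows directly from Theorem \ref{t5.2.4},'' and your argument is precisely the spelled-out version of that reduction, using the defining factorization of $\zeta_{p,q}^{L_{12}}\left(  M\right)$ through the canonical injection from Lemma \ref{t5.3.1}, the compatibility of the permutation twists with that injection, and monicness of the injection to cancel. Nothing further is needed.
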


\begin{proof}
This follows directly from Theorem \ref{t5.2.4}.
\end{proof}

\subsection{\label{s5.4}The Third Consideration}

In this subsection we are concerned with the Lie algebra structure of
$\overline{\Omega}_{\left(  13\right)  }^{\left(  p,1\right)  }\left(
M\right)  $'s, where $p$\ ranges over natural numbers.

\begin{notation}
We introduce the following notations:

\begin{enumerate}
\item We denote by
\[
\mathcal{A}^{p}:\overline{\Omega}_{\left(  1\right)  }^{\left(  p,1\right)
}\left(  M\right)  \rightarrow\overline{\Omega}_{\left(  1\right)  }^{\left(
p,1\right)  }\left(  M\right)
\]
the morphism
\begin{align*}
& \overline{\Omega}_{\left(  1\right)  }^{\left(  p,1\right)  }\left(
M\right) \\
& \underrightarrow{\sum_{\sigma\in\mathbb{S}_{p}}\varepsilon_{\sigma}\left(
\cdot^{\sigma}\right)  _{\overline{\Omega}_{\left(  1\right)  }^{\left(
p,1\right)  }\left(  M\right)  }}\\
& \overline{\Omega}_{\left(  1\right)  }^{\left(  p,1\right)  }\left(
M\right)
\end{align*}
where $\mathbb{S}_{p}$\ is the group of permutations of the set $\left\{
1,2,...,n\right\}  $.

\item We denote by
\[
\mathcal{A}_{p,q}^{p+q}:\overline{\Omega}_{\left(  1\right)  }^{\left(
p+q,1\right)  }\left(  M\right)  \rightarrow\overline{\Omega}_{\left(
1\right)  }^{\left(  p+q,1\right)  }\left(  M\right)
\]
the morphism
\begin{align*}
& \overline{\Omega}_{\left(  1\right)  }^{\left(  p+q,1\right)  }\left(
M\right) \\
& \underrightarrow{\left(  1/p!q!\right)  \mathcal{A}^{p+q}}\\
& \overline{\Omega}_{\left(  1\right)  }^{\left(  p+q,1\right)  }\left(
M\right)
\end{align*}

\item We denote by
\[
\mathcal{A}_{p,q,r}^{p+q+r}:\overline{\Omega}_{\left(  1\right)  }^{\left(
p+q+r,1\right)  }\left(  M\right)  \rightarrow\overline{\Omega}_{\left(
1\right)  }^{\left(  p+q+r,1\right)  }\left(  M\right)
\]
the morphism
\begin{align*}
& \overline{\Omega}_{\left(  1\right)  }^{\left(  p+q+r,1\right)  }\left(
M\right) \\
& \underrightarrow{\left(  1/p!q!r!\right)  \mathcal{A}^{p+q+r}}\\
& \overline{\Omega}_{\left(  1\right)  }^{\left(  p+q+r,1\right)  }\left(
M\right)
\end{align*}

\end{enumerate}
\end{notation}

It is easy to see that

\begin{lemma}
\label{t5.4.1}The morphism
\begin{align*}
& \overline{\Omega}_{\left(  13\right)  }^{\left(  p,1\right)  }\left(
M\right)  \times\overline{\Omega}_{\left(  13\right)  }^{\left(  q,1\right)
}\left(  M\right) \\
& \underrightarrow{i_{\overline{\Omega}_{\left(  13\right)  }^{\left(
p,1\right)  }\left(  M\right)  }^{\overline{\Omega}_{\left(  1\right)
}^{\left(  p,1\right)  }\left(  M\right)  }\times i_{\overline{\Omega
}_{\left(  13\right)  }^{\left(  q,1\right)  }\left(  M\right)  }%
^{\overline{\Omega}_{\left(  1\right)  }^{\left(  q,1\right)  }\left(
M\right)  }}\\
& \overline{\Omega}_{\left(  1\right)  }^{\left(  p,1\right)  }\left(
M\right)  \times\overline{\Omega}_{\left(  1\right)  }^{\left(  q,1\right)
}\left(  M\right) \\
& \underrightarrow{\zeta_{p,q}^{L_{1}}\left(  M\right)  }\\
& \overline{\Omega}_{\left(  1\right)  }^{\left(  p+q,1\right)  }\left(
M\right) \\
& \underrightarrow{\mathcal{A}_{p,q}^{p+q}}\\
& \overline{\Omega}_{\left(  1\right)  }^{\left(  p+q,1\right)  }\left(
M\right)
\end{align*}
is to be factored uniquely through the canonical injection
\begin{equation}
i_{\overline{\Omega}_{\left(  13\right)  }^{\left(  p+q,1\right)  }\left(
M\right)  }^{\overline{\Omega}_{\left(  1\right)  }^{\left(  p+q,1\right)
}\left(  M\right)  }:\overline{\Omega}_{\left(  13\right)  }^{\left(
p+q,1\right)  }\left(  M\right)  \rightarrow\overline{\Omega}_{\left(
1\right)  }^{\left(  p+q,1\right)  }\left(  M\right) \label{5.4.1.1}%
\end{equation}
into a morphism
\begin{equation}
\overline{\Omega}_{\left(  13\right)  }^{\left(  p,1\right)  }\left(
M\right)  \times\overline{\Omega}_{\left(  13\right)  }^{\left(  q,1\right)
}\left(  M\right)  \rightarrow\overline{\Omega}_{\left(  13\right)  }^{\left(
p+q,1\right)  }\left(  M\right) \label{5.4.1.2}%
\end{equation}

\end{lemma}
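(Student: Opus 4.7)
The plan is to exhibit the image of the displayed composition as antisymmetric with respect to every $\tau\in\mathbb{S}_{p+q}$ in the equalizer sense that defines $\overline{\Omega}_{\left(13\right)}^{\left(p+q,1\right)}\left(M\right)$, and then appeal to the universal property of that intersection of equalizers to produce the factorization (\ref{5.4.1.2}) through the canonical injection (\ref{5.4.1.1}).

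First, since $\zeta_{p,q}^{L_{1}}\left(M\right)$ already lands in $\overline{\Omega}_{\left(1\right)}^{\left(p+q,1\right)}\left(M\right)$ by Proposition \ref{t5.2.2}, and every operator $\left(\cdot^{\sigma}\right)_{\overline{\Omega}_{\left(1\right)}^{\left(p+q,1\right)}\left(M\right)}$ arises from $\alpha_{\mathcal{W}_{\cdot}}$ applied to a permutation of coordinates on $D^{p+q}$ (a map that fixes $0$ and therefore commutes with the Dirac condition $\alpha_{\mathcal{W}_{1\rightarrow D}}$), the operator $\mathcal{A}^{p+q}$, being a linear combination of such endomorphisms, restricts to an endomorphism of $\overline{\Omega}_{\left(1\right)}^{\left(p+q,1\right)}\left(M\right)$. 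Hence the full composition in the lemma lands in $\overline{\Omega}_{\left(1\right)}^{\left(p+q,1\right)}\left(M\right)$.

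Second, I would verify the identity
\[
\left(\cdot^{\tau}\right)_{\overline{\Omega}_{\left(1\right)}^{\left(p+q,1\right)}\left(M\right)}\circ\mathcal{A}^{p+q}=\varepsilon_{\tau}\cdot\mathcal{A}^{p+q}
\]
for every $\tau\in\mathbb{S}_{p+q}$. Using the contravariant functoriality of $\mathcal{W}$ together with the naturality of $\alpha$, the composition $\left(\cdot^{\tau}\right)\circ\left(\cdot^{\sigma}\right)$ is identified with a single operator $\left(\cdot^{\sigma'}\right)$ in which $\sigma'$ depends bijectively on $\sigma$ as $\sigma$ ranges over $\mathbb{S}_{p+q}$; reindexing the sum gives $\sum_{\sigma}\varepsilon_{\sigma}\left(\cdot^{\sigma'}\right)=\varepsilon_{\tau}\sum_{\sigma'}\varepsilon_{\sigma'}\left(\cdot^{\sigma'}\right)=\varepsilon_{\tau}\mathcal{A}^{p+q}$, the sign $\varepsilon_\tau$ factoring out because $\varepsilon$ is a group homomorphism. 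Precomposing with $\zeta_{p,q}^{L_{1}}\left(M\right)\circ\left(i\times i\right)$ and dividing by $p!\,q!$ then yields exactly the defining relation of the $\tau$-th equalizer appearing in the description of $\overline{\Omega}_{\left(13\right)}^{\left(p+q,1\right)}\left(M\right)$.

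Since these equalizer relations hold simultaneously for every $\tau\in\mathbb{S}_{p+q}$ and the Dirac condition has already been secured, the universal property of the intersection delivers the unique factorization through the canonical injection $i_{\overline{\Omega}_{\left(13\right)}^{\left(p+q,1\right)}\left(M\right)}^{\overline{\Omega}_{\left(1\right)}^{\left(p+q,1\right)}\left(M\right)}$, producing the desired morphism (\ref{5.4.1.2}). The main obstacle is essentially categorical bookkeeping: one must check that $\left(\cdot^{\tau}\right)\circ\left(\cdot^{\sigma}\right)$ equals a single $\left(\cdot^{\sigma\tau}\right)$-type operator as an honest equality of morphisms in $\mathcal{K}$ (not merely pointwise), which reduces to the identity $\mathcal{W}_{\phi_{\sigma}}\circ\mathcal{W}_{\phi_{\tau}}=\mathcal{W}_{\phi_{\tau}\circ\phi_{\sigma}}$ in $\mathbf{W}$ combined with the DG-categorical functoriality of $\mathbf{T}^{\left(-\right)}M$. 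Once that compositional identity is in place, the antisymmetrization argument is purely linear-algebraic and poses no further difficulty.
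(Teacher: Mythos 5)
Your proposal is correct and is essentially the argument the paper has in mind: the paper omits the proof as ``easy to see,'' and the key identity you verify, namely that $\left(\cdot^{\tau}\right)_{\overline{\Omega}_{\left(1\right)}^{\left(p+q,1\right)}\left(M\right)}\circ\mathcal{A}^{p+q}=\varepsilon_{\tau}\,\mathcal{A}^{p+q}$ via reindexing the sum over $\mathbb{S}_{p+q}$, is precisely the content of the paper's own Lemma \ref{t5.4.2}, which together with the preservation of the Dirac condition and the universal property of the intersection of equalizers yields the unique factorization through the monic canonical injection. Nothing further is needed.
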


\begin{notation}
The morphism in (\ref{5.4.1.2}) is denoted
\[
\zeta_{p,q}^{FN_{13}}\left(  M\right)  :\overline{\Omega}_{\left(  13\right)
}^{\left(  p,1\right)  }\left(  M\right)  \times\overline{\Omega}_{\left(
13\right)  }^{\left(  q,1\right)  }\left(  M\right)  \rightarrow
\overline{\Omega}_{\left(  13\right)  }^{\left(  p+q,1\right)  }\left(
M\right)
\]
where $FN$\ stands for Fr\"{o}licher and Nijenhuis.
\end{notation}

\begin{lemma}
\label{t5.4.2}Given $\sigma\in\mathbb{S}_{p}$, the morphism
\begin{align*}
& \overline{\Omega}_{\left(  1\right)  }^{\left(  p,1\right)  }\left(
M\right) \\
& \underrightarrow{\mathcal{A}^{p}}\\
& \overline{\Omega}_{\left(  1\right)  }^{\left(  p,1\right)  }\left(
M\right) \\
& \underrightarrow{\left(  \cdot^{\sigma}\right)  _{\overline{\Omega}_{\left(
1\right)  }^{\left(  p,1\right)  }\left(  M\right)  }}\\
& \overline{\Omega}_{\left(  1\right)  }^{\left(  p,1\right)  }\left(
M\right)
\end{align*}
is identical to the morphism
\begin{align*}
& \overline{\Omega}_{\left(  1\right)  }^{\left(  p,1\right)  }\left(
M\right) \\
& \underrightarrow{\left(  \cdot^{\sigma}\right)  _{\overline{\Omega}_{\left(
1\right)  }^{\left(  p,1\right)  }\left(  M\right)  }}\\
& \overline{\Omega}_{\left(  1\right)  }^{\left(  p,1\right)  }\left(
M\right) \\
& \underrightarrow{\mathcal{A}^{p}}\\
& \overline{\Omega}_{\left(  1\right)  }^{\left(  p,1\right)  }\left(
M\right)
\end{align*}
Both of them are identical to the morphism
\begin{align*}
& \overline{\Omega}_{\left(  1\right)  }^{\left(  p,1\right)  }\left(
M\right) \\
& \underrightarrow{\mathcal{A}^{p}}\\
& \overline{\Omega}_{\left(  1\right)  }^{\left(  p,1\right)  }\left(
M\right) \\
& \underrightarrow{\varepsilon_{\sigma}}\\
& \overline{\Omega}_{\left(  1\right)  }^{\left(  p,1\right)  }\left(
M\right)
\end{align*}

\end{lemma}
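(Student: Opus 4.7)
The plan is to expand $\mathcal{A}^{p}$ as the signed sum over $\mathbb{S}_{p}$ and then perform a change of variables in the summation index. Concretely, $\mathcal{A}^{p}=\sum_{\tau\in\mathbb{S}_{p}}\varepsilon_{\tau}(\cdot^{\tau})_{\overline{\Omega}_{(1)}^{(p,1)}(M)}$, so both candidate compositions become double sums that differ only by which index is shifted, and the anti-homomorphism property of $\tau\mapsto(\cdot^{\tau})$ lets us collapse everything to $\varepsilon_{\sigma}\mathcal{A}^{p}$.

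First I would establish the composition rule for permutation actions. The morphism $(\cdot^{\tau})_{\mathbf{T}^{\mathcal{W}_{D^{p}}}M}$ is $\alpha_{\mathcal{W}_{\phi_{\tau}}}(M)$ where $\phi_{\tau}\colon(d_{1},\dots,d_{p})\mapsto(d_{\tau(1)},\dots,d_{\tau(p)})$, and since $\phi_{\sigma}\circ\phi_{\tau}=\phi_{\tau\circ\sigma}$ (check: applying $\phi_{\tau}$ then $\phi_{\sigma}$ places $d_{\tau(\sigma(i))}$ in the $i$-th slot), the contravariance of $\mathcal{W}$ together with the functoriality of $\mathbf{T}^{\bullet}M$ encoded in $\alpha$ yields
\[
(\cdot^{\tau})_{\mathbf{T}^{\mathcal{W}_{D^{p}}}M}\circ(\cdot^{\sigma})_{\mathbf{T}^{\mathcal{W}_{D^{p}}}M}=(\cdot^{\tau\circ\sigma})_{\mathbf{T}^{\mathcal{W}_{D^{p}}}M}.
\]
Transporting this along the definition of $\xi^{\sigma}$ gives the analogous identity for the action on $\overline{\Omega}_{(1)}^{(p,1)}(M)$, again as contravariant in the permutation.

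With that in hand, the first equality is a one-line manipulation:
\[
(\cdot^{\sigma})\circ\mathcal{A}^{p}=\sum_{\tau\in\mathbb{S}_{p}}\varepsilon_{\tau}\,(\cdot^{\sigma\circ\tau}),\qquad \mathcal{A}^{p}\circ(\cdot^{\sigma})=\sum_{\tau\in\mathbb{S}_{p}}\varepsilon_{\tau}\,(\cdot^{\tau\circ\sigma}).
\]
Setting $\rho=\sigma\circ\tau$ in the first sum and $\rho=\tau\circ\sigma$ in the second, we have $\varepsilon_{\tau}=\varepsilon_{\sigma}\varepsilon_{\rho}$ in both cases, and $\rho$ ranges over all of $\mathbb{S}_{p}$ as $\tau$ does; hence both expressions coincide with $\varepsilon_{\sigma}\sum_{\rho}\varepsilon_{\rho}(\cdot^{\rho})=\varepsilon_{\sigma}\mathcal{A}^{p}$, which is the third formulation in the lemma.

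The only real obstacle, and hence the step to which I would devote the most care, is the bookkeeping for the composition rule $(\cdot^{\tau})\circ(\cdot^{\sigma})=(\cdot^{\tau\circ\sigma})$ at the level of $\overline{\Omega}_{(1)}^{(p,1)}(M)$: one must check that passing from $\phi_{\sigma}$ on $D^{p}$ through the contravariant $\mathcal{W}$, then through the exponential transpose defining the action on $[\mathbf{T}^{\mathcal{W}_{D^{p}}}M\to M]$, and finally through $\mathbf{T}^{\mathcal{W}_{D}}$ does not flip the order a second time. Once this is pinned down, the rest of the argument is a purely formal reindexing of a finite sum.
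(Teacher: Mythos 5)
Your proposal is correct and is precisely the ``familiar token'' the paper invokes: the paper's proof is a one-line appeal to the standard argument for the antisymmetrizer commuting with permutation actions up to the sign $\varepsilon_{\sigma}$, and you have simply written that argument out (expand $\mathcal{A}^{p}$ as the signed sum, use the composition rule for the $(\cdot^{\sigma})$ actions, reindex over $\mathbb{S}_{p}$). Your added care about the variance of the action under composition is sensible but immaterial to the conclusion, since the reindexing works whether the assignment $\sigma\mapsto(\cdot^{\sigma})$ is a homomorphism or an anti-homomorphism.
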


\begin{proof}
By mimicking the familiar token in establishing the antisymmetry of wedge
products of differential forms in orthodox differential geometry.
\end{proof}

\begin{proposition}
\label{t5.4.3}The morphisms
\begin{align}
& \overline{\Omega}_{\left(  13\right)  }^{\left(  p,1\right)  }\left(
M\right)  \times\overline{\Omega}_{\left(  13\right)  }^{\left(  q,1\right)
}\left(  M\right) \nonumber\\
& \underrightarrow{\zeta_{p,q}^{FN_{13}}\left(  M\right)  }\nonumber\\
& \overline{\Omega}_{\left(  13\right)  }^{\left(  p+q,1\right)  }\left(
M\right) \label{5.4.3.1}%
\end{align}
and
\begin{align}
& \overline{\Omega}_{\left(  13\right)  }^{\left(  p,1\right)  }\left(
M\right)  \times\overline{\Omega}_{\left(  13\right)  }^{\left(  q,1\right)
}\left(  M\right) \nonumber\\
& =\overline{\Omega}_{\left(  13\right)  }^{\left(  q,1\right)  }\left(
M\right)  \times\overline{\Omega}_{\left(  13\right)  }^{\left(  p,1\right)
}\left(  M\right) \nonumber\\
& \underrightarrow{\zeta_{q,p}^{FN_{13}}\left(  M\right)  }\nonumber\\
& \overline{\Omega}_{\left(  13\right)  }^{\left(  p+q,1\right)  }\left(
M\right) \nonumber\\
& \underrightarrow{\left(  -1\right)  ^{pq}}\nonumber\\
& \overline{\Omega}_{\left(  13\right)  }^{\left(  p+q,1\right)  }\left(
M\right) \label{5.4.3.2}%
\end{align}
sum up only to vanish.
\end{proposition}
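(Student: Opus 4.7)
The plan is to reduce the claim to the antisymmetry already established at the $L_{1}$ level (Proposition \ref{t5.2.3}) by exploiting the defining factorization of $\zeta_{p,q}^{FN_{13}}(M)$ through the antisymmetrizer $\mathcal{A}_{p,q}^{p+q}$ and then invoking the compatibility of $\mathcal{A}^{p+q}$ with permutations (Lemma \ref{t5.4.2}). Since both morphisms (\ref{5.4.3.1}) and (\ref{5.4.3.2}) land in $\overline{\Omega}_{(13)}^{(p+q,1)}(M)$, which embeds monically into $\overline{\Omega}_{(1)}^{(p+q,1)}(M)$ via the canonical injection $i_{\overline{\Omega}_{(13)}^{(p+q,1)}(M)}^{\overline{\Omega}_{(1)}^{(p+q,1)}(M)}$, it will suffice to verify that the sum of their postcompositions with this injection vanishes in $\overline{\Omega}_{(1)}^{(p+q,1)}(M)$.

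By Lemma \ref{t5.4.1}, postcomposing (\ref{5.4.3.1}) with the injection yields $\mathcal{A}_{p,q}^{p+q}\circ\zeta_{p,q}^{L_{1}}(M)$ (after precomposition with the product of injections $\overline{\Omega}_{(13)}\to\overline{\Omega}_{(1)}$), while postcomposing (\ref{5.4.3.2}) yields $(-1)^{pq}\,\mathcal{A}_{p,q}^{p+q}\circ\zeta_{q,p}^{L_{1}}(M)$ precomposed with the analogous injections together with the canonical swap of factors. Proposition \ref{t5.2.3} asserts that
\[
\zeta_{p,q}^{L_{1}}(M)+\bigl(\cdot^{\sigma_{p,q}}\bigr)_{\overline{\Omega}_{(1)}^{(p+q,1)}(M)}\circ\zeta_{q,p}^{L_{1}}(M)\circ\mathrm{sw}=0,
\]
where $\mathrm{sw}$ denotes the canonical isomorphism swapping the two factors. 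Postcomposing with $\mathcal{A}_{p,q}^{p+q}$ and applying Lemma \ref{t5.4.2} to absorb $(\cdot^{\sigma_{p,q}})$ into the signature $\varepsilon_{\sigma_{p,q}}$ produces
\[
\mathcal{A}_{p,q}^{p+q}\circ\zeta_{p,q}^{L_{1}}(M)=-\varepsilon_{\sigma_{p,q}}\,\mathcal{A}_{p,q}^{p+q}\circ\zeta_{q,p}^{L_{1}}(M)\circ\mathrm{sw}.
\]

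Finally, the identity $\varepsilon_{\sigma_{p,q}}=(-1)^{pq}$ is elementary: $\sigma_{p,q}$ is the $(p,q)$-shuffle moving the block $\{p+1,\dots,p+q\}$ past the block $\{1,\dots,p\}$, and therefore decomposes as a product of $pq$ adjacent transpositions. Substituting this signature into the displayed identity yields the vanishing of the sum of (\ref{5.4.3.1}) and (\ref{5.4.3.2}) after postcomposition with the injection, and monicity of the injection then delivers the conclusion. The only (minor) obstacle is the categorical bookkeeping of the precomposed swap and the product injections $\overline{\Omega}_{(13)}\to\overline{\Omega}_{(1)}$; no ingredient beyond Proposition \ref{t5.2.3} and Lemma \ref{t5.4.2} enters, so the argument is parallel to, and indeed a signed refinement of, the proof of Proposition \ref{t5.3.2}.
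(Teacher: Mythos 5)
Your proposal is correct and follows essentially the same route as the paper: postcompose both morphisms with the monic injection into $\overline{\Omega}_{\left(1\right)}^{\left(p+q,1\right)}\left(M\right)$, unwind $\zeta_{p,q}^{FN_{13}}$ via Lemma \ref{t5.4.1} into $\mathcal{A}_{p,q}^{p+q}\circ\zeta^{L_{1}}$, absorb $\left(\cdot^{\sigma_{p,q}}\right)$ into the sign $\varepsilon_{\sigma_{p,q}}=(-1)^{pq}$ by Lemma \ref{t5.4.2}, and conclude from Proposition \ref{t5.2.3}. No substantive difference from the paper's proof.
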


\begin{proof}
The morphism (\ref{5.4.3.1}) followed by the canonical injection
(\ref{5.4.1.1}) is identical to the morphism
\begin{align}
& \overline{\Omega}_{\left(  13\right)  }^{\left(  p,1\right)  }\left(
M\right)  \times\overline{\Omega}_{\left(  13\right)  }^{\left(  q,1\right)
}\left(  M\right) \nonumber\\
& \underrightarrow{i_{\overline{\Omega}_{\left(  13\right)  }^{\left(
p,1\right)  }\left(  M\right)  }^{\overline{\Omega}_{\left(  1\right)
}^{\left(  p,1\right)  }\left(  M\right)  }\times i_{\overline{\Omega
}_{\left(  13\right)  }^{\left(  q,1\right)  }\left(  M\right)  }%
^{\overline{\Omega}_{\left(  1\right)  }^{\left(  q,1\right)  }\left(
M\right)  }}\nonumber\\
& \overline{\Omega}_{\left(  1\right)  }^{\left(  p,1\right)  }\left(
M\right)  \times\overline{\Omega}_{\left(  1\right)  }^{\left(  q,1\right)
}\left(  M\right) \nonumber\\
& \underrightarrow{\zeta_{p,q}^{L_{1}}\left(  M\right)  }\nonumber\\
& \overline{\Omega}_{\left(  1\right)  }^{\left(  p+q,1\right)  }\left(
M\right) \nonumber\\
& \underrightarrow{\mathcal{A}_{p,q}^{p+q}}\nonumber\\
& \overline{\Omega}_{\left(  1\right)  }^{\left(  p+q,1\right)  }\left(
M\right) \label{5.4.3.3}%
\end{align}
by definition, while the morphism (\ref{5.4.3.2}) followed by the canonical
injection (\ref{5.4.1.1}) is identical to the morphism
\begin{align}
& \overline{\Omega}_{\left(  13\right)  }^{\left(  p,1\right)  }\left(
M\right)  \times\overline{\Omega}_{\left(  13\right)  }^{\left(  q,1\right)
}\left(  M\right) \nonumber\\
& \underrightarrow{i_{\overline{\Omega}_{\left(  13\right)  }^{\left(
p,1\right)  }\left(  M\right)  }^{\overline{\Omega}_{\left(  1\right)
}^{\left(  p,1\right)  }\left(  M\right)  }\times i_{\overline{\Omega
}_{\left(  13\right)  }^{\left(  q,1\right)  }\left(  M\right)  }%
^{\overline{\Omega}_{\left(  1\right)  }^{\left(  q,1\right)  }\left(
M\right)  }}\nonumber\\
& \overline{\Omega}_{\left(  1\right)  }^{\left(  p,1\right)  }\left(
M\right)  \times\overline{\Omega}_{\left(  1\right)  }^{\left(  q,1\right)
}\left(  M\right) \nonumber\\
& \underrightarrow{\zeta_{p,q}^{L_{1}}\left(  M\right)  }\nonumber\\
& \overline{\Omega}_{\left(  1\right)  }^{\left(  p+q,1\right)  }\left(
M\right) \nonumber\\
& \underrightarrow{\left(  \cdot^{\sigma_{p,q}}\right)  _{\overline{\Omega
}_{\left(  1\right)  }^{\left(  p+q,1\right)  }\left(  M\right)  }}\nonumber\\
& \overline{\Omega}_{\left(  1\right)  }^{\left(  p+q,1\right)  }\left(
M\right) \nonumber\\
& \underrightarrow{\mathcal{A}_{p,q}^{p+q}}\nonumber\\
& \overline{\Omega}_{\left(  1\right)  }^{\left(  p+q,1\right)  }\left(
M\right) \label{5.4.3.4}%
\end{align}
by Lemma \ref{t5.4.2}. The sum of (\ref{5.4.3.3}) and (\ref{5.4.3.4}) vanishes
by dint of Proposition \ref{t5.2.3}, so that the sum of (\ref{5.4.3.1}) and
(\ref{5.4.3.2}) also vanishes.
\end{proof}

\begin{lemma}
\label{t5.4.4}The morphism
\begin{align*}
& \overline{\Omega}_{\left(  13\right)  }^{\left(  p,1\right)  }\left(
M\right)  \times\overline{\Omega}_{\left(  13\right)  }^{\left(  q,1\right)
}\left(  M\right)  \times\overline{\Omega}_{\left(  13\right)  }^{\left(
r,1\right)  }\left(  M\right) \\
& \underrightarrow{i_{\overline{\Omega}_{\left(  13\right)  }^{\left(
p,1\right)  }\left(  M\right)  }^{\overline{\Omega}_{\left(  1\right)
}^{\left(  p,1\right)  }\left(  M\right)  }\times i_{\overline{\Omega
}_{\left(  13\right)  }^{\left(  q,1\right)  }\left(  M\right)  }%
^{\overline{\Omega}_{\left(  1\right)  }^{\left(  q,1\right)  }\left(
M\right)  }\times i_{\overline{\Omega}_{\left(  13\right)  }^{\left(
r,1\right)  }\left(  M\right)  }^{\overline{\Omega}_{\left(  1\right)
}^{\left(  r,1\right)  }\left(  M\right)  }}\\
& \overline{\Omega}_{\left(  1\right)  }^{\left(  p,1\right)  }\left(
M\right)  \times\overline{\Omega}_{\left(  1\right)  }^{\left(  q,1\right)
}\left(  M\right)  \times\overline{\Omega}_{\left(  1\right)  }^{\left(
r,1\right)  }\left(  M\right) \\
& \underrightarrow{\mathrm{id}_{\overline{\Omega}_{\left(  1\right)
}^{\left(  p,1\right)  }\left(  M\right)  }\times\zeta_{q.r}^{L_{1}}\left(
M\right)  }\\
& \overline{\Omega}_{\left(  1\right)  }^{\left(  p,1\right)  }\left(
M\right)  \times\overline{\Omega}_{\left(  1\right)  }^{\left(  q+r,1\right)
}\left(  M\right) \\
& \underrightarrow{\mathrm{id}_{\overline{\Omega}_{\left(  1\right)
}^{\left(  p,1\right)  }\left(  M\right)  }\times\mathcal{A}_{q,r}^{q+r}}\\
& \overline{\Omega}_{\left(  1\right)  }^{\left(  p,1\right)  }\left(
M\right)  \times\overline{\Omega}_{\left(  1\right)  }^{\left(  q+r,1\right)
}\left(  M\right) \\
& \underrightarrow{\zeta_{p,q+r}^{L_{1}}\left(  M\right)  }\\
& \overline{\Omega}_{\left(  1\right)  }^{\left(  p+q+r,1\right)  }\left(
M\right) \\
& \underrightarrow{\mathcal{A}_{p,q+r}^{p+q+r}}\\
& \overline{\Omega}_{\left(  1\right)  }^{\left(  p+q+r,1\right)  }\left(
M\right)
\end{align*}
is identical to the morphism
\begin{align*}
& \overline{\Omega}_{\left(  13\right)  }^{\left(  p,1\right)  }\left(
M\right)  \times\overline{\Omega}_{\left(  13\right)  }^{\left(  q,1\right)
}\left(  M\right)  \times\overline{\Omega}_{\left(  13\right)  }^{\left(
r,1\right)  }\left(  M\right) \\
& \underrightarrow{i_{\overline{\Omega}_{\left(  13\right)  }^{\left(
p,1\right)  }\left(  M\right)  }^{\overline{\Omega}_{\left(  1\right)
}^{\left(  p,1\right)  }\left(  M\right)  }\times i_{\overline{\Omega
}_{\left(  13\right)  }^{\left(  q,1\right)  }\left(  M\right)  }%
^{\overline{\Omega}_{\left(  1\right)  }^{\left(  q,1\right)  }\left(
M\right)  }\times i_{\overline{\Omega}_{\left(  13\right)  }^{\left(
r,1\right)  }\left(  M\right)  }^{\overline{\Omega}_{\left(  1\right)
}^{\left(  r,1\right)  }\left(  M\right)  }}\\
& \overline{\Omega}_{\left(  1\right)  }^{\left(  p,1\right)  }\left(
M\right)  \times\overline{\Omega}_{\left(  1\right)  }^{\left(  q,1\right)
}\left(  M\right)  \times\overline{\Omega}_{\left(  1\right)  }^{\left(
r,1\right)  }\left(  M\right) \\
& \underrightarrow{\mathrm{id}_{\overline{\Omega}_{\left(  1\right)
}^{\left(  p,1\right)  }\left(  M\right)  }\times\zeta_{q.r}^{L_{1}}\left(
M\right)  }\\
& \overline{\Omega}_{\left(  1\right)  }^{\left(  p,1\right)  }\left(
M\right)  \times\overline{\Omega}_{\left(  1\right)  }^{\left(  q+r,1\right)
}\left(  M\right) \\
& \underrightarrow{\zeta_{p,q+r}^{L_{1}}\left(  M\right)  }\\
& \overline{\Omega}_{\left(  1\right)  }^{\left(  p+q+r,1\right)  }\left(
M\right) \\
& \underrightarrow{\mathcal{A}_{p,q,r}^{p+q+r}}\\
& \overline{\Omega}_{\left(  1\right)  }^{\left(  p+q+r,1\right)  }\left(
M\right)
\end{align*}

\end{lemma}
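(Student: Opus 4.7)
The plan is to unfold the three antisymmetrizer coefficients and reduce the whole claim to a single equivariance identity for $\zeta^{L_{1}}_{p,q+r}(M)$. By definition $\mathcal{A}_{p,q+r}^{p+q+r}=\tfrac{1}{p!(q+r)!}\mathcal{A}^{p+q+r}$, $\mathcal{A}_{q,r}^{q+r}=\tfrac{1}{q!r!}\mathcal{A}^{q+r}$ and $\mathcal{A}_{p,q,r}^{p+q+r}=\tfrac{1}{p!q!r!}\mathcal{A}^{p+q+r}$. Cancelling the common scalar $\tfrac{1}{p!q!r!}$ that occurs in both compositions, the lemma becomes equivalent to the single assertion
\begin{equation*}
\mathcal{A}^{p+q+r}\circ\zeta^{L_{1}}_{p,q+r}(M)\circ\bigl(\mathrm{id}\times\mathcal{A}^{q+r}\bigr)\;=\;(q+r)!\,\mathcal{A}^{p+q+r}\circ\zeta^{L_{1}}_{p,q+r}(M)
\end{equation*}
on the relevant product of $\overline{\Omega}_{(1)}$'s, since the inclusions $i_{\overline{\Omega}_{(13)}}^{\overline{\Omega}_{(1)}}$ pre-compose both sides in the same way. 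Thus the content of the lemma is that pre-antisymmetrising the second tensor factor is redundant once the total antisymmetrisation is to be applied.

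The heart of the matter is the following equivariance of $\zeta^{L_{1}}_{p,q+r}(M)$ under the embedded subgroup $\mathbb{S}_{q+r}\hookrightarrow\mathbb{S}_{p+q+r}$ that fixes the first $p$ letters: for every $\sigma\in\mathbb{S}_{q+r}$, writing $\tilde{\sigma}\in\mathbb{S}_{p+q+r}$ for the permutation acting as $\sigma$ on $\{p+1,\ldots,p+q+r\}$ and as the identity on $\{1,\ldots,p\}$, one has
\begin{equation*}
\zeta^{L_{1}}_{p,q+r}(M)\circ\bigl(\mathrm{id}\times(\cdot^{\sigma})_{\overline{\Omega}_{(1)}^{(q+r,1)}(M)}\bigr)\;=\;(\cdot^{\tilde{\sigma}})_{\overline{\Omega}_{(1)}^{(p+q+r,1)}(M)}\circ\zeta^{L_{1}}_{p,q+r}(M).
\end{equation*}
This is plausible because, per Proposition \ref{t5.2.2}, $\zeta^{L_{1}}_{p,q+r}(M)$ is built from $\underline{\mathrm{\Pr od}}_{(p,1),(q+r,1)}^{M}$, $\overline{\mathrm{\Pr od}}_{(p,1),(q+r,1)}^{M}$ and $\zeta^{\overset{\cdot}{-}}(M)$, all of which treat the ``$p$'' slot and the ``$q+r$'' slot as coming from disjoint Weil-algebra tensor factors $\mathcal{W}_{D^{p}}\otimes_{k}\mathcal{W}_{D^{q+r}}$; a permutation acting only inside the second factor therefore commutes with the whole construction. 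I expect this verification to be the principal obstacle, as it requires tracing the permutation through nested exponential transposes in the definition of $\underline{\mathrm{Conv}}$ and $\overline{\mathrm{Conv}}$, but it is essentially bookkeeping with Weil-algebra homomorphisms.

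Granting the equivariance, the conclusion follows quickly. Expanding $\mathcal{A}^{q+r}=\sum_{\sigma\in\mathbb{S}_{q+r}}\varepsilon_{\sigma}(\cdot^{\sigma})$ and pushing each summand through $\zeta^{L_{1}}_{p,q+r}(M)$ via the equivariance yields
\begin{equation*}
\mathcal{A}^{p+q+r}\circ\zeta^{L_{1}}_{p,q+r}(M)\circ\bigl(\mathrm{id}\times\mathcal{A}^{q+r}\bigr)=\sum_{\sigma\in\mathbb{S}_{q+r}}\varepsilon_{\sigma}\,\mathcal{A}^{p+q+r}\circ(\cdot^{\tilde{\sigma}})\circ\zeta^{L_{1}}_{p,q+r}(M).
\end{equation*}
Applying Lemma \ref{t5.4.2} to each term turns $\mathcal{A}^{p+q+r}\circ(\cdot^{\tilde{\sigma}})$ into $\varepsilon_{\tilde{\sigma}}\mathcal{A}^{p+q+r}$, and since $\varepsilon_{\tilde{\sigma}}=\varepsilon_{\sigma}$ (embedding a permutation as the identity on extra letters preserves its sign), the signs cancel in pairs. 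Each of the $(q+r)!$ summands thus contributes one identical copy of $\mathcal{A}^{p+q+r}\circ\zeta^{L_{1}}_{p,q+r}(M)$, producing exactly the factor $(q+r)!$ required, and the lemma is established.
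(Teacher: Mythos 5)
Your argument is correct and is essentially the paper's own proof made explicit: the paper disposes of Lemma \ref{t5.4.4} with the single sentence ``by mimicking the familiar token in establishing the associativity of wedge products,'' and the token in question is precisely your reduction to $\mathcal{A}^{p+q+r}\circ\zeta^{L_{1}}_{p,q+r}(M)\circ(\mathrm{id}\times\mathcal{A}^{q+r})=(q+r)!\,\mathcal{A}^{p+q+r}\circ\zeta^{L_{1}}_{p,q+r}(M)$ via the $\mathbb{S}_{q+r}\hookrightarrow\mathbb{S}_{p+q+r}$ equivariance of $\zeta^{L_{1}}_{p,q+r}(M)$ together with Lemma \ref{t5.4.2} and $\varepsilon_{\tilde{\sigma}}=\varepsilon_{\sigma}$. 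You correctly identify the one step the paper leaves implicit (the equivariance, which follows from the tensor-factor structure of $\underline{\mathrm{Conv}}$ and $\overline{\mathrm{Conv}}$), so no further comment is needed.
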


\begin{proof}
By mimicking the familiar token in establishing the associativity of wedge
products of differential forms in orthodox differential geometry.
\end{proof}

\begin{theorem}
\label{t5.4.5}The three morphisms
\begin{align}
& \overline{\Omega}_{\left(  13\right)  }^{\left(  p,1\right)  }\left(
M\right)  \times\overline{\Omega}_{\left(  13\right)  }^{\left(  q,1\right)
}\left(  M\right)  \times\overline{\Omega}_{\left(  13\right)  }^{\left(
r,1\right)  }\left(  M\right) \nonumber\\
& \underrightarrow{\mathrm{id}_{\overline{\Omega}_{\left(  13\right)
}^{\left(  p,1\right)  }\left(  M\right)  }\times\zeta_{q,r}^{FN_{13}}\left(
M\right)  }\nonumber\\
& \overline{\Omega}_{\left(  13\right)  }^{\left(  p,1\right)  }\left(
M\right)  \times\overline{\Omega}_{\left(  13\right)  }^{\left(  q+r,1\right)
}\left(  M\right) \nonumber\\
& \underrightarrow{\zeta_{p,q+r}^{FN_{13}}\left(  M\right)  }\nonumber\\
& \overline{\Omega}_{\left(  13\right)  }^{\left(  p+q+r,1\right)  }\left(
M\right) \label{5.4.5.1}%
\end{align}
\begin{align}
& \overline{\Omega}_{\left(  13\right)  }^{\left(  p,1\right)  }\left(
M\right)  \times\overline{\Omega}_{\left(  13\right)  }^{\left(  q,1\right)
}\left(  M\right)  \times\overline{\Omega}_{\left(  13\right)  }^{\left(
r,1\right)  }\left(  M\right) \nonumber\\
& =\overline{\Omega}_{\left(  13\right)  }^{\left(  q,1\right)  }\left(
M\right)  \times\overline{\Omega}_{\left(  13\right)  }^{\left(  r,1\right)
}\left(  M\right)  \times\overline{\Omega}_{\left(  13\right)  }^{\left(
p,1\right)  }\left(  M\right) \nonumber\\
& \underrightarrow{\mathrm{id}_{\overline{\Omega}_{\left(  13\right)
}^{\left(  q,1\right)  }\left(  M\right)  }\times\zeta_{r,p}^{FN_{13}}\left(
M\right)  }\nonumber\\
& \overline{\Omega}_{\left(  13\right)  }^{\left(  q,1\right)  }\left(
M\right)  \times\overline{\Omega}_{\left(  13\right)  }^{\left(  p+r,1\right)
}\left(  M\right) \nonumber\\
& \underrightarrow{\zeta_{q,p+r}^{FN_{13}}\left(  M\right)  }\nonumber\\
& \overline{\Omega}_{\left(  13\right)  }^{\left(  p+q+r,1\right)  }\left(
M\right) \nonumber\\
& \underrightarrow{\left(  -1\right)  ^{p\left(  q+r\right)  }}\nonumber\\
& \overline{\Omega}_{\left(  13\right)  }^{\left(  p+q+r,1\right)  }\left(
M\right) \label{5.4.5.2}%
\end{align}
and
\begin{align}
& \overline{\Omega}_{\left(  13\right)  }^{\left(  p,1\right)  }\left(
M\right)  \times\overline{\Omega}_{\left(  13\right)  }^{\left(  q,1\right)
}\left(  M\right)  \times\overline{\Omega}_{\left(  13\right)  }^{\left(
r,1\right)  }\left(  M\right) \nonumber\\
& =\overline{\Omega}_{\left(  13\right)  }^{\left(  r,1\right)  }\left(
M\right)  \times\overline{\Omega}_{\left(  13\right)  }^{\left(  p,1\right)
}\left(  M\right)  \times\overline{\Omega}_{\left(  13\right)  }^{\left(
q,1\right)  }\left(  M\right) \nonumber\\
& \underrightarrow{\mathrm{id}_{\overline{\Omega}_{\left(  13\right)
}^{\left(  r,1\right)  }\left(  M\right)  }\times\zeta_{p,q}^{FN_{13}}\left(
M\right)  }\nonumber\\
& \overline{\Omega}_{\left(  13\right)  }^{\left(  r,1\right)  }\left(
M\right)  \times\overline{\Omega}_{\left(  13\right)  }^{\left(  p+q,1\right)
}\left(  M\right) \nonumber\\
& \underrightarrow{\zeta_{r,p+q}^{FN_{13}}\left(  M\right)  }\nonumber\\
& \overline{\Omega}_{\left(  13\right)  }^{\left(  p+q+r,1\right)  }\left(
M\right) \nonumber\\
& \underrightarrow{\left(  -1\right)  ^{r\left(  p+q\right)  }}\nonumber\\
& \overline{\Omega}_{\left(  13\right)  }^{\left(  p+q+r,1\right)  }\left(
M\right) \label{5.4.5.3}%
\end{align}
sum up only to vanish.
\end{theorem}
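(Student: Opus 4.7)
The plan is to reduce Theorem \ref{t5.4.5} to Theorem \ref{t5.2.4} by pushing everything into $\overline{\Omega}_{\left(1\right)}^{\left(p+q+r,1\right)}\left(M\right)$ through the canonical injection $i_{\overline{\Omega}_{\left(13\right)}^{\left(p+q+r,1\right)}\left(M\right)}^{\overline{\Omega}_{\left(1\right)}^{\left(p+q+r,1\right)}\left(M\right)}$ and then invoking Theorem \ref{t5.2.4} together with the sign calculus provided by Lemma \ref{t5.4.2}. The key observation is that this canonical injection is a monomorphism (being realized as an intersection of equalizers in Section \ref{s4}), so it suffices to verify that the three composed morphisms in Theorem \ref{t5.4.5}, followed by this injection, already sum to zero in $\overline{\Omega}_{\left(1\right)}^{\left(p+q+r,1\right)}\left(M\right)$.

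First I would apply Lemma \ref{t5.4.4} to (\ref{5.4.5.1}) to rewrite it, after composition with the canonical injection, as $\mathcal{A}_{p,q,r}^{p+q+r}$ applied to the morphism (\ref{5.2.4.1}) of Theorem \ref{t5.2.4}. Next I would prove the two obvious analogues of Lemma \ref{t5.4.4} obtained by cyclic relabelling of $(p,q,r)$; these are proven by exactly the same ``antisymmetrization of wedge products'' token and rewrite (\ref{5.4.5.2}) and (\ref{5.4.5.3}), after composition with the canonical injection, as $\mathcal{A}_{p,q,r}^{p+q+r}$ applied respectively to the expressions
\[
\left(\cdot^{\sigma_{p,q+r}}\right)_{\overline{\Omega}_{\left(1\right)}^{\left(p+q+r,1\right)}\left(M\right)}\circ\zeta_{q,r+p}^{L_{1}}\left(M\right)\circ\left(\mathrm{id}\times\zeta_{r,p}^{L_{1}}\left(M\right)\right)
\]
and
\[
\left(\cdot^{\sigma_{r,p+q}}\right)_{\overline{\Omega}_{\left(1\right)}^{\left(p+q+r,1\right)}\left(M\right)}\circ\zeta_{r,p+q}^{L_{1}}\left(M\right)\circ\left(\mathrm{id}\times\zeta_{p,q}^{L_{1}}\left(M\right)\right),
\]
using Lemma \ref{t5.4.2} to absorb the external scalar factors $(-1)^{p(q+r)}$ and $(-1)^{r(p+q)}$ of (\ref{5.4.5.2})--(\ref{5.4.5.3}) into the antisymmetrizer. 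Indeed, since $\sigma_{p,q+r}$ is a $p$-through-$(q+r)$ shuffle, $\varepsilon_{\sigma_{p,q+r}}=(-1)^{p(q+r)}$, and by Lemma \ref{t5.4.2} we have $\mathcal{A}_{p,q,r}^{p+q+r}\circ\left(\cdot^{\sigma_{p,q+r}}\right)=(-1)^{p(q+r)}\mathcal{A}_{p,q,r}^{p+q+r}$; similarly for $\sigma_{r,p+q}$.

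Combining these rewritings, the sum of (\ref{5.4.5.1}), (\ref{5.4.5.2}) and (\ref{5.4.5.3}), precomposed with the canonical injection, becomes $\mathcal{A}_{p,q,r}^{p+q+r}$ applied to the sum of the morphisms (\ref{5.2.4.1}), (\ref{5.2.4.2}) and (\ref{5.2.4.3}) of Theorem \ref{t5.2.4}. That latter sum vanishes by Theorem \ref{t5.2.4}, hence so does its antisymmetrization. Since the canonical injection is monic, the sum of (\ref{5.4.5.1})--(\ref{5.4.5.3}) itself vanishes.

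The main obstacle will be the bookkeeping for the sign conventions, specifically checking that the shuffle permutations appearing naturally in the comparison of Theorem \ref{t5.2.4} with the Fr\"{o}licher--Nijenhuis format have signatures exactly $(-1)^{p(q+r)}$ and $(-1)^{r(p+q)}$, and verifying the two cyclic analogues of Lemma \ref{t5.4.4}. Both reduce to routine permutation arithmetic and mimic the classical proof that antisymmetrization of an associative product yields the graded wedge, but they must be carried out with some care because the underlying $\zeta^{L_{1}}$-bracket is neither commutative nor graded-commutative before antisymmetrization.
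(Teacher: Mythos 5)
Your proposal is correct and follows essentially the same route as the paper: compose the three morphisms with the canonical injection $i_{\overline{\Omega}_{\left(13\right)}^{\left(p+q+r,1\right)}\left(M\right)}^{\overline{\Omega}_{\left(1\right)}^{\left(p+q+r,1\right)}\left(M\right)}$, use Lemma \ref{t5.4.4} (and its cyclic relabellings) together with Lemma \ref{t5.4.2} and the signature identities $\varepsilon_{\sigma_{p,q+r}}=(-1)^{p(q+r)}$, $\varepsilon_{\sigma_{r,p+q}}=(-1)^{r(p+q)}$ to rewrite everything as $\mathcal{A}_{p,q,r}^{p+q+r}$ applied to the three morphisms of Theorem \ref{t5.2.4}, and conclude by that theorem and the monomorphy of the injection. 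The only nitpick is the word ``precomposed'' near the end, where you mean composition with the injection on the target side, as you correctly state earlier.
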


\begin{proof}
The morphism (\ref{5.4.5.1}) followed by the canonical injection
(\ref{5.4.1.1}) is identical to the morphism
\begin{align}
& \overline{\Omega}_{\left(  13\right)  }^{\left(  p,1\right)  }\left(
M\right)  \times\overline{\Omega}_{\left(  13\right)  }^{\left(  q,1\right)
}\left(  M\right)  \times\overline{\Omega}_{\left(  13\right)  }^{\left(
r,1\right)  }\left(  M\right) \nonumber\\
& \underrightarrow{i_{\overline{\Omega}_{\left(  13\right)  }^{\left(
p,1\right)  }\left(  M\right)  }^{\overline{\Omega}_{\left(  1\right)
}^{\left(  p,1\right)  }\left(  M\right)  }\times i_{\overline{\Omega
}_{\left(  13\right)  }^{\left(  q,1\right)  }\left(  M\right)  }%
^{\overline{\Omega}_{\left(  1\right)  }^{\left(  q,1\right)  }\left(
M\right)  }\times i_{\overline{\Omega}_{\left(  13\right)  }^{\left(
r,1\right)  }\left(  M\right)  }^{\overline{\Omega}_{\left(  1\right)
}^{\left(  r,1\right)  }\left(  M\right)  }}\nonumber\\
& \overline{\Omega}_{\left(  1\right)  }^{\left(  p,1\right)  }\left(
M\right)  \times\overline{\Omega}_{\left(  1\right)  }^{\left(  q,1\right)
}\left(  M\right)  \times\overline{\Omega}_{\left(  1\right)  }^{\left(
r,1\right)  }\left(  M\right) \nonumber\\
& \underrightarrow{\mathrm{id}_{\overline{\Omega}_{\left(  1\right)
}^{\left(  p,1\right)  }\left(  M\right)  }\times\zeta_{q.r}^{L_{1}}\left(
M\right)  }\nonumber\\
& \overline{\Omega}_{\left(  1\right)  }^{\left(  p,1\right)  }\left(
M\right)  \times\overline{\Omega}_{\left(  1\right)  }^{\left(  q+r,1\right)
}\left(  M\right) \nonumber\\
& \underrightarrow{\zeta_{p,q+r}^{L_{1}}\left(  M\right)  }\nonumber\\
& \overline{\Omega}_{\left(  1\right)  }^{\left(  p+q+r,1\right)  }\left(
M\right) \nonumber\\
& \underrightarrow{\mathcal{A}_{p,q,r}^{p+q+r}}\nonumber\\
& \overline{\Omega}_{\left(  1\right)  }^{\left(  p+q+r,1\right)  }\left(
M\right) \label{5.4.5.4}%
\end{align}
by dint of Lemma \ref{t5.4.4}. The morphism (\ref{5.4.5.2}) followed by the
canonical injection (\ref{5.4.1.1}) is identical to the morphism
\begin{align}
& \overline{\Omega}_{\left(  13\right)  }^{\left(  p,1\right)  }\left(
M\right)  \times\overline{\Omega}_{\left(  13\right)  }^{\left(  q,1\right)
}\left(  M\right)  \times\overline{\Omega}_{\left(  13\right)  }^{\left(
r,1\right)  }\left(  M\right) \nonumber\\
& \underrightarrow{i_{\overline{\Omega}_{\left(  13\right)  }^{\left(
p,1\right)  }\left(  M\right)  }^{\overline{\Omega}_{\left(  1\right)
}^{\left(  p,1\right)  }\left(  M\right)  }\times i_{\overline{\Omega
}_{\left(  13\right)  }^{\left(  q,1\right)  }\left(  M\right)  }%
^{\overline{\Omega}_{\left(  1\right)  }^{\left(  q,1\right)  }\left(
M\right)  }\times i_{\overline{\Omega}_{\left(  13\right)  }^{\left(
r,1\right)  }\left(  M\right)  }^{\overline{\Omega}_{\left(  1\right)
}^{\left(  r,1\right)  }\left(  M\right)  }}\nonumber\\
& \overline{\Omega}_{\left(  1\right)  }^{\left(  p,1\right)  }\left(
M\right)  \times\overline{\Omega}_{\left(  1\right)  }^{\left(  q,1\right)
}\left(  M\right)  \times\overline{\Omega}_{\left(  1\right)  }^{\left(
r,1\right)  }\left(  M\right) \nonumber\\
& =\overline{\Omega}_{\left(  1\right)  }^{\left(  q,1\right)  }\left(
M\right)  \times\overline{\Omega}_{\left(  1\right)  }^{\left(  r,1\right)
}\left(  M\right)  \times\overline{\Omega}_{\left(  1\right)  }^{\left(
p,1\right)  }\left(  M\right) \nonumber\\
& \underrightarrow{\mathrm{id}_{\overline{\Omega}_{\left(  1\right)
}^{\left(  q,1\right)  }\left(  M\right)  }\times\zeta_{r.p}^{L_{1}}\left(
M\right)  }\nonumber\\
& \overline{\Omega}_{\left(  1\right)  }^{\left(  q,1\right)  }\left(
M\right)  \times\overline{\Omega}_{\left(  1\right)  }^{\left(  p+r,1\right)
}\left(  M\right) \nonumber\\
& \underrightarrow{\zeta_{q,p+r}^{L_{1}}\left(  M\right)  }\nonumber\\
& \overline{\Omega}_{\left(  1\right)  }^{\left(  p+q+r,1\right)  }\left(
M\right) \nonumber\\
& \underrightarrow{\left(  \cdot^{\sigma_{p,q+r}}\right)  _{\overline{\Omega
}_{\left(  1\right)  }^{\left(  p+q+r,1\right)  }\left(  M\right)  }%
}\nonumber\\
& \overline{\Omega}_{\left(  1\right)  }^{\left(  p+q+r,1\right)  }\left(
M\right) \nonumber\\
& \underrightarrow{\mathcal{A}_{p,q,r}^{p+q+r}}\nonumber\\
& \overline{\Omega}_{\left(  1\right)  }^{\left(  p+q+r,1\right)  }\left(
M\right) \label{5.4.5.5}%
\end{align}
by dint of Lemmas \ref{t5.4.2} and \ref{t5.4.4} with $\varepsilon
_{\sigma_{p,q+r}}=(-1)^{p\left(  q+r\right)  }$. The morphism (\ref{5.4.5.3})
followed by the canonical injection (\ref{5.4.1.1}) is identical to the
morphism
\begin{align}
& \overline{\Omega}_{\left(  13\right)  }^{\left(  p,1\right)  }\left(
M\right)  \times\overline{\Omega}_{\left(  13\right)  }^{\left(  q,1\right)
}\left(  M\right)  \times\overline{\Omega}_{\left(  13\right)  }^{\left(
r,1\right)  }\left(  M\right) \nonumber\\
& \underrightarrow{i_{\overline{\Omega}_{\left(  13\right)  }^{\left(
p,1\right)  }\left(  M\right)  }^{\overline{\Omega}_{\left(  1\right)
}^{\left(  p,1\right)  }\left(  M\right)  }\times i_{\overline{\Omega
}_{\left(  13\right)  }^{\left(  q,1\right)  }\left(  M\right)  }%
^{\overline{\Omega}_{\left(  1\right)  }^{\left(  q,1\right)  }\left(
M\right)  }\times i_{\overline{\Omega}_{\left(  13\right)  }^{\left(
r,1\right)  }\left(  M\right)  }^{\overline{\Omega}_{\left(  1\right)
}^{\left(  r,1\right)  }\left(  M\right)  }}\nonumber\\
& \overline{\Omega}_{\left(  1\right)  }^{\left(  p,1\right)  }\left(
M\right)  \times\overline{\Omega}_{\left(  1\right)  }^{\left(  q,1\right)
}\left(  M\right)  \times\overline{\Omega}_{\left(  1\right)  }^{\left(
r,1\right)  }\left(  M\right) \nonumber\\
& =\overline{\Omega}_{\left(  1\right)  }^{\left(  r,1\right)  }\left(
M\right)  \times\overline{\Omega}_{\left(  1\right)  }^{\left(  p,1\right)
}\left(  M\right)  \times\overline{\Omega}_{\left(  1\right)  }^{\left(
q,1\right)  }\left(  M\right) \nonumber\\
& \underrightarrow{\mathrm{id}_{\overline{\Omega}_{\left(  1\right)
}^{\left(  r,1\right)  }\left(  M\right)  }\times\zeta_{p,q}^{L_{1}}\left(
M\right)  }\nonumber\\
& \overline{\Omega}_{\left(  1\right)  }^{\left(  r,1\right)  }\left(
M\right)  \times\overline{\Omega}_{\left(  1\right)  }^{\left(  p+q,1\right)
}\left(  M\right) \nonumber\\
& \underrightarrow{\zeta_{q,p+r}^{L_{1}}\left(  M\right)  }\nonumber\\
& \overline{\Omega}_{\left(  1\right)  }^{\left(  p+q+r,1\right)  }\left(
M\right) \nonumber\\
& \underrightarrow{\left(  \cdot^{\sigma_{r,p+q}}\right)  _{\overline{\Omega
}_{\left(  1\right)  }^{\left(  p+q+r,1\right)  }\left(  M\right)  }%
}\nonumber\\
& \overline{\Omega}_{\left(  1\right)  }^{\left(  p+q+r,1\right)  }\left(
M\right) \nonumber\\
& \underrightarrow{\mathcal{A}_{p,q,r}^{p+q+r}}\nonumber\\
& \overline{\Omega}_{\left(  1\right)  }^{\left(  p+q+r,1\right)  }\left(
M\right) \label{5.4.5.6}%
\end{align}
by dint of Lemmas \ref{t5.4.2} and \ref{t5.4.4} with $\varepsilon
_{\sigma_{r,p+q}}=(-1)^{r\left(  p+q\right)  }$. The sum of the three
morphisms (\ref{5.4.5.4})-(\ref{5.4.5.6}) vanishes thanks to Theorem
\ref{t5.2.4}, so that the sum of the three morphisms (\ref{5.4.5.1}%
)-(\ref{5.4.5.3}) also vanishes.
\end{proof}

\subsection{\label{s5.5}The Fourth Consideration}

In this subsection we are concerned with the Lie algebra structure of
$\overline{\Omega}_{\left(  123\right)  }^{\left(  p,1\right)  }\left(
M\right)  $'s, where $p$\ ranges over natural numbers. It is easy to see that

\begin{lemma}
\label{t5.5.1}The morphism
\begin{align*}
& \overline{\Omega}_{\left(  123\right)  }^{\left(  p,1\right)  }\left(
M\right)  \times\overline{\Omega}_{\left(  123\right)  }^{\left(  q,1\right)
}\left(  M\right) \\
& \underrightarrow{i_{\overline{\Omega}_{\left(  123\right)  }^{\left(
p,1\right)  }\left(  M\right)  }^{\overline{\Omega}_{\left(  13\right)
}^{\left(  p,1\right)  }\left(  M\right)  }\times i_{\overline{\Omega
}_{\left(  123\right)  }^{\left(  q,1\right)  }\left(  M\right)  }%
^{\overline{\Omega}_{\left(  13\right)  }^{\left(  q,1\right)  }\left(
M\right)  }}\\
& \overline{\Omega}_{\left(  13\right)  }^{\left(  p,1\right)  }\left(
M\right)  \times\overline{\Omega}_{\left(  13\right)  }^{\left(  q,1\right)
}\left(  M\right) \\
& \underrightarrow{\zeta_{p,q}^{FN_{13}}\left(  M\right)  }\\
& \overline{\Omega}_{\left(  13\right)  }^{\left(  p+q,1\right)  }\left(
M\right)
\end{align*}
is to be factored uniquely through the canonical injection
\[
i_{\overline{\Omega}_{\left(  123\right)  }^{\left(  p+q,1\right)  }\left(
M\right)  }^{\overline{\Omega}_{\left(  13\right)  }^{\left(  p+q,1\right)
}\left(  M\right)  }:\overline{\Omega}_{\left(  123\right)  }^{\left(
p+q,1\right)  }\left(  M\right)  \rightarrow\overline{\Omega}_{\left(
13\right)  }^{\left(  p+q,1\right)  }\left(  M\right)
\]
into a morphism
\begin{equation}
\overline{\Omega}_{\left(  123\right)  }^{\left(  p,1\right)  }\left(
M\right)  \times\overline{\Omega}_{\left(  123\right)  }^{\left(  q,1\right)
}\left(  M\right)  \rightarrow\overline{\Omega}_{\left(  123\right)
}^{\left(  p+q,1\right)  }\left(  M\right) \label{5.5.1.1}%
\end{equation}

\end{lemma}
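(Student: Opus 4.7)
The plan is to exploit the definition of $\overline{\Omega}_{\left(123\right)}^{\left(p+q,1\right)}\left(M\right)$ as the intersection of $\overline{\Omega}_{\left(12\right)}^{\left(p+q,1\right)}\left(M\right)$ and $\overline{\Omega}_{\left(13\right)}^{\left(p+q,1\right)}\left(M\right)$. The composition displayed in the lemma already lands in $\overline{\Omega}_{\left(13\right)}^{\left(p+q,1\right)}\left(M\right)$ tautologically, since $\zeta_{p,q}^{FN_{13}}\left(M\right)$ was defined in precisely that way via Lemma \ref{t5.4.1}. What remains to verify is that, when restricted along the inclusions $\overline{\Omega}_{\left(123\right)}^{\left(p,1\right)}\left(M\right)\hookrightarrow\overline{\Omega}_{\left(13\right)}^{\left(p,1\right)}\left(M\right)$ and similarly for $q$, the image additionally lies in $\overline{\Omega}_{\left(12\right)}^{\left(p+q,1\right)}\left(M\right)$; the universal property of the intersection will then produce both the existence and the uniqueness of the factorization through $\overline{\Omega}_{\left(123\right)}^{\left(p+q,1\right)}\left(M\right)$.

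To carry this out, first I would unwind the definition of $\zeta_{p,q}^{FN_{13}}\left(M\right)$: composed with the canonical injection of $\overline{\Omega}_{\left(13\right)}^{\left(p+q,1\right)}\left(M\right)$ into $\overline{\Omega}_{\left(1\right)}^{\left(p+q,1\right)}\left(M\right)$, it equals $\mathcal{A}_{p,q}^{p+q}\circ\zeta_{p,q}^{L_{1}}\left(M\right)$ (after the inclusions $\overline{\Omega}_{\left(13\right)}\hookrightarrow\overline{\Omega}_{\left(1\right)}$ on both input factors). Second, since $\overline{\Omega}_{\left(123\right)}\subset\overline{\Omega}_{\left(12\right)}$, we may route the restricted input through $\overline{\Omega}_{\left(12\right)}^{\left(p,1\right)}\left(M\right)\times\overline{\Omega}_{\left(12\right)}^{\left(q,1\right)}\left(M\right)$, whereupon Lemma \ref{t5.3.1} guarantees that $\zeta_{p,q}^{L_{1}}\left(M\right)$ already factors through $\overline{\Omega}_{\left(12\right)}^{\left(p+q,1\right)}\left(M\right)\hookrightarrow\overline{\Omega}_{\left(1\right)}^{\left(p+q,1\right)}\left(M\right)$, with the factored morphism being $\zeta_{p,q}^{L_{12}}\left(M\right)$. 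The remaining step is therefore to show that $\mathcal{A}_{p,q}^{p+q}$ preserves the subobject $\overline{\Omega}_{\left(12\right)}^{\left(p+q,1\right)}\left(M\right)$, i.e., restricts to an endomorphism of it.

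This last verification is the heart of the argument. Since $\mathcal{A}_{p,q}^{p+q}$ is, up to the scalar $1/\left(p!q!\right)$, a signed sum of the permutation operators $\left(\cdot^{\sigma}\right)_{\overline{\Omega}_{\left(1\right)}^{\left(p+q,1\right)}\left(M\right)}$ for $\sigma\in\mathbb{S}_{p+q}$, and since the $\mathbb{R}$-linear structure on $\overline{\Omega}_{\left(1\right)}^{\left(p+q,1\right)}\left(M\right)$ inherited from $\mathbf{T}^{\mathcal{W}_{D}}M$ respects the equalizers defining $\overline{\Omega}_{\left(12\right)}^{\left(p+q,1\right)}\left(M\right)$, it suffices to check that each individual $\left(\cdot^{\sigma}\right)$ sends $\overline{\Omega}_{\left(12\right)}^{\left(p+q,1\right)}\left(M\right)$ into itself. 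This is straightforward: linearity in the $i$-th slot of the argument in $\mathbf{T}^{\mathcal{W}_{D^{p+q}}}M$ is transported by pre-composition with the permutation morphism $\mathcal{W}_{\left(d_{1},\dots,d_{p+q}\right)\mapsto\left(d_{\sigma\left(1\right)},\dots,d_{\sigma\left(p+q\right)}\right)}$ into linearity in the $\sigma^{-1}\left(i\right)$-th slot, so a form linear in \emph{every} slot stays linear in every slot. The main obstacle, and indeed the only nontrivial bit of bookkeeping, is precisely this compatibility of the antisymmetrizer with the multilinearity equalizer; once it is isolated, the claim follows immediately from the intersection presentation of $\overline{\Omega}_{\left(123\right)}^{\left(p+q,1\right)}\left(M\right)$ and the uniqueness of factorizations through monic arrows.
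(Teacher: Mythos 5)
Your argument is correct and in fact supplies the detail that the paper leaves entirely to the reader (Lemma \ref{t5.5.1} is prefaced only by ``It is easy to see that'' and carries no proof). The route you take --- using the intersection presentation $\overline{\Omega}_{\left(123\right)}=\overline{\Omega}_{\left(12\right)}\cap\overline{\Omega}_{\left(13\right)}$, noting that the composite lands in $\overline{\Omega}_{\left(13\right)}^{\left(p+q,1\right)}\left(M\right)$ by the very definition of $\zeta_{p,q}^{FN_{13}}\left(M\right)$, invoking Lemma \ref{t5.3.1} to handle the $\left(12\right)$-component, and checking that the antisymmetrizer $\mathcal{A}_{p,q}^{p+q}$ preserves the multilinearity equalizer because each permutation operator merely relabels the slots and the equalizer conditions are stable under $\mathbb{R}$-linear combinations --- is precisely the intended one.
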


\begin{notation}
The morphism (\ref{5.5.1.1}) is denoted
\[
\zeta_{p,q}^{FN_{123}}\left(  M\right)  :\overline{\Omega}_{\left(
123\right)  }^{\left(  p,1\right)  }\left(  M\right)  \times\overline{\Omega
}_{\left(  123\right)  }^{\left(  q,1\right)  }\left(  M\right)
\rightarrow\overline{\Omega}_{\left(  123\right)  }^{\left(  p+q,1\right)
}\left(  M\right)
\]

\end{notation}

\begin{proposition}
\label{t5.5.2}The morphisms
\begin{align*}
& \overline{\Omega}_{\left(  123\right)  }^{\left(  p,1\right)  }\left(
M\right)  \times\overline{\Omega}_{\left(  123\right)  }^{\left(  q,1\right)
}\left(  M\right) \\
& \underrightarrow{\zeta_{p,q}^{FN_{123}}\left(  M\right)  }\\
& \overline{\Omega}_{\left(  123\right)  }^{\left(  p+q,1\right)  }\left(
M\right)
\end{align*}
and
\begin{align*}
& \overline{\Omega}_{\left(  123\right)  }^{\left(  p,1\right)  }\left(
M\right)  \times\overline{\Omega}_{\left(  123\right)  }^{\left(  q,1\right)
}\left(  M\right) \\
& =\overline{\Omega}_{\left(  123\right)  }^{\left(  q,1\right)  }\left(
M\right)  \times\overline{\Omega}_{\left(  123\right)  }^{\left(  p,1\right)
}\left(  M\right) \\
& \underrightarrow{\zeta_{q,p}^{FN_{123}}\left(  M\right)  }\\
& \overline{\Omega}_{\left(  123\right)  }^{\left(  p+q,1\right)  }\left(
M\right) \\
& \underrightarrow{\left(  -1\right)  ^{pq}}\\
& \overline{\Omega}_{\left(  123\right)  }^{\left(  p+q,1\right)  }\left(
M\right)
\end{align*}
sum up only to vanish.
\end{proposition}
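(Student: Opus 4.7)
The plan is to reduce Proposition \ref{t5.5.2} directly to Proposition \ref{t5.4.3}, which has already established the analogous $(-1)^{pq}$-antisymmetry for $\zeta_{p,q}^{FN_{13}}(M)$. The reduction is effected by post-composing both of the morphisms appearing in the statement with the canonical injection
\[
\iota := i_{\overline{\Omega}_{(123)}^{(p+q,1)}(M)}^{\overline{\Omega}_{(13)}^{(p+q,1)}(M)}:\overline{\Omega}_{(123)}^{(p+q,1)}(M)\rightarrow\overline{\Omega}_{(13)}^{(p+q,1)}(M),
\]
so that we can work inside the already-controlled object $\overline{\Omega}_{(13)}^{(p+q,1)}(M)$.

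First I would observe that by the very defining property of $\zeta_{p,q}^{FN_{123}}(M)$ established in Lemma \ref{t5.5.1}, the composite $\iota\circ\zeta_{p,q}^{FN_{123}}(M)$ equals the morphism obtained by first applying the pair of canonical injections
\[
i_{\overline{\Omega}_{(123)}^{(p,1)}(M)}^{\overline{\Omega}_{(13)}^{(p,1)}(M)}\times i_{\overline{\Omega}_{(123)}^{(q,1)}(M)}^{\overline{\Omega}_{(13)}^{(q,1)}(M)}
\]
and then applying $\zeta_{p,q}^{FN_{13}}(M)$. The same identification applies with the roles of $p$ and $q$ interchanged. Since scalar multiplication by $(-1)^{pq}$ is a pointwise operation that evidently restricts from $\overline{\Omega}_{(13)}^{(p+q,1)}(M)$ to $\overline{\Omega}_{(123)}^{(p+q,1)}(M)$, it commutes with $\iota$. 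Hence the post-compositions with $\iota$ of the two morphisms in the statement of Proposition \ref{t5.5.2} coincide, respectively, with the morphisms (\ref{5.4.3.1}) and (\ref{5.4.3.2}) precomposed with the pair of canonical injections of the factors.

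Next I would invoke Proposition \ref{t5.4.3}, which tells us that (\ref{5.4.3.1}) and (\ref{5.4.3.2}) sum up to vanish. Precomposition with the pair of canonical injections preserves vanishing sums, so the sum of the two morphisms of Proposition \ref{t5.5.2}, post-composed with $\iota$, is zero. Since $\iota$ is a canonical injection arising from an intersection (equivalently, an intersection of equalizers), it is a monomorphism; consequently the sum of the two morphisms into $\overline{\Omega}_{(123)}^{(p+q,1)}(M)$ must itself vanish.

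There is essentially no substantive obstacle here: the whole content is packaged by the compatibility of the brackets $\zeta^{FN_{123}}$ and $\zeta^{FN_{13}}$ with the canonical inclusion $\iota$. The only point that requires a moment of bookkeeping is the verification that both the scalar twist by $(-1)^{pq}$ and the swap of the two factors commute with the canonical injections, but this is immediate because these operations live on the ambient spaces $\overline{\Omega}_{(13)}^{(\cdot,1)}(M)$ and the $\overline{\Omega}_{(123)}$-layer is cut out purely as an intersection of sub-objects.
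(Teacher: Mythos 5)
Your proposal is correct and follows the same route as the paper, which simply states that Proposition \ref{t5.5.2} follows directly from Proposition \ref{t5.4.3}; you have merely made explicit the bookkeeping (post-composition with the monic canonical injection, compatibility of the bracket and the scalar twist with that injection via Lemma \ref{t5.5.1}) that the paper leaves implicit.
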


\begin{proof}
This follows directly from Proposition \ref{t5.4.3}.
\end{proof}

\begin{theorem}
\label{t5.5.3}The three morphisms
\begin{align*}
& \overline{\Omega}_{\left(  123\right)  }^{\left(  p,1\right)  }\left(
M\right)  \times\overline{\Omega}_{\left(  123\right)  }^{\left(  q,1\right)
}\left(  M\right)  \times\overline{\Omega}_{\left(  123\right)  }^{\left(
r,1\right)  }\left(  M\right) \\
& \underrightarrow{\mathrm{id}_{\overline{\Omega}_{\left(  123\right)
}^{\left(  p,1\right)  }\left(  M\right)  }\times\zeta_{q,r}^{FN_{123}}\left(
M\right)  }\\
& \overline{\Omega}_{\left(  123\right)  }^{\left(  p,1\right)  }\left(
M\right)  \times\overline{\Omega}_{\left(  123\right)  }^{\left(
q+r,1\right)  }\left(  M\right) \\
& \underrightarrow{\zeta_{p,q+r}^{FN_{123}}\left(  M\right)  }\\
& \overline{\Omega}_{\left(  123\right)  }^{\left(  p+q+r,1\right)  }\left(
M\right)
\end{align*}
\begin{align*}
& \overline{\Omega}_{\left(  123\right)  }^{\left(  p,1\right)  }\left(
M\right)  \times\overline{\Omega}_{\left(  123\right)  }^{\left(  q,1\right)
}\left(  M\right)  \times\overline{\Omega}_{\left(  123\right)  }^{\left(
r,1\right)  }\left(  M\right) \\
& =\overline{\Omega}_{\left(  123\right)  }^{\left(  q,1\right)  }\left(
M\right)  \times\overline{\Omega}_{\left(  123\right)  }^{\left(  r,1\right)
}\left(  M\right)  \times\overline{\Omega}_{\left(  123\right)  }^{\left(
p,1\right)  }\left(  M\right) \\
& \underrightarrow{\mathrm{id}_{\overline{\Omega}_{\left(  123\right)
}^{\left(  q,1\right)  }\left(  M\right)  }\times\zeta_{r,p}^{FN_{123}}\left(
M\right)  }\\
& \overline{\Omega}_{\left(  123\right)  }^{\left(  q,1\right)  }\left(
M\right)  \times\overline{\Omega}_{\left(  123\right)  }^{\left(
p+r,1\right)  }\left(  M\right) \\
& \underrightarrow{\zeta_{q,p+r}^{FN_{123}}\left(  M\right)  }\\
& \overline{\Omega}_{\left(  123\right)  }^{\left(  p+q+r,1\right)  }\left(
M\right) \\
& \underrightarrow{\left(  -1\right)  ^{p\left(  q+r\right)  }}\\
& \overline{\Omega}_{\left(  123\right)  }^{\left(  p+q+r,1\right)  }\left(
M\right)
\end{align*}
and
\begin{align*}
& \overline{\Omega}_{\left(  123\right)  }^{\left(  p,1\right)  }\left(
M\right)  \times\overline{\Omega}_{\left(  123\right)  }^{\left(  q,1\right)
}\left(  M\right)  \times\overline{\Omega}_{\left(  123\right)  }^{\left(
r,1\right)  }\left(  M\right) \\
& =\overline{\Omega}_{\left(  123\right)  }^{\left(  r,1\right)  }\left(
M\right)  \times\overline{\Omega}_{\left(  123\right)  }^{\left(  p,1\right)
}\left(  M\right)  \times\overline{\Omega}_{\left(  123\right)  }^{\left(
q,1\right)  }\left(  M\right) \\
& \underrightarrow{\mathrm{id}_{\overline{\Omega}_{\left(  123\right)
}^{\left(  r,1\right)  }\left(  M\right)  }\times\zeta_{p,q}^{FN_{123}}\left(
M\right)  }\\
& \overline{\Omega}_{\left(  123\right)  }^{\left(  r,1\right)  }\left(
M\right)  \times\overline{\Omega}_{\left(  123\right)  }^{\left(
p+q,1\right)  }\left(  M\right) \\
& \underrightarrow{\zeta_{r,p+q}^{FN_{123}}\left(  M\right)  }\\
& \overline{\Omega}_{\left(  123\right)  }^{\left(  p+q+r,1\right)  }\left(
M\right) \\
& \underrightarrow{\left(  -1\right)  ^{r\left(  p+q\right)  }}\\
& \overline{\Omega}_{\left(  123\right)  }^{\left(  p+q+r,1\right)  }\left(
M\right)
\end{align*}
sum up only to vanish.
\end{theorem}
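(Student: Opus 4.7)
The plan is to mimic the proof of Theorem \ref{t5.3.3}, which deduced the Jacobi identity for $\overline{\Omega}_{(12)}^{(p,1)}(M)$'s from that for $\overline{\Omega}_{(1)}^{(p,1)}(M)$'s simply by using that $\overline{\Omega}_{(12)}^{(p,1)}(M)$ is a subobject of $\overline{\Omega}_{(1)}^{(p,1)}(M)$ and $\zeta^{L_{12}}_{p,q}(M)$ is the corresponding restriction of $\zeta^{L_{1}}_{p,q}(M)$. Here I exploit the analogous fact that $\overline{\Omega}_{(123)}^{(p,1)}(M)$ is a subobject of $\overline{\Omega}_{(13)}^{(p,1)}(M)$ via the canonical injection $i^{\overline{\Omega}_{(13)}^{(p,1)}(M)}_{\overline{\Omega}_{(123)}^{(p,1)}(M)}$, and that $\zeta^{FN_{123}}_{p,q}(M)$ was defined in Lemma \ref{t5.5.1} precisely so as to be the restriction of $\zeta^{FN_{13}}_{p,q}(M)$ along this injection.

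First I would record the commuting square
\[
i^{\overline{\Omega}_{(13)}^{(p+q,1)}(M)}_{\overline{\Omega}_{(123)}^{(p+q,1)}(M)} \circ \zeta^{FN_{123}}_{p,q}(M) = \zeta^{FN_{13}}_{p,q}(M) \circ \bigl(i^{\overline{\Omega}_{(13)}^{(p,1)}(M)}_{\overline{\Omega}_{(123)}^{(p,1)}(M)} \times i^{\overline{\Omega}_{(13)}^{(q,1)}(M)}_{\overline{\Omega}_{(123)}^{(q,1)}(M)}\bigr),
\]
which is immediate from the defining diagram in Lemma \ref{t5.5.1}. Iterating this identification twice, each of the three morphisms of Theorem \ref{t5.5.3}, when post-composed with the canonical injection $i^{\overline{\Omega}_{(13)}^{(p+q+r,1)}(M)}_{\overline{\Omega}_{(123)}^{(p+q+r,1)}(M)}$, becomes precisely the corresponding morphism of Theorem \ref{t5.4.5} pre-composed with the triple canonical injection of $\overline{\Omega}_{(123)}$'s into $\overline{\Omega}_{(13)}$'s. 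The scalar signs $(-1)^{p(q+r)}$ and $(-1)^{r(p+q)}$ commute with the injection since they are realized by the $\mathbb{R}$-module structure on $\overline{\Omega}_{(1)}^{(p+q+r,1)}(M)$, which the subobjects inherit.

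By Theorem \ref{t5.4.5} the three transported morphisms sum to the zero morphism into $\overline{\Omega}_{(13)}^{(p+q+r,1)}(M)$. Since $i^{\overline{\Omega}_{(13)}^{(p+q+r,1)}(M)}_{\overline{\Omega}_{(123)}^{(p+q+r,1)}(M)}$ is a monomorphism (as the canonical injection of an intersection of subobjects), and sum of morphisms is preserved under composition with this monomorphism, the three original morphisms of Theorem \ref{t5.5.3} sum to the zero morphism into $\overline{\Omega}_{(123)}^{(p+q+r,1)}(M)$, giving the desired Jacobi identity.

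I do not foresee any real obstacle here: the genuine work was carried out in Theorems \ref{t5.2.4} and \ref{t5.4.5}, and the present statement is obtained, in the style already exemplified by Theorem \ref{t5.3.3}'s one-line proof, by a diagram chase through the defining injections. The only mild nuisance is bookkeeping the three permutations and the two sign factors through the embeddings, but these are formal and carry over without alteration.
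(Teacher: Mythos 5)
Your proposal is correct and follows essentially the same route as the paper, whose proof of Theorem \ref{t5.5.3} is simply that it follows directly from Theorem \ref{t5.4.5}; you have merely made explicit the routine transport along the canonical injection of $\overline{\Omega}_{\left(123\right)}^{\left(p,1\right)}\left(M\right)$ into $\overline{\Omega}_{\left(13\right)}^{\left(p,1\right)}\left(M\right)$ furnished by Lemma \ref{t5.5.1}, together with the fact that a monomorphism reflects the vanishing of a sum of morphisms. No gap.
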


\begin{proof}
This follows directly from Theorem \ref{t5.4.5}.
\end{proof}

\end{document}